\theoremstyle{plain}
\newtheorem{thm}{Theorem}[section]
\newtheorem{lem}[thm]{Lemma}
\newtheorem{prop}[thm]{Proposition}
\newtheorem{cor}[thm]{Corollary}
\theoremstyle{definition}
\newtheorem{defi}[thm]{Definition}
\newtheorem{eg}[thm]{Example}
\theoremstyle{remark}
\newtheorem{rmk}[thm]{Remark}
\def\ZZ{{\mathbf Z}}
\def\QQ{{\mathbf Q}}
\def\RR{{\mathbf R}}
\def\CC{{\mathbf C}}
\def\AA{{\mathbf A}}
\def\C{{\mathbf C}}
\def\cD{\mathcal{D}}
\def\cE{\mathcal{E}}
\def\cF{\mathcal{F}}
\def\cH{\mathcal{H}}
\def\cJ{\mathcal{J}}
\def\cK{\mathcal{K}}
\def\cM{\mathcal{M}}
\def\cN{\mathcal{N}}
\def\cO{\mathcal{O}}
\def\cR{\mathcal{R}}
\def\cT{\mathcal{T}}
\def\.{\cdot}
\def\^{\widehat}
\def\de{\partial}
\def\({\left(}
\def\){\right)}
\renewcommand{\and}{ \ \ \text{ and } \ \ }
\def\gr{{\mathrm{Gr}}}
\DeclareMathOperator{\coker} {coker}
\DeclareMathOperator{\lct} {lct}
\begin{document}

\author[Q.~Chen]{Qianyu Chen}

\address{Department of Mathematics, University of Michigan, 530 Church Street, Ann Arbor, MI 48109, USA}

\email{qyc@umich.edu}

\author[B.~Dirks]{Bradley Dirks}

\address{Department of Mathematics, Stony Brook University, Stony Brook, NY 11794, USA}

\email{bradley.dirks@stonybrook.edu}

\author[M.~Musta\c{t}\u{a}]{Mircea Musta\c{t}\u{a}}

\address{Department of Mathematics, University of Michigan, 530 Church Street, Ann Arbor, MI 48109, USA}

\email{mmustata@umich.edu}

\thanks{Q.C. was partially supported by NSF grant DMS-1952399, B.D. was partly supported by NSF grant DMS-2001132 and NSF-MSPRF grant DMS-2303070, 
 and M.M. was partially supported by NSF grants DMS-2301463 and DMS-1952399.}

\subjclass[2020]{14F10, 32S40, 14B05}

\begin{abstract}
We give an introduction to the theory of $V$-filtrations of Malgrange and Kashiwara. After discussing the basic properties of this construction (in the case
of a smooth hypersurface and, later, in the general case), we describe the connection with the theory of $b$-functions. As an example, we treat the case
of weighted homogeneous isolated singularities. We discuss the compatibility of $V$-filtrations with proper push-forward and duality and the connection with
nearby and vanishing cycles via the Riemann-Hilbert correspondence. We end by describing some invariants of singularities via
the $V$-filtration. 
\end{abstract}

\title[An introduction to $V$-filtrations] {An introduction to $V$-filtrations}

\maketitle

\tableofcontents

\section{Introduction}

Let $X$ be a smooth, irreducible complex algebraic variety. The sheaf $\cD_X$ of differential operators of $X$ is the subsheaf of ${\mathbf C}$-algebras 
of ${\mathcal End}_{\mathbf C}(\cO_X)$ generated by $\cO_X$ and the sheaf of derivations ${\mathcal Der}_{\mathbf C}(\cO_X)$. A $\cD$-module on $X$
is simply a sheaf of (left or right) $\cD_X$-modules. The theory of $\cD$-modules started being developed in the 70s in the work of Bernstein (in the algebraic setting,
when $X={\mathbf A}_k^n$, for any field $k$ of characteristic $0$) and of Kashiwara and his collaborators (in the complex analytic setting, when $X$ is a complex
manifold). While one of its original motivations came from the desire to have an algebraic approach to questions concerning systems of linear PDEs (hence the name 
\emph{algebraic analysis}, sometimes used for this field), it turned out that the theory of $\cD$-modules had deep connections with other fields of mathematics, especially
representation theory and singularity theory. For details on the history of the subject and applications to representation theory, we refer to \cite{HTT}. 

A special feature of the field, that takes it outside of the usual algebro-geometric framework, is that $\cD_X$ is a sheaf of \emph{noncommutative} rings. However,
$\cD_X$ is not far from being commutative, and \emph{coherent} $\cD_X$-modules turn out to have associated objects (depending on the choice of a suitable filtration) 
that are coherent sheaves of $\cO$-modules on the cotangent bundle
$T^*X$ (we review this briefly in Section~\ref{section_SS} below). The  \emph{dimension} of a coherent $\cD_X$-module is defined by considering the 
dimension of the support of the corresponding sheaf on $T^*X$ (this support is, in fact, independent of any choices).
A fundamental result
in the theory says that the dimension of a nonzero coherent $\cD_X$-module is always $\geq n=\dim(X)$. Because of this, the $\cD_X$-modules 
that are either zero or of dimension $n$
(called \emph{holonomic} $\cD_X$-modules) enjoy special properties (for example, they form an abelian category in which all objects have finite length). Furthermore, they
are preserved by natural pull-back and push-forward functors. The good properties of holonomic $\cD$-modules are at the origin of many of the striking applications
of this theory.

One of the crown jewels of $\cD$-module theory is the Riemann-Hilbert correspondence, proved independently by Kashiwara \cite{KashiwaraRH} and Mebkhout
\cite{MebkhoutRH}. This gives an equivalence of categories ${\rm DR}_X^{\rm an}\colon {\mathcal D}^b_{rh}(\cD_X)\to {\mathcal D}^b_{c}(X^{\rm an})$ (the \emph{analytic de Rham  functor}). The left-hand side is the bounded derived category of complexes whose cohomology objects are regular holonomic 
$\cD_X$-modules, while the right-hand side is the bounded derived category of complexes of ${\mathbf C}$-vector spaces on $X^{\rm an}$ with constructible cohomology. 
We discuss this briefly in Section~\ref{section_RH}. This equivalence of triangulated categories induces an equivalence of abelian categories between regular holonomic $\cD_X$-modules
and the famous category of perverse sheaves on $X^{\rm an}$, introduced by Beilinson, Bernstein, Deligne, and Gabber \cite{BBDG}. A classical special case of this correspondence 
(say, when $X$ is projective) identifies the category of $\cD_X$-modules on $X$ that are $\cO_X$-coherent (equivalently, these are vector bundles with integrable
connection) with the category of local systems on $X^{\rm an}$. 

The role of the $V$-filtration is to give a description of two important constructions, the \emph{nearby} and \emph{vanishing cycles}, at the level of $\cD_X$-modules.
More precisely, to any ${\mathcal K}\in \cD^b_c(X^{\rm an})$ and any nonzero regular function $f\colon X\to {\mathbf A}^1$, 
one can associate the nearby cycles $\psi_f{\mathcal K}$ and the vanishing cycles
$\varphi_f{\mathcal K}$ in $\cD^b_c(X^{\rm an})$. By a result of Gabber, if ${\mathcal K}$ is a perverse sheaf, then both
$\psi_f{\mathcal K}[-1]$ and $\varphi_f{\mathcal K}[-1]$ are perverse sheaves. If ${\mathcal K}={\rm DR}_X^{\rm an}(\cM)$, for a regular holonomic 
$\cD_X$-module $\cM$, then the $V$-filtration provides a $\cD$-module theoretic description of $\psi_f{\mathcal K}[-1]$ and $\varphi_f{\mathcal K}[-1]$.
This construction was first introduced by Malgrange in \cite{Malgrange} in the case when $\cM=\cO_X$, with the goal of comparing two important 
invariants of $f$, the roots of the Bernstein-Sato polynomial of $f$ and the eigenvalues of the monodromy action on the cohomology of the Milnor fiber of $f$.
It was later generalized to arbitrary holonomic $\cD_X$-modules by Kashiwara in \cite{Kashiwara3}.

The $V$-filtration and its interplay with the Hodge filtration plays a key role in Saito's theory of Hodge modules, see \cite{Saito-MHP} and \cite{Saito-MHM}. 
First, the description of Hodge modules is done by induction on dimension and going from objects in dimension $n$ to objects in dimension $n-1$ is done 
via the graded pieces with respect to the $V$-filtration. Second, Hodge modules come with a canonical filtration (the \emph{Hodge filtration}) and this is required
to satisfy a suitable compatibility with the $V$-filtrations associated to arbitrary functions. It follows that in the setting of Hodge modules, it is important to keep track
of the $V$-filtration and the Hodge filtration at the same time. This is an important topic, but beyond the scope of these notes.

Another topic that we do not touch upon is the $V$-filtration associated to several functions. This more general version is also due to Kashiwara  \cite{Kashiwara3},
but it is more subtle and less understood than in the special case of one function. While we do not treat the general case here for reasons of space, we refer to \cite{BMS} for its
description and relation to $b$-functions 
and to \cite{CD} for its connection to the theory of Hodge modules.

Our goal here is to give an introduction 
to $V$-filtrations. The following is an outline of what we cover.
In the next section we give an overview of some basic notions and results from the theory of $\cD$-modules that we need. In Section~\ref{section_Vfiltration_smooth}
we discuss the $V$-filtration on general $\cD$-modules in the special case when the hypersurface defined by $f\in\cO_X(X)$ is smooth.
The case of an arbitrary function $f$ is handled by a trick due to Kashiwara: if $\iota\colon X\hookrightarrow X\times {\mathbf A}^1$ is the closed immersion giving the
graph of $f$, then the $V$-filtration corresponding to a $\cD_X$-module $\cM$ is obtained by taking the $V$-filtration on the push-forward $\iota_+(\cM)$ with respect to the function
$t$ on $X\times {\mathbf A}^1$ coming from the coordinate on ${\mathbf A}^1$. We discuss this in Section~\ref{section_Vfilt_general}, while in
Section~\ref{sect_Vfil_bfcn} we relate the $V$-filtration to $b$-functions. From this point on, the different sections can be read independently.
 We give a nontrivial example of a $V$-filtration on $\iota_+(\cO_X)$ when $f$
has weighted homogeneous isolated singularities in Section~\ref{section_weighted_homogeneous}. In Sections~\ref{section_behavior_push_forward} 
and \ref{section_behavior_push_forward2} 
we discuss
the compatibility of the $V$-filtration with duality and, respectively, proper push-forward, while in Section~\ref{section_comparison} we explain the description of the nearby 
and vanishing cycles via $V$-filtrations. In the final Section~\ref{section_b_invar_sing} we discuss the connection between the $V$-filtration on 
$\iota_+(\cO_X)$ and certain invariants of singularities of the given hypersurface.

\subsection*{Acknowledgment} We are indebted to the anonymous referee for several comments and suggestions on a preliminary version of this paper.

\section{A brief review of $\cD$-module theory}\label{section_review}

In this section we review some basic definitions and results on $\cD$-modules. For details and proofs we refer to \cite{HTT} and \cite{Bernstein}.
Unless explicitly mentioned otherwise, we work over an algebraically closed field $k$, of characteristic $0$. 

\subsection{The sheaf of differential operators}\label{sect_D_X}
Let $X$ be a smooth, irreducible, $n$-dimensional algebraic variety over $k$. The \emph{sheaf of differential operators} on $X$ is the subsheaf of $k$-algebras
$\cD_X\subseteq {\mathcal End}_k(\cO_X)$ generated by $\cO_X$ (with $\cO_X$ acting on itself by multiplication) and the sheaf of $k$-derivations 
${\mathcal T}_X={\mathcal Der}_k(\cO_X)$. If $x_1,\ldots,x_n$ form an algebraic system of coordinates on an open subset $U\subseteq X$ (that is,
$dx_1,\ldots,dx_n$ trivialize $\Omega_U$ and we have the dual basis of ${\mathcal T}_U$ given by $\partial_1,\ldots,\partial_n$), then $\cD_X\vert_U=\cD_U$
is a free left (or right) $\cO_U$-module, with basis given by $\partial^{\alpha}:=\partial_1^{\alpha_1}\cdots\partial_n^{\alpha_n}$ for $\alpha=(\alpha_1,\ldots,\alpha_n)
\in {\mathbf Z}_{\geq 0}^n$. 

The sheaf $\cD_X$ carries an increasing filtration $(F_p\cD_X)_{p\geq 0}$ by order of differential operators: if $U$ and $x_1,\ldots,x_n$ are as above,
then $F_p\cD_U$ is generated as a (left or right) $\cO_U$-module by $\partial^{\alpha}$, with $|\alpha|:=\alpha_1+\ldots+\alpha_n\leq p$. 
It is easy to see that we have $F_p\cD_X\cdot F_q\cD_X\subseteq F_{p+q}\cD_X$ and $[F_p\cD_X,F_q\cD_X]\subseteq F_{p+q-1}\cD_X$ for all $p,q\geq 0$.
We may thus form the sheaf of graded rings ${\rm Gr}^F_{\bullet}(\cD_X)=\bigoplus_{p\geq 0}F_p\cD_X/F_{p-1}\cD_X$ and this is, in fact, a sheaf of
\emph{commutative} rings. Since ${\rm Gr}^F_1(\cD_X)={\mathcal T}_X$, it is clear that we have an induced morphism of sheaves of commutative rings
$${\mathcal Sym}_{\cO_X}^{\bullet}(\cT_X)\to {\rm Gr}^F_{\bullet}(\cD_X)$$
and by looking in local coordinates we see that this is an isomorphism. It is then easy to deduce that
for every affine open subset $U\subseteq X$, the ring $\cD_X(U)$ is both left and right Noetherian.

A left (right) $\cD$-module on $X$ is a sheaf of left (respectively, right) $\cD_X$-modules.
Two basic examples of left $\cD$-modules are $\cD_X$ and $\cO_X$ (note that we have a tautological action of $\cD_X$ on $\cO_X$ since $\cD_X$
is a subsheaf of ${\mathcal End}_k(\cO_X)$). 

\subsection{The de Rham complex}\label{section_de_Rham}
Recall that a connection $\nabla$ on an $\cO_X$-module $\cM$ is a $k$-linear map $\nabla\colon \cM\to \Omega_X\otimes_{\cO_X}\cM$ that satisfies the Leibniz rule: 
$\nabla(fu)=f\cdot \nabla(u)+df\otimes u$ for every local sections $f\in\cO_X$ and $u\in\cM$. Given such a connection $\nabla$, we get induced maps
$$\nabla\colon \Omega_X^i\otimes_{\cO_X}\cM\to \Omega_X^{i+1}\otimes_{\cO_X}\cM$$
that satisfy again a version of Leibniz rule. The connection is \emph{integrable} if $\nabla\circ\nabla=0$ on $\cM$, in which case also the higher order compositions
vanish, so we have the de Rham complex
$$0\to\cM\overset{\nabla}\longrightarrow\Omega_X\otimes_{\cO_X}\cM\overset{\nabla}\longrightarrow \ldots\overset{\nabla}\longrightarrow
\Omega_X^n\otimes_{\cO_X}\cM\to 0,$$
placed in degrees $0,\ldots,n$.
It follows easily from the definition that giving a left $\cD_X$-module structure on $\cM$ is equivalent to giving an $\cO_X$-module structure on $\cM$,
together with an integrable connection. We denote by ${\rm DR}_X(\cM)$ the corresponding shifted de Rham complex $\Omega_X^{\bullet}\otimes_{\cO_X}\cM[n]$;
note that the maps in this complex are
not $\cO_X$-linear. One can show (see \cite[Theorem~1.4.10]{HTT}) that a left $\cD_X$-module is coherent as an $\cO_X$-module if and only if it is a locally
free $\cO_X$-module of finite rank; in other words, it is a vector bundle with an integrable connection. 

\subsection{The left-right equivalence}\label{section_equivalence}
In what follows we will mostly consider left $\cD_X$-modules, but right $\cD_X$-modules sometimes naturally enter the picture (for example, when considering duality
and direct image of $\cD$-modules). A useful fact is that there is a canonical equivalence of categories between left and right $\cD_X$-modules, as follows. If we denote 
by $\cM^r$ the right $\cD_X$-module corresponding to the left $\cD_X$-module $\cM$, then as $\cO_X$-modules we have
$\cM^r=\cM\otimes_{\cO_X}\omega_X$, where $\omega_X=\Omega_X^n$, so $\cM\simeq {\mathcal Hom}_{\cO_X}(\omega_X,\cM^r)$. 
The right action of $\cD_X$ on $\cM\otimes_{\cO_X}\omega_X$ is easy to describe in a system
of coordinates $x_1,\ldots,x_n$, as follows. The map given by the \emph{classical adjoint} 
\begin{equation}\label{eq_classical_adjoint}
P = \sum_{\alpha \in \mathbf Z^n_{\geq 0}} h_{\alpha} \de_x^{\alpha}\mapsto {}^t P: = \sum_{\alpha \in \mathbf Z^n_{\geq 0}} (-\de_x)^\alpha h_{\alpha}
\end{equation}
is an automorphism $\cD_X\simeq \cD_X^{\rm op}$ and if $\eta=dx_1\wedge\ldots\wedge dx_n$, then
\[ (m \otimes \eta) \cdot P = ({}^t P \cdot m)\otimes \eta.\]
For example, the left $\cD_X$-module $\cD_X^{\ell}$ corresponding to the standard right $\cD_X$-module structure on $\cD_X$ is isomorphic to 
$\cD_X$ via the map given by the classical adjoint. 
For the fact that in general we get a global equivalence of categories, see \cite[Section~1.2]{HTT}.

\subsection{Singular support and holonomic $\cD$-modules}\label{section_SS}
From now on, unless explicitly mentioned otherwise, all our $\cD_X$-modules are left $\cD_X$-modules. Let $\cM$
be a \emph{coherent} such $\cD_X$-module: this means that $\cM$ is quasi-coherent as an $\cO_X$-module and it is locally finitely
generated over $\cD_X$. In this case one can show that there is a \emph{good filtration} on $\cM$, that is, an increasing filtration $(F_p\cM)_{p\in\ZZ}$ with the following properties:
\begin{enumerate}
\item[i)] $F_p\cM=0$ for $p\ll 0$ and $\cM=\bigcup_{p\in\ZZ}F_p\cM$.
\item[ii)] $F_p\cD_X\cdot F_q\cM\subseteq F_{p+q}\cM$ for all $p,q\in\ZZ$.
\item[iii)] ${\rm Gr}^F_{\bullet}(\cM):=\bigoplus_{p\in\ZZ}F_p\cM/F_{p-1}\cM$ is locally finitely generated over ${\rm Gr}^F_{\bullet}(\cD_X)$.
\end{enumerate}
Note that if $\pi\colon T^*X\to X$ is the cotangent bundle of $X$, then we have a coherent sheaf $\cF$ on $T^*X$ such that $\pi_*(\cF)\simeq {\rm Gr}^F_{\bullet}(\cM)$.
While $\cF$ depends on the choice of filtration, its support ${\rm Supp}(\cF)$ (viewed as a subset of $T^*X$) is independent of the filtration.
This is the \emph{characteristic variety} (or \emph{singular support}) ${\rm SS}(\cM)$. The \emph{dimension} $\dim(\cM)$ of $\cM$ is, by definition,
the dimension of ${\rm SS}(\cM)$. Note that $\dim(\cM)\leq\dim(T^*X)=2n$. 

A fundamental result in the theory of $\cD_X$-modules is \emph{Bernstein's inequality}: if $\cM$ is a coherent $\cD_X$-module, then every irreducible component of ${\rm SS}(\cM)$ has dimension $\geq n$.
In particular, if $\cM\neq 0$, then 
$\dim(\cM)\geq n$. 
A coherent $\cD_X$-module $\cM$ is \emph{holonomic} if $\cM=0$ or $\dim(\cM)=n$. 

Given a short exact sequence of coherent $\cD_X$-modules
\begin{equation}\label{SES_coh1}
0\to\cM'\to\cM\to\cM''\to 0,
\end{equation}
after choosing a good filtration on $\cM$ and taking the induced filtrations on $\cM'$ and $\cM''$,
we obtain the short exact sequence 
\begin{equation}\label{SES_coh2}
0\to {\rm Gr}^F_{\bullet}(\cM')\to {\rm Gr}^F_{\bullet}(\cM)\to {\rm Gr}^F_{\bullet}(\cM'')\to 0,
\end{equation}
so $\dim(\cM)=\max\big\{\dim(\cM'),\dim(\cM'')\big\}$. In particular, $\cM$ is holonomic if and only if both $\cM'$ and $\cM''$ are holonomic.
It is then easy to see that the holonomic $\cD_X$-modules form an abelian category. 

A more refined invariant than the characteristic variety is the characteristic cycle of a coherent $\cD_X$-module. Given a good filtration on the coherent $\cD_X$-module 
$\cM$, if $\cF$ is the coherent sheaf on $T^*X$ corresponding to ${\rm Gr}^F_{\bullet}(\cM)$, the \emph{characteristic cycle} ${\rm CC}(\cM)$ is
$${\rm CC}(\cM):=\sum_{Z}\ell(\cF_{\eta_Z})[Z],$$
where the sum is over the irreducible components $Z$ of ${\rm SS}(\cM)$ and $\eta_Z$ is the generic point of $Z$. 
It is a bit more subtle, but one can show that the characteristic cycle, too, is independent of the choice of good filtration.
Given a short exact sequence (\ref{SES_coh1}) of holonomic $\cD_X$ modules, we deduce from the corresponding exact sequence (\ref{SES_coh2})
that ${\rm CC}(\cM)={\rm CC}(\cM')+{\rm CC}(\cM'')$. Since we clearly have ${\rm CC}(\cM)=0$ if and only if $\cM=0$, it is easy to deduce that
in the category of holonomic $\cD_X$-modules, every object has finite length. 

\subsection{Functors on $\cD$-modules}\label{section_functors}
We next recall the definition of pull-back and push-forward for $\cD$-modules. Let $f\colon X\to Y$ be a morphism of smooth, irreducible algebraic varieties over $k$. 
If $\cM$ is a left $\cD_Y$-module, then the $\cO_X$-module $f^*(\cM)=\cO_X\otimes_{f^{-1}(\cO_Y)}f^{-1}(\cM)$ has a natural structure of left $\cD_X$-module:
the connection $\nabla_{\cM}$ on $\cM$ induces the following connection on $f^*(\cM)$:
$$\nabla_{f^*(\cM)}\colon f^*(\cM)\to f^*(\Omega_Y)\otimes_{\cO_X}f^*(\cM)\to \Omega_X\otimes_{\cO_X}f^*(\cM)$$
such that if $\nabla_{\cM}(u)=\sum_idg_i\otimes v_i$, then 
$$\nabla_{f^*(\cM)}\big(h\otimes f^{-1}(u)\big)=dh\otimes f^{-1}(u)+\sum_ihd(g_i\circ f)\otimes f^{-1}(v_i)\quad\text{for}\quad h\in\cO_X.$$
We thus get a right-exact functor from $\cD_Y$-modules to $\cD_X$-modules
that extends to an exact functor between triangulated categories ${\mathbf L}f^*\colon
\cD^b(\cD_Y)\to \cD^b(\cD_X)$. Note that we denote by $\cD^b(\cD_X)$, respectively $\cD^b(\cD_X^r)$, the bounded derived category of left, respectively right,
$\cD_X$-modules. 

In particular, we see that $f^*(\cD_Y)$ is a left $\cD_X$-module; in fact, it is a $\cD_X-f^{-1}(\cD_Y)$-bimodule, using also the right $\cD_Y$-module structure of 
$\cD_Y$. As such, it is denoted by $\cD_{X\to Y}$.
It follows that if $\cM$ is a \emph{right} $\cD_X$-module, then $\cM\otimes_{\cD_X}\cD_{X\to Y}$ is a right $f^{-1}(\cD_Y)$-module, and thus 
$f_*(\cM\otimes_{\cD_X}\cD_{X\to Y})$ is a right $\cD_Y$-module. Since this construction combines a left exact functor with a right-exact functor, the correct functor
is defined at the level of derived categories. More precisely, we have the exact functor between triangulated categories
$$f_+={\mathbf R}f_*(-\otimes^L_{\cD_X}\cD_{X\to Y})\colon \cD^b(\cD_X^r)\to \cD^b(\cD_Y^r).$$
Finally, the equivalence of categories between left and right $\cD$-modules induces equivalences
$\cD^b(\cD_X)\simeq \cD^b(\cD_X^r)$ and $\cD^b(\cD_Y)\simeq \cD^b(\cD_Y^r)$. Using these, we obtain the push-forward functor for left $\cD$-modules
$f_+\colon \cD^b(\cD_X)\to \cD^b(\cD_Y)$. 

If $g\colon Y\to Z$ is another such morphism, then we have a canonical isomorphism $(g\circ f)_+\simeq g_+\circ f_+$. Since every morphism $f\colon X\to Y$
admits the decomposition $X\overset{i}\hookrightarrow X\times Y\overset{p}\longrightarrow Y$, where $p$ is the projection and $i$ is the closed immersion 
given by $i(x)=\big(x,f(x)\big)$, one can reduce understanding the push-forward via general morphisms to the case of projections and closed immersions. 

We consider first the case of a closed immersion $i\colon X\hookrightarrow Y$ between smooth varieties. Working locally, we may assume that we have 
algebraic coordinates $x_1,\ldots,x_n,y_1,\ldots,y_m$ on $Y$ such that $X$ is defined by $(y_1,\ldots,y_m)$. In this case we have
$$\cD_{X\to Y}=\cD_Y/(y_1,\ldots,y_m)\cD_Y=\bigoplus_{\beta\in {\mathbf Z}_{\geq 0}^m}\cD_X\partial_y^{\beta},$$
where the right multiplication by $y_i$ and $\partial_{y_i}$ are given by
$$P\partial_y^{\beta}\cdot\partial_{y_i}=P\partial_y^{\beta+e_i}\quad\text{and}\quad P\partial_y^{\beta}\cdot y_i=\beta_iP\partial_y^{\beta-e_i}.$$
In particular, we see that $\cD_{X\to Y}$ is a free left $\cD_X$-module. Since both functors $i_*$ and $-\otimes_{\cD_X}\cD_{X\to Y}$ are exact,
we see that $i_+$ is induced at the level of derived categories by an exact functor at the level of abelian categories. 
An important result in this setting is Kashiwara's Equivalence Theorem (see \cite[Section~1.6]{HTT}): this says that $i_+$ gives an equivalence between the category 
of coherent (holonomic) $\cD_X$-modules and the category of coherent (respectively, holonomic) $\cD_Y$-modules with support contained in $X$;
the inverse functor is ${\mathbf L}i^*[-m]$. More generally, there is a natural isomorphism $i_+\circ {\mathbf L}i^*[-m]\simeq {\mathbf R}\Gamma_X$
of functors on $\cD^b(\cD_Y)$,
where ${\mathbf R}\Gamma_X$ is the derived functor of the functor that maps $\cM$ to the subsheaf $\Gamma_X\cM$ of sections supported on $X$. 
In particular, we have 
\begin{equation}\label{eq_local_coho}
i_+(\cO_X)\simeq \cH^m_X(\cO_Y),
\end{equation}
where the right-hand side is the $m^{\rm th}$ local cohomology sheaf of $\cO_Y$ with respect to $X$.


Suppose now that $Y$ and $Z$ are smooth varieties and $p\colon X=Y\times Z\to Y$ is the projection onto the first factor. In this case we have 
$\cD_{X\to Y}\simeq \cD_Y\otimes_k\cO_Z$. 
It is easy to see that the de Rham complex ${\rm DR}_Z(\cD_Z)$ is a complex of right $\cD_Z$-modules and the corresponding
complex of left $\cD_Z$-modules gives a
free resolution of $\cO_Z$. This implies that if $Y$ is a point and if 
$\cM$ is a left $\cD_Z$-module, then
$$p_+(\cM)={\mathbf R}\Gamma\big({\rm DR}_Z(\cM)\big),$$
so the cohomology of $p_+(\cM)$ gives, up to a shift, the de Rham cohomology of $\cM$. For arbitrary $Y$, we have
\begin{equation} \label{pushforwardSmooth} 
p_+(\cM)\simeq {\mathbf R}p_*\big({\rm DR}_{X/Y}(\cM)\big),
\end{equation}
where ${\rm DR}_{X/Y}(\cM)$ is the shifted relative de Rham complex $\Omega_{X/Y}^{\bullet}\otimes_{\cO_X}\cM[\dim(Z)]$.

Let $\cD^b_{h}(\cD_X)$ denote the bounded derived category of complexes of $\cD$-modules on $X$ with holonomic cohomology.
It is an important result of the theory that pull-back and push-forward of $\cD$-modules preserves holonomicity: if $f\colon X\to Y$ is a morphism
of smooth, irreducible varieties, then we have induced functors
$${\mathbf L}f^*\colon \cD^b_h(\cD_Y)\to\cD^b_h(\cD_X)\quad\text{and}\quad f_+\colon\cD^b_h(\cD_X)\to\cD^b_h(\cD_Y).$$
An important example is the following: suppose that $X$ is the open subset of $Y$ where $g\neq 0$, for some nonzero $g\in\cO(Y)$. Note that 
in this case $f_+$ coincides with the usual push-forward functor at the level of $\cO$-modules. We thus see that if $\cM$ is a holonomic $\cD_X$-module,
then $f_+(\cM)=f_*(\cM)$ is a holonomic $\cD_Y$-module. In particular, we see that $\cO_Y[1/g]$ is a holonomic $\cD_Y$-module (note that, a priori, it is not
even clear that this is a coherent $\cD_Y$-module). The key ingredient in proving the preservation of holonomicity by the push-forward via an open immersion
is the notion of $b$-function, that will play an important role also in Section~\ref{sect_Vfil_bfcn} below. 

Another important functor on holonomic $\cD$-modules is provided by duality. More generally, we have a duality functor on the bounded derived category of left $\cD_X$-modules
with coherent cohomology $\cD^b_c(\cD_X)$. Note that for every left $\cD_X$-module $\cM$, we have a right $\cD_X$-module structure on ${\mathcal Hom}_{\cD_X}(\cM,\cD_X)$
induced by the canonical right $\cD_X$-module structure on $\cD_X$, and thus a left $\cD_X$-module structure on ${\mathcal Hom}_{\cD_X}(\cM,\cD_X)\otimes_{\cO_X}\omega_X^{-1}$. We thus obtain a \emph{duality functor}
\begin{equation}\label{eq_def_duality_functor}
\cD^b_c(\cD_X)\to \cD^b_c(\cD_X),\quad \cM\mapsto \cM^*:={\mathbf R}{\mathcal Hom}_{\cD_X}(\cM,\cD_X)\otimes_{\cO_X}\omega_X^{-1}[n],
\end{equation}
where $n=\dim(X)$. 
One can show that there is a natural isomorphism $(\cM^*)^*\simeq\cM$; in particular, the functor $\cM\mapsto\cM^*$ is an anti-equivalence of categories.
The reason for the shift by $n$ in the definition of duality is due to the following two properties:
if $\cM$ is a coherent $\cD_X$-module, then
\begin{enumerate}
\item[i)] ${\mathcal Ext}^i_{\cD_X}(\cM,\cD_X)=0$ for all $i>n$.
\item[ii)] If $\cM\neq 0$, then $\min\{i\geq 0 \mid {\mathcal Ext}_{\cD_X}^i(\cM,\cD_X)\neq 0\}=2n-\dim(\cM)$.
\end{enumerate}
In particular, if $\cM$ is holonomic, we see that ${\mathcal Ext}_{\cD_X}^i(\cM,\cD_X)=0$ for $i\neq n$,
hence $\cM^*$ is a $\cD_X$-module; in fact, one can show that it is a holonomic $\cD_X$-module. We thus get an induced anti-equivalence 
of the abelian category of holonomic $\cD_X$-modules and also an anti-equivalence of $\cD^b_h(\cD_X)$.

\subsection{The Riemann-Hilbert correspondence}\label{section_RH}
We now assume that $k={\mathbf C}$ and write $X^{\rm an}$ for the complex
manifold corresponding to the smooth complex algebraic variety $X$. Recall that a sheaf $\cF$ of ${\mathbf C}$-vector spaces on $X^{\rm an}$ is \emph{constructible} if
there is a finite decomposition $X=\bigsqcup_iX_i$, with each $X_i$ being a locally closed algebraic subvariety,
such that the restriction of $\cF$ to each $X_i$ is a local system (that is, it is a locally constant sheaf of finite rank). 
Let us write $\cD^b_c(X^{\rm an})$ for the bounded derived category of complexes of sheaves of ${\mathbf C}$-vector spaces on $X^{\rm an}$ whose 
cohomology sheaves are constructible. These categories satisfy a 6-functor formalism. In particular, we have the \emph{Verdier duality} functor ${\mathbf D}_{X^{\rm an}}
={\mathbf R}{\mathcal Hom}_{\mathbf C}\big(-,\underline{\mathbf C}_X[2n]\big)$ on $\cD^b_c(X^{\rm an})$ and if $f\colon X\to Y$ is a morphism of smooth, irreducible complex
algebraic varieties, we have exact functors
$${\mathbf R}f_*, {\mathbf R}f_!\colon \cD^b_c(X^{\rm an})\to \cD^b_c(Y^{\rm an})\quad\text{and}\quad 
f^{-1},f^!\colon \cD^b_c(Y^{\rm an})\to \cD^b_c(X^{\rm an})$$
that satisfy certain adjointness properties and compatibilities with duality. 

Beilinson, Bernstein, Deligne, and Gabber defined in \cite{BBDG} an abelian subcategory ${\rm Perv}(X^{\rm an})$ of $\cD^b_c(X^{\rm an})$ whose
objects are known as \emph{perverse sheaves}. 
They are complexes with constructible cohomology that satisfy certain bounds on the dimensions of the supports of the cohomology sheaves
and whose Verdier duals satisfy the same conditions (in particular, the subcategory is preserved by Verdier duality).

If $\cM$ is a left $\cD_X$-module, then we also consider the 
analytification of the de Rham complex ${\rm DR}_X(\cM)$, that we denote by 
${\rm DR}_X^{\rm an}(\cM)$. In his doctoral thesis, Kashiwara proved that ${\rm DR}^{\rm an}_{X}(\cM)\in\cD^b_c(X^{\rm an})$.
In fact, he showed that ${\rm DR}_X^{\rm an}(\cM)$ is a perverse sheaf, though this notion was formalized only later. 

In order to obtain an equivalence of categories one needs to impose one extra condition: \emph{regular singularities}. We do not give the definition since it is somewhat involved and we will not need it. For a thorough discussion, see \cite[Chapter~6]{HTT}. 
If we denote by $\cD^b_{rh}(\cD_X)$ the subcategory of $\cD^b_h(\cD_X)$ whose cohomology sheaves are regular holonomic $\cD_X$-modules, then
$\cD^b_{rh}(\cD_X)$ is preserved by ${\mathbf D}_X$ and 
for a morphism $f\colon X\to Y$ between smooth, irreducible complex algebraic varieties, we have induced functors
$$f_+\colon\cD^b_{rh}(\cD_X)\to \cD^b_{rh}(\cD_Y)\quad\text{and}\quad f^{\dagger}={\mathbf L}f^*[\dim(X)-\dim(Y)]\colon \cD^b_{rh}(\cD_Y)
\to \cD^b_{rh}(\cD_X).$$

The Riemann-Hilbert correspondence, proved independently by Kashiwara and Mebkhout, says that the analytic de Rham functor
induces an equivalence of triangulated categories
$${\rm DR}_X^{\rm an}\colon \cD^b_{rh}(\cD_X)\simeq \cD^b_c(X^{\rm an}).$$
Moreover, via this equivalence, the functor $(-)^*$ defined in (\ref{eq_def_duality_functor}) corresponds to ${\mathbf D}_{X^{\rm an}}$ and if $f\colon X\to Y$ is a morphism
of smooth, irreducible algebraic varieties, then $f_+$ and $f^{\dagger}$ correspond, respectively, to ${\mathbf R}f_*$ and $f^!$. 
Finally, ${\rm DR}_X^{\rm an}$ induces an equivalence between the abelian category of regular holonomic $\cD_X$-modules and 
${\rm Perv}(X^{\rm an})$. For a detailed discussion and for proofs, see \cite[Chapter~7]{HTT}.

\section{The $V$-filtration with respect to a smooth hypersurface}\label{section_Vfiltration_smooth}

Our presentation of $V$-filtrations in this section is based on the one in \cite[Section~3.1]{Saito-MHP}.
Let $X$ be a smooth, irreducible algebraic variety over $k$
and let $t\in\cO_X(X)$ be nonzero, defining a smooth, irreducible hypersurface $H$. 
We put $\dim(X)=n+1$, so $\dim(H)=n$.

\subsection{The filtration $V^{\bullet}\cD_X$}\label{section_VD}
For every $m\in \ZZ$, we put
$$V^m\cD_X=\{P\in\cD_X\mid P\cdot (t)^q\subseteq (t)^{q+m}\,\,\text{for all}\,\,q\in\ZZ\}$$
(with the convention that $(t)^i=\cO_X$ if $i\leq 0$). 
It follows from the definition that we have $V^{m_1}\cD_X\cdot V^{m_2}\cD_X\subseteq V^{m_1+m_2}\cD_X$ for all $m_1,m_2\in\ZZ$.

\begin{rmk}
Note that the decreasing filtration $V^{\bullet}\cD_X$ only depends on $H$, but not on the function $t$ defining $H$.
\end{rmk}

It is easy to describe $V^m\cD_X$ locally. Note first that if $U\subseteq X$ is an open subset such that $H\cap U=\emptyset$, then $V^m\cD_X\vert_U=\cD_U$ for all $m\in\ZZ$.
On the other hand, if $U$ is an affine open subset with algebraic coordinates $x_1,\ldots,x_n,t$, then
given $P\in \Gamma(U,\cD_X)$, we write $P=\sum_{\alpha,j}P_{\alpha,j}\partial_x^{\alpha}\partial_t^j$, where 
the sum is over $\alpha\in\ZZ_{\geq 0}^n$ and $j\in\ZZ_{\geq 0}$, and
all $P_{\alpha,j}$ lie in $\cO_X(U)$. In this case, it is straightforward to see that $P\in\Gamma(U,V^m\cD_X)$ if and only if 
$P_{\alpha,j}\in (t^{m+j})$ for all $\alpha$ and $j$.

Given coordinates, as above, it is convenient to also describe $V^m\cD_X$ in terms of the Euler operator $t\partial_t$. In fact, for reasons that will become clearer later, we will use instead 
$s:=-\partial_tt=-t\partial_t-1$. For future reference, we collect in the next lemma some easy identities involving $s$, $t$, and $\partial_t$.

\begin{lem}\label{lem_s}
The operator $s=-\partial_tt$ satisfies the following properties:
\begin{enumerate}
\item[i)] For every $P\in k[s]$ and every $m\geq 0$, we have
$P(s)t^m=t^mP(s-m)$ and $P(s)\partial_t^m=\partial_t^mP(s+m)$.
\item[ii)] For every $m\in\ZZ_{>0}$, we have
$$t^m\partial_t^m=(-1)^m\prod_{j=1}^m(s+j)\quad\text{and}\quad \partial_t^mt^m=(-1)^m\prod_{j=0}^{m-1}(s-j).$$
\end{enumerate}
\end{lem}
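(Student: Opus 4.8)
The plan is to prove everything by reducing to the single-variable identity $[\partial_t, t] = 1$, i.e.\ $\partial_t t = t\partial_t + 1$, and then iterating. Since $s = -\partial_t t$, we have $s = -t\partial_t - 1$, so $t\partial_t = -s-1$ and $\partial_t t = -s$. All the operators involved lie in the subalgebra $k\langle t,\partial_t\rangle$, and since $x_1,\dots,x_n$ and the corresponding $\partial_{x_i}$ commute with $t$ and $\partial_t$, it suffices to verify the identities in the Weyl algebra in the single pair of variables $(t,\partial_t)$.

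\textbf{Part (i).} First I would establish the commutation relations $s t = t(s-1)$ and $s\partial_t = \partial_t(s+1)$, which are the $m=1$ cases. For the first, compute $st = -\partial_t t \cdot t$ and $t(s-1) = t(-\partial_t t - 1) = -t\partial_t t - t$; using $\partial_t t = t\partial_t + 1$ one gets $-\partial_t t^2 = -(t\partial_t+1)t = -t\partial_t t - t$, matching. The relation $s\partial_t = \partial_t(s+1)$ is analogous (or follows by taking classical adjoints). Then $st^m = t^m(s-m)$ and $s\partial_t^m = \partial_t^m(s+m)$ follow by an immediate induction on $m$ (e.g.\ $st^{m} = (st)t^{m-1} = t(s-1)t^{m-1} = t \cdot t^{m-1}(s-1-(m-1)) = t^m(s-m)$ using the inductive hypothesis applied with the shifted polynomial). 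Since any $P(s) \in k[s]$ is a $k$-linear combination of powers $s^j$, and $s^j t^m = t^m (s-m)^j$ by iterating $st^m = t^m(s-m)$, we get $P(s)t^m = t^m P(s-m)$; similarly $P(s)\partial_t^m = \partial_t^m P(s+m)$.

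\textbf{Part (ii).} Here I would induct on $m$. For $\partial_t^m t^m$: the base case $m=1$ is $\partial_t t = t\partial_t + 1 = -s-1+1 = -s = (-1)^1(s-0)$. For the inductive step, write $\partial_t^{m+1}t^{m+1} = \partial_t^m (\partial_t t) t^m = \partial_t^m(t\partial_t + 1)t^m = \partial_t^m t \cdot \partial_t t^m + \partial_t^m t^m$. One then needs to commute factors past each other using part (i); alternatively, and more cleanly, note $\partial_t^m t^{m+1} = t^{m+1}$-times something is awkward, so instead I would use part (i) directly: $\partial_t^{m+1} t^{m+1} = \partial_t \cdot \partial_t^m t^m \cdot t = \partial_t \cdot (-1)^m\prod_{j=0}^{m-1}(s-j) \cdot t$. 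Now move $t$ to the left past the polynomial using $P(s)t = tP(s-1)$ from (i): this gives $\partial_t \cdot t \cdot (-1)^m\prod_{j=0}^{m-1}(s-1-j) = (-s)\cdot(-1)^m\prod_{j=1}^{m}(s-j) = (-1)^{m+1}\prod_{j=0}^{m}(s-j)$, completing the induction. The identity $t^m\partial_t^m = (-1)^m\prod_{j=1}^m(s+j)$ is handled symmetrically, inducting via $t^{m+1}\partial_t^{m+1} = t\cdot t^m\partial_t^m\cdot \partial_t = t\cdot(-1)^m\prod_{j=1}^m(s+j)\cdot\partial_t$ and using $P(s)\partial_t = \partial_t P(s+1)$ together with $t\partial_t = -s-1 = -(s+1)$.

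\textbf{Main obstacle.} There is no real obstacle — every identity is a routine induction once the two basic commutation relations in (i) are in place. The only thing to be careful about is bookkeeping: getting the index ranges in the two products right (one runs $j=1,\dots,m$, the other $j=0,\dots,m-1$), and tracking how the shift $s \mapsto s\mp 1$ accumulates when a single $t$ or $\partial_t$ is moved past a degree-$m$ polynomial in $s$ versus when $t^m$ is moved past. Organizing the proof so that (ii) is deduced from (i) rather than reproved from scratch keeps this clean.
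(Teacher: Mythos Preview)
Your proof is correct and follows exactly the approach the paper indicates: the paper's entire proof is the sentence ``All assertions follow easily by induction on $m$,'' and you have simply written out that induction in detail. The only comment is stylistic: in part~(ii) you begin one computation for $\partial_t^{m+1}t^{m+1}$, abandon it mid-line, and switch to the cleaner factorization $\partial_t\cdot(\partial_t^m t^m)\cdot t$; you should just present the clean version directly.
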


\begin{proof}
All assertions follow easily by induction on $m$. 
\end{proof}

Using the above lemma, we see that $t^i\partial_t^j$ lies in $k[s]\cdot t^{i-j}$ if $i\geq j$ and it lies in $k[s]\cdot\partial_t^{j-i}$ if $i\leq j$.
We thus conclude that, given coordinates $x_1,\ldots,x_n,t$ as above, 
\begin{equation}\label{formula_description_V0}
V^0\cD_U=\cO_U\langle\partial_{x_1},\ldots,\partial_{x_n},s\rangle.
\end{equation}
We also have 
\begin{equation}\label{eq1_V_D}
V^m\cD_U=V^0\cD_U\cdot t^m=t^m\cdot V^0\cD_U\quad\text{for all}\quad m\geq 0\quad\text{and}
\end{equation}
\begin{equation}\label{eq2_V_D}
V^{-m}\cD_U=\sum_{i=0}^mV^0\cD_U\cdot \partial_t^i=\sum_{i=0}^m\partial_t^i\cdot V^0\cD_U\quad\text{for all}\quad m\geq 0.
\end{equation}

\begin{rmk}\label{s_under_change_coord}
Note that the operators $\partial_t$ and $s=-\partial_tt$ depend on the choice of coordinates. 
More precisely, if we choose a different set of local coordinates $x'_1,\ldots,x'_n,t$ and write
$\partial_t'$ for the corresponding derivation with respect to this system of coordinates, we have
$\partial_{t}'=\partial_t+\sum_{i=1}^n\partial'_{t}(x_i)\partial_{x_i}$, hence 
$$\partial'_t-\partial_t\in V^0\cD_X\quad\text{and}-\partial_t't+\partial_tt\in V^1\cD_X.$$
We also note that if we replace $t$ by $t'=ut$, for some invertible function $u$, then a similar computation shows that, with respect to the same coordinates $x_1,\ldots,x_n$,
we have $\partial_{t'}-\partial_tu^{-1}\in V^0\cD_X$ and thus $\partial_{t'}t'-\partial_tt\in V^1\cD_X$.
\end{rmk}

\begin{rmk}\label{V0_Noeth}
Note that all $V^i\cD_X$ are quasi-coherent sheaves of $\cO_X$-modules and for every affine open subset $U\subseteq X$, the ring $\Gamma(U,V^0\cD_X)$ 
is both left and right Noetherian. In fact, if we consider the 
Rees-type sheaf of rings
$${\mathcal R}_+(V^{\bullet}\cD_X):=\bigoplus_{i\geq 0}V^i\cD_Xz^i,$$
then $\Gamma\big(U, {\mathcal R}_+(V^{\bullet}\cD_X)\big)$ is both left and right Noetherian.
This is standard to check, using the filtration induced on ${\mathcal R}_+(V^{\bullet}\cD_X)$ by the order filtration on $\cD_X$ and showing that
the corresponding graded ring is Noetherian.
\end{rmk}

\subsection{The $V$-filtration with respect to $t$}\label{section_def_V}
We now come to the definition of the $V$-filtration. The role of this filtration is, roughly speaking, to put the operator 
$\partial_tt$ on a given $\cD$-module in upper triangular form. 

\begin{defi}\label{defi_V_filtration}
A \emph{weak}\footnote{This is not standard terminology. We will only use it in a few remarks in this section, in order to emphasize the different roles of the conditions in the definition of a $V$-filtration.} \emph{$V$-filtration} on a coherent $\cD_X$-module $\cM$ (with respect to $t\in\cO_X(X)$) is a decreasing filtration 
$V^{\bullet}\cM=(V^{\alpha}\cM)_{\alpha\in\QQ}$ 
by quasi-coherent $\cO_X$-modules, 
indexed by rational numbers, which is exhaustive\footnote{This means that
$\cup_{\alpha\in\QQ}V^{\alpha}\cM=\cM$.}, discrete and left-continuous\footnote{These conditions mean that there is a positive integer $\ell$ such that $V^{\alpha}\cM$ takes a constant value 
for $\alpha$ in an interval of the form $\big(i/\ell,(i+1)/\ell\big]$, where $i$ is any integer.}, and satisfies the following properties:
\begin{enumerate}
\item[i)] We have $V^i\cD_X\cdot V^{\alpha}\cM\subseteq V^{\alpha+i}\cM$ for every $\alpha\in\QQ$ and $i\in\ZZ$.
\item[ii)] For every $\alpha\in\QQ$, the operator $(s+\alpha)$ is nilpotent on ${\rm Gr}_V^{\alpha}(\cM)$.
\end{enumerate}
We say that $V^{\bullet}\cM$ is a \emph{$V$-filtration} if it satisfies, in addition, the following two conditions:
\begin{enumerate}
\item[iii)] Each $V^{\alpha}\cM$ is locally finitely generated over $V^0\cD_X$.
\item[iv)] We have $t\cdot V^{\alpha}\cM=V^{\alpha+1}\cM$ for all $\alpha>0$.
\end{enumerate}
\end{defi}

Note that, in the above definition, we put ${\rm Gr}_V^{\alpha}(\cM):=V^{\alpha}\cM/V^{>\alpha}\cM$, where 
$$V^{>\alpha}\cM=\bigcup_{\beta>\alpha}V^{\beta}\cM=V^{\alpha+\epsilon}\cM\quad\text{for}\quad 0<\epsilon\ll 1.$$
By assumption, there is a positive integer $\ell$ such that ${\rm Gr}_V^{\alpha}(\cM)=0$ unless $\ell\alpha\in\ZZ$
(we will refer to this property by saying that the filtration is \emph{discrete}).

\begin{rmk}[Restriction of $V$-filtration to an open subset]\label{restriction_Vfilt_open_subset}
It is clear from the definition that if $V^{\bullet}\cM$ is a (weak) $V$-filtration on $\cM$ with respect to $t$, then for every open subset
$U\subseteq X$, the restriction $V^{\bullet}\cM\vert_U$ is a (weak) $V$-filtration on $\cM\vert_U$ with respect to $t\vert_U$. 
\end{rmk}

\begin{rmk}[Behavior on $X\smallsetminus H$]\label{D_mod_str_V0}
If $V^{\bullet}\cM$ is a weak $V$-filtration on $\cM$, then
it follows from condition i) in Definition~\ref{defi_V_filtration} that for every $\alpha\in\QQ$, the quotient $V^{\alpha}\cM/V^{\alpha+1}\cM$ is annihilated by $(t)$;
in fact, it is naturally a $\cD_H$-module. In particular, each ${\rm Gr}^{\alpha}_V(\cM)$ is naturally a $\cD_H$-module.
Since the filtration is exhaustive and discrete, and ${\rm Gr}^{\alpha}_V(\cM)\vert_{X\smallsetminus H}=0$ for all $\alpha$, we see that $V^{\alpha}\cM\vert_{X\smallsetminus H}=\cM\vert_{X\smallsetminus H}$
for all $\alpha\in\QQ$.
\end{rmk}

\begin{rmk}[Independence of coordinates and of the equation $t$]\label{remark_Vfil_invertible_multiple}
Since ${\rm Gr}_V^{\alpha}(\cM)$ is supported on $H$, we only put the condition in ii) in Definition~\ref{defi_V_filtration} at the points in $H$; 
note that the operator $s$ is defined around these points. While this operator is only defined locally and depends on the choice of local coordinates,
its action on ${\rm Gr}_V^{\alpha}(\cM)$ is well-defined: indeed,
as we have noticed in Remark~\ref{s_under_change_coord}, if $s'$ is the same operator corresponding to a different choice of coordinates, then
$s-s'\in V^1\cD_X$, hence the actions of $s$ and $s'$ on ${\rm Gr}_V^{\alpha}(\cM)$ agree by condition i). 

Furthermore, it follows from the same remark that if we replace $t$ by $ut$, for some invertible function $u$, and if we denote by $s'$ the new operator, then
the actions of $s$ and $s'$ on ${\rm Gr}_V^{\alpha}(\cM)$ again coincide. Therefore the notion of $V$-filtration on $\cM$ only depends on $H$ and it does not depend on the choice of function $t$.
\end{rmk}

\begin{rmk}
In the presence of local coordinates $x_1,\ldots,x_n,t$, it follows from the formulas (\ref{formula_description_V0}), (\ref{eq1_V_D}) and (\ref{eq2_V_D}) that the condition in i) in Definition~\ref{defi_V_filtration} is equivalent to the fact 
that each $V^{\alpha}\cM$ is preserved by the action of $\cO_X\langle\partial_{x_1},\ldots,\partial_{x_n}\rangle$ and 
$$t\cdot V^{\alpha}\cM\subseteq V^{\alpha+1}\cM\quad\text{and}\quad \partial_t\cdot V^{\alpha}\cM\subseteq V^{\alpha-1}\cM\quad\text{for all}\quad \alpha\in\QQ.$$
\end{rmk}

\begin{rmk}[Coherence of ${\rm Gr}_V^{\alpha}(\cM)$]\label{D_mod_str_V}
If $V^{\bullet}\cM$ is a $V$-filtration on $\cM$, then
it follows from condition iii) in the definition that each ${\rm Gr}_V^{\alpha}(\cM)$
 is locally finitely generated over $\cD_H[s]$. Since $(s+\alpha)$ is nilpotent by condition ii) in the definition, we conclude that ${\rm Gr}_V^{\alpha}(\cM)$ is a coherent
$\cD_H$-module.
\end{rmk}

\begin{prop}\label{rmk1_Vfiltration}
If $V^{\bullet}\cM$ is a weak $V$-filtration on the coherent $\cD_X$-module $\cM$, then for every $\alpha\neq 0$, the maps
\begin{equation}\label{eq_rmk1_Vfiltration}
{\rm Gr}_V^{\alpha}(\cM)\overset{t\cdot}\longrightarrow {\rm Gr}_V^{\alpha+1}(\cM)\overset{\partial_t\cdot}\longrightarrow{\rm Gr}^{\alpha}_V(\cM)
\end{equation}
are isomorphisms of $\cD_H$-modules.
\end{prop}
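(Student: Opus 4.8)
The plan is to reduce the proposition to an elementary computation with the operator $s = -\partial_t t$ on the graded pieces, where condition ii) in Definition~\ref{defi_V_filtration} and the hypothesis $\alpha \neq 0$ combine to make the relevant operators invertible.

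First I would verify that the two maps in (\ref{eq_rmk1_Vfiltration}) are well-defined morphisms of $\cD_H$-modules. Condition i) with $i=1$ gives $t\cdot V^{\beta}\cM\subseteq V^{\beta+1}\cM$ for all $\beta\in\QQ$, and since $V^{\alpha+1}\cM\subseteq V^{>\alpha}\cM$, multiplication by $t$ carries $V^{\alpha}\cM$ into $V^{\alpha+1}\cM$ and $V^{>\alpha}\cM$ into $V^{>\alpha+1}\cM$; it therefore descends to a map ${\rm Gr}_V^{\alpha}(\cM)\to{\rm Gr}_V^{\alpha+1}(\cM)$. Likewise condition i) with $i=-1$ shows that $\partial_t$ descends to ${\rm Gr}_V^{\alpha+1}(\cM)\to{\rm Gr}_V^{\alpha}(\cM)$. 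Both graded pieces vanish away from $H$ by Remark~\ref{D_mod_str_V0}, so one may work locally near a point of $H$, where coordinates $x_1,\dots,x_n,t$ exist and ${\rm Gr}_V^{\bullet}(\cM)$ carries its $\cD_H$-module structure. The induced maps are $\cD_H$-linear: $t$ commutes with each $\partial_{x_i}$ and with $\cO_X$, while $\partial_t$ commutes with each $\partial_{x_i}$ and, for $f\in\cO_X$ and $u\in V^{\alpha+1}\cM$, the discrepancy $\partial_t(fu)-f\partial_t(u)=\partial_t(f)\,u$ lies in $V^{\alpha+1}\cM\subseteq V^{>\alpha}\cM$, hence is $0$ in ${\rm Gr}_V^{\alpha}(\cM)$.

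Next I would compute the two composites. On ${\rm Gr}_V^{\alpha}(\cM)$, the composition $\partial_t\circ t$ acts as $\partial_t t=-s=\alpha\cdot\id-(s+\alpha)$; by condition ii) the operator $s+\alpha$ is nilpotent on ${\rm Gr}_V^{\alpha}(\cM)$, so $\partial_t\circ t$ is the sum of the invertible scalar $\alpha\neq 0$ and a nilpotent operator, hence invertible (its inverse is the finite sum $\alpha^{-1}\sum_{k\geq 0}\alpha^{-k}(s+\alpha)^{k}$). Symmetrically, on ${\rm Gr}_V^{\alpha+1}(\cM)$, the composition $t\circ\partial_t$ acts as $t\partial_t=\partial_t t-1=-s-1=\alpha\cdot\id-\big(s+(\alpha+1)\big)$, and $s+(\alpha+1)$ is nilpotent on ${\rm Gr}_V^{\alpha+1}(\cM)$ by condition ii), so this composite is again invertible since $\alpha\neq 0$.

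Finally I would conclude by a scrap of linear algebra: bijectivity of $\partial_t\circ t$ on ${\rm Gr}_V^{\alpha}(\cM)$ forces $t\colon{\rm Gr}_V^{\alpha}(\cM)\to{\rm Gr}_V^{\alpha+1}(\cM)$ to be injective, and bijectivity of $t\circ\partial_t$ on ${\rm Gr}_V^{\alpha+1}(\cM)$ forces that same $t$ to be surjective; hence $t$ is an isomorphism, and then $\partial_t=(\partial_t\circ t)\circ t^{-1}$ is a composition of isomorphisms. I do not expect any genuine obstacle: the proposition is essentially a one-line identity once the maps are known to be well-defined, and the only point requiring attention is the bookkeeping in the first step — in particular the inclusion $V^{\alpha+1}\cM\subseteq V^{>\alpha}\cM$ — while the decisive input is just that $\alpha\neq 0$, which is exactly what upgrades ``scalar plus nilpotent'' to an invertible operator.
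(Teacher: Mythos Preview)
Your proof is correct and follows essentially the same approach as the paper: both compositions $\partial_t t$ and $t\partial_t$ on the relevant graded pieces are identified with $\alpha$ plus a nilpotent operator (via condition ii) and the identity $t\partial_t=\partial_t t-1$), hence invertible since $\alpha\neq 0$, and the bijectivity of each individual map follows. You simply spell out more of the bookkeeping on well-definedness and $\cD_H$-linearity than the paper does.
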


\begin{proof}
Let $\nu_t$ and $\nu_{\partial_t}$ be the first, respectively the second, map in (\ref{eq_rmk1_Vfiltration}). 
It is clear that they are morphisms of $\cD_H$-modules, hence we only need to show that they are bijective.
It follows from condition ii) in Definition~\ref{defi_V_filtration} that since $\alpha\neq 0$, the composition $\nu_{\partial_t}\circ\nu_{t}$ is invertible,
hence $\nu_t$ is injective and $\nu_{\partial_t}$ is surjective. On the other hand, since $t\partial_t=\partial_tt-1$ and $\alpha+1\neq 1$,
it follows from condition ii) in Definition~\ref{defi_V_filtration} that $\nu_t\circ\nu_{\partial_t}$ is invertible, hence $\nu_t$ is surjective and $\nu_{\partial_t}$
is injective. Therefore both $\nu_t$ and $\nu_{\partial_t}$ are bijective.
\end{proof}

It follows from the above proposition that the interesting maps are the following morphisms of $\cD_H$-modules:
\begin{equation} \label{defn-can} {\rm can}\colon {\rm Gr}_V^1(\cM)\overset{\partial_t\cdot}\longrightarrow {\rm Gr}_V^0(\cM)\quad\text{and}\end{equation}
\begin{equation} \label{defn-Var}{\rm Var}\colon {\rm Gr}_V^0(\cM)\overset{t\cdot}\longrightarrow {\rm Gr}_V^1(\cM)\end{equation}
(the notation for these two maps is justified by the connection to topological vanishing and nearby cycles, see Remark~\ref{remark_nearby_cycles} below).

\begin{cor}\label{cor_rmk1_Vfiltration}
If $V^{\bullet}\cM$ is a weak $V$-filtration on $\cM$, then the following hold:
\item[i)] If $\alpha\leq 0$ and $u\in\cM$ is such that $tu\in V^{\alpha+1}\cM$, then $u\in V^{\alpha}\cM$. In particular, we have 
$$\{u\in\cM\mid tu=0\}\subseteq V^0\cM.$$
\item[ii)] If $\alpha\leq 1$ and $v\in\cM$ is such that $\partial_tv\in V^{\alpha-1}\cM$, then $v\in V^{\alpha}\cM$.
\item[iii)] For every positive integer $m$ and every $\alpha\in\QQ$ such that
$-m\leq \alpha<-m+1$, we have
\begin{equation}\label{eq_cor_rmk1_Vfiltration}
V^{\alpha}\cM=\partial_t^m\cdot V^{\alpha+m}\cM+\sum_{j=0}^{m-1}\partial_t^j\cdot V^0\cM \quad\text{if}\quad -m\leq \alpha<-m+1.
\end{equation}
\end{cor}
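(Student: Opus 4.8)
The plan is to derive all three statements from Proposition~\ref{rmk1_Vfiltration} together with the discreteness of the filtration; the common device is to ``push elements up the $V$-filtration'' using the injectivity, respectively surjectivity, of the actions of $t$ and $\partial_t$ on the graded pieces. The elementary point underlying everything is that, since $V^{\bullet}\cM$ is exhaustive, discrete and left-continuous, for any $u\in\cM$ either $u$ lies in $V^{\gamma}\cM$ for every $\gamma\in\QQ$, or there is a largest $\beta\in\QQ$ with $u\in V^{\beta}\cM$, and in the latter case the image $\bar u$ of $u$ in ${\rm Gr}_V^{\beta}(\cM)$ is nonzero.

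For i), I would argue by contradiction. If $u\notin V^{\alpha}\cM$, let $\beta$ be the largest index with $u\in V^{\beta}\cM$ (it exists, since otherwise $u\in V^{\alpha}\cM$); then $\beta<\alpha\le 0$, so $\beta\neq 0$, and $\bar u\neq 0$ in ${\rm Gr}_V^{\beta}(\cM)$. Since $\alpha>\beta$ we have $V^{\alpha+1}\cM\subseteq V^{>\beta+1}\cM$, so the hypothesis $tu\in V^{\alpha+1}\cM$ forces the image of $tu$ in ${\rm Gr}_V^{\beta+1}(\cM)$ to vanish; but by condition i) of Definition~\ref{defi_V_filtration} multiplication by $t$ induces a map ${\rm Gr}_V^{\beta}(\cM)\to{\rm Gr}_V^{\beta+1}(\cM)$ sending $\bar u$ to that image, and this map is injective by Proposition~\ref{rmk1_Vfiltration} because $\beta\neq 0$. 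Hence $\bar u=0$, a contradiction, so $u\in V^{\alpha}\cM$; the ``in particular'' assertion is the case $\alpha=0$ applied with $tu=0\in V^1\cM$. Part ii) is proved the same way, with $\partial_t$ in place of multiplication by $t$: if $\beta$ is the largest index with $v\in V^{\beta}\cM$ and $\bar v\neq 0$ its image in ${\rm Gr}_V^{\beta}(\cM)$, then $\beta<\alpha\le 1$ forces $\beta-1\neq 0$; from $\partial_tv\in V^{\alpha-1}\cM\subseteq V^{>\beta-1}\cM$ we get $\partial_t\bar v=0$ in ${\rm Gr}_V^{\beta-1}(\cM)$, while $\partial_t\colon{\rm Gr}_V^{\beta}(\cM)\to{\rm Gr}_V^{\beta-1}(\cM)$ is injective by Proposition~\ref{rmk1_Vfiltration}, a contradiction.

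For iii), the inclusion ``$\supseteq$'' is formal: condition i) of Definition~\ref{defi_V_filtration} gives $\partial_t^m\cdot V^{\alpha+m}\cM\subseteq V^{\alpha}\cM$, and for $0\le j\le m-1$ one has $\partial_t^j\cdot V^0\cM\subseteq V^{-j}\cM\subseteq V^{\alpha}\cM$ since $-j\ge -m+1>\alpha$. For ``$\subseteq$'' I would first establish the case $m=1$, namely
\[
V^{\alpha}\cM=\partial_t\cdot V^{\alpha+1}\cM+V^0\cM\quad\text{for every}\quad\alpha<0,
\]
which I will call $(\star)$, and then induct on $m$: if $\alpha\in[-m-1,-m)$ then $\alpha+1\in[-m,-m+1)$, so feeding the inductive hypothesis for $V^{\alpha+1}\cM$ into $(\star)$ gives
\[
V^{\alpha}\cM=\partial_t\Big(\partial_t^m V^{\alpha+m+1}\cM+\sum_{j=0}^{m-1}\partial_t^jV^0\cM\Big)+V^0\cM=\partial_t^{m+1}V^{\alpha+m+1}\cM+\sum_{j=0}^{m}\partial_t^jV^0\cM,
\]
which is the desired identity for $m+1$. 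To prove $(\star)$, given $u\in V^{\alpha}\cM$ with $\alpha<0$, after replacing $\alpha$ by the smallest number $\ge\alpha$ at which $V^{\bullet}\cM$ can jump (harmless, and trivial if that number is $\ge 0$), I would peel off one graded piece at a time: using that $\partial_t\colon{\rm Gr}_V^{\gamma+1}(\cM)\to{\rm Gr}_V^{\gamma}(\cM)$ is surjective for every $\gamma<0$ (Proposition~\ref{rmk1_Vfiltration}), one inductively produces $w_0,w_1,\ldots\in V^{\alpha+1}\cM$ such that $u-\partial_t(w_0+\cdots+w_i)$ lies one step higher in the filtration; since the filtration is discrete, after finitely many steps this remainder lands in $V^0\cM$, and we obtain $u\in\partial_t\cdot V^{\alpha+1}\cM+V^0\cM$.

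I expect the only point requiring care to be the routine bookkeeping with the ``constancy intervals'' of the filtration — that a largest $\beta$ with $u\in V^{\beta}\cM$ exists, that $\alpha>\beta$ implies $V^{\alpha+1}\cM\subseteq V^{>\beta+1}\cM$, and that the peeling process in $(\star)$ terminates — all of which is immediate from the discreteness and left-continuity built into the definition. I do not anticipate a genuine difficulty here, since the real content, the isomorphisms on graded pieces, is already supplied by Proposition~\ref{rmk1_Vfiltration}.
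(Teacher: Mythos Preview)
Your proposal is correct and follows essentially the same approach as the paper. For i) and ii) both you and the paper use the injectivity of $t$ (resp.\ $\partial_t$) on graded pieces from Proposition~\ref{rmk1_Vfiltration} together with discreteness; you phrase it as a contradiction via a maximal $\beta$, the paper phrases it as an iterative push, but these are the same argument. For iii) both arguments peel off graded pieces using the surjectivity of $\partial_t\colon{\rm Gr}_V^{\gamma+1}\to{\rm Gr}_V^{\gamma}$ for $\gamma<0$ and then induct on $m$; your auxiliary statement $(\star)$ for all $\alpha<0$ is a mild uniformization of the paper's two cases $\alpha\le -1$ and $-1<\alpha<0$, but the content is identical.
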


\begin{proof}
In the setting of i), since the $V$-filtration is exhaustive, it follows that there is $\beta\in\QQ$ such that $u\in V^{\beta}\cM$. If $\beta\geq \alpha$, then we are done.
Otherwise, since $tu\in V^{>\beta+1}$ and $\beta+1<\alpha+1\leq 1$, it follows from Proposition~\ref{rmk1_Vfiltration} that $u\in V^{>\beta}\cM$. Since the $V$-filtration
is discrete, after repeating this finitely many steps, we conclude that $u\in V^{\alpha}\cM$. The second assertion in i) follows by taking $\alpha=0$. 

The argument for ii) is similar: let $\gamma$ be such that $v\in V^{\gamma}\cM$. If $\gamma\geq \alpha$, then we are done. Otherwise,
since $\partial_tv\in V^{>\gamma-1}\cM$ and $\gamma-1<\alpha-1\leq 0$, it follows from Proposition~\ref{rmk1_Vfiltration} that $v\in V^{>\gamma}\cM$.
After repeating this finitely many steps, we conclude that $v\in V^{\alpha}\cM$. 

For the assertion in iii), we only need to prove the inclusion ``$\subseteq$". Since $\alpha<0$, for every $w\in V^{\alpha}\cM$,
 it follows from Proposition~\ref{rmk1_Vfiltration} that we can write
$w=\partial_tw'+w''$, where $w'\in V^{\alpha+1}\cM$ and $w''\in V^{\beta}\cM$, for some $\beta>\alpha$. 
Since the filtration is discrete, we can iterate this argument to see that $w\in \partial_t\cdot V^{\alpha+1}\cM+V^{\alpha+1}\cM$ if $\alpha\leq -1$ 
and $w\in \partial_t\cdot V^{\alpha+1}\cM+V^0\cM$ if $-1<\alpha<0$. The formula in (\ref{eq_cor_rmk1_Vfiltration}) then follows by an easy induction on $m$.
\end{proof}

Note that the formula in (\ref{eq_cor_rmk1_Vfiltration}) gives an explicit description of $V^{\alpha}\cM$ for $\alpha<0$ in terms of $V^{\beta}\cM$, for $\beta\in [0,1)$.

\begin{rmk}
If $V^{\bullet}\cM$ is a weak filtration on the coherent $\cD_X$-module $\cM$, then the following are equivalent:
\begin{enumerate}
\item[a)] $V^{\bullet}\cM$ is a $V$-filtration.
\item[b)] The $V^0\cD_X$-module $V^0\cM$ is locally finitely generated and $t\cdot V^i\cM=V^{i+1}\cM$ for $i\in\ZZ$, $i\gg 0$.
\item[c)] The ${\mathcal R}_+(V^{\bullet}\cD_X)$-module ${\mathcal R}_+(V^{\bullet}\cM):=\bigoplus_{i\geq 0}V^{i}\cM z^i$ is locally finitely generated.
\end{enumerate}
We leave the proof as an exercise for the reader, since we will not need this fact.
\end{rmk}

\begin{prop}\label{unique_V_filt}
A coherent $\cD_X$-module $\cM$ admits at most one $V$-filtration with respect to $t$.
\end{prop}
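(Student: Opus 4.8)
The plan is to show that if $V^\bullet \cM$ and $W^\bullet \cM$ are both $V$-filtrations with respect to $t$, then $V^\alpha \cM = W^\alpha \cM$ for all $\alpha \in \QQ$. Since the statement is local (and both filtrations restrict well to open subsets, by Remark~\ref{restriction_Vfilt_open_subset}), I may assume we have local coordinates $x_1,\dots,x_n,t$ on an affine open $U$, and in particular the operator $s$ and the ring $V^0\cD_X$ are available. By symmetry it suffices to prove $V^\alpha \cM \subseteq W^\alpha \cM$ for all $\alpha$. I will first reduce to the range $\alpha \in (0,1]$: indeed, for $\alpha > 0$ condition (iv) gives $V^{\alpha+1}\cM = t\cdot V^\alpha\cM$ and likewise for $W$, so knowing equality on $(0,1]$ propagates to all $\alpha > 0$; and for $\alpha \le 0$ the explicit formula \eqref{eq_cor_rmk1_Vfiltration} from Corollary~\ref{cor_rmk1_Vfiltration}(iii) expresses $V^\alpha\cM$ in terms of the $V^\beta\cM$ with $\beta \in [0,1)$ (and $V^0\cM$), so equality on $(0,1]$ together with equality of the pieces indexed by $[0,1)$ forces equality for $\alpha < 0$ as well. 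It remains to handle $\alpha \in [0,1]$, and the heart of the matter is the single index $\alpha = 0$ versus $\alpha \in (0,1)$; note $V^1\cM = t\cdot V^{\epsilon}\cM$ for small $\epsilon > 0$ so the value at $1$ is determined by values in $(0,1)$.

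For the main step, fix $\alpha \in (0,1]$; I claim $V^\alpha \cM \subseteq W^{\alpha}\cM$. Take $u \in V^\alpha \cM$. Since $W^\bullet \cM$ is exhaustive and discrete there is a largest $\beta$ with $u \in W^\beta\cM$; suppose for contradiction $\beta < \alpha$. Consider the image $\bar u \in \gr_W^\beta(\cM)$, which is nonzero. By condition (ii) for $W$, $s + \beta$ is nilpotent on $\gr_W^\beta(\cM)$, so some power $(s+\beta)^N$ kills $\bar u$, i.e. $(s+\beta)^N u \in W^{>\beta}\cM$. On the other hand, $u \in V^\alpha\cM$ and $V^0\cD_X$ preserves each $V^\gamma\cM$ (condition (i)), with $s \in V^0\cD_X$ by \eqref{formula_description_V0}, so $(s+\beta)^N u \in V^\alpha\cM$; here I want to iterate and conclude $u \in W^{>\beta}\cM$, contradicting maximality of $\beta$. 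The mechanism is the standard one: on $\gr_W^\beta(\cM)$ the operator $s$ acts with all eigenvalues (generalized) equal to $-\beta$, whereas on $\gr_V^\alpha(\cM)$ — through which $u$ also has nonzero image since $\beta<\alpha$ would have to be reconciled — $s$ acts with eigenvalue $-\alpha \ne -\beta$. Concretely: apply $(s+\beta)^N$; this lands in $W^{>\beta}\cM$, but it also stays in $V^\alpha\cM$; repeating with the new representative and using discreteness of $W^\bullet\cM$, after finitely many steps one pushes $u$ past every $W^{\beta'}$ with $\beta' < \alpha$, so $u \in W^\alpha\cM$. The symmetric argument gives the reverse inclusion, hence $V^\alpha\cM = W^\alpha\cM$ for $\alpha \in (0,1]$.

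I expect the main obstacle to be making the "eigenvalue separation" argument fully rigorous without circular reasoning: a priori $u$ need not lie in a single $\gr_V^\alpha$ piece — it only starts in $V^\alpha\cM$ — so one must argue by a careful double induction, decreasing the $W$-index of $u$ while controlling the $V$-index, and invoke discreteness of both filtrations (the common $\ell$) to guarantee termination. A clean way to package this is to prove the intermediate lemma: \emph{for any weak $V$-filtration, if $u \in V^\alpha\cM$ and $(s+\gamma)^N u \in V^{>\alpha}\cM + (\text{lower }W\text{-piece})$ with $\gamma \ne \alpha$, then $u$ itself lies in the smaller piece} — essentially the same bookkeeping already used in the proof of Corollary~\ref{cor_rmk1_Vfiltration}, applied with the roles of $t,\partial_t$ replaced by the nilpotent-minus-scalar operator $s+\gamma$. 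Once that bookkeeping is isolated, both inclusions follow formally, and the extension from $(0,1]$ to all of $\QQ$ via (iv) and \eqref{eq_cor_rmk1_Vfiltration} is routine.
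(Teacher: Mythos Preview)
Your eigenvalue-separation instinct is correct, but the concrete iteration you describe runs in the wrong direction and proves nothing. Applying $(s+\beta)^N$ to $u$ gives $(s+\beta)^N u\in W^{>\beta}\cM$ \emph{automatically} from the nilpotence of $s+\beta$ on $\gr_W^\beta(\cM)$ --- it uses nothing about $V^\bullet$. Iterating as you propose only yields $q(s)\,u\in W^{\alpha}\cM$ for a polynomial $q$ that has $-\beta$ among its roots; since $(s+\beta)$ is nilpotent on $\gr_W^\beta(\cM)$, this places no constraint whatsoever on $\bar u\in\gr_W^\beta(\cM)$, and in particular cannot force $\bar u=0$. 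What you actually need is a polynomial $p(s)$ with all roots $\le -\alpha$ (hence coprime to $(s+\beta)^N$) such that $p(s)\,u\in W^{>\beta}\cM$; then B\'ezout in $k[s]$ applied on $\gr_W^\beta(\cM)$ gives $\bar u=0$, the desired contradiction.

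Producing such a $p$ is exactly where conditions (iii) and (iv) must enter the \emph{core} argument, not just your reduction step. One first needs $V^{\alpha'}\cM\subseteq W^{>\beta}\cM$ for some $\alpha'\gg 0$: finite generation of $V^\alpha\cM$ over $V^0\cD_X$ (condition~(iii)) together with exhaustiveness of $W^\bullet$ gives $V^\alpha\cM\subseteq W^\gamma\cM$ for some $\gamma$, and then condition~(iv) gives $V^{\alpha+m}\cM=t^m\cdot V^\alpha\cM\subseteq W^{\gamma+m}\cM\subseteq W^{>\beta}\cM$ for $m\gg 0$. Now the nilpotence of $s+\gamma_j$ on each $\gr_V^{\gamma_j}$, for $\gamma_j$ ranging over the finitely many $V$-jumps in $[\alpha,\alpha+m)$, produces $p(s)=\prod_j(s+\gamma_j)^{N_j}$ with all roots $\le -\alpha$ and $p(s)\,u\in V^{\alpha+m}\cM\subseteq W^{>\beta}\cM$. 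The paper packages this via the ``bi-graded'' subquotient
\[
\frac{V^{\beta}\cM\cap W^{\gamma}\cM}{(V^{>\beta}\cM\cap W^{\gamma}\cM)+(V^{\beta}\cM\cap W^{>\gamma}\cM)},
\]
which is simultaneously a subquotient of $\gr_V^{\beta}(\cM)$ and of $\gr_W^{\gamma}(\cM)$ and hence vanishes for $\beta\neq\gamma$. Your proposed ``intermediate lemma'' is too vague to substitute for this step, and the analogy with Corollary~\ref{cor_rmk1_Vfiltration} is misleading: there $t$ and $\partial_t$ genuinely shift the filtration index, so one can undo them, whereas $s+\gamma$ shifts neither filtration --- it is only nilpotent on a single graded piece --- so there is no way to recover $u$ from $q(s)\,u$.
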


\begin{proof}
Suppose that $V^{\bullet}\cM$ and $\widetilde{V}^{\bullet}\cM$ are two $V$-filtrations on $\cM$. By symmetry, it is enough to show that we have
\begin{equation}\label{eq1_unique_V_filt}
V^{\alpha}\cM\subseteq \widetilde{V}^{\alpha}\cM\quad\text{for all}\quad\alpha\in\QQ.
\end{equation}
This is clear on $X\smallsetminus H$, hence we only need to check it around the points in $H$.
Note first that
\begin{equation}\label{eq2_unique_V_filt}
\frac{V^{\beta}\cM\cap \widetilde{V}^{\gamma}\cM}{(V^{>\beta}\cM\cap \widetilde{V}^{\gamma}\cM)+(V^{\beta}\cM\cap \widetilde{V}^{>\gamma}\cM)}
\end{equation}
is a subquotient of both ${\rm Gr}_V^{\beta}(\cM)$ and ${\rm Gr}^{\gamma}_{\widetilde{V}}(\cM)$. Property iv) in Definition~\ref{defi_V_filtration} thus implies that
both $s+\beta$ and $s+\gamma$ are nilpotent on (\ref{eq2_unique_V_filt}), and thus (\ref{eq2_unique_V_filt}) is $0$ for all 
$\beta,\gamma\in\QQ$, with $\beta\neq\gamma$.

Let us fix now $\alpha\in\QQ$. Note first that since the filtration $\widetilde{V}^{\bullet}\cM$ is an exhaustive filtration by $V^0\cD_X$-submodules
and $V^{\alpha}\cM$ is locally finitely generated over $V^0\cD_X$, there is $\gamma\in\QQ$ such that 
\begin{equation}\label{eq3_unique_V_filt}
V^{\alpha}\cM\subseteq \widetilde{V}^{\gamma}\cM.
\end{equation}
If $\gamma\geq \alpha$, then (\ref{eq1_unique_V_filt}) holds and we are done, hence we may and will assume that $\gamma<\alpha$ and that 
$\gamma$ is maximal such that (\ref{eq3_unique_V_filt}) holds, aiming for a contradiction (we may assume this since the filtration $\widetilde{V}^{\bullet}\cM$ is discrete and left continuous). 
On the other hand, since $V^{\bullet}\cM$ satisfies property iii), it follows that there is $m_0\geq 0$ such that for every $m\in\ZZ_{>0}$, we have
$$V^{\alpha+m_0+m}\cM\subseteq t^m\cdot V^{\alpha+m_0}\cM\subseteq t^m\cdot \widetilde{V}^{\gamma}\cM\subseteq \widetilde{V}^{>\gamma}\cM.$$
Since $V^{\alpha}\cM\not\subseteq \widetilde{V}^{>\gamma}\cM$ and the filtration $V^{\bullet}\cM$ is discrete and left continuous, it follows that there is $\beta\geq\alpha$ such that 
\begin{equation}\label{eq4_unique_V_filt}
V^{\beta}\cM\not\subseteq \widetilde{V}^{>\gamma}\cM\quad\text{and}\quad 
\end{equation}
\begin{equation}\label{eq5_unique_V_filt}
V^{>\beta}\cM\subseteq \widetilde{V}^{>\gamma}\cM.
\end{equation}
Note that $\gamma<\beta$, hence the quotient
 (\ref{eq2_unique_V_filt}) for $\beta$ and $\gamma$ is $0$, and therefore
$$V^{\beta}\cM\subseteq V^{\beta}\cM\cap \widetilde{V}^{\gamma}\cM\subseteq (V^{>\beta}\cM\cap \widetilde{V}^{\gamma}\cM)+(V^{\beta}\cM\cap V^{>\gamma}\cM)
\subseteq V^{>\gamma}\cM,$$
where the first inclusion follows from (\ref{eq3_unique_V_filt}), since $\beta\geq\alpha$, and the last inclusion follows from (\ref{eq5_unique_V_filt}). However, this contradicts 
(\ref{eq4_unique_V_filt}), and thus completes the proof of the proposition.
\end{proof}

\begin{rmk}
The proof of Proposition~\ref{unique_V_filt} shows that if $V^{\bullet}\cM$ is a $V$-filtration on $\cM$ and $\widetilde{V}^{\bullet}\cM$ is a weak $V$-filtration on $\cM$, then
$V^{\alpha}\cM\subseteq\widetilde{V}^{\alpha}\cM$ for all $\alpha\in\QQ$.
\end{rmk}

\begin{cor}\label{cor_unique_V_filt}
Given a coherent $\cD_X$-module $\cM$ and an open cover $X=\bigcup_iU_i$, then $\cM$ has a $V$-filtration with respect to $t$ if and only if the same holds for each
$\cM\vert_{U_i}$ with respect to $t\vert_{U_i}$.
\end{cor}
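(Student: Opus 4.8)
The plan is to treat the two implications separately, with all the substance in the ``if'' direction.

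The ``only if'' implication is immediate from Remark~\ref{restriction_Vfilt_open_subset}: restricting a $V$-filtration on $\cM$ with respect to $t$ to any open subset $U_i$ yields a $V$-filtration on $\cM|_{U_i}$ with respect to $t|_{U_i}$.

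For the ``if'' implication, suppose we are given a $V$-filtration $V^{\bullet}(\cM|_{U_i})$ on each $\cM|_{U_i}$. I would first reduce to a finite cover, using that $X$, being a Noetherian topological space, is quasi-compact. The crucial point is then a gluing statement: for $i\neq j$, both restrictions $V^{\bullet}(\cM|_{U_i})|_{U_i\cap U_j}$ and $V^{\bullet}(\cM|_{U_j})|_{U_i\cap U_j}$ are $V$-filtrations on $\cM|_{U_i\cap U_j}$ with respect to $t|_{U_i\cap U_j}$ (again by Remark~\ref{restriction_Vfilt_open_subset}), so they coincide by the uniqueness in Proposition~\ref{unique_V_filt}. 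Hence, for each $\alpha\in\QQ$, the quasi-coherent $\cO_{U_i}$-submodules $V^{\alpha}(\cM|_{U_i})\subseteq\cM|_{U_i}$ agree on overlaps and glue to a quasi-coherent $\cO_X$-submodule $V^{\alpha}\cM\subseteq\cM$ — concretely, the subsheaf of local sections of $\cM$ that restrict into $V^{\alpha}(\cM|_{U_i})$ over each $U_i$ — giving a decreasing filtration $V^{\bullet}\cM$ with $V^{\bullet}\cM|_{U_i}=V^{\bullet}(\cM|_{U_i})$ for all $i$.

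What remains is to check that $V^{\bullet}\cM$ satisfies the conditions of Definition~\ref{defi_V_filtration}. Exhaustiveness is a statement about subsheaves of $\cM$ and is therefore local; conditions i), iii), iv) only involve $\cO_X$- and $\cD_X$-module operations and local finite generation over $V^0\cD_X$, hence are also local; and since formation of ${\rm Gr}_V^{\alpha}$ commutes with restriction to the $U_i$, condition ii) follows too (and it is imposed only near $H$, where $s$ is defined, cf.\ Remark~\ref{remark_Vfil_invertible_multiple}). The one step needing care — the main obstacle, such as it is — is discreteness and left-continuity, since the definition demands a \emph{single} integer $\ell$ valid on all of $X$, whereas the given local filtrations come with possibly different integers $\ell_i$. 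This is precisely where the reduction to a finite cover is used: with $\ell=\lcm_i\ell_i$, every half-open interval $(j/\ell,(j+1)/\ell]$ lies inside one of the form $(k/\ell_i,(k+1)/\ell_i]$, so $V^{\alpha}\cM$ is constant for $\alpha$ ranging in any such interval, which gives discreteness and left-continuity with this common $\ell$. This completes the proof.
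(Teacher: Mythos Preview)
Your proof is correct and follows essentially the same approach as the paper's: use Proposition~\ref{unique_V_filt} to see that the local $V$-filtrations agree on overlaps and hence glue, then observe that the defining conditions are local. You are in fact more careful than the paper on one point---the paper simply asserts that being a $V$-filtration is detected on an open cover, while you explicitly reduce to a finite subcover so that a common $\ell$ can be chosen for discreteness and left-continuity.
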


\begin{proof}
Given a filtration $V^{\bullet}\cM$ on $\cM$ it is clear that this is a $V$-filtration if and only if it induces a $V$-filtration on each $U_i$. The assertion in the corollary now follows from the fact that
by the proposition, given $V$-filtrations for each $\cM\vert_{U_i}$, these glue to a filtration on $\cM$.
\end{proof}

We next give two basic examples of explicit $V$-filtrations with respect to $t$.

\begin{eg}[$V$-filtration on a vector bundle with integrable connection]\label{eg_V_filtration_O}
If $\cM$ is a $\cD_X$-module that is coherent as an $\cO_X$-module (hence locally free), then $\cM$ has a $V$-filtration given by 
$V^{\alpha}\cM=(t)^{\lceil\alpha\rceil-1}\cdot\cM$ for all $\alpha\in\QQ$, with the convention that $(t)^m=\cO_X$ if $m\leq 0$. It is straightforward to check that the conditions in Definition~\ref{defi_V_filtration}
are satisfied. We only note that each $V^{\alpha}\cM$ is clearly locally finitely generated over $V^0\cD_X$ since it is coherent as an $\cO_X$-module. 
Also, condition ii) in the definition holds since if $m\in\ZZ_{>0}$, then $(\partial_tt-m)t^{m-1}u=t^m\partial_tu\in V^{m+1}\cM$, hence 
$(s+m)\cdot {\rm Gr}_V^m(\cM)=0$.
\end{eg}

\begin{eg}[$V$-filtration on a $\cD_X$-module supported on $H$]\label{V_filt_support_H}
If $\cN$ is a coherent $\cD_X$-module supported on $H$, then $\cN$ has a $V$-filtration such that $V^{\alpha}\cN=0$ for all $\alpha>0$
and ${\rm Gr}_V^{\alpha}(\cN)=0$ for all $\alpha\in\QQ_{<0}\smallsetminus\ZZ$. Indeed, note first that by Corollary~\ref{cor_unique_V_filt}, in order to check this,
we may and will assume that $X$ is affine and we have coordinates $x_1,\ldots,x_n,t$ on $X$ such that $H$ is defined by $(t)$. 
It is easy to check that if $\cN_i=\{u\in\cN\mid (s+i)u=0\}$, then the following hold
(these facts are steps in the proof of Kashiwara's Equivalence Theorem, see \cite[Theorem~1.6.1]{HTT}):
\begin{enumerate}
\item[i)] $t\cdot \cN_i\subseteq \cN_{i+1}$ and $\partial_t\cdot \cN_i\subseteq \cN_{i-1}$ for all $i\in\ZZ$.
\item[ii)] $\cN_i=0$ for $i>0$ and $\cN=\bigoplus_{i\leq 0}\cN_i$.
\item[iii)] $\cN_{-m}=\partial_t^m\cdot\cN_0$ for all $m\geq 0$.
\end{enumerate}
Since $\cN$ is a coherent $\cD_X$-module, it follows that $\cN_0$ is a coherent $\cD_H$-module, and thus by iii) above, we see that
every $\bigoplus_{i=0}^m\cN_{-i}$ is a finitely generated $V^0\cD_X$-module for every $m\geq 0$. It is then clear that we get a $V$-filtration on $\cN$
by putting
$$V^{\alpha}\cN=\bigoplus_{i=0}^{-\lceil\alpha\rceil}\cN_{-i}\quad\text{for}\quad \alpha\leq 0$$
and $V^{\alpha}\cN=0$ for $\alpha>0$. 
\end{eg}

\begin{prop}\label{properties_V_filtration1}
Let $\cM$ be a coherent $\cD_X$-module that has a $V$-filtration $V^{\bullet}\cM$ with respect to $t$.
\begin{enumerate}
\item[i)] For every $\alpha>0$, the map $V^{\alpha}\cM\overset{t\cdot}\longrightarrow V^{\alpha+1}\cM$ is bijective.
\item[ii)] If $\cM'$ is a $\cD_X$-submodule of $\cM$ and $p\colon \cM\to \cM''=\cM/\cM'$ is the canonical projection, then $\cM'$ and $\cM''$ have $V$-filtrations
with respect to $t$ given by
\begin{equation}\label{eq_properties_V_filtration1}
V^{\alpha}\cM'=V^{\alpha}\cM\cap\cM'\quad\text{and}\quad V^{\alpha}\cM''=p(V^{\alpha}\cM)\quad\text{for all}\quad \alpha\in\QQ.
\end{equation}
\end{enumerate}
\end{prop}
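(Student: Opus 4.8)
The plan is to verify in turn that the two candidate filtrations satisfy conditions i)--iv) of Definition~\ref{defi_V_filtration}. Part i) of the proposition is the easy consequence of Proposition~\ref{rmk1_Vfiltration}: for $\alpha > 0$ we have $\alpha \neq 0$ and $\alpha + 1 \neq 1$, so the composites $\nu_{\partial_t}\circ\nu_t$ and $\nu_t\circ\nu_{\partial_t}$ on ${\rm Gr}_V^\alpha(\cM)$ and ${\rm Gr}_V^{\alpha+1}(\cM)$ are invertible, hence $t\cdot$ is bijective on the associated graded; combined with condition iv) (which gives $t\cdot V^\alpha\cM = V^{\alpha+1}\cM$, i.e.\ surjectivity already at the level of the filtered pieces for $\alpha>0$) and an induction on the discrete index using exhaustiveness, one gets injectivity of $t\cdot$ on $V^\alpha\cM$ itself. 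Actually the cleanest route: surjectivity is iv); for injectivity, if $tu = 0$ with $u \in V^\alpha\cM$, $\alpha > 0$, then by Corollary~\ref{cor_rmk1_Vfiltration}~i) applied repeatedly (or directly, since $tu = 0 \in V^{\beta+1}\cM$ for all $\beta$) one sees $u \in V^\beta\cM$ for all $\beta$, so $u \in \bigcap_\beta V^\beta\cM = 0$ by discreteness together with the fact that $V^\beta\cM = 0$ for $\beta \gg 0$ on the support — more carefully, one argues that $tu=0$ forces $u$ into $V^{>\beta}\cM$ whenever $u \in V^\beta\cM$ and $\beta > 0$, using injectivity of $\nu_t$ on ${\rm Gr}^\beta_V$, and concludes $u = 0$ by discreteness of the filtration above any fixed index.

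For part ii), the properties that are inherited "for free" are the exhaustiveness, discreteness, left-continuity, and condition i): each is a purely formal consequence of the corresponding property for $V^\bullet\cM$ (intersection with a $\cD_X$-submodule, resp.\ image under a $\cD_X$-linear surjection, commutes with the $V^i\cD_X$-action and preserves the indexing constraints). Condition ii) (nilpotence of $s+\alpha$ on ${\rm Gr}_V^\alpha$) is inherited because ${\rm Gr}_{V}^\alpha(\cM')$ and ${\rm Gr}_{V}^\alpha(\cM'')$ are, respectively, a sub and a quotient of ${\rm Gr}_V^\alpha(\cM)$ — here one must check the snake-lemma-type statement that the natural maps ${\rm Gr}_V^\alpha(\cM') \to {\rm Gr}_V^\alpha(\cM)$ is injective and ${\rm Gr}_V^\alpha(\cM) \to {\rm Gr}_V^\alpha(\cM'')$ is surjective; the latter is clear, and the former amounts to $V^{>\alpha}\cM \cap \cM' = V^{>\alpha}\cM'$, which holds because $V^{>\alpha}\cM' = \bigcup_{\beta>\alpha}(V^\beta\cM\cap\cM') = (\bigcup_{\beta>\alpha}V^\beta\cM)\cap\cM' = V^{>\alpha}\cM\cap\cM'$.

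The two conditions that require real work are iii) (local finite generation over $V^0\cD_X$) and iv) ($t\cdot V^\alpha\cM' = V^{\alpha+1}\cM'$ and similarly for $\cM''$, for $\alpha>0$). For $\cM''$ both are immediate: $V^\alpha\cM'' = p(V^\alpha\cM)$ is a quotient of a locally finitely generated $V^0\cD_X$-module, hence locally finitely generated (as $V^0\cD_X$ is Noetherian on affine opens, Remark~\ref{V0_Noeth}), and $t\cdot p(V^\alpha\cM) = p(t\cdot V^\alpha\cM) = p(V^{\alpha+1}\cM) = V^{\alpha+1}\cM''$. The genuine obstacle is condition iii) for the submodule: one must show $V^\alpha\cM\cap\cM'$ is locally finitely generated over $V^0\cD_X$. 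I would deduce this from the Noetherianity of the Rees-type ring ${\mathcal R}_+(V^\bullet\cD_X)$ recorded in Remark~\ref{V0_Noeth}: the Rees module ${\mathcal R}_+(V^\bullet\cM) = \bigoplus_{i\geq 0}V^i\cM z^i$ is a finitely generated ${\mathcal R}_+(V^\bullet\cD_X)$-module (this is the equivalence (a)$\Leftrightarrow$(c) in the remark preceding Proposition~\ref{unique_V_filt}), so its submodule ${\mathcal R}_+(V^\bullet\cM)\cap\bigoplus_i\cM' z^i = \bigoplus_{i\geq 0}(V^i\cM\cap\cM')z^i$ is also finitely generated over the (Noetherian) Rees ring; running the same equivalence backwards shows the induced filtration on $\cM'$ satisfies iii) and iv). For the non-integer and negative indices $\alpha$, condition iii) then follows from iv) together with Corollary~\ref{cor_rmk1_Vfiltration}~iii), which expresses every $V^\alpha$ with $\alpha<0$ in terms of finitely many $\partial_t^j$ applied to $V^0$ and $V^{\alpha+m}$; and the finitely many pieces with $\alpha\in[0,1)$ are handled by the Noetherian Rees argument as above. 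Finally, once iii) and iv) hold and we know (from Proposition~\ref{rmk1_Vfiltration}, which uses only the weak conditions i)--ii) already verified) that the relevant graded maps are isomorphisms, one checks $t\cdot V^\alpha\cM' = V^{\alpha+1}\cM'$ for $\alpha>0$ directly: "$\subseteq$" is clear, and "$\supseteq$" follows since given $v = tu \in V^{\alpha+1}\cM\cap\cM'$ with $u\in V^\alpha\cM$, the bijectivity of $t\cdot$ on $V^\alpha\cM$ from part i) forces $u$ to be the unique preimage, and $u\in\cM'$ because $\cM'$ is a $\cD_X$-submodule containing $v$ with $tu=v$ and $t\cdot$ injective on $V^\alpha\cM$ — more precisely $u \in V^\alpha\cM$ and $tu \in \cM'$; writing the preimage in $\cM'$ using that $\cM'$ inherits a weak $V$-filtration and applying part i) to the ambient $\cM$ gives $u \in V^\alpha\cM \cap \cM' = V^\alpha\cM'$. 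The main obstacle, to reiterate, is condition iii) for $\cM'$, and the Rees-ring Noetherianity of Remark~\ref{V0_Noeth} is the tool that resolves it.
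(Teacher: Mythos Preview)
Your argument for part i) has a real gap. You correctly observe that if $tu=0$ with $u\in V^\alpha\cM$ and $\alpha>0$, then injectivity of $t$ on each ${\rm Gr}_V^\beta(\cM)$ for $\beta>0$ (Proposition~\ref{rmk1_Vfiltration}) pushes $u$ into $\bigcap_\beta V^\beta\cM$. But you then assert $u=0$ ``by discreteness'' or because ``$V^\beta\cM=0$ for $\beta\gg 0$.'' Neither holds: the $V$-filtration is \emph{not} required to be separated, and it generally is not --- for instance $V^\beta\cM\vert_{X\smallsetminus H}=\cM\vert_{X\smallsetminus H}$ for every $\beta$ (Remark~\ref{D_mod_str_V0}). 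Discreteness only controls where the jumps occur; it says nothing about the intersection. So after pushing $u$ arbitrarily high, you are stuck.

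The paper resolves this by intertwining i) and ii). It first observes that for ii), everything is straightforward for $\cM''$ and almost everything for $\cM'$; condition iii) for $\cM'$ is immediate since $V^\alpha\cM\cap\cM'\subseteq V^\alpha\cM$ is a submodule of a locally finitely generated module over the Noetherian ring $V^0\cD_X$ (no Rees machinery needed --- you are making this step harder than it is). The only genuine issue is iv) for $\cM'$, which reduces to i) for $\cM''$. To prove i) in general, the paper applies the partial ii) to the \emph{particular} exact sequence $0\to\cM_1\to\cM\to\cM_2\to 0$ with $\cM_1=\Gamma_H(\cM)$: here ``$tu\in\cM_1\Rightarrow u\in\cM_1$'' is obvious, so iv) for $\cM_1$ holds and the induced filtration on $\cM_1$ is a genuine $V$-filtration; by uniqueness and Example~\ref{V_filt_support_H} it satisfies $V^\alpha\cM\cap\cM_1=0$ for $\alpha>0$. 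Thus $V^\alpha\cM$ embeds in $\cM_2$, on which $t$ acts injectively by construction, and i) follows.

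Your closing direct argument for iv) on $\cM'$ also applies i) to the wrong module: injectivity of $t$ on $V^\alpha\cM$ does not force the unique preimage $u$ of $v\in\cM'$ to lie in $\cM'$. What is needed is i) for the quotient $\cM''$ (so that $t\cdot p(u)=0$ forces $p(u)=0$), which is exactly the paper's route. Your Rees-module idea is a legitimate alternative for getting iv) on $\cM'$ without invoking i), but then you must actually prove the equivalence (c)$\Rightarrow$(a) in the remark preceding Proposition~\ref{unique_V_filt}, which the paper leaves as an exercise; if you do that, you could reverse the logic and deduce i) from ii) via the $\Gamma_H$ sequence.
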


\begin{proof}
Suppose first that we are in the setting of ii) and we define the filtrations $V^{\bullet}\cM'$ and $V^{\bullet}\cM''$ as in (\ref{eq_properties_V_filtration1}). 
Note that for every $\alpha\in\QQ$, we have a short exact sequence
$$0\to {\rm Gr}_V^{\alpha}(\cM')\to {\rm Gr}^{\alpha}_V(\cM)\to {\rm Gr}^{\alpha}_V(\cM'')\to 0.$$
It is straightforward to check that $V^{\bullet}\cM''$ is a $V$-filtration on $\cM''$. It is also clear that $V^{\bullet}\cM'$ satisfies all the conditions to make 
it a $V$-filtration on $\cM'$ with the exception of condition iv) in Definition~\ref{defi_V_filtration} (the fact that each $V^{\alpha}\cM'$ is locally finitely generated over 
$V^0\cD_X$ follows from the fact that $V^{\alpha}\cM$ has this property and Remark~\ref{V0_Noeth}). However, this condition also follows if 
we know that if $u\in V^{\alpha}\cM$, with $\alpha>0$, is such that $tu\subseteq \cM'$, then $u\in\cM'$; in other words, it is enough to know that 
$\cM''$ satisfies assertion i) in the proposition.

We now prove that every $\cD_X$-module $\cM$ that has a $V$-filtration satisfies the assertion in i). 
We consider the short exact sequence
$$0\to \cM_1\to \cM\to \cM_2\to 0,$$
such that $\cM_1=\Gamma_H(\cM)$. Note that in this case it is clear that if $tu\in\cM_1$, then $u\in\cM_1$. The previous discussion thus implies that 
the $V$-filtration on $\cM$ induces $V$-filtrations on $\cM_1$ and $\cM_2$. On the other hand, since $\cM_1$ is supported on $H$, it follows from the uniqueness
of the $V$-filtration and Example~\ref{V_filt_support_H}
that $V^{\alpha}\cM\cap \cM_1=0$ for all $\alpha>0$. Therefore $V^{\alpha}\cM\simeq V^{\alpha}\cM_2$ for all $\alpha>0$, and since multiplication by $t$ is injective on $\cM_2$,
we see that multiplication by $t$ is injective on $V^{\alpha}\cM$ for all $\alpha>0$. This completes the proof of the proposition.
\end{proof}

\begin{cor}\label{cor_properties_V_filtration1}
It $\varphi\colon \cM\to \cN$ is a morphism of coherent $\cD_X$-modules such that both $\cM$ and $\cN$ have $V$-filtrations with respect to $t$, then
$\varphi$ is a filtered morphism and it is strict\footnote{The fact that $\varphi$ is a filtered morphism means that $\varphi(V^{\alpha}\cM)\subseteq V^{\alpha}\cN$
for all $\alpha\in\QQ$. The fact that it is strict means that, in addition, the filtration on $\varphi(\cM)$ induced from $\cM$ is the same as the one induced from $\cN$, that is,
$\varphi(V^{\alpha}\cM)=\varphi(\cM)\cap V^{\alpha}\cN$ for all $\alpha\in\QQ$.}. In particular, the category of coherent $\cD_X$-modules that carry a $V$-filtration
with respect to $t$ is an abelian category.
\end{cor}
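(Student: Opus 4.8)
The plan is to reduce everything to the uniqueness statement (Proposition~\ref{unique_V_filt}) together with the good hereditary behavior of the $V$-filtration under sub- and quotient-modules that is already recorded in Proposition~\ref{properties_V_filtration1}~ii). First I would check that $\varphi$ is a filtered morphism. By Remark~\ref{restriction_Vfilt_open_subset} and the locality of all conditions involved, it suffices to work on $X \smallsetminus H$ (where all $V^\alpha$ equal the whole module, so there is nothing to prove) and locally around points of $H$; so I may assume we have coordinates $x_1,\ldots,x_n,t$. Consider the image $\cP := \varphi(\cM) \subseteq \cN$, and put on it the two filtrations $V^\alpha_{\mathrm{sub}}\cP := p(V^\alpha\cM)$ (push-forward along the surjection $p \colon \cM \twoheadrightarrow \cP$) and $V^\alpha_{\mathrm{quot}}\cP := V^\alpha\cN \cap \cP$ (pull-back along the inclusion $\cP \hookrightarrow \cN$). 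By Proposition~\ref{properties_V_filtration1}~ii), both of these are genuine $V$-filtrations on the coherent $\cD_X$-module $\cP$. By Proposition~\ref{unique_V_filt}, a coherent $\cD_X$-module has at most one $V$-filtration, so $V^\alpha_{\mathrm{sub}}\cP = V^\alpha_{\mathrm{quot}}\cP$ for all $\alpha$. This is exactly the statement that $\varphi(V^\alpha\cM) = \varphi(\cM) \cap V^\alpha\cN$, i.e. that $\varphi$ is both filtered (since $\varphi(V^\alpha\cM) \subseteq V^\alpha\cN$) and strict.

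The one point that needs care, and which I expect to be the main (if modest) obstacle, is verifying the hypotheses of Proposition~\ref{properties_V_filtration1}~ii) in the two instances above: namely that $\cP = \varphi(\cM)$ is itself a coherent $\cD_X$-module (it is, being the quotient of the coherent module $\cM$, and coherent $\cD_X$-modules are closed under quotients), and — more substantively — that when we invoke the proposition for the \emph{submodule} $\cP \subseteq \cN$ we are allowed to conclude that $V^\alpha\cN \cap \cP$ is a $V$-filtration. The proof of Proposition~\ref{properties_V_filtration1} does establish this, but it does so conditionally on part~i) of that same proposition (injectivity of $t\cdot$ on $V^{>0}$), which is why I am careful to cite Proposition~\ref{properties_V_filtration1} as a whole rather than just its statement~ii). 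No new computation is required beyond what is already there; one simply applies the proposition twice, once to the quotient map $p$ and once to the inclusion $\iota$.

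Finally, for the last sentence — that the coherent $\cD_X$-modules admitting a $V$-filtration with respect to $t$ form an abelian category — I would argue as follows. This is a full subcategory of the abelian category of coherent $\cD_X$-modules, so it is enough to check that it is closed under kernels, cokernels, images, and coimages of its morphisms, and that the canonical map from coimage to image is an isomorphism there. Closure under sub- and quotient-modules is precisely Proposition~\ref{properties_V_filtration1}~ii): if $\cM$ carries a $V$-filtration and $\cM' \subseteq \cM$ is a $\cD_X$-submodule, then both $\cM'$ and $\cM/\cM'$ carry $V$-filtrations. Given a morphism $\varphi \colon \cM \to \cN$ in the subcategory, $\ker\varphi$ is a submodule of $\cM$, $\coker\varphi$ a quotient of $\cN$, and $\mathrm{im}\,\varphi$ both a quotient of $\cM$ and a submodule of $\cN$; hence all of these lie in the subcategory. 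The coimage-to-image isomorphism is inherited from the ambient abelian category of coherent $\cD_X$-modules (and is compatible with filtrations by the strictness just proved). Therefore the subcategory is abelian, and the inclusion into coherent $\cD_X$-modules is exact.
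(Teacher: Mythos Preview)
Your proof is correct and follows essentially the same route as the paper: put the two candidate filtrations $\varphi(V^\alpha\cM)$ and $V^\alpha\cN\cap\varphi(\cM)$ on the image, invoke Proposition~\ref{properties_V_filtration1}(ii) to see each is a $V$-filtration, and conclude by uniqueness (Proposition~\ref{unique_V_filt}). The paper is terser and simply says the abelian-category claim is ``an immediate consequence,'' whereas you spell it out, but the substance is the same.
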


\begin{proof}
It follows from assertion ii) in Proposition~\ref{properties_V_filtration1} that if we put
$$V^{\alpha}\varphi(\cM)=\varphi(V^{\alpha}\cM)\quad\text{and}\quad \widetilde{V}^{\alpha}\varphi(\cM)=V^{\alpha}\cN\cap \varphi(\cM),$$
then both these give a $V$-filtration on $\varphi(\cM)$. Therefore they agree by uniqueness of the $V$-filtration, hence $\varphi$ is a filtered morphism and it is strict. 
The last assertion in the corollary is an immediate consequence of this.
\end{proof}

The following proposition describes the behavior of $V$-filtration with respect to $\cD$-module push-forward via closed immersions.

\begin{prop}\label{prop_Vfil_immersion}
Let $i\colon Z\hookrightarrow X$ be the inclusion map of a smooth, irreducible, closed subvariety such that $t\vert_Z$ defines a smooth, irreducible hypersurface in $Z$.
If $\cN$ is a coherent $\cD_Z$-module and $\cM=i_+(\cN)$, then $\cN$ has a $V$-filtration with respect to $t\vert_Z$ if and only if $\cM$ has a $V$-filtration with respect to $t$.
Moreover, in this case, if $x_1,\ldots, x_m,y_1,\ldots,y_r,t$ are local coordinates on $X$ such that $Z$ is defined by $(y_1,\ldots,y_r)$, so 
$\cM\simeq\cN\otimes_kk[\partial_{y_1},\ldots,\partial_{y_r}]$, then 
$$V^{\alpha}\cM\simeq V^{\alpha}\cN\otimes_kk[\partial_{y_1},\ldots,\partial_{y_r}]\quad\text{for all}\quad \alpha\in\QQ.$$
\end{prop}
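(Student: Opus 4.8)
The plan is to reduce to the case of a closed immersion given in local coordinates by vanishing of a single function $y$, and then to verify directly that the ``obvious'' candidate filtration $V^\alpha\cM := V^\alpha\cN \otimes_k k[\partial_y]$ satisfies all four axioms of Definition~\ref{defi_V_filtration}, invoking uniqueness (Proposition~\ref{unique_V_filt}) to conclude that this must be \emph{the} $V$-filtration. First I would observe that, by Corollary~\ref{cor_unique_V_filt} and the fact that $\cD$-module push-forward along a closed immersion is local, the statement is local on $X$, so we may assume $X$ is affine with coordinates $x_1,\dots,x_m,y_1,\dots,y_r,t$ with $Z = V(y_1,\dots,y_r)$. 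Factoring $i$ into a sequence of codimension-one closed immersions and using functoriality $(g\circ f)_+\simeq g_+\circ f_+$, it suffices to treat $r=1$; write $y=y_1$, $\partial_y = \partial_{y_1}$, so that $\cM = i_+(\cN) \simeq \bigoplus_{j\ge 0}\cN\,\partial_y^{\,j}$ as described in Section~\ref{section_functors}, with $\cN$ acting in the evident way, $y\cdot(u\,\partial_y^j) = -j\,(yu\text{-correction})\dots$ — more precisely the left $\cD_X$-action determined by $\partial_y\cdot(u\partial_y^j) = u\partial_y^{j+1}$ and $y\cdot(u\partial_y^j)$ having a component in $\cN\partial_y^{j-1}$ plus, after moving $y$ past the coordinate functions, terms in $\cN\partial_y^{j}$; the operator $t$ acts diagonally since $t$ is a coordinate transverse to $Z$ in the chosen system, so $t\cdot(u\,\partial_y^j) = (tu)\,\partial_y^j$.

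The key point is that $t$ and $s = -\partial_t t$ act ``block-diagonally'' with respect to the decomposition $\cM = \bigoplus_j \cN\,\partial_y^j$: since $t, \partial_t$ commute with $y$ and $\partial_y$ (distinct coordinates), we get $t\cdot(u\partial_y^j) = (tu)\partial_y^j$ and likewise $s\cdot(u\partial_y^j) = (su)\partial_y^j$ for the operator $s$ computed in these coordinates. Granting this, I would define $V^\alpha\cM = \bigoplus_{j\ge 0} (V^\alpha\cN)\,\partial_y^j$ and check the axioms. Exhaustiveness, discreteness and left-continuity are immediate from the corresponding properties for $\cN$. For axiom (i), $V^i\cD_X$ is generated over $\cO_X$ by $\partial_{x_1},\dots,\partial_{x_m},\partial_{y}$, $s$, and $t$ (using the local description \eqref{formula_description_V0}, \eqref{eq1_V_D}, \eqref{eq2_V_D} adapted to the coordinate system); the first $m$ derivations and $y$ preserve each $V^\alpha\cN$-factor because $V^\alpha\cN$ is a $V^0\cD_Z$-submodule closed under the $\partial_{x_i}$, while $\partial_y$ shifts $j\mapsto j+1$ within the same $\alpha$, and $s,t$ act factor-wise by the block-diagonality above, shifting $\alpha$ by $0$ and $+1$ respectively exactly as required. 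Axiom (ii): $\mathrm{Gr}_V^\alpha(\cM) \simeq \bigoplus_j \mathrm{Gr}_V^\alpha(\cN)\,\partial_y^j$ and $s+\alpha$ acts factor-wise, hence is nilpotent since it is so on $\mathrm{Gr}_V^\alpha(\cN)$. Axiom (iii): $V^\alpha\cM = V^0\cD_X \cdot (V^\alpha\cN)$ using that $\cD_X$-locally one reaches all of $\bigoplus_j(V^\alpha\cN)\partial_y^j$ by applying powers of $\partial_y \in V^0\cD_X$ to $V^\alpha\cN$, and $V^\alpha\cN$ is finitely generated over $V^0\cD_Z$. Axiom (iv): $t\cdot V^\alpha\cM = \bigoplus_j (tV^\alpha\cN)\partial_y^j = \bigoplus_j(V^{\alpha+1}\cN)\partial_y^j = V^{\alpha+1}\cM$ for $\alpha>0$, directly from axiom (iv) for $\cN$.

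For the equivalence itself: if $\cN$ has a $V$-filtration, the construction above produces one on $\cM$, proving one direction. Conversely, if $\cM = i_+\cN$ has a $V$-filtration $V^\bullet\cM$, I would recover a filtration on $\cN$ by setting $V^\alpha\cN := V^\alpha\cM \cap \cN$, where $\cN \hookrightarrow \cM$ is the $j=0$ summand (i.e.\ $\cN = \{ m\in\cM \mid y m = 0 \}$ when $r=1$, or more invariantly $\cN = \mathrm{Ker}(y)$ if $y\cdot(-)$ is suitably interpreted — in the graph-like picture $\cN$ is the kernel of all the $y_i$). One then checks this is a weak $V$-filtration on $\cN$, verifies condition (iii) and (iv) using that $V^\bullet\cM$ satisfies them and that the splitting $\cM=\bigoplus_j\cN\partial_y^j$ is compatible (multiplication by $y$ sends the $j$-summand onto the $(j-1)$-summand up to unit, so intersecting with $V^\alpha\cM$ behaves well), and concludes by Proposition~\ref{unique_V_filt} that this must agree with the first construction, yielding the displayed formula $V^\alpha\cM \simeq V^\alpha\cN\otimes_k k[\partial_{y_1},\dots,\partial_{y_r}]$.

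\textbf{Main obstacle.} The step I expect to require the most care is the verification of the block-diagonality of the $s$- and $t$-actions together with axioms (iii) and the converse direction: one must be careful that ``$s$'' is only well-defined up to $V^1\cD_X$ and depends on coordinates (Remark~\ref{s_under_change_coord}), so the clean factor-wise description holds only in the specific coordinate system adapted to both $Z$ and $H$; and in the converse direction one must argue that an arbitrary $V$-filtration on $i_+\cN$ is automatically compatible with the $k[\partial_y]$-grading, which is where uniqueness (Proposition~\ref{unique_V_filt}) does the real work rather than a direct check.
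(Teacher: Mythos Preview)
Your forward direction is correct and is exactly what the paper does: once $V^\alpha\cN$ is given, the filtration $V^\alpha\cN\otimes_k k[\partial_y]$ on $\cM$ is checked to be a $V$-filtration by the straightforward axiom-by-axiom verification you outline.

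The converse direction, however, has a gap. You set $V^\alpha\cN := V^\alpha\cM\cap\cN$ and then write that you will ``verify condition (iii) and (iv) using \ldots\ that the splitting $\cM=\bigoplus_j\cN\partial_y^j$ is compatible (multiplication by $y$ sends the $j$-summand onto the $(j-1)$-summand up to unit, so intersecting with $V^\alpha\cM$ behaves well).'' But ``intersecting with $V^\alpha\cM$ behaves well'' is precisely the graded-compatibility statement you flagged as the main obstacle, and the parenthetical is not a proof of it. In particular, without first knowing that $V^\alpha\cM$ is homogeneous for the $\partial_y$-grading, there is no evident reason why $V^\alpha\cM\cap\cN$ should be finitely generated over $V^0\cD_Z$ (axiom (iii)): finite generation of $V^\alpha\cM$ over $V^0\cD_X$ does not pass to the intersection with a subspace that is only a $V^0\cD_Z$-module. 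So your plan is circular as written: uniqueness would give the graded compatibility \emph{after} you have a $V$-filtration on $\cN$, but to get that $V$-filtration you already need the graded compatibility.

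The paper resolves this with a one-line direct argument that you are missing: the operators $y_i\partial_{y_i}$ lie in $V^0\cD_X$, hence preserve each $V^\alpha\cM$, and they act on $u\otimes\partial_y^\beta$ by the scalar $-(\beta_i+1)$. Since for any element of $\cM$ only finitely many $\beta$ occur, the projection onto each $\beta$-component is a polynomial in the $y_i\partial_{y_i}$, hence lies in $V^0\cD_X$. Therefore $\sum_\beta u_\beta\otimes\partial_y^\beta\in V^\alpha\cM$ forces $u_\beta\otimes\partial_y^\beta\in V^\alpha\cM$ for every $\beta$, and then applying $y^\beta\in V^0\cD_X$ (which maps $u_\beta\otimes\partial_y^\beta$ to a nonzero multiple of $u_\beta\otimes 1$) gives $u_\beta\otimes 1\in V^\alpha\cM$. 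This establishes $V^\alpha\cM=\bigoplus_\beta V^\alpha\cN\otimes\partial_y^\beta$ directly, after which all four axioms for $V^\bullet\cN$ are genuinely straightforward and no appeal to uniqueness is needed. Your identification of the obstacle was correct; the missing ingredient is this eigenspace trick rather than Proposition~\ref{unique_V_filt}.
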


\begin{proof}
If $\cM$ has a $V$-filtration with respect to $t$, then by assumption each $V^{\alpha}\cM$ is preserved by the action of $y_i\partial_{y_i}$ for all $i$. 
This implies that if $\sum_{\beta}u_{\beta}\otimes \partial_y^{\beta}\in V^{\alpha}\cM$, then $u_{\beta}\otimes \partial_y^{\beta}\in V^{\alpha}\cM$ for all $\beta$. 
It is then straightforward to see that if we put $V^{\alpha}\cN=\{u\mid u\otimes 1\in V^{\alpha}\cM\}$ for all $\alpha$, then $V^{\bullet}\cN$ is a $V$-filtration on $\cN$ with respect to 
$t\vert_Z$. Conversely, if $\cN$ has a $V$-filtration with respect to $t\vert_Z$ and we put $V^{\alpha}\cM=V^{\alpha}\cN\otimes_kk[\partial_{y_1},\ldots,\partial_{y_r}]$,
it is straightforward to see that this is a $V$-filtration of $\cM$ with respect to $t$.
\end{proof}

\subsection{The maps ${\rm Var}$ and ${\rm can}$}\label{section_Var}
We keep the same notation as before.

\begin{prop}\label{prop_descr_Var}
If $\cM$ has a $V$-filtration with respect to $t$ and $i\colon H\hookrightarrow X$ is the inclusion, then
${\mathbf L}i^*(\cM)[-1]$ is computed by the complex 
$${\rm Gr}_V^0(\cM)\overset{t\cdot}\longrightarrow {\rm Gr}_V^1(\cM),$$
placed in cohomological degrees $0$ and $1$. 
\end{prop}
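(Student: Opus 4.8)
The plan is to compute $\mathbf{L}i^*(\cM)$ by an explicit two-term complex and then factor the comparison with $\big[{\rm Gr}_V^0(\cM)\overset{t\cdot}{\longrightarrow}{\rm Gr}_V^1(\cM)\big]$ through the $V$-filtration. Since $H=V(t)$ is a smooth hypersurface, $\cO_H$ has the locally free $\cO_X$-resolution given by multiplication by $t$ on $\cO_X$, so $\mathbf{L}i^*(\cM)=\cO_H\otimes^L_{\cO_X}\cM$ is represented by the complex
\[
K^\bullet\colon\quad \cM\overset{t\cdot}{\longrightarrow}\cM
\]
placed in cohomological degrees $-1$ and $0$; hence $\mathbf{L}i^*(\cM)[-1]$ is represented by the same two terms in degrees $0$ and $1$. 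It therefore suffices to produce a natural quasi-isomorphism, compatible with $\cD_H$-module structures, between $K^\bullet$ and $C^\bullet\colon {\rm Gr}_V^0(\cM)\overset{t\cdot}{\longrightarrow}{\rm Gr}_V^1(\cM)$ in degrees $-1$ and $0$, where the differential is the map in (\ref{eq_rmk1_Vfiltration}).

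I would interpolate through the subcomplex $B^\bullet\colon V^0\cM\overset{t\cdot}{\longrightarrow}V^1\cM$ of $K^\bullet$; this is a subcomplex because $t\in V^1\cD_X$, so condition i) of Definition~\ref{defi_V_filtration} gives $t\cdot V^0\cM\subseteq V^1\cM$, and the same condition gives $t\cdot V^{>0}\cM\subseteq V^{>1}\cM$, so that $\big[V^{>0}\cM\overset{t\cdot}{\longrightarrow}V^{>1}\cM\big]$ is a subcomplex of $B^\bullet$ with quotient exactly $C^\bullet$. The plan is to show that both $B^\bullet\hookrightarrow K^\bullet$ and $B^\bullet\twoheadrightarrow C^\bullet$ are quasi-isomorphisms; composing them in the derived category gives $K^\bullet\simeq C^\bullet$, and shifting by $[-1]$ yields the statement. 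For $B^\bullet\twoheadrightarrow C^\bullet$ this should be immediate: writing $V^{>0}\cM=V^\epsilon\cM$, $V^{>1}\cM=V^{1+\epsilon}\cM$ for $0<\epsilon\ll1$, exactness of the kernel subcomplex is precisely bijectivity of $t\cdot\colon V^\epsilon\cM\to V^{1+\epsilon}\cM$, which is Proposition~\ref{properties_V_filtration1} i). For $B^\bullet\hookrightarrow K^\bullet$ I must check that $\cM/V^0\cM\overset{t\cdot}{\longrightarrow}\cM/V^1\cM$ is acyclic; injectivity here is the implication ``$tu\in V^1\cM\Rightarrow u\in V^0\cM$'', which is Corollary~\ref{cor_rmk1_Vfiltration} i) with $\alpha=0$.

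The step I expect to be the main obstacle is surjectivity for $B^\bullet\hookrightarrow K^\bullet$, i.e.\ $\cM=t\cM+V^1\cM$. For this I would take $u\in\cM$; by exhaustiveness and discreteness/left-continuity of the filtration we may assume $u\in V^\alpha\cM$ with $\ell\alpha\in\ZZ$. If $\alpha\ge1$ we are done; if $\alpha<1$, then $\alpha\ne1$, so by Proposition~\ref{rmk1_Vfiltration} the map ${\rm Gr}_V^{\alpha-1}(\cM)\overset{t\cdot}{\longrightarrow}{\rm Gr}_V^{\alpha}(\cM)$ is surjective, whence $V^\alpha\cM\subseteq t\cdot V^{\alpha-1}\cM+V^{>\alpha}\cM\subseteq t\cM+V^{\alpha+1/\ell}\cM$; iterating this finitely many times (the exponent increases by $1/\ell$ each time until it reaches $1$) gives $u\in t\cM+V^1\cM$. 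It then remains to observe that the resulting identifications on cohomology are $\cD_H$-linear: every cohomology sheaf in sight is annihilated by $t$ and so is naturally a $\cD_H$-module, and all the maps in the zig-zag $K^\bullet\leftarrow B^\bullet\rightarrow C^\bullet$ are $\cO_X$-linear and commute with the derivations tangent to $H$ (each of which preserves every $V^\alpha\cM$), hence respect the $\cD_H$-structures. Every step other than the surjectivity claim is a direct application of a result proved earlier in this section.
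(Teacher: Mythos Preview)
Your proposal is correct and follows essentially the same approach as the paper's proof: both factor the comparison through the intermediate complex $B^\bullet=[V^0\cM\overset{t\cdot}{\to}V^1\cM]$, use Proposition~\ref{properties_V_filtration1}i) to show $B^\bullet\to C^\bullet$ is a quasi-isomorphism, and combine Proposition~\ref{rmk1_Vfiltration} (iterated using discreteness) with Corollary~\ref{cor_rmk1_Vfiltration}i) to show $B^\bullet\hookrightarrow K^\bullet$ is a quasi-isomorphism. The only cosmetic difference is that the paper verifies the latter by directly checking that the induced maps on $H^0$ and $H^1$ are isomorphisms, whereas you check that the cokernel complex $[\cM/V^0\cM\overset{t\cdot}{\to}\cM/V^1\cM]$ is acyclic; these are equivalent via the long exact sequence.
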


\begin{proof}
Consider the following commutative diagram with exact rows:
$$\xymatrix{0\ar[r] & V^{>0}\cM\ar[d]_{t\cdot}\ar[r] & V^0\cM\ar[d]_{t\cdot}\ar[r] &  {\rm Gr}_V^0(\cM)\ar[d]_{t\cdot}\ar[r] & 0\\
0\ar[r] & V^{>1}\cM\ar[r] & V^1\cM\ar[r] &  {\rm Gr}_V^1(\cM)\ar[r] & 0.}
$$
Since the first vertical map is an isomorphism by Proposition~\ref{properties_V_filtration1}i), it follows that the second and third columns 
are quasi-isomorphic. Therefore the first assertion in the proposition follows if we show that the inclusion of complexes
\begin{equation}\label{comm_diag_Vfilt}
\xymatrix{V^0\cM\ar[d]_{t\cdot}\ar[r] & \cM\ar[d]^{t\cdot}\\
V^1\cM\ar[r] & \cM}
\end{equation}
is an isomorphism. 

We first show that the induced morphism
$\sigma\colon V^1\cM/t\cdot V^0\cM\to \cM/t\cM$ is an isomorphism.
It follows from Proposition~\ref{rmk1_Vfiltration} that if $u\in V^{\alpha}\cM$, with $\alpha<1$, then there is
$u'\in V^{>\alpha}$ such that $u-u'\in t\cM$. Since the $V$-filtration is discrete, after iterating this finitely many times, we see that $\overline{u}\in \cM/t\cM$
lies in the image of $\sigma$. In order to prove that $\sigma$ is injective, consider $u\in V^1\cM\cap t\cM$. If we write $u=tv$,
then it follows from Corollary~\ref{cor_rmk1_Vfiltration}i) that 
$v\in V^0\cM$, hence $u\in t\cdot V^0\cM$. We have thus proved that the induced morphism between the cokernels of the vertical maps in (\ref{comm_diag_Vfilt})
is an isomorphism.

Finally, we need to show that the induced map
\begin{equation}\label{eq10_prop_descr_Var}
\{u\in V^0\cM\mid tu=0\}\to \{u\in\cM\mid tu=0\}
\end{equation}
is an isomorphism.
This is clearly injective and surjectivity follows from 
Corollary~\ref{cor_rmk1_Vfiltration}i) 
\end{proof}

\begin{cor}\label{cor_prop_descr_Var}
If $V^{\bullet}\cM$ is a $V$-filtration on $\cM$ and $\cM$ has no $t$-torsion, then for every $u\in\cM$ and every $\alpha\in\QQ$, we have $u\in V^{\alpha}\cM$
if and only if $tu\in V^{\alpha+1}\cM$. In particular, if the action of $t$ on $\cM$ is invertible, then $V^{\alpha+1}\cM=t\cdot V^{\alpha}\cM$ for all $\alpha\in\QQ$.
\end{cor}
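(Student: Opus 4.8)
The plan is to establish the equivalence $u\in V^{\alpha}\cM\iff tu\in V^{\alpha+1}\cM$ by treating the two implications separately, and to split the nontrivial implication into two cases according to the sign of $\alpha$. The forward implication is immediate and uses neither the torsion hypothesis nor the full strength of being a $V$-filtration: since $t$ is a section of $V^1\cD_X$, condition i) in Definition~\ref{defi_V_filtration} gives $t\cdot V^{\alpha}\cM\subseteq V^{\alpha+1}\cM$, so $u\in V^{\alpha}\cM$ implies $tu\in V^{\alpha+1}\cM$.

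For the reverse implication, assume $tu\in V^{\alpha+1}\cM$. When $\alpha\leq 0$, this is exactly the statement of Corollary~\ref{cor_rmk1_Vfiltration}i), so there is nothing to prove (and again $t$-torsion-freeness is not needed). When $\alpha>0$, I would invoke Proposition~\ref{properties_V_filtration1}i), which says the map $t\cdot\colon V^{\alpha}\cM\to V^{\alpha+1}\cM$ is bijective: pick $u'\in V^{\alpha}\cM$ with $tu'=tu$, and conclude $u=u'\in V^{\alpha}\cM$ using that $\cM$ has no $t$-torsion. This is the only point at which the torsion hypothesis is used, and it is the heart of the argument, though it is not really an obstacle — the one subtlety to keep in mind is that the case split must occur at $\alpha=0$, since multiplication by $t$ is bijective on $V^{\alpha}\cM$ only for $\alpha>0$ (at $\alpha=0$ it factors through the possibly non-injective map ${\rm Var}$).

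For the ``in particular'' assertion, note that if $t$ acts invertibly on $\cM$ then $\cM$ is in particular $t$-torsion-free, so the equivalence just proved applies. The inclusion $t\cdot V^{\alpha}\cM\subseteq V^{\alpha+1}\cM$ is automatic from condition i). Conversely, given $w\in V^{\alpha+1}\cM$, write $w=tu$ for the unique $u\in\cM$ with $tu=w$; then $tu\in V^{\alpha+1}\cM$ forces $u\in V^{\alpha}\cM$ by the equivalence, so $w\in t\cdot V^{\alpha}\cM$. This yields the reverse inclusion $V^{\alpha+1}\cM\subseteq t\cdot V^{\alpha}\cM$ and completes the proof.
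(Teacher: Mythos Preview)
Your proof is correct, but it takes a different route from the paper's. The paper argues uniformly through the graded pieces: it shows that $t\cdot\colon{\rm Gr}_V^{\beta}(\cM)\to{\rm Gr}_V^{\beta+1}(\cM)$ is injective for every $\beta$ (invoking Proposition~\ref{rmk1_Vfiltration} for $\beta\neq 0$ and Proposition~\ref{prop_descr_Var} for $\beta=0$, where the no-$t$-torsion hypothesis enters), and then climbs from whatever $V^{\beta}$ contains $u$ up to $V^{\alpha}$ using discreteness. You instead split at $\alpha=0$ and quote two ready-made statements: Corollary~\ref{cor_rmk1_Vfiltration}i) for $\alpha\leq 0$ and the bijectivity of $t\colon V^{\alpha}\cM\to V^{\alpha+1}\cM$ from Proposition~\ref{properties_V_filtration1}i) for $\alpha>0$, combining the latter with torsion-freeness. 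Your argument is slightly more direct and, notably, never uses Proposition~\ref{prop_descr_Var} --- the proposition this corollary is attached to; the paper's version has the modest advantage of a single uniform mechanism and makes the role of the graded injectivity at $\beta=0$ explicit.
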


\begin{proof}
For every $\beta\in \QQ$, the map
$${\rm Gr}_V^{\beta}(\cM)\overset{t\cdot}\longrightarrow {\rm Gr}_V^{\beta+1}(\cM)$$
is injective. Indeed, for $\beta\neq 0$ this follows from Proposition~\ref{rmk1_Vfiltration},
while for $\beta=0$ it follows from the fact that $\cM$ has no $t$-torsion by Proposition~\ref{prop_descr_Var}.
This implies that if $u\in V^{\beta}\cM$ is such that $tu\in V^{\alpha+1}\cM$ for some $\alpha>\beta$, then $u\in V^{>\beta}\cM$.
We thus get the first assertion in the corollary using the fact that the $V$-filtration is discrete. The last assertion in the statement
is an immediate consequence.
\end{proof}

\begin{prop}\label{prop_descr_can}
If the coherent $\cD_X$-module $\cM$ has a $V$-filtration with respect to $t$ and $i\colon H\hookrightarrow X$ is the inclusion, then the following hold:
\begin{enumerate}
\item[i)] The $\cD_X$-submodule $\cM':=\cD_X\cdot V^{>0}\cM$ is the smallest $\cD_X$-submodule of $\cM$ with the property that $\cM/\cM'$ is supported on $H$.
\item[ii)] We have an isomorphism of $\cD_H$-modules
$$\cM/\cM'\simeq i_+{\rm Coker}\big({\rm Gr}_V^1(\cM)\overset{\partial_t\cdot}\longrightarrow {\rm Gr}_V^0(\cM)\big).$$
\end{enumerate}
\end{prop}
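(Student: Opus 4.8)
The plan is to handle i) and ii) separately, the only genuine computation being a short local identity for $V^0\cM\cap\cM'$. For i), I would first note that $\cM/\cM'$ is supported on $H$: by Remark~\ref{D_mod_str_V0} one has $V^{>0}\cM\vert_{X\smallsetminus H}=\cM\vert_{X\smallsetminus H}$, so $\cM'$ agrees with $\cM$ off $H$. For minimality, given any $\cD_X$-submodule $\cN\subseteq\cM$ with $\cM/\cN$ supported on $H$, I would equip $\cM/\cN$ with the induced $V$-filtration $V^\alpha(\cM/\cN)=p(V^\alpha\cM)$ of Proposition~\ref{properties_V_filtration1}(ii), where $p\colon\cM\to\cM/\cN$ is the projection, and use the uniqueness in Proposition~\ref{unique_V_filt} together with Example~\ref{V_filt_support_H} to see that this filtration vanishes in all positive degrees. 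Then $p(V^{>0}\cM)=0$, i.e.\ $V^{>0}\cM\subseteq\cN$, and hence $\cM'=\cD_X\cdot V^{>0}\cM\subseteq\cN$.

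For ii), I would first observe that $V^{>0}\cM$ is locally finitely generated over $V^0\cD_X$ by condition iii) in Definition~\ref{defi_V_filtration}, so $\cM'$ is a coherent $\cD_X$-module and $\cM/\cM'$ is a coherent $\cD_X$-module supported on $H$; by Example~\ref{V_filt_support_H} and Kashiwara's Equivalence Theorem, $\cM/\cM'\simeq i_+\big({\rm Gr}_V^0(\cM/\cM')\big)$. The idea is then to compute both sides of the desired isomorphism as quotients of $V^0\cM$: Proposition~\ref{properties_V_filtration1}(ii) together with $V^{>0}\cM\subseteq\cM'$ gives ${\rm Gr}_V^0(\cM/\cM')=p(V^0\cM)=V^0\cM/(V^0\cM\cap\cM')$, while ${\rm Coker}\big({\rm Gr}_V^1(\cM)\xrightarrow{\,\partial_t\,}{\rm Gr}_V^0(\cM)\big)=V^0\cM/\big(V^{>0}\cM+\partial_t\cdot V^1\cM\big)$. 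So ii) reduces to proving, locally around a point of $H$ in coordinates $x_1,\dots,x_n,t$, the identity $V^0\cM\cap\cM'=V^{>0}\cM+\partial_t\cdot V^1\cM$; the two quotient maps out of $V^0\cM$ are then $V^0\cD_X$-linear with equal kernel, so they identify ${\rm Gr}_V^0(\cM/\cM')$ with ${\rm Coker}({\rm can})$ as $\cD_H$-modules, and applying $i_+$ concludes.

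It remains to prove the local identity, which I expect to be the only delicate step. The inclusion $\supseteq$ is immediate from condition i) in Definition~\ref{defi_V_filtration} and $V^1\cM\subseteq V^{>0}\cM\subseteq\cM'$. For $\subseteq$, the key is the recursive identity $\cM'=V^{>0}\cM+\partial_t\cdot\cM'$, which follows from the local description $\cD_X=V^0\cD_X+\partial_t\cD_X$ (a consequence of (\ref{eq2_V_D})) together with $V^0\cD_X\cdot V^{>0}\cM\subseteq V^{>0}\cM$. Given $w\in V^0\cM\cap\cM'$, I would write $w=v+\partial_t w'$ with $v\in V^{>0}\cM$ and $w'\in\cM'$; then $\partial_t w'=w-v\in V^0\cM$, so Corollary~\ref{cor_rmk1_Vfiltration}(ii) applied with $\alpha=1$ gives $w'\in V^1\cM$, whence $w\in V^{>0}\cM+\partial_t\cdot V^1\cM$. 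The subtlety here is that a naive expansion of an element of $\cM'=\cD_X\cdot V^{>0}\cM$ involves arbitrarily high powers of $\partial_t$, and it is exactly this ``divide by $\partial_t$'' trick — reducing everything to a single use of Corollary~\ref{cor_rmk1_Vfiltration}(ii) via the recursion for $\cM'$ — that makes the inclusion tractable. Everything else (coherence of $\cM'$, the shape of the $V$-filtration on a module supported on $H$, and the $\cD_H$-linearity bookkeeping) is routine given the results already in place.
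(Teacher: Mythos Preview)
Your proposal is correct and follows essentially the same approach as the paper: both reduce ii) to the local identity $V^0\cM\cap\cM'=V^{>0}\cM+\partial_t\cdot V^1\cM$ and prove the nontrivial inclusion by writing an element of $V^0\cM\cap\cM'$ as $v+\partial_t w'$ with $v\in V^{>0}\cM$, $w'\in\cM'$, and then applying Corollary~\ref{cor_rmk1_Vfiltration}(ii) once to force $w'\in V^1\cM$. The only cosmetic difference is that the paper writes out $\cM'=\sum_{m\geq 0}\partial_t^m\cdot V^{>0}\cM$ explicitly and decomposes $u=\sum_i\partial_t^iu_i$, whereas you phrase the same step via the recursion $\cM'=V^{>0}\cM+\partial_t\cdot\cM'$.
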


\begin{proof}
Given any $u\in\cM$, if $u\in V^{\alpha}\cM$ and $m+\alpha>0$, then $t^mu\in V^{>0}\cM$. Therefore $\cM/\cM'$ is supported on $H$.
On the other hand, if $\cN$ is any $\cD_X$-submodule of $\cM$ such that $\cM/\cN$ is supported on $H$, then it follows from Example~\ref{V_filt_support_H}
and Proposition~\ref{properties_V_filtration1} that $V^{>0}\cM=V^{>0}\cN\subseteq\cN$, hence $\cM'\subseteq\cN$. 

In order to prove the isomorphism in ii), note first that since $\cM/\cM'$ is supported on $H$, it follows from 
Kashiwara's Equivalence Theorem (see \cite[Theorem~1.6.1]{HTT})  and Example~\ref{V_filt_support_H} that
$$\cM/\cM'\simeq i_+i^{\dagger}(\cM/\cM')\simeq i_+{\rm Gr}_V^0(\cM/\cM')\simeq i_+\big(V^0\cM/(V^{>0}\cM+(V^0\cM\cap\cM'))\big).$$
The assertion in ii) thus follows if we show that
\begin{equation}\label{eq2_eq_prop_descr_can}
V^{>0}\cM+(V^0\cM\cap\cM')=V^{>0}\cM+\partial_t\cdot V^1\cM
\end{equation}
(we may and will assume that we are in an open subset of $X$ where we have coordinates $x_1,\ldots,x_n,t$ on $X$, so that
we have the operator $\partial_t$ acting on $\cM$). The inclusion ``$\supseteq$" in (\ref{eq2_eq_prop_descr_can}) is clear, hence we only
need to prove the reverse inclusion. 
Note that since $V^{>0}\cM$ is a $V^0\cD_X$-submodule of $\cM$, it follows that 
$$
\cM'=\sum_{m\geq 0}\partial_t^m\cdot V^{>0}\cM.
$$
Suppose now that $u\in V^0\cM\cap\cM'$. We can write $u=\sum_{i=0}^N\partial_t^iu_i$, with all $u_i\in V^{>0}\cM$. 
If $w=\sum_{i=1}^N\partial_t^{i-1}u_i$, then $\partial_tw=u-u_0\in V^0\cM$, hence $w\in V^1\cM$ by
Corollary~\ref{cor_rmk1_Vfiltration}ii). Therefore $u=u_0+\partial_tw\in V^{>0}\cM+\partial_t\cdot V^1\cM$, which completes the proof of the proposition.
\end{proof}

\section{The $V$-filtration with respect to an arbitrary hypersurface}\label{section_Vfilt_general}

We next discuss the case when the hypersurface is not necessarily smooth. The idea, due to Kashiwara, is to construct the $V$-filtration on the push-forward 
of a given $\cD_X$-module via the graph embedding.
In this section we assume that $X$ is a smooth, irreducible, $n$-dimensional variety, and $f\in\cO_X(X)$ is nonzero, defining the
(possibly singular) hypersurface $H$ in $X$.

We consider the smooth, irreducible variety $X\times\AA^1$, of dimension $n+1$. We denote by $t$ the coordinate on $\AA^1$, defining
the smooth hypersurface $X\times\{0\}$. Consider the closed immersion $\iota=\iota_f\colon X\hookrightarrow X\times\AA^1$ given by $\iota(x)=\big(x,f(x)\big)$. 
Given a coherent $\cD_X$-module $\cM$, we consider the coherent $\cD_{X\times\AA^1}$-module $\iota_+\cM$. With a slight abuse of terminology, we will
say that \emph{$\cM$ has a $V$-filtration with respect to $f$} if $\iota_+(\cM)$ has a $V$-filtration with respect to $t$. 

In fact, we prefer to work with sheaves on $X$, rather than on $X\times\AA^1$. In other words, we will tacitly identify a quasi-coherent $\cD_{X\times\AA^1}$-module $\cF$
with the quasi-coherent $\cD_X\langle t,\partial_t\rangle$-module $p_*(\cF)$, where $p\colon X\times\AA^1\to X$ is the projection onto the first component.

Note that the formulas (\ref{formula_description_V0}), (\ref{eq1_V_D}), and (\ref{eq2_V_D}) become in our setting
\begin{equation}\label{new_formula_description_V0}
V^0\cD_{X\times\AA^1}=\cD_X\langle s,t\rangle\subseteq\cD_X\langle t,\partial_t\rangle,
\end{equation}
\begin{equation}\label{new_eq1_V_D}
V^m\cD_{X\times\AA^1}=V^0\cD_{X\times\AA^1}\cdot t^m=t^m\cdot V^0\cD_{X\times\AA^1}\quad\text{for all}\quad m\geq 0\quad\text{and}
\end{equation}
\begin{equation}\label{new_eq2_V_D}
V^{-m}\cD_{X\times\AA^1}=\sum_{i=0}^mV^0\cD_{X\times\AA^1}\cdot \partial_t^i\quad\text{for all}\quad m\geq 0.
\end{equation}

Let us describe explicitly $\iota_+(\cM)$. We first consider the case when $\cM=\cO_X$. 
Note that it follows from (\ref{eq_local_coho}) that we have an isomorphism
\begin{equation}\label{eq_descr1_Bf}
B_f:=\iota_+(\cO_X)\simeq \cH^1_{\iota(X)}(\cO_{X\times {\mathbf A}^1})\simeq\cO_X[t]_{f-t}/\cO_X[t]
\end{equation}
(recall that we view this as a sheaf of $\cD_X\langle t,\partial_t\rangle$-modules on $X$).

We will denote by $\delta$ the class of $\tfrac{1}{f-t}$ in $B_f$.
Since $\cO_X[t]_{f-t}/\cO_X[t]=\bigoplus_{j\geq 1}\cO_X\tfrac{1}{(f-t)^j}$ and $\tfrac{(j-1)!}{(f-t)^j}=\partial_t^{j-1}\cdot \tfrac{1}{(f-t)}$, 
we conclude that
$$B_f=\bigoplus_{j\geq 0}\cO_X\partial_t^j\delta.$$
The action of $\cO_X$ and $\partial_t$ with respect to this decomposition is clear, while the actions of ${\mathcal Der}_k(\cO_X)$ and of $t$ are given by
\begin{equation}\label{action_on_Bf}
D\cdot h\partial_t^j\delta=D(h)\partial_t^j\delta-hD(f)\partial_t^{j+1}\delta\quad\text{and}\quad t\cdot h\partial_t^j\delta=hf\partial_t^j\delta-jh\partial_t^{j-1}\delta
\end{equation}
for every $D\in {\mathcal Der}_k(\cO_X)$ and $h\in\cO_X$.

Note now that if $\cM$ is an arbitrary coherent $\cD_X$-module, then we have an isomorphism
$$\iota_+(\cM)\simeq\cM\otimes_{\cO_X}B_f=\bigoplus_{j\geq 0}\cM\otimes\partial_t^j\delta,$$
with the actions of ${\mathcal Der}_k(\cO_X)$ and of $t$ being given by the analogues of the formulas in (\ref{action_on_Bf}):
\begin{equation}\label{action_on_Bf2}
D\cdot (u\otimes \partial_t^j\delta)=Du\otimes \partial_t^j\delta-D(f)u\otimes \partial_t^{j+1}\delta\quad\text{and}\quad t\cdot (u\otimes \partial_t^j\delta)=fu\otimes \partial_t^j\delta-ju\otimes\partial_t^{j-1}\delta
\end{equation}
for every $D\in {\mathcal Der}_k(\cO_X)$ and $u\in\cM$. 

\begin{eg}
Besides $B_f$, one important example is that of 
$$B_f':=\iota_+\big(\cO_X[1/f]\big)=\bigoplus_{j\geq 0}\cO_X[1/f]\partial_t^j\delta.$$
\end{eg}

\begin{rmk}[Behavior under restriction to an open subset]\label{rmk_restriction_Vfiltration_open_subset}
If $U$ is an open subset of $X$, we consider the restriction $g=f\vert_U$ of $f$, and let $\iota_f\colon X\hookrightarrow X\times\AA^1$ and $\iota_g\colon U\hookrightarrow U\times\AA^1$
be the corresponding graph embeddings. It is then clear that for every coherent $\cD_X$-module $\cM$, we have a canonical isomorphism
$$(\iota_g)_+(\cM\vert_U)\simeq (\iota_f)_+(\cM)\vert_U$$
(recall that we consider both sides as sheaves on $U$). In this case it follows from Remark~\ref{restriction_Vfilt_open_subset} that if $V^{\bullet}(\iota_f)_+\cM$ is a $V$-filtration
on $(\iota_f)_+(\cM)$, then $V^{\bullet}(\iota_f)_+\cM\vert_U$ gives a $V$-filtration on $(\iota_g)_+(\cM\vert_U)$.
\end{rmk}

\begin{rmk}[The support of ${\rm Gr}_V^{\alpha}(\cM)$]\label{gr_ann_by_f}
It follows from the definition that if $\cM$ has a $V$-filtration with respect to $f$, then $t\cdot {\rm Gr}_V^{\alpha}(\iota_+\cM)=0$ for all $\alpha\in\QQ$. 
A related fact is that as a $\cD_X$-module, ${\rm Gr}_V^{\alpha}(\iota_+\cM)$ is supported on $H$ for all $\alpha\in\QQ$.  Indeed, note that if $u\in V^{\alpha}\iota_+(\cM)\cap \sum_{j\leq p}\cM\otimes\partial_t^j\delta$, then
$$fu=tu+(f-t)u\in V^{>\alpha}\iota_+(\cM)+\left(V^{\alpha}\iota_+(\cM)\cap \sum_{j\leq p-1}\cM\otimes\partial_t^j\delta\right).$$ 
We deduce arguing by induction on $p$ that $f^{p+1}u\in V^{>\alpha}\iota_+(\cM)$.
\end{rmk}

\begin{rmk}[Behavior when $\cM$ has no $f$-torsion]\label{rmk_M_no_ftorsion}
It follows from the formula (\ref{action_on_Bf2}) 
that in $\iota_+(\cM)$ we have
\begin{equation}\label{eq_rmk_M_no_ftorsion}
t\cdot\sum_{i=0}^Nu_i\otimes\partial_t^i\delta=fu_N\otimes\partial_t^{N}\delta+(fu_{N-1}-Nu_N)\otimes\partial_t^{N-1}\delta+\ldots+(fu_0-u_1)\otimes\delta.
\end{equation}
We deduce that multiplication by $t$ on $\iota_+(\cM)$ is injective (or bijective) if and only if multiplication by $f$ on $\cM$
is injective (respectively, bijective). We thus conclude using Corollary~\ref{cor_prop_descr_Var} that if $\cM$ admits a $V$-filtration with respect to $f$ and $\cM$
has no $f$-torsion (or multiplication by $f$ is invertible), then for every $u\in \iota_+(\cM)$ and $\alpha\in\QQ$, we have $u\in V^{\alpha}\iota_+(\cM)$
if and only if $tu\in V^{\alpha+1}\iota_+(\cM)$ (respectively, for every $\alpha\in\QQ$, we have $t\cdot V^{\alpha}\iota_+(\cM)=V^{\alpha+1}\iota_+(\cM)$).
\end{rmk}

\begin{rmk}[Restriction to the complement of $H$]\label{Vfilt_where_f_invertible}
If $U=X\smallsetminus H$, then $\iota(U)$ is contained in the complement of the hypersurface defined by $t$, hence it follows from 
Remarks~\ref{D_mod_str_V0} and \ref{rmk_restriction_Vfiltration_open_subset} that if $\cM$ has a $V$-filtration with respect to $f$, then $V^{\alpha}\iota_+(\cM)\vert_U=\iota_+(\cM)\vert_U$ for all $\alpha\in\QQ$. Moreover, if $f$ is invertible (so $H=\emptyset$), then every $\cM$ has a $V$-filtration with respect to $f$.
\end{rmk}

\begin{eg}[The case of a $\cD_X$-module supported on $H$]\label{eg_case_support_in_H}
Note that ${\rm Supp}(\cM)\subseteq H$ if and only if ${\rm Supp}\big(\iota_+(\cM)\big)\subseteq X\times\{0\}$ and in this case it follows from
Example~\ref{V_filt_support_H} that $\cM$ has a $V$-filtration with respect to $f$. Moreover, we have
$V^{\alpha}\iota_+(\cM)=0$ for all $\alpha>0$ and
$${\rm Gr}_V^0\big(\iota_+(\cM)\big)\simeq \big\{u\in \iota_+(\cM)\mid t\cdot u=0\big\}$$
$$=\left\{\sum_{j\geq 0}\tfrac{f^j}{j!}u_0\otimes\partial_t^j\delta\mid u_0\in \cM\right\},$$
where the last equality follows easily from formula (\ref{eq_rmk_M_no_ftorsion})
(note that $f^ju=0$ for $j\gg 0$ by the assumption on $\cM$).
\end{eg}

\begin{rmk}[Compatibility of definitions smooth hypersurfaces]\label{Vfilt_on_B_f_smooth}
We note that if $f$ defines a smooth hypersurface in $X$, then the two notions of $V$-filtrations determine each other. More precisely, we have a $V$-filtration
on $\cM$ with respect to $f$ if and only if we have a $V$-filtration on $\iota_+(\cM)$ with respect to $t$. Indeed, this follows from Proposition~\ref{prop_Vfil_immersion}, since
$t\circ\iota=f$. Moreover, an easy computation using the proposition implies that the two $V$-filtrations determine each other as follows: 
if we choose local coordinates $x_1,\ldots,x_{n-1},y$ on $X$, with $y=f$, then 
$$V^{\alpha}\iota_+(\cM)=\sum_{j\geq 0}\partial_y^j\cdot (V^{\alpha}\cM\otimes\delta).$$

For example, if we take $\cM=\cO_X$, then it follows from Example~\ref{eg_V_filtration_O} that if the hypersurface defined by $f$ is smooth, then the $V$-filtration on $B_f$ is given by
$$V^{\alpha}\iota_+(\cO_X)=\cD_X\cdot y^{\lceil\alpha\rceil-1}\delta=\sum_{j\geq 0}\cO_X\cdot\partial_y^jy^{\lceil\alpha\rceil-1}\delta\quad\text{for all}\quad\alpha\in\QQ,$$
with the convention that $y^m=1$ if $m\leq 0$. In particular, we see that $V^1\iota_+(\cO_X)=\iota_+(\cO_X)$. 

\end{rmk}

\begin{eg}[The $V$-filtration on $\iota_+(\cO_X)$ when $H$ is an SNC divisor]\label{Vfiltration_SNC}
For a more interesting example, let us consider the case of a simple normal crossing divisor. Suppose that $x_1,\ldots,x_n$ are algebraic coordinates on $X$, and 
$f=\prod_{i=1}^rx_i^{a_i}$, where $a_1,\ldots,a_r$ are positive integers. For every $\lambda\in\QQ$, we put
$$I(f^{\lambda})=\big(x_1^{\lceil\lambda a_1\rceil-1}\cdots x_r^{\lceil\lambda a_r\rceil-1}),$$
with the convention that this is $\cO_X$ for $\lambda\leq 0$
(with the terminology in Section~\ref{section_multiplier} below, for every $\lambda>0$, this is the multiplier ideal $\cJ\big(X,(\lambda-\epsilon)H\big)$, where $0<\epsilon\ll 1$).
 It is clear from the formula that $I(f^{\lambda_1})\subseteq I(f^{\lambda_2})$ if $\lambda_1\geq \lambda_2$.
Note also that we have $f\cdot I(f^{\lambda})\subseteq I(f^{\lambda+1})$, with equality
if $\lambda>0$. 

Let us show that if we put
\begin{equation}\label{eq1_Vfiltration_SNC}
V^{\lambda}\iota_+(\cO_X)=\sum_{j\geq 0}\cD_X\cdot I(f^{\lambda+j})\partial_t^j\delta\quad\text{for all}\quad\lambda\in\QQ,
\end{equation}
then $V^{\bullet}\iota_+(\cO_X)$ is a $V$-filtration with respect to $t$.
A more general result is proved in \cite[Theorem~3.4]{Saito-MHM}, with a rather involved proof. We give a direct proof, by checking that the formula
in (\ref{eq1_Vfiltration_SNC}) satisfies the properties of the $V$-filtration. It is clear that this is a decreasing, exhaustive filtration. Note also that if $N={\rm lcm}(a_1,\ldots,a_r)$,
then $I(f^{\lambda})$ is constant for $\lambda\in (i/N,(i+1)/N]$ for every $i\in\ZZ$, hence the filtration we defined on $\iota_+(\cO_X)$ has the same property, 
and thus it is discrete and left continuous. 

Note first that the sum in the formula (\ref{eq1_Vfiltration_SNC}) can be replaced by a finite sum. Indeed, for every $i$, with $1\leq i\leq r$, 
and every $b\in\ZZ_{\geq 0}^r$, 
we have
$$(x_i\partial_{x_i}-b_i)\cdot x^b\partial_t^j\delta=-a_ix^{a+b}\partial_t^{j+1},$$
which implies 
$$I(f^{\lambda+1})\partial_t^{j+1}\delta\subseteq \cD_X\cdot I(f^{\lambda})\partial_t^j\delta\quad\text{for}\quad\lambda>0.$$
This implies that for $\lambda>0$ we have $V^{\lambda}\iota_+(\cO_X)=\cD_X\cdot I(f^{\lambda})\delta$ and, more generally, we have
$$V^{\lambda}\iota_+(\cO_X)=\sum_{j\leq\max\{1-\lambda,0\}}\cD_X\cdot I(f^{\lambda+j})\partial_t^j\quad\text{for all}\quad \lambda\in\QQ.$$
This implies that every $V^{\lambda}\iota_+(\cO_X)$ is finitely generated over $\cD_X$ (hence also over $V^0\cD_{X\times\AA^1}$). 

Note next that for every $j\geq 0$, we have
$$t\cdot I(f^{\lambda+j})\partial_t^j\delta\subseteq I(f^{\lambda+j+1})\partial_t^j\delta+I(f^{\lambda})\partial_t^{j-1}\delta\subseteq V^{\lambda+1}\iota_+(\cO_X),$$
so $t\cdot V^{\lambda}\iota_+(\cO_X)\subseteq V^{\lambda+1}\iota_+(\cO_X)$ for every $\lambda\in\QQ$. Moreover, this is an equality for $\lambda>0$. Indeed, we have seen that
in this case we have
$$V^{\lambda+1}\iota_+(\cO_X)=\cD_X\cdot I(f^{\lambda+1})\delta=f\cdot I(f^{\lambda})\delta=t\cdot I(f^{\lambda})\delta=t\cdot V^{\lambda}\iota_+(\cO_X).$$
Note also that we have 
$$\partial_t\cdot I(f^{\lambda+j})\partial_t^j=I(f^{\lambda+j})\partial_t^{j+1}\delta\subseteq V^{\lambda-1}\iota_+(\cO_X),$$
hence $\partial_t\cdot V^{\lambda}\iota_+(\cO_X)\subseteq V^{\lambda-1}\iota_+(\cO_X)$ for all $\lambda\in\QQ$. 

In order to conclude, it is enough to show that if $b\in\ZZ_{\geq 0}^r$ is such that $x^b\in I(f^{\lambda+j})$, then
$$(\partial_tt-\lambda)^rx^b\partial_t^j\delta\in V^{>\lambda}\iota_+(\cO_X).$$
Note that $x^b\partial_t^j\delta\in V^{>\lambda}\iota_+(\cO_X)$, unless there is $i$ such that $b_i=(\lambda+j) a_i-1$. 
In this case, if $e_1,\ldots,e_r$ is the standard basis of $\ZZ^r$, a simple computation gives
$$\partial_{x_i}\cdot x^{b+e_i}\partial_t^j\delta=(b_i+1)x^b\partial_t^j\delta-a_ix^{a+b}\partial_t^{j+1}\delta=-a_i(\partial_tt-\lambda)x^b\partial_t^j\delta.$$
We thus see that if $J=\big\{i\mid b_i=(\lambda+j) a_i-1\big\}$, then
$$(\partial_tt-\lambda)^{|J|}\cdot x^b\partial_t^j\delta\in\cD_X\cdot x^{b'}\partial_t^j\delta,$$
for some $b'\in\ZZ_{\geq 0}^r$ such that $x^{b'}\in I(f^{\lambda'+j})$ for some $\lambda'>\lambda$. 
We thus conclude that 
$$(\partial_tt-\lambda)^{|J|}\cdot x^b\partial_t^j\delta\in V^{>\lambda}\iota_+(\cO_X).$$
\end{eg}

\section{$V$-filtrations and $b$-functions}\label{sect_Vfil_bfcn}

The existence of $V$-filtrations is closely related to the existence of $b$-functions and the rationality of their roots. In this section we describe this relation.
As in the previous section, we assume that $X$ is a smooth, irreducible variety over the algebraically closed 
field $k$, of characteristic $0$. We consider a nonzero $f \in \cO_X(X)$ that defines the hypersurface $H$ and let $U = X \smallsetminus H$ be its complement, with $j\colon U\hookrightarrow X$ the inclusion. Let $\iota \colon X \to X\times \mathbf A^1$ be the graph embedding along $f$.

\subsection{Basics on $b$-functions}

Given a quasi-coherent left $\cD_X$-module $\cM$ on which $f$ acts bijectively
(equivalently, $\cM\simeq j_+(\cM\vert_U)$),
we consider the $\cO_X[s]$-module
$$\cM[s]f^s:=\cM\otimes_kk[s]f^s,$$
where $k[s]f^s$ is a free $k[s]$-module with basis the symbol $f^s$. 

In fact, $\cM[s]$ is not just a module over $\cO_X[s]$, but a left module over $V^0\cD_{X\times\AA^1}=\cD_X\langle s,t\rangle$.
Here $t$ acts by the translation $s\mapsto s+1$, that is
$$P(s)uf^s\mapsto P(s+1)fuf^s.$$
Moreover, a derivation $D\in {\mathcal Der}_k(\cO_X)$
commutes with the $k[s]$-action and acts on an element $uf^s$, with $u\in\cM$,
by the ``expected" rule
$$D\cdot uf^s=(Du)f^s+s\frac{D(f)}{f}uf^s.$$

It is a fundamental result, due to Bernstein and Kashiwara (see \cite[p. 25]{Bernstein}), that if $\cM$ is as above and $\cM\vert_U$ is holonomic, then 
for every local section $u$ of $\cM$, there is a nonzero $b(s)\in k[s]$ such that
\begin{equation}\label{eq_thm_b_fcn}
b(s)uf^s\in\cD_X[s]\cdot (fu)f^s.
\end{equation}
It is clear that the set of such $b(s)$ is an ideal in $k[s]$; its monic generator is the \emph{$b$-function} $b_u(s)$.

\begin{rmk}[Specialization to integral powers of $f$]\label{rmk_specialize_s}
In the above setting, we may specialize $s$ to integers. More precisely, for every 
$m\in\ZZ$, we have a morphism 
of sheaves of rings $\alpha_m\colon \cD_X[s]\to \cD_X$ that maps $P(s)$ to $P(m)$ for $P(s)\in\cD_X[s]$ and a surjective morphism of sheaves of
$\cD_X$-modules $\beta_m\colon \cM[s]f^s\to\cM$ where $\beta_m\big(Q(s)uf^s\big)=Q(m)f^mu$ for every $Q(s)\in k[s]$ and $u\in\cM$
(we here consider $\cM$ as a $\cD_X[s]$-module via $\alpha_m$). In particular, by specializing $s$ to $m\in\ZZ$ in (\ref{eq_thm_b_fcn}), we conclude 
that $b(m)f^mu\in \cD_X\cdot f^{m+1}u$. 
\end{rmk}

\begin{eg}[The Bernstein-Sato polynomial]
In the special case when $\cM=\cO_X[1/f]$ and $u = 1$, the $b$-function $b_u(s)$ is denoted by $b_f(s)$ and it is called the
\emph{Bernstein-Sato polynomial} of $f$. It is an important result of Kashiwara \cite{Kashiwara} that all roots of $b_f(s)$ are negative rational numbers.
For a more precise statement regarding the roots of $b_f(s)$ and some related $b$-functions, see Theorem~\ref{thm_DM_bfcn} below.
\end{eg}

\begin{rmk}[$b$-functions give coherence of push-forward]
As we have mentioned in Section~\ref{section_functors},
the existence of $b$-functions is a key ingredient in proving that if $\cN$ is a holonomic $\cD_U$-module, then $j_+(\cN)$ is a holonomic
$\cD_X$-module. The starting point is showing that $j_+(\cN)$ is coherent. To see this, suppose that $X$ is affine
and $\cN$ is generated over $\cD_X$ by the global sections $u_1,\ldots,u_r$. If $N\in {\mathbf Z}_{\geq 0}$ is such that 
no $b_{u_i}(s)$ has integer roots $<-N$, then by specializing  $s$ to $-j$ in the definition of $b_{u_i}$, for $j\geq N$, we see that $\tfrac{1}{f^{j+1}}u_i\in {\mathcal D_X}\cdot\tfrac{1}{f^j}u_i$.
Therefore $j_+(\cN)$ is generated over $\cD_X$ by $\tfrac{1}{f^N}u_i$, for $1\leq i\leq r$.
\end{rmk}

\begin{prop}\label{prop_V_and_b}
If $\cM$ is a coherent $\cD_X$-module on which $f$ acts bijectively, then we have
an isomorphism of $\cD_X\langle s,t\rangle$-modules
\begin{equation}\label{eq_prop_V_and_b}
\tau\colon \cM[s]f^s\overset{\sim}\to\iota_+(\cM),\quad P(s)uf^s\mapsto P(-\partial_tt)(u\otimes\delta).
\end{equation}
Moreover, the action of $\partial_t$ on the right-hand side corresponds to the action of $-st^{-1}$ on the left-hand side.
\end{prop}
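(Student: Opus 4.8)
The plan is to exhibit $\tau$ explicitly, check it is well-defined and $\cO_X$-linear, then verify compatibility with the actions of $t$, $\partial_t$, and derivations $D \in \mathcal{Der}_k(\cO_X)$, and finally produce an inverse. Recall that $\iota_+(\cM) = \bigoplus_{j \geq 0} \cM \otimes \partial_t^j \delta$ and that multiplication by $f$ on $\cM$ (equivalently, by $t$ on $\iota_+(\cM)$, by Remark~\ref{rmk_M_no_ftorsion}) is bijective. The source $\cM[s]f^s = \cM \otimes_k k[s] f^s$ is a free $k[s]$-module on the symbol $f^s$, with $t$ acting by $P(s) u f^s \mapsto P(s+1) f u f^s$, $D$ acting by $D \cdot u f^s = (Du) f^s + s \tfrac{D(f)}{f} u f^s$, and $s$ acting by multiplication. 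Since $P(s) u f^s = u \otimes P(s) f^s$ and $\{t^j f^s\}_{j \geq 0}$ together with the $t$-action (which is invertible here because $f$ is) lets us rewrite, the cleanest route is: define $\tau(uf^s) = u \otimes \delta$ and extend $k[s]$-linearly via $s \mapsto -\partial_t t$, i.e. $\tau(P(s) u f^s) = P(-\partial_t t)(u \otimes \delta)$; well-definedness is immediate since $k[s]f^s$ is $k[s]$-free on $f^s$.

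First I would check $\cO_X$-linearity: for $h \in \cO_X$, $\tau(h \cdot uf^s) = \tau((hu) f^s) = hu \otimes \delta = h(u \otimes \delta)$, using that $h$ commutes with $P(-\partial_t t)$ only up to $V^0$-terms — actually here one just uses $h(u \otimes \delta) = (hu)\otimes \delta$ from the formula for the $\cO_X$-action on $\iota_+(\cM)$, and more generally $h \cdot \tau(P(s)uf^s) = \tau(P(s)(hu)f^s)$ needs the commutation $h \cdot P(-\partial_t t) = P(-\partial_t t) \cdot h$ in $\cD_X\langle t, \partial_t\rangle$, which holds because $h \in \cO_X$ commutes with $t, \partial_t$ (the coordinate on $\AA^1$ is independent of $X$). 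Next, compatibility with $t$: on the source, $t \cdot P(s) u f^s = P(s+1) f u f^s$, so $\tau$ sends this to $P(-\partial_t t + 1)(fu \otimes \delta)$; on the target, $t \cdot P(-\partial_t t)(u \otimes \delta) = P(-\partial_t t + 1) \cdot t(u \otimes \delta)$ by Lemma~\ref{lem_s}(i) (the identity $P(s) t = t P(s-1)$, here with $s = -\partial_t t$, noting $-\partial_t t = -t\partial_t - 1$), and $t(u \otimes \delta) = fu \otimes \delta$ from \eqref{action_on_Bf2} with $j = 0$. These agree. Compatibility with $s$ is tautological. For $D \in \mathcal{Der}_k(\cO_X)$, I would compute $\tau(D \cdot uf^s) = \tau((Du)f^s + s\tfrac{D(f)}{f} uf^s) = Du \otimes \delta + (-\partial_t t)\big(\tfrac{D(f)}{f} u \otimes \delta\big)$, and compare with $D \cdot (u \otimes \delta) = Du \otimes \delta - D(f) u \otimes \partial_t \delta$ from \eqref{action_on_Bf2}; the match reduces to checking $-\partial_t t(\tfrac{D(f)}{f} u \otimes \delta) = -D(f) u \otimes \partial_t \delta$, which follows from $t(\tfrac{D(f)}{f}u \otimes \delta) = D(f) u \otimes \delta$ (apply \eqref{action_on_Bf2}) and then $-\partial_t(D(f)u \otimes \delta) = -D(f)u \otimes \partial_t\delta$ — here I use that $\partial_t$ commutes with multiplication by $\cO_X$-elements and that $\partial_t$ of $v \otimes \delta$ is $v \otimes \partial_t \delta$. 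One extra subtlety: $\tfrac{D(f)}{f}$ only makes sense because $f$ acts bijectively on $\cM$, which is exactly the standing hypothesis.

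For the last sentence of the proposition: the claim is that under $\tau$, the operator $\partial_t$ on $\iota_+(\cM)$ corresponds to $-st^{-1}$ on $\cM[s]f^s$ (note $t$ is invertible on the source since $f$ acts bijectively, so $t^{-1}$ makes sense). I would verify this by checking it on the generator $uf^s$ and using the module structure: we want $\tau^{-1}(\partial_t \cdot \tau(uf^s)) = -s t^{-1} u f^s$, i.e. $\partial_t(u \otimes \delta) = \tau(-st^{-1} uf^s)$. Now $t^{-1} u f^s$ is the element $w f^s$ with $f w = u$ (using $t$-invertibility), so $-s t^{-1} u f^s = -s w f^s$ and $\tau(-swf^s) = -(-\partial_t t)(w \otimes \delta) = \partial_t t (w \otimes \delta) = \partial_t (fw \otimes \delta) = \partial_t(u \otimes \delta)$, as desired; extending to all of $\cM[s]f^s$ is then a formal consequence of $\tau$ being a $\cD_X\langle s,t\rangle$-module map together with the relation $\partial_t \cdot t = -s$ (equivalently $-s t^{-1} \cdot t = -s = \partial_t t \cdot 1$, but one must be careful about non-commutativity, so it is safest to argue on generators as above and then invoke that both $\partial_t$ and $-st^{-1}$ satisfy the same commutation relations with $s$ and $t$).

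Finally, for bijectivity of $\tau$: surjectivity is clear since $\iota_+(\cM) = \bigoplus_j \cM \otimes \partial_t^j \delta$ and $\partial_t^j(u \otimes \delta) = u \otimes \partial_t^j \delta$ lies in the image (because $\partial_t = -st^{-1}$ under $\tau$, or more elementarily because $\tau$ is $\partial_t$-compatible once that is established, or most simply: $\partial_t^j$ acting on the image of $\tau$ stays in the image as $\tau$ respects the $\cD_X\langle s, t\rangle$-action and $\partial_t \in \cD_X\langle t, \partial_t\rangle$ — but $\partial_t$ is not in $V^0$, so the cleanest argument is via the last statement of the proposition). Injectivity: if $P(-\partial_t t)(u \otimes \delta) = 0$ in $\bigoplus_{j\geq 0}\cM \otimes \partial_t^j \delta$, expand $P(-\partial_t t)(u \otimes \delta) = \sum_j c_j \partial_t^j t^j (u \otimes \delta)$ after writing $P$ in the basis $\{\prod(s+i)\}$ or directly; using Lemma~\ref{lem_s}(ii) and the explicit formula \eqref{eq_rmk_M_no_ftorsion} for $t^j$, one sees the top-degree term in $\partial_t$ forces the leading coefficient of $P$ times a power of $f$ (invertible) times $u$ to vanish, hence by downward induction on the degree of $P$ (using $f$-bijectivity at each stage) all coefficients of $P$ kill $u$, so $P(s) u f^s = 0$. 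I expect this injectivity argument — keeping careful track of the filtration by powers of $\partial_t$ and peeling off leading terms — to be the one genuinely fiddly point; everything else is bookkeeping with Lemma~\ref{lem_s} and formula \eqref{action_on_Bf2}.
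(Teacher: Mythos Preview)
Your proposal is correct and follows the same overall strategy as the paper: verify that $\tau$ is $\cO_X[s]$-linear, check compatibility with $t$ and with derivations $D\in\mathcal{D}er_k(\cO_X)$ (using Lemma~\ref{lem_s}(i) and formula~(\ref{action_on_Bf2}) exactly as you do), and then prove bijectivity.

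The one place where the paper is cleaner is the bijectivity argument. Rather than your ``peel off the leading $\partial_t$-term'' induction, the paper observes directly from Lemma~\ref{lem_s}(ii) that
\[
(-1)^m\,\tau\big(s(s-1)\cdots(s-m+1)\,uf^s\big)=f^m u\otimes\partial_t^m\delta.
\]
Since the falling factorials $\prod_{j=0}^{m-1}(s-j)$ form a $k$-basis of $k[s]$ and multiplication by $f$ is invertible on $\cM$, this exhibits $\tau$ as a triangular change of basis between $\bigoplus_{m\geq 0}\cM\cdot\prod_{j=0}^{m-1}(s-j)\,f^s$ and $\bigoplus_{m\geq 0}\cM\otimes\partial_t^m\delta$, giving injectivity and surjectivity in one stroke. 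Your approach works too, but this one-line identity avoids the fiddly induction you anticipated. Conversely, your treatment of the $\partial_t\leftrightarrow -st^{-1}$ claim is more explicit than the paper's, which leaves it essentially implicit (it follows immediately once $\tau$ is known to intertwine $s$ and $t$, since $\partial_t=(-s)t^{-1}$ as operators on $\iota_+(\cM)$).
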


\begin{proof}
Let us check the compatibility of $\tau$ with the $\cD_X\langle s,t\rangle$-action. It is clear that $\tau$ is $\cO_X[s]$-linear, hence we
only need to check the compatibility with the action of ${\mathcal Der}_k(\cO_X)$ and that of $t$.
Note that
$$\tau\big(t\cdot P(s)uf^s)=\tau\big(P(s+1)fuf^s\big)=P(-\partial_tt+1)fu\otimes\delta$$
$$=P(-\partial_tt+1)t u\otimes\delta=
tP(-\partial_tt)u\otimes\delta=t\cdot \tau\big(P(s)uf^s\big),$$
where the second to last equality follows from Lemma~\ref{lem_s}i). 

Suppose now that $D\in {\mathcal Der}_k(\cO_X)$. We have
$$\tau\big(D\cdot P(s)uf^s)=\tau\big(P(s)Duf^s+sP(s)\tfrac{D(f)}{f}uf^s\big)=P(-\partial_tt)Du\otimes\delta-P(-\partial_tt)\partial_tt\tfrac{D(f)}{f}u\otimes\delta$$
$$=P(-\partial_tt)Du\otimes\delta-P(-\partial_tt)D(f)u\otimes\partial_t\delta,\quad\text{while}$$
$$D\cdot \tau\big(P(s)uf^s\big)=D\cdot P(-\partial_tt)u\otimes \delta=P(-\partial_tt)\big(Du\otimes\delta-D(f)u\otimes\partial_t\delta\big),$$
hence $\tau\big(D\cdot P(s)uf^s)=D\cdot \tau\big(P(s)uf^s\big)$. 

Note next that
it follows from the second formula in Lemma~\ref{lem_s}ii) that 
$$(-1)^m\tau\big(s(s-1)\cdots(s-m+1)uf^s\big)=f^mu\otimes\partial_t^m\delta\quad\text{for all}\quad m.$$
Since $k[s]$ has a basis given by $\prod_{j=0}^{m-1}(s-j)$, for $m\geq 0$, and since multiplication by $f$ is invertible on $\cM$,
it follows that $\tau$ is a bijective map. This completes the proof of the proposition.
\end{proof}

We next extend slightly the notion of $b$-function, as follows. This more general notion will feature in the characterization of the $V$-filtration
in Corollary~\ref{prop_char_V_filt}.

\begin{defi}\label{gen_defi_b_fcn}
Let $\cM$ be a coherent $\cD_X$-module.
We say that a local section $w\in \iota_+(\cM)$ \emph{has a $b$-function} if there is a nonzero $b(s)\in k[s]$
such that $b(s)w\in V^1\cD_{X\times\AA^1}\cdot w$. In this case, the set of such polynomials $b(s)$ is a nonzero ideal of $k[s]$.
Its monic generator is the \emph{$b$-function} $b_w(s)$.
\end{defi}

\begin{rmk}[Compatibility of the two definitions of $b$-functions]
Note that if $\cM$ is a coherent $\cD_X$-module on which $f$ acts bijectively, $u$ is a local section of $\cM$, and $w=u\otimes\delta$ is the corresponding
section of $\iota_+(\cM)$, then
$w$ corresponds to $uf^s$ via the isomorphism $\tau$ in Proposition~\ref{prop_V_and_b} and in this case $b_w(s)$ is the monic polynomial of minimal degree
such that $b_w(s)uf^s\in\cD_X[s]\cdot fuf^s$ (hence $b_w=b_u$).
Indeed, this follows from the fact that $V^1\cD_{X\times\AA^1}=\cD_X\langle s,t\rangle\cdot t$, hence $V^1\cD_{X\times\AA^1}\cdot (u\otimes\delta)=\cD_X[s]\cdot (fu\otimes\delta)$.
 In particular, by taking $\cM=\cO_X[1/f]$, we see that $b_{\delta}(s)=b_f(s)$. 
\end{rmk}

\begin{rmk}[Interpretation of $b_w$]\label{rmk_gen_defi_b_fcn}
With the notation in Definition~\ref{gen_defi_b_fcn}, we note that in fact $b_w(s)$ satisfies $b_w(s)\cdot \tfrac{V^0\cD_{X\times\AA^1}\cdot w}{V^1\cD_{X\times\AA^1}\cdot w}=0$.
This is due to the fact that $V^0\cD_{X\times\AA^1}=\cD_X[s]+V^1\cD_{X\times\AA^1}$ and $b_w(s)$ commutes with the elements of $\cD_X[s]$. 

Moreover, if $\cN=V^0\cD_{X\times\AA^1}\cdot w$, then $b_w$ is the minimal polynomial of the action of $s$ on $\cN/t\cN$. This follows from the fact that 
$V^1\cD_{X\times\AA^1}\cdot w=\big(t\cdot \cD_X\langle s,t\rangle\big)\cdot w=t\cN$.
\end{rmk}

\begin{rmk}[Behavior of $b_w$ with respect to an open cover]\label{B_fcn_open_cover_v0}
Suppose that $X=U_1\cup\ldots\cup U_r$ is an open cover, $\cM$ is a coherent $\cD_X$-module, and $f\in\cO_X(X)$ is nonzero. If $f_j=f\vert_{U_j}$ and $\iota_f\colon X\hookrightarrow 
X\times\AA^1$ and $\iota_{f_j}\colon U_j\hookrightarrow U_j\times\AA^1$ are the corresponding graph embeddings, then we have canonical isomorphisms
$$\iota_{f_j}(\cM\vert_{U_j})\simeq\iota_f(\cM)\vert_{U_j}.$$
For $w\in\Gamma\big(X,\iota_+(\cM)\big)$, set $w_j=w\vert_{U_j}$, 
then it follows from definition that 
$$b_w={\rm lcm}\big\{b_{w_j}\mid 1\leq j\leq r\}$$
(this means that $b_w$ exists if and only if each $b_{w_j}$ exists, and if this is the case, then we have the stated equality).
\end{rmk}

\begin{rmk}[Behavior of $b_w$ when rescaling $w$]\label{bfcn_rescaling_invertible}
Note that if $w\in \iota_+(\cM)$ and $p\in\cO_X(X)$ is an invertible function, then $b_w$ exists if and only if $b_{pw}$ exists and, if this is the case, then
$b_w=b_{pw}$. Indeed, if $b(s)w=Q\cdot w$, for some $Q\in V^1\cD_{X\times\AA^1}$, then 
$$b(s)pw=(pQ)\cdot w=(pQp^{-1})pw$$
and $pQp^{-1}\in V^1\cD_{X\times\AA^1}$. This implies that if $b_w$ exists, then $b_{pw}$ exists and $b_{pw}$ divides $b_w$. 
The converse follows by symmetry since $p$ is invertible.
\end{rmk}

\begin{rmk}[Behavior of $V$-filtration and $b$-functions when rescaling $f$]\label{Vfilt_rescaling}
Given a coherent $\cD_X$-module $\cM$, it is easy to compare the $V$-filtrations of $\cM$ with respect to $f$ and $g=pf$, where $p\in\cO_X(X)$ is an invertible function. 
Note that if $\iota_f$ and $\iota_g$ are the graph embeddings corresponding to $f$ and $g$, respectively, then $\iota_g=\varphi\circ \iota_f$, where 
$\varphi\colon X\times \AA^1\to X\times\AA^1$ is the isomorphism given by $\varphi(x,t)=\big(x,p(x)t\big)$, so that $t\circ\varphi=pt$. We thus get an isomorphism of $\cD_{X\times\AA^1}$-modules
$$\varphi^*(\iota_g)_+(\cM)\simeq (\iota_f)_+(\cM),$$
which in turn induces an isomorphism of $\cO_X$-modules
$$\tau\colon (\iota_g)_+(\cM)\to (\iota_f)_+(\cM),\,\,\tau\big(\sum_{j=1}^N\tfrac{u_j}{(g-t)^j}\big)=\sum_{j=1}^N\tfrac{p^{-j}u_j}{(f-t)^j}.$$
This has the property that $\tau(Pw)=\tau_0(P)w$, where $\tau_0\colon\cD_X\langle t,\partial_t\rangle\to \cD_X\langle t,\partial_t\rangle$ is the isomorphism
of sheaves of rings which is the identity on $\cO_X$ and satisfies
\begin{equation}\label{eq_Vfilt_rescaling}
\tau_0(t)=pt,\,\,\tau_0(\partial_t)=p^{-1}\partial_t,\,\,\text{and}\quad \tau_0(Q)=Q-Q(p)p^{-1}t\partial_t
\end{equation}
for $Q\in {\rm Der}_k(\cO_X)$ (note that, in particular, $\tau_0(s)=s$).
By the last assertion in Remark~\ref{remark_Vfil_invertible_multiple}, the $V$-filtrations
on $(\iota_f)_+(\cM)$ with respect to $t$ and $pt$ coincide, hence $\cM$ has a $V$-filtration with respect to $f$ if and only if it has a $V$-filtration with respect to $g$,
and in this case we have
$$\tau\big(V^{\alpha}(\iota_g)_+(\cM)\big)=V^{\alpha}(\iota_f)_+(\cM)\quad\text{for all}\quad\alpha\in\QQ.$$
Moreover,  since $\tau$ is compatible with the action of $V^{\bullet}\cD_X\langle t,\partial_t\rangle$
(this follows from the definition of $V^{\bullet}\cD_{X\times\AA^1}$ in terms of the ideal of $X\times\{0\}$, but can be deduced also from the
explicit formulas (\ref{eq_Vfilt_rescaling})),
it follows from the definition that 
$b_w=b_{\tau(w)}$ for every $w\in (\iota_g)_+(\cM)$, in the sense that one exists if and only if the other one exists and in this case they are equal. 
 For example, we see that for every $u\in\cM$ and $m\in\ZZ_{\geq 0}$,
if $v=u\otimes \partial_t^m\delta\in (\iota_g)_+(\cM)$ and $w=u\otimes\partial_t^m\delta\in (\iota_f)_+(\cM)$, then 
$$b_w=b_{\tau(w)}=b_{p^{-m-1}v}=b_v,$$
where the last equality follows from Remark~\ref{bfcn_rescaling_invertible}.
\end{rmk}

\subsection{Existence of the $V$-filtration via $b$-functions}
The following result provides the criterion for the existence of $V$-filtrations in terms of $b$-functions. We give the proof following an approach due to Sabbah.

\begin{thm}\label{thm_existence_Vfilt}
Let $\cM$ be a coherent $\cD_X$-module.
\item[i)] If $\cM$ has a $V$-filtration with respect to $f$, then every section $w\in\iota_+(\cM)$ has a $b$-function. Moreover, if $w\in V^{\alpha}\iota_+(\cM)$, then
all roots of $b_w(s)$ are rational numbers $\gamma\leq -\alpha$ such that ${\rm Gr}_V^{-\gamma}(\iota_+\cM)\neq 0$. 
\item[ii)] Conversely, if $w_1,\ldots,w_r\in\Gamma(X,\cM)$ generate $\cM$ as a $\cD_X$-module and if each $w_i\otimes\delta\in \iota_+(\cM)$
has a $b$-function whose roots are all rational, then $\cM$ has a $V$-filtration with respect to $f$.
\end{thm}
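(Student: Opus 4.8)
Write $\cN=\iota_+(\cM)$ and fix $\alpha\in\QQ$ with $w\in V^{\alpha}\cN$. Put $P:=V^0\cD_{X\times\AA^1}\cdot w$; by Remark~\ref{V0_Noeth} this is a coherent $V^0\cD_{X\times\AA^1}$-module, and $P\subseteq V^{\alpha}\cN$ since $V^0\cD_{X\times\AA^1}$ preserves $V^{\alpha}\cN$. Filter $P$ by $V^{\gamma}P:=P\cap V^{\gamma}\cN$. The Rees-type module $\bigoplus_{i\ge0}V^{\alpha+i}\cN\,z^i$ is finitely generated over ${\mathcal R}_+(V^{\bullet}\cD_{X\times\AA^1})$: each $V^{\alpha+i}\cN$ is finitely generated over $V^0\cD_{X\times\AA^1}$ by Definition~\ref{defi_V_filtration}~iii), and $V^{\alpha+i}\cN=t\cdot V^{\alpha+i-1}\cN$ once $\alpha+i-1>0$ by Definition~\ref{defi_V_filtration}~iv), so it is generated in bounded degree. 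Since ${\mathcal R}_+(V^{\bullet}\cD_{X\times\AA^1})$ is Noetherian (Remark~\ref{V0_Noeth}), its sub-object $\bigoplus_{i\ge0}V^{\alpha+i}P\,z^i$ is finitely generated as well, and (using $V^m\cD_{X\times\AA^1}=t^mV^0\cD_{X\times\AA^1}$, see (\ref{new_eq1_V_D})) this gives an integer $d$ with $V^{\gamma+1}P=t\cdot V^{\gamma}P$ for all $\gamma\ge\alpha+d$; in particular $V^{\gamma}P\subseteq tP$ for $\gamma>\alpha+d+1$. Hence the image of $V^{\bullet}P$ in $P/tP$ has only finitely many nonzero graded pieces, each indexed by some $\gamma\in[\alpha,\alpha+d+1]$ and each a subquotient of ${\rm Gr}_V^{\gamma}(\cN)$, on which $(s+\gamma)$ is nilpotent by Definition~\ref{defi_V_filtration}~ii). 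Consequently $\prod_{\gamma}(s+\gamma)^{N}$, the product over the finitely many $\gamma$ with ${\rm Gr}_V^{\gamma}(\cN)\neq0$, annihilates $P/tP$ for $N\gg0$; since $b_w$ is the minimal polynomial of $s$ on $P/tP$ by Remark~\ref{rmk_gen_defi_b_fcn}, it follows that $w$ has a $b$-function whose roots are rational, are $\le-\alpha$, and each equals $-\gamma$ for some $\gamma$ with ${\rm Gr}_V^{\gamma}(\iota_+\cM)\neq0$.

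\textbf{Part (ii): reductions.} The conclusion is local on $X$: by Remark~\ref{rmk_restriction_Vfiltration_open_subset} and Corollary~\ref{cor_unique_V_filt} a $V$-filtration exists once it does on the members of an affine cover, and by Remark~\ref{B_fcn_open_cover_v0} the hypothesis on $b$-functions is inherited under restriction, so we may assume $X$ affine with algebraic coordinates. Next, $\iota_+(\cM)=\sum_i\cD_{X\times\AA^1}\cdot(w_i\otimes\delta)$: indeed $\iota_+(\cM)=\bigoplus_{j\ge0}\cM\otimes\partial_t^j\delta$, and an induction on the order of $D\in\cD_X$ using the identity $(Dv)\otimes\delta=D\cdot(v\otimes\delta)+D(f)\,\partial_t\cdot(v\otimes\delta)$ for $D\in{\mathcal Der}_k(\cO_X)$ coming from (\ref{action_on_Bf2}) gives $\cM\otimes\delta\subseteq\sum_i\cD_{X\times\AA^1}\cdot(w_i\otimes\delta)$, and then applying $\partial_t^j$ gives the rest. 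The coherent $\cD_{X\times\AA^1}$-modules admitting a $V$-filtration with respect to $t$ are closed under finite direct sums and under quotients (Proposition~\ref{properties_V_filtration1}~ii)), and $\iota_+(\cM)$ is a quotient of $\bigoplus_i\cD_{X\times\AA^1}\cdot(w_i\otimes\delta)$, so it suffices to treat a single generator. We are thus reduced to: if $w\in\iota_+(\cM)$ has a $b$-function $b_w(s)=\prod_{k=1}^{\ell}(s+\alpha_k)^{\nu_k}$ with $\alpha_1<\cdots<\alpha_\ell$ in $\QQ$, then $\cN:=\cD_{X\times\AA^1}\cdot w$ carries a $V$-filtration with respect to $t$.

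\textbf{Part (ii): construction.} Set $P:=V^0\cD_{X\times\AA^1}\cdot w$, so that $tP=V^1\cD_{X\times\AA^1}\cdot w$ and $b_w(s)P\subseteq tP$ (Remark~\ref{rmk_gen_defi_b_fcn}). The $\cD_X[s]$-module $P/tP$ is annihilated by $b_w(s)$; since $s$ commutes with $\cD_X$ and its minimal polynomial on $P/tP$ factors with rational roots, $P/tP$ splits into generalized $s$-eigenspaces $\bigoplus_k(P/tP)_{-\alpha_k}$. Letting $Q_k\subseteq P$ be the preimage of $\bigoplus_{j\ge k}(P/tP)_{-\alpha_j}$, we obtain coherent $V^0\cD_{X\times\AA^1}$-submodules $P=Q_1\supseteq\cdots\supseteq Q_{\ell+1}=tP$ with $(s+\alpha_k)$ nilpotent on $Q_k/Q_{k+1}$. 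One uses the $Q_k$ and the powers of $t$ to define $V^{\beta}P$ for $\beta\in\QQ$ — the shape for $\beta>1$ being forced by Definition~\ref{defi_V_filtration}~iv) — and then sets
$$V^{\beta}\cN:=\sum_{m\ge0}\partial_t^{m}\cdot V^{\beta+m}P,$$
whose shape for $\beta<0$ is forced by Corollary~\ref{cor_rmk1_Vfiltration}~iii). This is visibly a decreasing, exhaustive, discrete, left-continuous filtration by quasi-coherent $\cO_X$-submodules with $t\cdot V^{\beta}\cN\subseteq V^{\beta+1}\cN$ and $\partial_t\cdot V^{\beta}\cN\subseteq V^{\beta-1}\cN$; using that the $t$- and $\partial_t$-towers stabilize together with Remark~\ref{V0_Noeth}, each $V^{\beta}\cN$ is finitely generated over $V^0\cD_{X\times\AA^1}$, so Definition~\ref{defi_V_filtration}~i), iii), iv) hold. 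The remaining condition ii) follows from the corresponding property of the $Q_k/Q_{k+1}$ once one tracks eigenvalue shifts: Lemma~\ref{lem_s}~i) gives $P(s)t^m=t^mP(s-m)$ and $P(s)\partial_t^m=\partial_t^mP(s+m)$, so multiplication by $t^m$ and by $\partial_t^m$ shifts the $s$-eigenvalue by $\mp m$ in step with the shift of the $V$-index, and the only graded pieces that occur lie in $\{-\alpha_k\}+\ZZ$. Finally, the filtrations produced over the affine opens are unique (Proposition~\ref{unique_V_filt}), so they glue, proving the theorem.

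\textbf{Main obstacle.} The delicate point is condition ii) in the single-generator case: one must set up $V^{\beta}P$ on the fundamental strip $\beta\in(0,1]$ so that its successive quotients are exactly the $(s+\beta)$-isotypic pieces built from the $Q_k$, while at the same time each $V^{\beta}P$ stays finitely generated over $V^0\cD_{X\times\AA^1}$ and satisfies $t\cdot V^{\beta}P=V^{\beta+1}P$ — it is the tension between these two requirements that makes the rationality of the roots of $b_w$ indispensable — and then propagate this both upward by $t$ and downward by $\partial_t$ without creating graded pieces outside $\{-\alpha_k\}+\ZZ$. By contrast, the behavior on $X\smallsetminus H$, the finite-generation bookkeeping, and the gluing over an affine cover are handled formally by the results of Section~\ref{section_Vfiltration_smooth}.
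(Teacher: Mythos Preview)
Your argument for part~(i) is correct and parallels the paper's closely: both exploit that $V^1\cD_{X\times\AA^1}\cdot w$ eventually absorbs $V^{\beta}\cap(\text{module generated by }w)$ for $\beta\gg0$, so that a finite product $\prod(s+\gamma_i)^N$ over the nonzero graded pieces pushes $w$ into $V^1\cD_{X\times\AA^1}\cdot w$. Your phrasing via the Rees module and $P=V^0\cD_{X\times\AA^1}\cdot w$ is a clean variant.

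For part~(ii), your reduction to a single generator via direct sums and Proposition~\ref{properties_V_filtration1}~ii) is valid and is a pleasant simplification over the paper's simultaneous treatment of all $w_i$. However, there is a genuine gap in the construction. You decompose $P/tP$ into generalized $s$-eigenspaces indexed by $-\alpha_1,\ldots,-\alpha_\ell$ and write ``One uses the $Q_k$ and the powers of $t$ to define $V^{\beta}P$ for $\beta\in\QQ$,'' asserting that the quotients on the strip $(0,1]$ are the correct isotypic pieces. But nothing guarantees that the $\alpha_k$ lie in $(0,1]$: the roots of $b_w$ are rational but otherwise arbitrary. If, say, $b_w(s)=(s+3)$, then $P/tP$ is killed by $(s+3)$, yet your recipe gives no way to produce a $V^{\beta}P$ on $(0,1]$ with $(s+\beta)$ nilpotent on its graded pieces and with $t\cdot V^{\beta}P=V^{\beta+1}P$ for $\beta>0$; simply declaring $V^{\beta}P=P$ on $(0,1]$ fails condition~iv), while the $Q_k$ themselves sit at level $\alpha_k=3$, not in $(0,1]$. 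The ``eigenvalue shift'' observation from Lemma~\ref{lem_s} tells you how eigenvalues move under $t^m$ and $\partial_t^m$, but not how to arrange a filtration whose jumps land in the right places to begin with.

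This is exactly the difficulty the paper isolates by introducing pre-$V$-filtrations (Definition~\ref{defi_Z_filtration}) and proving Proposition~\ref{prop_Z_filtration}: one first writes down the crude $\ZZ$-indexed filtration $W^j=V^j\cD_{X\times\AA^1}\cdot w$, which satisfies $p(\partial_tt-j)\cdot W^j\subseteq W^{j+1}$ for $p(x)=b_w(-x)$ with no constraint on the location of the roots, and then applies an explicit root-shifting procedure (replacing $W^{\bullet}$ by $W^{m+1}+(\partial_tt-m-\lambda)^d\cdot W^m$ to move a root $\lambda$ to $\lambda+1$) to bring all roots into $[0,1)$. Only then does the eigenspace decomposition on $W^0/W^1$ yield the $\QQ$-indexed $V$-filtration. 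Your ``Main obstacle'' paragraph correctly identifies where the work is, but the proof itself does not supply it.
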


Before giving the proof, it is convenient to introduce a version of $V$-filtration indexed by integers. We do this in the setting of a smooth hypersurface (as usual, we will use it on $X\times {\mathbf A}^1$).

\begin{defi}\label{defi_Z_filtration}
Let $\cM$ be a coherent $\cD_X$-module and $t\in\cO_X(X)$ nonzero that defines a smooth hypersurface. A \emph{pre-$V$-filtration} on $\cM$
(with respect to $t$) is a decreasing, exhaustive filtration $W^{\bullet}\cM=(W^i\cM)_{i\in {\mathbf Z}}$ on $\cM$ 
by quasi-coherent $\cO_X$-modules,  that satisfies the following properties:
\begin{enumerate}
\item[a)] We have $V^i\cD_{X}\cdot W^j\cM\subseteq W^{i+j}\cM$ for all $i,j\in\ZZ$.
\item[b)] $W^i\cM$ is locally finitely generated over $V^0\cD_X$ for every $i\in\ZZ$.
\item[c)] We have $W^{i+1}\cM=t\cdot W^i\cM$ for $i\gg 0$. 
\item[d)] There is a polynomial $p=p_W\in\QQ[x]$, with all roots in $\QQ$, such that $p(\partial_tt-i)\cdot W^i\cM\subseteq W^{i+1}\cM$ for all $i\in\ZZ$.
\end{enumerate}
If the polynomial $p_W$ in d) has all its roots in $[0,1)$, then we call $W^{\bullet}$ a \emph{$\ZZ$-indexed $V$-filtration}. 
As before, if $f\in\cO_X(X)$ is nonzero and $\iota=\iota_f\colon X\to X\times {\mathbf A}^1$ is the corresponding closed immersion, we consider 
pre-$V$-filtrations and $\ZZ$-indexed $V$-filtrations
on $\iota_+(\cM)$ with respect to $t$.
\end{defi}

The following proposition relates the various notions of $V$-filtrations. 

\begin{prop}\label{prop_Z_filtration}
Let $\cM$ be a coherent $\cD_X$-module and $t\in\cO_X(X)$ nonzero that defines a smooth hypersurface. 
\begin{enumerate}
\item[i)] If $V^{\bullet}\cM$ is a $V$-filtration on $\cM$ with respect to $t$, then $(V^i\cM)_{i\in\ZZ}$ is a $\ZZ$-indexed $V$-filtration with respect to $t$.
\item[ii)] If $\cM$ has a pre-$V$-filtration with respect to $t$, then it also has a $\ZZ$-indexed $V$-filtration.
\item[iii)] If $(W^i\cM)_{i\in\ZZ}$ is a $\ZZ$-indexed $V$-filtration on $\cM$ with respect to $t$, then $\cM$ has a $V$-filtration $V^{\bullet}\cM$
with respect to $t$ and $W^i\cM=V^i\cM$ for all $i\in\ZZ$.
\end{enumerate}
\end{prop}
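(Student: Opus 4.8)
The plan is to treat (i) as a direct verification, (iii) by building the $\QQ$-indexed filtration out of generalized $s$-eigenspaces of the graded pieces of $W^\bullet$, and (ii) by a \emph{shearing} procedure that moves the roots of the polynomial into $[0,1)$.

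For (i), I would check conditions (a)--(d) of Definition~\ref{defi_Z_filtration} for $(V^i\cM)_{i\in\ZZ}$: (a), (b), (c) are immediate from conditions i), iii), iv) of Definition~\ref{defi_V_filtration}, so the only real point is to produce the polynomial $p_W$ with all roots in $[0,1)$. Since the filtration is discrete, fix $\ell$ with $\mathrm{Gr}_V^{\alpha}(\cM)=0$ unless $\ell\alpha\in\ZZ$; there are finitely many residue classes of such $\alpha$ modulo $\ZZ$, and within each class the graded pieces are connected by the isomorphisms of Proposition~\ref{rmk1_Vfiltration}, which intertwine the action of $s$ up to an integer shift, the only gap being between $\mathrm{Gr}_V^{0}$ and $\mathrm{Gr}_V^{1}$. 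Hence the nilpotency order of $(s+\alpha)$ on $\mathrm{Gr}_V^{\alpha}(\cM)$ is bounded uniformly in $\alpha$ by some $N$, and I would take $p_W(x)=\prod_{\beta\in[0,1),\ \ell\beta\in\ZZ}(x-\beta)^{N}$: since $\partial_t t-i=-s-i$, we get $p_W(\partial_t t-i)=\pm\prod_{\beta}(s+i+\beta)^{N}$, which kills $V^{i}\cM/V^{i+1}\cM$ because the latter is filtered by the pieces $\mathrm{Gr}_V^{\alpha}(\cM)$ with $\alpha\in[i,i+1)$.

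For (iii), given a $\ZZ$-indexed $V$-filtration $W^\bullet\cM$, each quotient $W^{i}\cM/W^{i+1}\cM$ is annihilated by $t$ (as $t\in V^1\cD_X$), is a $\cD_H[s]$-module, and is killed by the polynomial $p_W(-s-i)$ in $s$; since $\cD_H$ commutes with $s$, it splits into a finite direct sum of generalized $s$-eigenspaces with eigenvalues $-i-\beta$, $\beta$ a root of $p_W$, all lying in $(-i-1,-i]$. For $i<\alpha\le i+1$ I would set $V^{\alpha}\cM$ to be the preimage in $W^{i}\cM$ of the sum of the generalized $(-i-\beta)$-eigenspaces of $s$ on $W^{i}\cM/W^{i+1}\cM$ over the roots $\beta$ with $\beta\ge\alpha-i$; then $V^{i+1}\cM=W^{i+1}\cM$, the definitions on adjacent intervals match, and $V^{i}\cM=W^{i}\cM$ for all $i$. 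To see $V^\bullet\cM$ is a $V$-filtration: discreteness and left-continuity hold because the roots $\beta$ have a common denominator; $\mathrm{Gr}_V^{i+\beta}(\cM)$ is by construction the generalized $(-(i+\beta))$-eigenspace, giving condition ii) of Definition~\ref{defi_V_filtration}; conditions i) and iii) follow from condition (a) together with the facts that $\cD_H$ and $s$ preserve the eigenspace decomposition while $t$ and $\partial_t$ shift the $s$-eigenvalue by $-1$ and $+1$; and condition iv) is deduced, much as in the proof of Proposition~\ref{properties_V_filtration1}, from condition (c) for $W^\bullet$ (which gives $tV^{i}\cM=V^{i+1}\cM$ for $i\gg 0$) and Proposition~\ref{rmk1_Vfiltration}, by a descending induction on $\alpha>0$ using discreteness.

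For (ii), starting from a pre-$V$-filtration $W^\bullet\cM$ with polynomial $p$, I would reduce by a finite sequence of shearing operations to the case where all roots of $p$ lie in $[0,1)$, in which case $W^\bullet\cM$ is by definition a $\ZZ$-indexed $V$-filtration. As before each $W^{i}\cM/W^{i+1}\cM$ splits into generalized $s$-eigenspaces; write $(W^{i}\cM/W^{i+1}\cM)_{\gamma}$ for the summand attached to a root $\gamma$ of $p$ (the generalized $(-i-\gamma)$-eigenspace). If $p$ has a root $\gamma\ge 1$, I would replace $W^\bullet\cM$ by
$$\widetilde W^{i}\cM:=\bigl\{\,v\in W^{i-1}\cM \;:\; v\bmod W^{i}\cM\in\textstyle\sum_{\gamma\ge 1}(W^{i-1}\cM/W^{i}\cM)_{\gamma}\,\bigr\},$$
which satisfies $W^{i}\cM\subseteq\widetilde W^{i}\cM\subseteq W^{i-1}\cM$; dually, if $p$ has a root $\gamma<0$, I would replace $W^{i}\cM$ by the preimage in $W^{i}\cM$ of $\sum_{\gamma\ge 0}(W^{i}\cM/W^{i+1}\cM)_{\gamma}$. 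Using that $\cD_H$ and $s$ preserve the eigenspace decomposition while $t,\partial_t$ shift $s$-eigenvalues by $\mp 1$, one checks that $\widetilde W^\bullet\cM$ is again a pre-$V$-filtration, whose polynomial has roots obtained from those of $p$ by replacing every $\gamma\ge 1$ with $\gamma-1$ (resp.\ every $\gamma<0$ with $\gamma+1$), the other roots being untouched and no root $<0$ (resp.\ $\ge 1$) being created. Performing all available ``down'' shears first and then all available ``up'' shears therefore brings every root into $[0,1)$ after finitely many steps, and we are done by (iii) is not needed here.

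I expect the main obstacle to be the verification in (ii) that the sheared filtration is still a pre-$V$-filtration --- especially condition (c), that $t\widetilde W^{i}\cM=\widetilde W^{i+1}\cM$ for $i\gg 0$. The inclusion ``$\subseteq$'' is formal from the eigenvalue shift; for ``$\supseteq$'' I would use that for $i\gg 0$ the map $t\colon W^{i-1}\cM\to W^{i}\cM$ is surjective (condition (c) for $W^\bullet$) and compatible with the eigenspace decompositions, hence surjective on each generalized eigenspace, which allows lifting elements of $\widetilde W^{i+1}\cM$ back into $\widetilde W^{i}\cM$; the remaining axioms (a), (b), (d) for $\widetilde W^\bullet\cM$ are comparatively routine, (b) because $\widetilde W^{i}\cM$ lies between two $V^0\cD_X$-coherent modules and $V^0\cD_X$ is Noetherian.
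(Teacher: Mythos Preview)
Your treatment of (i) and (iii) is essentially the paper's. For (ii) the paper proceeds a bit differently: it first shifts the indexing, $\widetilde{W}^m\cM = W^{m+q}\cM$, so that all roots of $p_W$ become $< 1$, and then, for one root $\lambda$ at a time (with multiplicity $d$, writing $p_W(x)=(x-\lambda)^dq(x)$), replaces $W^\bullet$ by
\[
U^m\cM := W^{m+1}\cM + (\partial_t t - m - \lambda)^d \cdot W^m\cM,
\]
which is again a pre-$V$-filtration, now with polynomial $(x-\lambda-1)^d q(x)$; repeating pushes every root into $[0,1)$.

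This is the same shearing idea as yours, seen through a different lens: modulo $W^{m+1}$, the operator $(\partial_t t - m - \lambda)^d$ kills exactly the $\lambda$-eigenspace and is invertible on the others, so $U^m$ is precisely the preimage in $W^m$ of $\sum_{\gamma\neq\lambda}(W^m/W^{m+1})_\gamma$. The only difference is that the paper moves one root up by $1$ at a time (after an initial global shift down), while you move all roots $\ge 1$ down and then all roots $<0$ up. The payoff of the paper's explicit formula is that the verification you flag as the main obstacle, condition (c), becomes immediate: by Lemma~\ref{lem_s} one has $(\partial_t t - m - 1 - \lambda)^d\, t = t\,(\partial_t t - m - \lambda)^d$, so $tW^m = W^{m+1}$ for $m\gg 0$ yields $tU^m = U^{m+1}$ in one line. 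Your eigenspace-lifting argument for (c) is also correct (the key point being that $W^{i+1}=tW^i\subseteq t\widetilde W^i$ for $i\gg 0$ absorbs the error term when you adjust a lift to land in the correct eigenspaces), just a little heavier.
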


\begin{proof}
For the assertion in i), we only need to prove that property d) holds for some polynomial $p$ with roots in $[0,1)$. Since $V^{\bullet}\cM$ is discrete and left-continuous, it follows from 
property ii) in Definition~\ref{defi_V_filtration} that there are polynomials $p_1$ and $p_2$ with roots in $[0,1)$ such that $p_1(\partial_tt)\cdot V^0\cM\subseteq V^1\cM$
and $p_2(\partial_tt-1)\cdot V^1\cM\subseteq V^2\cM$. Since $V^{m+1}\cM=t\cdot V^m\cM$  and $V^{-m}\cM=\sum_{i =0}^m\partial_t^i\cdot V^0\cM$ for all $m\geq 1$ 
(see assertion iii) in Corollary~\ref{cor_rmk1_Vfiltration}), using Lemma~\ref{lem_s} it follows easily that if $p=p_1p_2$, then $p(\partial_tt-m)\cdot V^m\cM\subseteq
V^{m+1}\cM$ for all $m\in\ZZ$. 

Let us prove ii). 
Note that if $W^{\bullet}\cM$ is a pre-$V$-filtration on $\cM$  with polynomial $p_W$ and for an integer $q$ we put $\widetilde{W}^m\cM=W^{m+q}\cM$, then
$\widetilde{W}^{\bullet}\cM$ is again a pre-$V$-filtration with corresponding polynomial $p_{\widetilde{W}}(x)=p_W(x-q)$. By taking a suitable $q$ (small enough)  and replacing $W^{\bullet}\cM$
by $\widetilde{W}^{\bullet}\cM$, we see that we may assume that all roots of $p_W$ are $<1$. 

Suppose now that $\lambda$ is a root of $p_W$
and let us write $p_W=(x-\lambda)^dq(x)$, where $q(\lambda)\neq 0$. We define a new filtration $U^{\bullet}\cM$ by the formula
$$U^m\cM:=W^{m+1}\cM+(\partial_tt-m-\lambda)^d\cdot W^m\cM\quad\text{for all}\quad m\in\ZZ.$$
It is clear that this is a decreasing, exhaustive filtration and it is an easy exercise to see, using Lemma~\ref{lem_s}, that since $W^{\bullet}\cM$ satisfies conditions a), b), and c)
in Definition~\ref{defi_Z_filtration}, so does
$U^{\bullet}\cM$. Let us show that we may take $p_U(x)=(x-\lambda-1)^dq(x)$. Indeed, for every $m\in\ZZ$, we have
$$(\partial_tt-\lambda-m-1)^dq(\partial_tt-m)\cdot W^{m+1}\cM\subseteq q(\partial_tt-m)\cdot U^{m+1}\cM\subseteq U^{m+1}\cM$$
by definition of $U^{\bullet}\cM$ and 
$$(\partial_tt-\lambda-m-1)^dq(\partial_tt-m)\cdot (\partial_tt-m-\lambda)^d\cdot W^m\cM$$
$$=(\partial_tt-\lambda-m-1)^dp_W(\partial_tt-m)\cdot W^m\cM\subseteq
(\partial_tt-\lambda-m-1)^d\cdot W^{m+1}\cM$$
$$\subseteq U^{m+1}\cM.$$
We thus conclude that $U^{\bullet}\cM$ is a pre-$V$-filtration and we may take $p_U(x)=(x-\lambda-1)^dq(x)$. After applying this construction finitely many times, we may replace 
$\lambda$
by a root in $[0,1)$, and after repeating the same process for the other roots of $p_W$, and replacing $W^{\bullet}\cM$ by the final pre-$V$-filtration, we see that we may assume that all
roots of $p_W$ lie in $[0,1)$, hence this is a $\ZZ$-indexed $V$-filtration. 

We now prove iii). Let $\alpha_1<\ldots<\alpha_d$ be the distinct roots of $p_W(x)$, which by assumption lie in the interval $[0,1)$. 
For every $m\in\ZZ$ and every $i$, with $1\leq i\leq d$, let $P^{(m)}_i$, with $W^{m+1}\cM\subseteq P^{(m)}_i\subseteq W^m\cM$ be such that $P^{(m)}_i/W^{m+1}\cM$ is the generalized eigenspace with eigenvalue 
$\alpha_i+m$ for the action
of $\partial_tt$ on $W^m\cM/W^{m+1}\cM$. It is a standard linear algebra result that
we have $W^m\cM/W^{m+1}\cM=\bigoplus_{i=1}^d P^{(m)}_i/W^{m+1}\cM$. Since $\partial_tt=-s$ is a $\cD_X[s]$-linear operator and since $t\cdot P^{(m)}_i\subseteq W^{m+1}\cM\subseteq P^{(m)}_i$, it follows that each 
$P_i$ is a $V^0\cD_X$-submodule of $\cM$. 
We put $V^m\cM=W^m\cM$ and for 
$1\leq i\leq d$, we define the $V^0\cD_X$-module $V^{m+\alpha_i}\cM$ such that $W^{m+1}\cM\subseteq V^{m+\alpha_i}\cM$ and 
$V^{m+\alpha_i}\cM/W^{m+1}\cM=P_i\oplus\ldots\oplus P_d$. Note that by definition we have
$$W^m\cM=V^m\cM=V^{m+\alpha_1}\cM\supseteq V^{m+\alpha_2}\cM\supseteq\ldots\supseteq V^{m+\alpha_d}\cM\supseteq W^{m+1}\cM=V^{m+1}\cM.$$
We extend this filtration to all rational numbers such that $V^{\lambda}\cM$ takes constant value for $\lambda$ in each interval $(m,m+\alpha_1],(m+\alpha_1,m+\alpha_2],\ldots,(m+\alpha_d,m+1]$. 
Checking that this is indeed a $V$-filtration corresponding to $t$ is a straightforward exercise. 
\end{proof} 

\begin{rmk}[The case when $p_W$ has roots in {$(0,1]$}]\label{rmk_var_Z_filtration}
If $W^{\bullet}\cM$ is a pre-$V$-filtration on $\cM$ with respect to $t$ such that the polynomial $p_W$ has all roots in $(0,1]$, then a similar argument to the one above shows that $\cM$ has a $V$-filtration $V^\bullet \cM$ with respect to $t$ and
\[ W^i \cM = V^{>i}\cM \quad \text{ for all }\quad i\in \ZZ.\]
\end{rmk}

We now turn to the proof of the result relating $b$-functions and $V$-filtrations.

\begin{proof}[Proof of Theorem~\ref{thm_existence_Vfilt}]
In order to prove the assertion in i), note first that 
there is $\beta$ such that 
\begin{equation}\label{eq_claim_beta}
V^{\beta}\iota_+(\cM)\cap \cD_{X\times\AA^1}\cdot w\subseteq V^1\cD_{X\times\AA^1}\cdot w.
\end{equation}
Indeed, by Proposition~\ref{properties_V_filtration1}, the $V$-filtration with respect to $t$ on 
$\cM_w:=\cD_{X\times\AA^1}\cdot w$ is the restriction of the $V$-filtration on $\iota_+(\cM)$. 
Since $V^1\cM_w$ is finitely generated over $V^0\cD_{X\times \AA^1}$, it follows that there is $\gamma$ such that
$V^1\cM_w\subseteq V^{\gamma}\cD_{X\times\AA^1}\cdot w$. We conclude that if $\beta\in {\mathbf Z}_{>1}$ is such that
$\beta+\gamma\geq 2$, then
$$V^{\beta}\iota_+(\cM)\cap \cM_w=V^{\beta}\cM_w= t^{\beta-1}\cdot V^1\cM_w\subseteq (t^{\beta-1}\cdot V^{\gamma}\cD_{X\times\AA^1})\cdot w
\subseteq V^1\cD_{X\times\AA^1}\cdot w,$$
hence (\ref{eq_claim_beta}) holds.

Since $(s+\gamma)$ is nilpotent on ${\rm Gr}_V^{\gamma}(\iota_+\cM)$
for all $\gamma\in\QQ$ and since the $V$-filtration is discrete, it follows that if $\gamma_1,\ldots,\gamma_r$ are the rational numbers $\gamma$ with 
$\alpha\leq\gamma<\beta$ and with ${\rm Gr}_V^{\gamma}(\iota_+\cM)\neq 0$, then there is $N\geq 1$ such that
$$(s+\gamma_1)^{N}\cdots(s+\gamma_r)^{N}w\in V^{\beta}\iota_+\cM\cap \cD_{X\times\AA^1}\cdot w\subseteq V^1\cD_{X\times\AA^1}\cdot w.$$
This implies that $b_w(s)$ divides $\prod_{i=1}^r(s+\gamma_i)^{N}$, proving the assertion in i). 

In order to prove the statement in ii), we see that by Proposition~\ref{prop_Z_filtration}, it is enough to show that $\iota_+(\cM)$ has a pre-$V$-filtration
with respect to $t$. For every $j\in\ZZ$, let
$$W^j\iota_+(\cM):=\sum_{\ell=1}^rV^j\cD_{X\times\AA^1}\cdot (w_{\ell}\otimes\delta)\subseteq \iota_+(\cM).$$
Since $w_1,\ldots,w_r$ generate $\cM$ over $\cD_X$, it follows that $w_1\otimes\delta,\ldots,w_r\otimes\delta$ generate $\iota_+(\cM)$
over $\cD_{X\times\AA^1}$, hence $W^{\bullet}\iota_+(\cM)$ is exhaustive. 
It is also clear that it satisfies conditions a), b), and c) in Definition~\ref{defi_Z_filtration}. 
In order to check condition d), let $b_{\ell}$ be the $b$-function of $w_{\ell}\otimes\delta$, so 
we have $b_{\ell}(-\partial_tt)\cdot V^0\cD_{X\times\AA^1}(w_{\ell}\otimes \delta)\subseteq V^1\cD_{X\times\AA^1}(w_{\ell}\otimes\delta)$ for all $\ell$
(see Remark~\ref{rmk_gen_defi_b_fcn}). This implies that if $p(x)=\prod_{\ell=1}^rb_{\ell}(-x)$, then
$p(\partial_tt)\cdot W^0\iota_+(\cM)\subseteq W^1\iota_+(\cM)$. 
Moreover, it follows from the definition of $W^{\bullet}\iota_+(\cM)$ and the formulas (\ref{new_eq1_V_D}) and (\ref{new_eq2_V_D}) that for all $m\geq 0$, we have
$W^m\iota_+(\cM)=t^m\cdot W^0\iota_+(\cM)$ and $W^{-m}\iota_+(\cM)=\sum_{j=0}^m\partial_t^j\cdot W^0\iota_+(\cM)$.
Using Lemma~\ref{lem_s}, it is then straightforward to deduce that 
$p(\partial_tt+m)\cdot W^{-m}\iota_+(\cM)\subseteq W^{-m+1}\iota_+(\cM)$ for all $m\in\ZZ$, hence $W^{\bullet}\cM$ 
is a pre-$V$-filtration on $\cM$. This completes the proof of the theorem.
\end{proof}

In fact, once we know that a $\cD$-module has a $V$-filtration, this can be characterized via $b$-functions by the condition in Theorem~\ref{thm_existence_Vfilt}i).
This was first shown by Sabbah \cite{Sabbah}.

\begin{cor}\label{prop_char_V_filt}
If a coherent $\cD_X$-module $\cM$ has a $V$-filtration with respect to $f$, then for every $\alpha\in\QQ$, $V^{\alpha}\iota_+(\cM)$ consists of those sections $w\in\iota_+(\cM)$
with the property that all roots of $b_w$ are $\leq -\alpha$.
\end{cor}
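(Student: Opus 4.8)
The plan is to prove the two inclusions separately, using Theorem~\ref{thm_existence_Vfilt}i) for one direction and the uniqueness of the $V$-filtration (Proposition~\ref{unique_V_filt}) together with part ii) of Theorem~\ref{thm_existence_Vfilt} for the other. Write $W^{\alpha}$ for the set of sections $w\in\iota_+(\cM)$ such that all roots of $b_w$ are $\leq -\alpha$; since $\cM$ is assumed to have a $V$-filtration, every section has a $b$-function (by Theorem~\ref{thm_existence_Vfilt}i)), so $W^{\alpha}$ is well-defined, and we must show $V^{\alpha}\iota_+(\cM)=W^{\alpha}$ for all $\alpha\in\QQ$.

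First I would check the easy inclusion $V^{\alpha}\iota_+(\cM)\subseteq W^{\alpha}$. This is immediate from Theorem~\ref{thm_existence_Vfilt}i): if $w\in V^{\alpha}\iota_+(\cM)$, then all roots of $b_w(s)$ are rational numbers $\gamma\leq-\alpha$, which is exactly the defining condition of $W^{\alpha}$.

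For the reverse inclusion $W^{\alpha}\subseteq V^{\alpha}\iota_+(\cM)$, I would first verify that $W^{\bullet}$ is itself a filtration with the right formal shape and then invoke uniqueness. The key observations are: (a) $W^{\alpha}$ is a $V^0\cD_{X\times\AA^1}$-submodule of $\iota_+(\cM)$ — indeed, if $w$ has $b$-function $b_w$ and $P\in V^0\cD_{X\times\AA^1}=\cD_X[s]+V^1\cD_{X\times\AA^1}$, then using that $b_w(s)$ commutes with $\cD_X[s]$ and that $V^1\cD_{X\times\AA^1}\cdot w$ is preserved, one sees $b_w(s)\cdot(Pw)\in V^1\cD_{X\times\AA^1}\cdot (Pw)$, so $b_{Pw}$ divides $b_w$ and $Pw\in W^{\alpha}$; (b) $W^{\bullet}$ is decreasing and exhaustive, and by Theorem~\ref{thm_existence_Vfilt}i) it is discrete and left-continuous (the possible roots of any $b_w$ lie in the discrete set $\{-\gamma : {\rm Gr}_V^{\gamma}(\iota_+\cM)\neq 0\}$); (c) $t\cdot W^{\alpha}\subseteq W^{\alpha+1}$, since $b_w(s-1)$ is a $b$-function for $tw$ (as $t$ conjugates $s$ to $s+1$ by Lemma~\ref{lem_s}i), shifting roots by $-1$); and similarly $\partial_t\cdot W^{\alpha}\subseteq W^{\alpha-1}$. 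Combined with the nilpotence of $(s+\alpha)$ on ${\rm Gr}_W^{\alpha}$ — which follows because on $W^{\alpha}/W^{>\alpha}$ the minimal polynomial of $s$ can only have $-\alpha$ as a root, by the definition of $W^{\alpha}$ and $W^{>\alpha}$ — this shows $W^{\bullet}$ is a weak $V$-filtration (Definition~\ref{defi_V_filtration}i),ii)). Then by the Remark following Proposition~\ref{unique_V_filt}, $V^{\alpha}\iota_+(\cM)\subseteq W^{\alpha}$ for all $\alpha$; but combined with the first inclusion this forces $V^{\alpha}\iota_+(\cM)=W^{\alpha}$, as desired.

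The main obstacle I anticipate is verifying the finite-generation and condition iv) parts if one tries to show $W^{\bullet}$ is a genuine $V$-filtration directly; this is why the cleaner route is to show $W^{\bullet}$ is only a \emph{weak} $V$-filtration and then quote the remark after Proposition~\ref{unique_V_filt} (which gives $V^{\alpha}\subseteq\widetilde V^{\alpha}$ whenever $V^{\bullet}$ is a $V$-filtration and $\widetilde V^{\bullet}$ is a weak one), avoiding conditions iii) and iv) for $W^{\bullet}$ altogether. The one technical point that needs care is establishing left-continuity of $W^{\bullet}$: one must argue that for $w\in W^{\alpha}$ with $-\alpha$ not actually a root of $b_w$ (i.e.\ all roots $<-\alpha$), there is $\epsilon>0$ with $w\in W^{\alpha+\epsilon}$ — this follows from discreteness of the set of possible roots, so that "roots $\leq-\alpha$" and "roots $\leq-\alpha-\epsilon$" define the same condition for small $\epsilon$. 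Once these formal properties are in hand, the proof closes immediately by uniqueness.
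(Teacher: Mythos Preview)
Your argument has a genuine logical gap in the ``reverse inclusion'' step. You correctly observe that the Remark after Proposition~\ref{unique_V_filt} says: if $V^{\bullet}$ is a $V$-filtration and $\widetilde{V}^{\bullet}$ is a weak $V$-filtration, then $V^{\alpha}\subseteq\widetilde{V}^{\alpha}$. Applying this with $\widetilde{V}^{\bullet}=W^{\bullet}$ gives $V^{\alpha}\iota_+(\cM)\subseteq W^{\alpha}$ --- but that is exactly the inclusion you already established in the first step from Theorem~\ref{thm_existence_Vfilt}i). Combining two copies of the same inclusion does not yield equality; you never prove $W^{\alpha}\subseteq V^{\alpha}\iota_+(\cM)$. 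The asymmetry in the Remark is essential: its proof uses the finite-generation condition iii) on the genuine $V$-filtration to bound things, so it cannot be reversed to compare a weak filtration against a genuine one from the other side. (As a sanity check: the constant filtration $W^{\alpha}=\iota_+(\cM)$ for all $\alpha$ is a weak $V$-filtration satisfying $V^{\alpha}\subseteq W^{\alpha}$, yet certainly not equal to $V^{\bullet}$.) There is also a smaller issue in your verification of (a): from $b_w(s)w\in V^1\cD_{X\times\AA^1}\cdot w$ and $P\in V^0\cD_{X\times\AA^1}$ you get $b_w(s)(Pw)\in V^1\cD_{X\times\AA^1}\cdot w$ (see Remark~\ref{rmk_gen_defi_b_fcn}), not $V^1\cD_{X\times\AA^1}\cdot(Pw)$, so the divisibility $b_{Pw}\mid b_w$ is not justified as written.

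The paper's proof of the reverse inclusion is much more direct and avoids these difficulties entirely: given $w$ with all roots of $b_w$ at most $-\alpha$, take $\beta$ maximal with $w\in V^{\beta}\iota_+(\cM)$ (using discreteness). If $\beta<\alpha$, then $b_w(s)w\in V^1\cD_{X\times\AA^1}\cdot w\subseteq V^{\beta+1}\iota_+(\cM)$, so $b_w(s)$ kills $\overline{w}\in{\rm Gr}_V^{\beta}\big(\iota_+(\cM)\big)$. But $(s+\beta)$ is nilpotent on this graded piece while $b_w(-\beta)\neq 0$ (since $-\beta>-\alpha$ is not a root), so $b_w(s)$ acts invertibly there, forcing $\overline{w}=0$ --- a contradiction. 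This is a two-line argument once you have Theorem~\ref{thm_existence_Vfilt}i), and it requires no structural analysis of $W^{\bullet}$ at all.
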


\begin{proof}
We have already seen in Theorem~\ref{thm_existence_Vfilt}i) that if $w\in V^{\alpha}\iota_+(\cM)$, then all roots of $b_w$ are rational numbers $\leq -\alpha$. Conversely,
suppose that all roots of $b_w$ are $\leq -\alpha$. Let $\beta\in\QQ$ be such that $w\in V^{\beta}\iota_+(\cM)$. If $\beta\geq\alpha$, then we are done. If $\beta<\alpha$, since the
$V$-filtration is discrete, we may assume that $w\not\in V^{>\beta}\iota_+(\cM)$ and aim for a contradiction. Since $b_w(s)w\in V^1\cD_{X\times\AA^1}\cdot w\subseteq
V^{\beta+1}\iota_+(\cM)$, we have $b_w(s)\overline{w}=0$ in ${\rm Gr}_V^{\beta}\big(\iota_+(\cM)\big)$. Since $s+\beta$ is nilpotent on ${\rm Gr}_V^{\beta}(\iota_+\cM)$,
while $b_w(s)\overline{w}=0$ and $b_w(s)=\prod_{i=1}^r(s+\alpha_i)$ with $\alpha_i\geq\alpha>\beta$ for all $i$, we conclude that 
$\overline{w}=0$ in  ${\rm Gr}_V^{\beta}(\iota_+\cM)$, a contradiction.
\end{proof}

\begin{rmk}[The roots of the $b$-functions vs. the jumps in the $V$-filtration]
With the notation in Theorem~\ref{thm_existence_Vfilt}ii), if 
$$R=\bigcup_{i=1}^r\big\{\lambda\in\QQ\mid b_{w_i}(-\lambda)=0\big\},$$
then 
$$R\subseteq \big\{\alpha\in\QQ\mid {\rm Gr}_V^{\alpha}(\iota_+\cM)\neq 0\big\}\subseteq R+\ZZ.$$
Indeed, the first inclusion follows from assertion i) in the theorem, while the second inclusion follows from the proof of assertion ii) in the theorem.
\end{rmk}

\begin{eg}[$b$-functions for invertible $f$]\label{b_fcn_invertible_f}
If $f\in\cO_X(X)$ is invertible and $\iota\colon X\hookrightarrow X\times\AA^1$ is the corresponding graph embedding, then it follows from 
Remark~\ref{Vfilt_where_f_invertible} that for every coherent $\cD_X$-module $\cM$, we have a $V$-filtration given by 
$V^{\alpha}\iota_+(\cM)=\iota_+(\cM)$ for all $\alpha\in\QQ$. In this case it follows from Corollary~\ref{prop_char_V_filt} that $b_w=1$ for every 
$w\in \iota_+(\cM)$. 
\end{eg}

\begin{eg}[Holonomic $\cD$-module without a $V$-filtration]
Let $X=\AA^1={\rm Spec}\big(k[x]\big)$ and $f=x$.
The $\cD_X$-module
 $$\cM=\cD_{\AA^1}/\cD_{\AA^1}\cdot (\partial_xx-\lambda), \quad\text{where}\quad \lambda\in k\smallsetminus\QQ,$$
  is holonomic and 
$f$ acts bijectively on $\cM$. It is easy to see that the $b$-function of $\overline{1}\otimes\delta$ is $b(s)=(s+\lambda)$,
hence $\cM$ does not have a $V$-filtration with respect to $f$ by Theorem~\ref{thm_existence_Vfilt}.
\end{eg}

\begin{eg}[Existence of $V$-filtration for $\cO_X$]\label{exist_V_filt_O_X}
Since the Bernstein-Sato polynomial $b_f(s)$ has rational roots by \cite{Kashiwara}, it follows from Theorem~\ref{thm_existence_Vfilt}
that $\cO_X$ has a $V$-filtration with respect to $f$. Since the roots of $b_f(s)$ are negative, we deduce using
Corollary~\ref{prop_char_V_filt} that $\delta\in V^{>0}\iota_+(\cO_X)$.
\end{eg}

A consequence of Theorem~\ref{thm_existence_Vfilt} is that the subcategory of $\cD_X$-modules that have a $V$-filtration with respect to $f$
is closed under extensions:

\begin{cor}\label{cor_thm_existence_Vfilt}
Given a short exact sequence of coherent $\cD_X$-modules
$$0\to \cM'\to\cM\to\cM''\to 0,$$
if $\cM'$ and $\cM''$ have a $V$-filtration with respect to $f$, then so does $\cM$.
\end{cor}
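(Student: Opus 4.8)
The plan is to reduce everything to the criterion of Theorem~\ref{thm_existence_Vfilt}. By Corollary~\ref{cor_unique_V_filt} together with Remark~\ref{rmk_restriction_Vfiltration_open_subset}, the property of having a $V$-filtration with respect to $f$ may be checked on an open cover of $X$; hence we may assume $X$ is affine and $\cM$ is generated over $\cD_X$ by finitely many global sections $w_1,\dots,w_r$. By Theorem~\ref{thm_existence_Vfilt}ii), it then suffices to show that each $w_i\otimes\delta\in\iota_+(\cM)$ admits a $b$-function all of whose roots are rational. Note that, since $\iota_+$ for the closed immersion $\iota$ is exact, applying it to the given short exact sequence produces a short exact sequence $0\to\iota_+(\cM')\to\iota_+(\cM)\to\iota_+(\cM'')\to 0$; we regard $\iota_+(\cM')$ as a submodule of $\iota_+(\cM)$ with quotient $\iota_+(\cM'')$.

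Fix $i$ and write $w=w_i$. The image of $w\otimes\delta$ in $\iota_+(\cM'')$ is $w''\otimes\delta$, where $w''$ is the image of $w$ in $\cM''$. Since $\cM''$ has a $V$-filtration with respect to $f$, Theorem~\ref{thm_existence_Vfilt}i) (applied to $\cM''$) gives a nonzero $b(s)\in k[s]$ with all roots rational and an operator $Q\in V^1\cD_{X\times\AA^1}$ with $b(s)(w''\otimes\delta)=Q\cdot(w''\otimes\delta)$. Lifting this relation to $\iota_+(\cM)$, the element $v:=b(s)(w\otimes\delta)-Q\cdot(w\otimes\delta)$ lies in $\iota_+(\cM')$. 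Moreover $b(s)=b(-\partial_tt)$ and $Q$ both lie in $V^0\cD_{X\times\AA^1}$, so $v\in V^0\cD_{X\times\AA^1}\cdot(w\otimes\delta)$. Since $\cM'$ has a $V$-filtration with respect to $f$, Theorem~\ref{thm_existence_Vfilt}i) (applied now to $\cM'$) gives a nonzero $c(s)\in k[s]$ with all roots rational such that $c(s)v\in V^1\cD_{X\times\AA^1}\cdot v$.

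It remains to combine these. Because $v\in V^0\cD_{X\times\AA^1}\cdot(w\otimes\delta)$ and $V^{m_1}\cD_{X\times\AA^1}\cdot V^{m_2}\cD_{X\times\AA^1}\subseteq V^{m_1+m_2}\cD_{X\times\AA^1}$, we have $c(s)v\in V^1\cD_{X\times\AA^1}\cdot v\subseteq V^1\cD_{X\times\AA^1}\cdot(w\otimes\delta)$, and likewise $c(s)Q\in V^0\cD_{X\times\AA^1}\cdot V^1\cD_{X\times\AA^1}\subseteq V^1\cD_{X\times\AA^1}$. Therefore
\[
c(s)b(s)(w\otimes\delta)=c(s)Q\cdot(w\otimes\delta)+c(s)v\in V^1\cD_{X\times\AA^1}\cdot(w\otimes\delta),
\]
so $w\otimes\delta$ has a $b$-function, necessarily dividing $c(s)b(s)$, and hence with all roots rational. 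Applying this for each $i$ and invoking Theorem~\ref{thm_existence_Vfilt}ii) yields a $V$-filtration on $\cM$ with respect to $f$ locally, and Corollary~\ref{cor_unique_V_filt} glues these to a global one. The argument is essentially formal once Theorem~\ref{thm_existence_Vfilt} is in hand; the only point requiring care --- and the place most likely to hide a slip --- is the bookkeeping of which operators lie in $V^0$ versus $V^1\cD_{X\times\AA^1}$ when lifting and combining the two $b$-function relations, together with the (standard) exactness of $\iota_+$ used to split off the submodule $\iota_+(\cM')$.
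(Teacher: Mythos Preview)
Your proof is correct and follows essentially the same approach as the paper: reduce to the affine case, apply $\iota_+$ to obtain an exact sequence, and for a generator $w\otimes\delta$ use the $b$-function relation in $\iota_+(\cM'')$ to land in $\iota_+(\cM')$, then apply the $b$-function there and multiply the two polynomials. The paper phrases the core step for an arbitrary $u\in\iota_+(\cM)$ rather than just for the $w_i\otimes\delta$, but this is a cosmetic difference since Theorem~\ref{thm_existence_Vfilt}ii) only requires the latter.
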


\begin{proof}
By Corollary~\ref{cor_unique_V_filt}, we may and will assume that $X$ is affine. 
After applying $\iota_+$, we get an exact sequence
$$0\to \iota_+(\cM')\hookrightarrow \iota_+(\cM)\overset{p}\longrightarrow \iota_+(\cM'')\to 0.$$
We use the criterion for the existence of $V$-filtrations with respect to $f$ in Theorem~\ref{thm_existence_Vfilt}.
Therefore it is enough to show that every element $u\in \iota_+(\cM)$ has a $b$-function with roots in $\QQ$. 
Since $\cM''$ has a $V$-filtration with respect to $f$, we have a polynomial $b_1(s)$ with roots in $\QQ$ and $Q_1\in V^1\cD_{X\times\AA^1}$ such that
$b_1(s)u -Q_1u\in \iota_+(\cM')$. Since $\cM'$ has a $V$-filtration with respect to $f$, it follows that we have a polynomial $b_2(s)$ with roots in $\QQ$ and $Q_2\in V^1\cD_{X\times\AA^1}$ such that
$$b_2(s)\big(b_1(s)u-Q_1u\big)=Q_2\big(b_1(s)u-Q_1u\big).$$ 
Therefore $b_1(s)b_2(s)u\in V^1\cD_{X\times\AA^1}u$, hence $u$ has a $b$-function that divides $b_1b_2$, and thus has rational roots. 
\end{proof}

\begin{cor}\label{cor_equiv_exist_Vfil}
If $j\colon U\hookrightarrow X$ is the inclusion of the complement of the hypersurface defined by $f$ and 
if $\cM$ is a coherent $\cD_X$-module such that $\cM\vert_U$ is holonomic, then $\cM$ has a $V$-filtration with respect to $f$ if and only if $\cM':=j_+(\cM\vert_U)$ has a $V$-filtration with respect to $f$. Moreover, in this case we have $V^{\alpha}\iota_+(\cM)=V^{\alpha}\iota_+(\cM')$ for all $\alpha>0$.
\end{cor}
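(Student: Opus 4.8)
The plan is to compare $\cM$ with $\cM'=j_+(\cM\vert_U)$ through the canonical adjunction morphism $\varphi\colon\cM\to\cM'$. Since $j$ is an open immersion, $\varphi$ restricts to an isomorphism over $U$, so its kernel $\cK$ and cokernel $\cC$ are supported on $H$. Setting $\cN=\im(\varphi)$, we obtain two short exact sequences of coherent $\cD_X$-modules
$$0\to\cK\to\cM\to\cN\to 0\quad\text{and}\quad 0\to\cN\to\cM'\to\cC\to 0.$$
Here $\cN$ and $\cC$ are coherent as quotients of $\cM$ and of $\cM'$, while $\cK$ is coherent as a $\cD_X$-submodule of the coherent module $\cM$; finally $\cM'$ itself is coherent (in fact holonomic) because $\cM\vert_U$ is holonomic and push-forward along the open immersion $j$ preserves holonomicity, which is where the $b$-function machinery recalled above enters. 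By Example~\ref{eg_case_support_in_H}, since $\cK$ and $\cC$ are supported on $H$ they admit $V$-filtrations with respect to $f$, and moreover $V^{\alpha}\iota_+(\cK)=V^{\alpha}\iota_+(\cC)=0$ for every $\alpha>0$.

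Now suppose $\cM'$ has a $V$-filtration with respect to $f$. Applying the exact functor $\iota_+$ to the second sequence and invoking Proposition~\ref{properties_V_filtration1}(ii), the submodule $\iota_+(\cN)\subseteq\iota_+(\cM')$ acquires a $V$-filtration, so $\cN$ has a $V$-filtration with respect to $f$; then the first sequence together with Corollary~\ref{cor_thm_existence_Vfilt} (closure of this class under extensions) shows $\cM$ has one. Conversely, if $\cM$ has a $V$-filtration with respect to $f$, then by Proposition~\ref{properties_V_filtration1}(ii) applied to the quotient $\iota_+(\cM)\twoheadrightarrow\iota_+(\cN)$ from the first sequence, $\cN$ has a $V$-filtration with respect to $f$, and applying Corollary~\ref{cor_thm_existence_Vfilt} to the second sequence produces a $V$-filtration on $\cM'$. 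This gives the equivalence.

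For the last assertion, assume both $V$-filtrations exist. By Proposition~\ref{properties_V_filtration1}(ii), the $V$-filtration on $\cM$ induces one on $\cN$, and so does the $V$-filtration on $\cM'$; by uniqueness (Proposition~\ref{unique_V_filt}) these two induced filtrations on $\iota_+(\cN)$ coincide. Fix $\alpha>0$. The surjection $\iota_+(\cM)\twoheadrightarrow\iota_+(\cN)$ has kernel $\iota_+(\cK)$ with $V^{\alpha}\iota_+(\cK)=0$, and $V^{\alpha}\iota_+(\cN)$ is the image of $V^{\alpha}\iota_+(\cM)$ (Proposition~\ref{properties_V_filtration1}(ii)), so it restricts to an isomorphism $V^{\alpha}\iota_+(\cM)\overset{\sim}\to V^{\alpha}\iota_+(\cN)$. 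Likewise, the image of $V^{\alpha}\iota_+(\cM')$ in $\iota_+(\cC)$ equals $V^{\alpha}\iota_+(\cC)=0$, hence $V^{\alpha}\iota_+(\cM')\subseteq\iota_+(\cN)$, and then $V^{\alpha}\iota_+(\cN)=\iota_+(\cN)\cap V^{\alpha}\iota_+(\cM')=V^{\alpha}\iota_+(\cM')$. Composing, the canonical map $\iota_+(\varphi)$ restricts to an isomorphism $V^{\alpha}\iota_+(\cM)\overset{\sim}\to V^{\alpha}\iota_+(\cM')$ for all $\alpha>0$, which is the asserted equality.

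The only genuinely non-formal inputs are the holonomicity (hence coherence) of $\cM'=j_+(\cM\vert_U)$ and Corollary~\ref{cor_thm_existence_Vfilt}; everything else is bookkeeping with Propositions~\ref{properties_V_filtration1} and \ref{unique_V_filt} and Example~\ref{eg_case_support_in_H}. I expect the point requiring the most care is checking that the filtration induced on the common piece $\cN$ is literally the same whether computed from $\cM$ or from $\cM'$, which is exactly where uniqueness of the $V$-filtration is used.
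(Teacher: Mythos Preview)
Your proof is correct and follows essentially the same route as the paper: compare $\cM$ and $\cM'$ through the canonical morphism, observe that its kernel and cokernel are supported on $H$ (hence carry $V$-filtrations with $V^{\alpha}=0$ for $\alpha>0$ by Example~\ref{eg_case_support_in_H}), and combine Proposition~\ref{properties_V_filtration1}(ii) with Corollary~\ref{cor_thm_existence_Vfilt}. The paper is simply terser, invoking Corollary~\ref{cor_properties_V_filtration1} (strictness of morphisms with respect to the $V$-filtration) in place of your explicit appeal to uniqueness on the image $\cN$; these amount to the same thing.
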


\begin{proof}
Since $\cM\vert_U$ is holonomic, we know that $j_+(\cM\vert_U)$ is holonomic, hence coherent. The assertion in the statement then follows from 
Corollaries~\ref{cor_properties_V_filtration1} and 
~\ref{cor_thm_existence_Vfilt}, using the fact that the kernel and the cokernel of the canonical morphism $\gamma\colon \cM\to \cM'$ are supported on the hypersurface defined by $f$, and thus have a $V$-filtration with
 respect to $f$ by Example~\ref{eg_case_support_in_H}. Moreover, these satisfy
 $$V^{\alpha}\iota_+\big({\rm ker}(\gamma)\big)=0=V^{\alpha}\iota_+\big({\rm coker}(\gamma)\big)\quad\text{for all}\quad \alpha>0,$$
 which gives the last assertion in the corollary.
\end{proof} 

\begin{eg}\label{exist_V_filt_O_X2}
For every nonzero $f\in\cO_X(X)$, the $\cD_X$-module $\cO_X[1/f]$ has a $V$-filtration with respect to $f$ and
$$V^{\alpha}\iota_+(\cO_X)=V^{\alpha}\iota_+\big(\cO_X[1/f]\big)\quad\text{for all}\quad \alpha>0.$$ 
This follows from the above corollary and Example~\ref{exist_V_filt_O_X}.
\end{eg}

We end this section by stating without proof a result giving estimates for the roots of certain $b$-functions in terms of a log resolution of $(X,H)$,
where $H$ is the hypersurface defined by $f$. Recall that a log resolution of $(X,H)$ is a projective morphism 
$\pi\colon Y\to X$ that is an isomorphism over $X\smallsetminus H$, with $Y$ smooth, and such that $\pi^*(H)$ is a simple normal crossing divisor. We write
\begin{equation}\label{formula_pull_back_H}
\pi^*(H)=\sum_{i=1}^Na_iE_i\quad\text{and}\quad K_{Y/X}=\sum_{i=1}^Nk_iE_i,
\end{equation}
where $K_{Y/X}$ is the effective divisor on $Y$ defined by the determinant of $\pi^*(\Omega_X)\to \Omega_Y$. 
Such a log resolution exists by Hironaka's theorem.

\begin{thm}\label{thm_DM_bfcn}
Let $X$ be a smooth, irreducible algebraic variety and $f\in\cO_X(X)$ nonzero, defining the hypersurface $H$. 
We also consider a nonnegative integer $m$ and a nonzero $g\in\cO_X(X)$.
Given a log resolution $\pi\colon Y\to X$ of $(X,H)$, with the notation in (\ref{formula_pull_back_H}), the
following hold:
\begin{enumerate}
\item[i)] Every root of $b_f$ is of the form $-\tfrac{k_i+\ell}{a_i}$, for some $i$ with $1\leq i\leq N$ and some positive integer $\ell$.
\item[ii)] Every root of $b_{g\partial_t^m\delta}$ is $\leq-\min\big\{1,\tfrac{k_i+b_i+1}{a_i}-m\mid 1\leq i\leq N\big\}$, where $b_i$ the coefficient of $E_i$ in $\pi^*\big({\rm div}(g)\big)$.
\item[iii)] Every root of $b_{g\delta}$ is $\leq -\min\big\{\tfrac{k_i+b_i+1}{a_i}\mid 1\leq i\leq N\big\}$.
\item[iv)] Every root of $b_{\partial_t^m\delta}$ is either a negative integer or it is of the form $m-\tfrac{k_i+\ell}{a_i}$, for some $i$ with $1\leq i\leq N$ and some positive integer $\ell$.
Furthermore, if $H$ is reduced and the strict transforms of the components of $H$ on $X$ are disjoint\footnote{This condition can always be achieved after performing finitely many further
blow-ups.}, then we may take $i$ such that the divisor $E_i$ is exceptional. 
\end{enumerate}
\end{thm}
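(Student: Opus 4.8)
The plan is to reduce all four statements to an explicit computation for a simple normal crossing divisor by means of the log resolution $\pi$, keeping careful track of how the relative canonical divisor $K_{Y/X}$ enters through the proper push‑forward of $\cD$‑modules. Throughout I work with $B_f=\iota_+(\cO_X)$ and its canonical section $\delta$, using that over the open set where $f$ is invertible — hence for the purpose of computing the $b$‑functions in (i) and (iii) — one may equivalently work in $\iota_+(\cO_X[1/f])\simeq\cO_X[1/f][s]f^s$, with $\delta\leftrightarrow f^s$ and $\partial_t$ acting as $-st^{-1}$ by Proposition~\ref{prop_V_and_b}.

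\emph{The SNC computation.} Suppose first that $Y$ carries algebraic coordinates $y_1,\dots,y_n$ in which $\pi^*(f)=u\cdot y^a$, where $u$ is a unit and $y^a:=\prod_i y_i^{a_i}$, and in which $K_{Y/X}$ and $\pi^*(\divisor(g))$ are supported on $\bigcup_i\{y_i=0\}$ with respective multiplicities $k_i$ and $b_i$. By Remarks~\ref{Vfilt_rescaling} and \ref{bfcn_rescaling_invertible} we may discard the unit and take $\pi^*(f)=g_Y:=y^a$. For a monomial $y^c=\prod_i y_i^{c_i}$ one computes directly that $\prod_i\partial_{y_i}^{a_i}$ applied to $y^c g_Y^{s+1}$ equals, up to a nonzero constant, $\prod_i\prod_{\ell=1}^{a_i}\bigl(s+\tfrac{c_i+\ell}{a_i}\bigr)\cdot y^c g_Y^{s}$; since $\prod_i\partial_{y_i}^{a_i}\in\cD_Y$ and $t\cdot(y^c g_Y^s)=y^c g_Y^{s+1}$, this shows that $b_{y^c\delta_Y}(s)$ divides $\prod_i\prod_{\ell=1}^{a_i}\bigl(s+\tfrac{c_i+\ell}{a_i}\bigr)$. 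Using the description of the $\partial_t$‑action, the same computation applied to $y^c g_Y^{s+1-m}$ yields that $b_{y^c\partial_t^m\delta_Y}(s)$ divides $(s+1)s(s-1)\cdots(s-m+2)\cdot\prod_i\prod_{\ell=1}^{a_i}\bigl(s+\tfrac{c_i+\ell}{a_i}-m\bigr)$, so its roots lie in the union of a finite set of integers with $\{\,m-\tfrac{c_i+\ell}{a_i}\ :\ 1\le\ell\le a_i\,\}$.

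\emph{The reduction.} Let $P=\pi\times\id\colon Y\times\AA^1\to X\times\AA^1$. The key point is that under the identification of $\iota_{f,+}(\cO_X)$ with a cohomology of $P_+$ of the corresponding object on $Y$ (legitimate because $\pi$ is proper birational and $X$ is smooth), the section $g\partial_t^m\delta$ corresponds \emph{not} to $\pi^*(g)\partial_t^m\delta_Y$ but to $J\cdot\pi^*(g)\cdot\partial_t^m\delta_Y$, where $J$ is a local equation of $K_{Y/X}$: this extra Jacobian factor is exactly the $\omega_{Y/X}$‑twist built into the push‑forward of \emph{left} $\cD$‑modules (equivalently, it is the factor in $\pi^*(dx_1\wedge\dots\wedge dx_n)=J\,dy_1\wedge\dots\wedge dy_n$). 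Using the compatibility of the $V$‑filtration, and hence of $b$‑functions, with proper push‑forward (Section~\ref{section_behavior_push_forward2}) together with Remarks~\ref{B_fcn_open_cover_v0} and \ref{bfcn_rescaling_invertible}, one concludes that $b_{g\partial_t^m\delta}$ divides the least common multiple, over an affine cover of $Y$, of the local $b$‑functions of $J\,\pi^*(g)\,\partial_t^m\delta_Y$. Since $J\cdot\pi^*(g)$ is locally a unit times $\prod_i y_i^{k_i+b_i}$, the SNC computation with $c_i=k_i+b_i$ shows that the roots of $b_{g\delta}$ lie in $\{-\tfrac{k_i+b_i+\ell}{a_i}\}$, hence are at most $-\min_i\tfrac{k_i+b_i+1}{a_i}$ — this is (iii), and (i) is the case $g=1$, $m=0$ — and that the roots of $b_{g\partial_t^m\delta}$ are at most $m-\min_i\tfrac{k_i+b_i+1}{a_i}$, which is $\le-\min\bigl\{1,\ \min_i\bigl(\tfrac{k_i+b_i+1}{a_i}-m\bigr)\bigr\}$; this gives (ii), and with $g=1$ it gives that every root of $b_{\partial_t^m\delta}$ is either an integer or of the form $m-\tfrac{k_i+\ell}{a_i}$.

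\emph{The refinements.} It remains to sharpen the integrality assertion in (iv). Iterating the identity $\partial_t^m\delta=\partial_t\cdot\partial_t^{m-1}\delta$ and comparing $V^1\cD_{X\times\AA^1}\cdot\partial_t^{j}\delta$ with $V^1\cD_{X\times\AA^1}\cdot\partial_t^{j-1}\delta$, one shows that $b_{\partial_t^m\delta}(s)$ divides $b_f(s-m)$ times a product of factors $s+r$ with $r$ a positive integer in a range depending only on the integer roots of $b_f$; the first factor contributes roots of the form $m-\tfrac{k_i+\ell}{a_i}$ by (i), and the second contributes integer roots which are negative by Kashiwara's theorem that the roots of $b_f$ are negative. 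Finally, if $H$ is reduced and the strict transforms of its components are pairwise disjoint, then near a general point of such a strict transform $\pi$ is an isomorphism and $H$ is smooth there, so that chart contributes only integer roots; hence every non‑integer root of $b_{\partial_t^m\delta}$ must come from a chart meeting the exceptional locus, i.e. one may take $E_i$ exceptional. The main obstacle in this whole argument is the bookkeeping in the reduction step — verifying that the push‑forward sends $\delta$ to $J\delta_Y$ and not to $\delta_Y$, which is where $K_{Y/X}$ is forced to appear, and that the resulting divisibility of $b$‑functions runs in the useful direction — which is the substance of the compatibility of the $V$‑filtration with proper push‑forward and requires care with the left–right conversion and with the behaviour of the graph embeddings under $P$.
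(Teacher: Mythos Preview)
The paper does not prove this theorem; it cites Lichtin for (i) and Dirks--Musta\c{t}\u{a} for (ii)--(iv), both of which follow Kashiwara's method. Your outline \emph{is} that method: compute the $b$-function in the SNC case, then reduce to it via the log resolution, with the Jacobian factor $J$ of $K_{Y/X}$ appearing because the trace map on the push-forward of \emph{right} $\cD$-modules sends $J\eta_Y\otimes g_Y^s$ to $\eta_X\otimes f^s$. So the strategy is correct and matches the references. Your identification of the reduction step as the crux, and of the Jacobian as arising from the left--right switch, is exactly right.

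There is, however, a concrete gap in your SNC bound for $m\geq 1$. You claim $b_{y^c\partial_t^m\delta_Y}(s)$ divides $(s+1)s(s-1)\cdots(s-m+2)\cdot Q(s)$, where $Q(s)=\prod_{i,\ell}\bigl(s+\tfrac{c_i+\ell}{a_i}-m\bigr)$. This is true but too weak: the factor $(s+1)s\cdots(s-m+2)$ contributes spurious roots $0,1,\dots,m-2$, and you then silently drop them when you assert that ``the roots of $b_{g\partial_t^m\delta}$ are at most $m-\min_i\tfrac{k_i+b_i+1}{a_i}$.'' For $m\geq 2$ and $\min_i\tfrac{k_i+b_i+1}{a_i}$ large, your bound on the divisor does \emph{not} rule out a root at $0$, so (ii) does not follow as written. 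The fix is a sharper SNC computation: writing $A(s)=s(s-1)\cdots(s-m+1)$ so that $y^c\partial_t^m\delta_Y$ corresponds to $A(s)y^cg_Y^{s-m}$ and its image under $t$ to $A(s+1)y^cg_Y^{s+1-m}$, one has
\[
(s+1)Q(s)\cdot A(s)\,y^cg_Y^{s-m}=(s-m+1)\cdot A(s+1)Q(s)\,y^cg_Y^{s-m}\in \cD_Y[s]\cdot A(s+1)\,y^cg_Y^{s+1-m},
\]
since $\tfrac{(s+1)A(s)}{A(s+1)}=s-m+1$. Hence $b_{y^c\partial_t^m\delta_Y}$ divides $(s+1)Q(s)$, whose only possible integer root is $-1$. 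With $c_i=k_i+b_i$ this gives (ii) immediately, and with $c_i=k_i$ it gives the main clause of (iv) without needing your separate ``refinements'' paragraph (whose divisibility claim $b_{\partial_t^m\delta}\mid b_f(s-m)\cdot\prod(s+r)$ is true---it is the easy half of Theorem~\ref{equality_b_fcn}---but your justification via ``iterating $\partial_t^m\delta=\partial_t\cdot\partial_t^{m-1}\delta$'' is too vague as written).

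Your argument for the ``furthermore'' in (iv) is correct in outline: in a chart meeting only strict transforms of a reduced $H$ one has $a_i=1$, $k_i=0$, so the contributions $m-\tfrac{k_i+\ell}{a_i}=m-\ell$ are integers, and the non-integer roots must therefore come from exceptional $E_i$.
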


The assertion in i) was proved by Lichtin \cite{Lichtin}, improving on Kashiwara's method for proving the rationality of the roots of $b_f$ in \cite{Kashiwara}. The assertions in 
ii)-iv) were proved in \cite{DM}, following Lichtin's argument. We will discuss some applications to invariants of singularities in Section~\ref{section_b_invar_sing}.

\section{An example: weighted homogeneous, isolated singularities}\label{section_weighted_homogeneous}

In this section we discuss the $V$-filtration and the Bernstein-Sato polynomial for weighted homogeneous polynomials, with isolated singularities. 
We consider $X={\mathbf A}^n$, with $\cO_X(X)=R=k[x_1,\ldots,x_n]$ and we write $D_R$ for the ring of differential operators of $R$.
A polynomial $f\in R$ is \emph{weighted homogeneous} if the following condition holds:
there are $w_1,\ldots,w_n\in\QQ_{>0}$ such that if for a monomial $x^u=x_1^{u_1}\cdots x_n^{u_n}\in R$ we put $\rho(x^u)=\sum_{i=1}^nu_iw_i$, then
there is $d$ such that $f=\sum_{\rho(x^u)=d}c_ux^u$ (once the $w_i$ are fixed, we will refer to such polynomials as being
\emph{$w$-homogeneous of degree $d$} and write $\rho(f)=d$). After possibly rescaling all the $w_i$ by the same positive rational number, we may and will assume that
$\rho(f)=1$. We put $|w|:=\sum_{i=1}^nw_i$.
We consider on $R$ the filtration given by shifting by $|w|$ the filtration induced by $\rho$: for every $q\in {\mathbf Q}$, we put
$$R_{\geq q}=\bigoplus_{\rho(u)+|w|\geq q} kx^u\quad\text{and}\quad R_{>q}=\bigoplus_{\rho(u)+|w|> q} kx^u.$$
Note that these are ideals of $R$. 



A special role in our setting is played by the operator $\theta=\sum_{i=1}^nw_ix_i\partial_i$ and by $\sum_{i=1}^nw_i\partial_ix_i=\theta+|w|$. 
Note that if $h$ is $w$-homogeneous, then $\theta(h)=\rho(h)h$, hence an easy computation gives
\begin{equation}\label{eq1_eg_quasihom}
\theta\cdot hf^s=\big(s+\rho(h)\big)hf^s\quad\text{and}\quad (w_1\partial_1x_1+\ldots+w_n\partial_nx_n)\cdot hf^s=\big(s+\rho(h)+|w|\big)hf^s.
\end{equation}

We also note that since $f=\theta(f)$, it follows that $f\in J_f:=\big(\tfrac{\partial f}{\partial x_1},\ldots,\tfrac{\partial f}{\partial x_n}\big)$, hence the zero-locus of $V(J_f)$ is precisely the singular locus of the hypersurface $Z$ defined by $f$.
Furthermore, $Z$ has an isolated singularity at $0$ if and only if $V(J_f)\subseteq \{0\}$. Indeed, if $m$ is a positive integer such that
$mw_i\in\ZZ$ for all $i$, then the $k^*$-action on $X$ given by $\lambda\cdot (u_1,\ldots,u_n)=(\lambda^{mw_1}u_1,\ldots,\lambda^{mw_n}u_n)$ preserves 
$Z$; if $P$ is a singular point of $Z$ different from the origin, then $k^*\cdot P$ is a 1-dimensional subset of the singular locus of $Z$
whose closure contains $0$.

\subsection{The $V$-filtration for weighted homogeneous polynomials with isolated singularities}
From now on we assume that $f$ is $w$-homogeneous, of degree $1$, with an isolated singularity at $0$.
The following result describes the $V$-filtration on $\iota_+(\cO_X)$ in this case. We denote by $\lceil\alpha\rceil$ the smallest integer $\geq\alpha$.

\begin{thm}\label{Vfilt_qhom}
If we define $W^{\alpha}\subseteq\iota_+(R)$ by 
\begin{equation}\label{eq1_Vfilt_qhom}
W^{\alpha}= \left\{
\begin{array}{cl}
\sum_{j\geq 0}D_R\cdot R_{\geq\alpha+j}\partial_t^j\delta, & \text{if}\,\,\alpha\leq 1; \\[2mm]
t^{\lceil\alpha\rceil-1}W^{\alpha-\lceil\alpha\rceil+1}\iota_+(R), & \text{if}\,\,\alpha>1,
\end{array}\right.
\end{equation}
then $V^{\alpha}\iota_+(R)=W^{\alpha}$ for all $\alpha\in\QQ$. 
\end{thm}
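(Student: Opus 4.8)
The plan is to verify directly that the filtration $W^{\bullet}$ defined by (\ref{eq1_Vfilt_qhom}) is a $V$-filtration on $\iota_+(R)$ with respect to $t$ in the sense of Definition~\ref{defi_V_filtration}; the equality $V^{\alpha}\iota_+(R)=W^{\alpha}$ for all $\alpha$ then follows from the uniqueness statement in Proposition~\ref{unique_V_filt} (the existence of some $V$-filtration being guaranteed, e.g., by Example~\ref{exist_V_filt_O_X}). The bookkeeping conditions are immediate: since the ideals $R_{\geq q}$ are decreasing in $q$, change only for $q$ in the discrete set $\rho(\ZZ_{\geq 0}^n)+|w|$, and satisfy $R_{\geq q}=\bigcap_{q'<q}R_{\geq q'}$, the filtration $W^{\bullet}$ is decreasing, exhaustive (any $\sum_{j=0}^Nu_j\otimes\partial_t^j\delta$ lies in $W^{\alpha}$ once $\alpha$ is small enough that $R_{\geq\alpha+j}=R$ for all $j\leq N$), discrete and left-continuous. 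Condition iv) of Definition~\ref{defi_V_filtration}, namely $t\cdot W^{\alpha}=W^{\alpha+1}$ for $\alpha>0$, holds by construction, since the second case of (\ref{eq1_Vfilt_qhom}) is set up precisely so that $W^{\alpha+1}=t\cdot W^{\alpha}$ for all $\alpha>0$.

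For condition i) I would first record the commutation relations on $\iota_+(R)$: a short computation with (\ref{action_on_Bf2}) shows $[D,t]=0=[D,\partial_t]$ for every $D\in{\mathcal Der}_k(\cO_X)$, and $[\partial_t,t]=1$. Using the local description (\ref{new_formula_description_V0})--(\ref{new_eq2_V_D}) of $V^{\bullet}\cD_{X\times\AA^1}$, condition i) then reduces to three statements: $W^{\alpha}$ is $D_R$-stable, $t\cdot W^{\alpha}\subseteq W^{\alpha+1}$, and $\partial_t\cdot W^{\alpha}\subseteq W^{\alpha-1}$. The $D_R$-stability is built into the first case of (\ref{eq1_Vfilt_qhom}) and propagates to $\alpha>1$ because $D_R$ commutes with $t$. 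For the $t$- and $\partial_t$-inclusions when $\alpha\leq 1$ one uses (\ref{action_on_Bf2}) together with $f\cdot R_{\geq q}\subseteq R_{\geq q+1}$ (as $\rho(f)=1$) and $R_{\geq q}\subseteq R_{\geq q-1}$; the case $\alpha>1$ follows from these and from $[\partial_t,t]=1$.

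The heart of the argument is conditions ii) and iii). For ii) I would show that in fact $(s+\alpha)$ acts as $0$ on ${\rm Gr}_V^{\alpha}(\iota_+R)$ (semisimplicity of the monodromy in the weighted homogeneous case). For $\alpha\leq 1$ the graded piece is generated over $D_R$ by the classes of $x^b\otimes\partial_t^j\delta$ with $\rho(b)+|w|=\alpha+j$, and since $s=-\partial_tt$ commutes with $D_R$ it suffices to kill these classes. The Euler identities (\ref{eq1_eg_quasihom}) (equivalently $\theta(f)=f$) give, after substituting $fx^b\otimes\partial_t^{j+1}\delta$ via (\ref{action_on_Bf2}), that $(s+\alpha)(x^b\otimes\partial_t^j\delta)=(\theta+|w|)(x^b\otimes\partial_t^j\delta)=\sum_iw_i\,\partial_{x_i}\!\big(x_ix^b\otimes\partial_t^j\delta\big)$; as $x_ix^b$ has strictly larger weighted degree than $x^b$, each summand lies in $W^{>\alpha}$, so the class vanishes, and the case $\alpha>1$ follows using $st=t(s-1)$. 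For iii) — finite generation of each $W^{\alpha}$ over $V^0\cD_{X\times\AA^1}$, equivalently over $D_R$ — this is where the isolated-singularity hypothesis enters: since $V(J_f)\subseteq\{0\}$, the Milnor algebra $R/J_f$ is finite dimensional, hence $R_{\geq q}\subseteq J_f$ for $q\gg 0$, and any homogeneous element of $J_f$ of degree $d$ is a combination $\sum_ih_i\,\partial_{x_i}f$ with $h_i$ homogeneous of degree $d-1+w_i$. Rewriting $\partial_{x_i}(f)h_i\otimes\partial_t^j\delta=\partial_{x_i}(h_i)\otimes\partial_t^{j-1}\delta-\partial_{x_i}\!\big(h_i\otimes\partial_t^{j-1}\delta\big)$ via (\ref{action_on_Bf2}) and checking degrees, one gets $R_{\geq\alpha+j}\partial_t^j\delta\subseteq D_R\cdot R_{\geq\alpha+(j-1)}\partial_t^{j-1}\delta$ once $j$ is large; iterating collapses the infinite sum in (\ref{eq1_Vfilt_qhom}) to a finite one, each term of which is a finitely generated $R$-module. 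The case $\alpha>1$ is then immediate, since $t^{\lceil\alpha\rceil-1}W^{\alpha-\lceil\alpha\rceil+1}$ is a homomorphic image of $W^{\alpha-\lceil\alpha\rceil+1}$.

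I expect step iii) to be the main obstacle: it is the only place using the global hypothesis that the singularity is isolated, and it requires combining the finiteness of $R/J_f$ with the Euler relation to run an induction on the $\partial_t$-degree that tames the a priori infinite sum defining $W^{\alpha}$. The computation in ii), while somewhat delicate, is essentially mechanical once the Euler identity is invoked, and conditions i) and iv) are routine. Once all four conditions are checked, Proposition~\ref{unique_V_filt} identifies $W^{\bullet}$ with $V^{\bullet}\iota_+(R)$, completing the proof.
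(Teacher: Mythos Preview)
Your overall strategy matches the paper's: verify that $W^{\bullet}$ satisfies Definition~\ref{defi_V_filtration} and invoke uniqueness. Most of your checks are correct, but the argument for condition~ii) breaks down at $\alpha=1$, and this is a genuine error, not merely a gap in exposition.

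You claim that $(s+\alpha)$ acts as \emph{zero} on ${\rm Gr}_W^{\alpha}$ for all $\alpha\leq 1$, and you justify this by writing $(s+\alpha)(x^b\partial_t^j\delta)=\sum_i w_i\,\partial_{x_i}(x_ix^b\partial_t^j\delta)$ and asserting that each $x_ix^b\partial_t^j\delta\in W^{>\alpha}$ because $x_ix^b$ has strictly larger weighted degree. For $\alpha<1$ this is fine: $W^{>\alpha}$ is still defined by the first branch of (\ref{eq1_Vfilt_qhom}), so $R_{>\alpha+j}\partial_t^j\delta\subseteq W^{>\alpha}$. For $\alpha=1$, however, $W^{>1}=t\cdot W^{>0}$ is defined by the second branch, and in general $R_{>1+j}\partial_t^j\delta\not\subseteq W^{>1}$. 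Concretely, take $f=x^2+y^2$, so $|w|=1$ and $\delta$ itself is one of your generators ($\rho(1)+|w|=1$, $j=0$). Then $(s+1)\delta=-t\partial_t\delta$, and $(s+1)\delta\in W^{>1}=t\cdot W^{>0}$ would force $\partial_t\delta\in W^{>0}$ (as $t$ is injective). But $b_{\partial_t\delta}(s)=s(s+1)$ by Theorem~\ref{equality_b_fcn}, so Corollary~\ref{prop_char_V_filt} gives $\partial_t\delta\notin V^{>0}=W^{>0}$. Thus $(s+1)$ does \emph{not} kill the class of $\delta$ in ${\rm Gr}_W^1$; your semisimplicity claim is false at $\alpha=1$. (The monodromy on the Milnor fiber is indeed semisimple here, but ${\rm Gr}_V^1$ corresponds to the whole unipotent nearby cycle $\psi_{f,1}$ on $H$, not just the vanishing cohomology at the origin.)

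The paper handles $\alpha=1$ differently: it only proves $(\partial_tt-1)^2\cdot W^1\subseteq W^{>1}$. The point is that $\partial_t\cdot W^1\subseteq W^0$, and the $\alpha=0$ instance of \emph{your} argument gives $\partial_tt\cdot W^0\subseteq W^{>0}$; hence
\[
(\partial_tt-1)^2\cdot W^1=t(\partial_tt)\partial_t\cdot W^1\subseteq t\cdot W^{>0}=W^{>1}.
\]
The case $\alpha>1$ then follows by conjugating with $t^m$ as you indicate, but one must allow nilpotency order $2$ at the integers. A minor related omission: the fact that $W^{\bullet}$ is decreasing across the branch boundary (i.e., $W^{m+\epsilon}\subseteq W^m$ for integers $m\geq 1$) also needs a short separate check, since your degree argument for $R_{\geq q}$ only handles each branch individually.
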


 We note that a more precise statement, involving also the Hodge filtration,
is given for the microlocal $V$-filtration in \cite[(4.2.1)]{Saito_filtration}. We give an elementary proof of the above result, by showing that the filtration in the theorem satisfies
the definition of the $V$-filtration. 

\begin{proof}[Proof of Theorem~\ref{Vfilt_qhom}]
We need to show that $(W^{\alpha})_{\alpha\in\QQ}$
satisfies the definition of the $V$-filtration. Since $R_{\geq\beta}\subseteq R_{\geq\gamma}$ for $\beta\geq\gamma$, we see
that in order to check that $W^{\bullet}$ is a decreasing filtration,
it is enough to show that for every $m\in {\mathbf Z}_{>0}$ and $0<\epsilon\ll 1$, we have
$$W^{m+\epsilon}=t^m\cdot W^{\epsilon}\subseteq W^m=t^{m-1}W^1.$$
This follows using the fact that $f$ is $w$-homogeneous with $\rho(f)=1$:
$$t\cdot R_{\geq \epsilon+j}\partial_t^j\delta\subseteq f\cdot R_{\geq\epsilon+j}\partial_t^j\delta+R_{\geq \epsilon+j}\partial_t^{j-1}\delta
\subseteq R_{1+j}\partial_t^j\delta+R_{\geq j}\partial_t^{j-1}\delta\subseteq W^1.$$

The fact that $W^{\bullet}$ is exhaustive 
follows from the fact that $R_{\geq\alpha+j}=R$ if $\alpha\leq -j$. Note also that if $\ell$ is such that $\ell w_i\in {\mathbf Z}$ for all $i$,
then $R_{\geq \lambda}$ is constant for $\lambda$ in any interval of the form $\big(\tfrac{i}{\ell},\tfrac{i+1}{\ell}\big]$, with $i\in {\mathbf Z}$;
therefore the same holds for $W^{\lambda}$. 

Let us show that 
\begin{equation}\label{eq1_condW}
t\cdot W^{\alpha}\subseteq W^{\alpha+1}\quad\text{for all}\quad\alpha\in {\mathbf Q},
\end{equation}
 with equality for $\alpha>0$.
Of course, the assertion for $\alpha>0$ holds by definition, hence we may assume $\alpha\leq 0$. 
By definition of $W^{\alpha}$, it is enough to show that for every $j\in {\mathbf Z}_{\geq 0}$, we have 
$t\cdot R_{\geq\alpha+j}\partial_t^j\delta\subseteq W^{\alpha+1}$. This follows using the fact that 
$f$-is $w$-homogeneous, with $\rho(f)=1$:
$$t\cdot R_{\geq\alpha+j}\partial_t^j\delta\subseteq f\cdot R_{\geq \alpha+j}\partial_t^j\delta+R_{\geq\alpha+j}\partial_t^{j-1}\delta
\subseteq R_{\geq \alpha+j+1}\partial_t^j\delta+R_{\geq\alpha+j}\partial_t^{j-1}\delta\subseteq W^{\alpha+1}.$$

The inclusion 
\begin{equation}\label{eq2_condW}
\partial_t\cdot W^{\alpha}\subseteq W^{\alpha-1}\quad\text{for}\quad\alpha\leq 1
\end{equation}
follows directly from the definition. 
Suppose now that $\alpha\in (m,m+1]$ for some integer $m\geq 2$, in which case we have
$$\partial_t\cdot W^{\alpha}=\partial_t\cdot t^mW^{\alpha-m}\subseteq t^m\partial_t\cdot W^{\alpha-m}+t^{m-1}W^{\alpha-m}
\subseteq t^m\cdot W^{\alpha-m-1}+t^{m-1}\cdot W^{\alpha-m}\subseteq W^{\alpha-1},$$
where we use (\ref{eq1_condW}) and (\ref{eq2_condW}).

We next show that $\partial_tt-\alpha$ is nilpotent on ${\rm Gr}_W^{\alpha}=W^{\alpha}/W^{>\alpha}$ for all $\alpha\in {\mathbf Q}$. 
Suppose first that $\alpha<1$. It is enough to show that if $h\in R$ is $w$-homogeneous, with $\rho(h)+|w|=\alpha+j$,
then $(\partial_tt-\alpha)h\partial_t^j\delta\in D_R\cdot R_{>\alpha+j}\partial_t^j\delta$. To see this, note that  by Lemma~\ref{lem_s} and 
(\ref{eq1_eg_quasihom}), we have
$$(\partial_tt-\alpha)h\partial_t^j\delta=\partial_t^j(\partial_tt-\alpha-j)h\delta=-\sum_{i=1}^n w_i\partial_{x_i}x_ih\partial_t^j\delta
\in D_R\cdot R_{>\alpha+j}\partial_t^j\delta,$$
where we use the fact that $x_ih\in R_{>\alpha+j}$ for all $i$, due to the fact that $w_i>0$. 

We deduce that $(\partial_tt-1)^2\cdot {\rm Gr}_W^1=0$. Indeed, we have already seen that $\partial_tW^1\subseteq W^0$, so by what we have already proved,
we have $(\partial_tt)\partial_t\cdot W^1\subseteq W^{>0}$. We thus get 
$$(\partial_tt-1)^2\cdot W^1=t(\partial_tt\partial_t)\cdot W^1\subseteq t\cdot W^{>0}\subseteq W^{>1}$$
by (\ref{eq1_condW}). Note now that if $\alpha\in (m,m+1]$, for a positive integer $m$, we know that 
$$(\partial_tt-\alpha+m)^k\cdot W^{\alpha-m}\subseteq W^{>\alpha-m},\quad\text{with}\quad
k\in\{1,2\},$$ and
using the definition of $W^{\alpha}$ and Lemma~\ref{lem_s}, we obtain
$$(\partial_tt-\alpha)^k\cdot W^{\alpha}=(\partial_tt-\alpha)^kt^m\cdot W^{\alpha-m}=t^m(\partial_tt-\alpha+m)^k\cdot W^{\alpha-m}\subseteq t^m\cdot W^{>\alpha-m}=W^{>\alpha}.$$


In order to complete the proof of the theorem, it is enough to show that each $W^{\alpha}$ is a finitely generated $D_R[t\partial_t]$-module;
in fact, we will show that it is a finitely generated $D_R$-module. 
Since $W^{\alpha}\subseteq W^1$ for $\alpha\geq 1$ and $W^{\alpha}=t^{\lceil\alpha\rceil-1}W^{\alpha-\lceil\alpha\rceil+1}$ for $\alpha>1$, it follows that
it is enough to show that $W^1$ is a finitely generated $D_R$-module. Since $\dim_k(R/J_f)<\infty$ and $w_i>0$ for all $i$, it follows that 
there is $N$ such that $R_{\geq N+1}\subseteq J_f$. We claim that in this case 
$R_{\geq j+1}\partial_t^j\delta\subseteq D_R\cdot R_{\geq j}\partial_t^{j-1}\delta$ for all $j\geq N$. 
Indeed, given $g\in R_{\geq j+1}$, which we may assume $w$-homogeneous, we have $h\in J_f$, hence we can write $h=\sum_{i=1}^n\tfrac{\partial f}{\partial x_i}g_i$,
with $g_i\in R_{\geq j+w_i}$ (recall that $\rho(f)=1$, hence $\rho\big(\tfrac{\partial f}{\partial x_i}\big)=1-w_i$). Since we can write
$$h\partial_t^j\delta=\sum_{i=1}^n\partial_{x_i}\cdot g_i\partial_t^{j-1}\delta-\sum_{i=1}^n\tfrac{\partial g}{\partial x_i}\partial_t^{j-1}\delta$$
and $g_i,\tfrac{\partial g_i}{\partial x_i}\in R_{\geq j}$, this proves our assertion. This in turn implies that
$$W^1=\sum_{j=0}^{N}D_R\cdot R_{\geq j+1}\partial_t^j\delta,$$
hence $W^1$ is a finitely generated $D_R$-module. This completes the proof of the theorem.
\end{proof}

\subsection{The $b$-function of weighted homogeneous polynomials with isolated singularities}
We next turn to the description of the Bernstein-Sato polynomial of $f$, that will require a bit more work.
We keep the assumption that $f$ is $w$-homogeneous, of degree $1$, with an isolated singularity at $0$.
Recall that $R/J_f$ is a finite-dimensional $k$-vector space.
We put 
$$\Sigma(f)=\{\rho(g)\mid g\in R\smallsetminus J_f,\,g\,\,\text{is}\,\,w-\text{homogeneous}\}.$$
Note that this is a finite set: if $x^{u_1},\ldots,x^{u_r}$ are monomials whose classes in $R/J_f$ form a basis,
then 
$\Sigma(f)=\big\{\rho(u_1),\ldots,\rho(u_r)\big\}$. Indeed, the only thing to note is that if $h\not \in J_f$
is $w$-homogeneous and if we write $h=\sum_{i=1}^rc_ix^{u_i}+g$, where $c_i\in k$ for all $i$ and $g\in J_f$, since $J_f$ is generated
by $w$-homogeneous elements, we may assume that $g$ is $w$-homogeneous, with $\rho(h)=\rho(g)=\rho(x^{u_i})$
for all $i$ such that $c_i\neq 0$ (note also that some $c_i$ must be nonzero since $g\not\in J_f$).

\begin{thm}[\cite{BGM}]\label{thm_bfcn_qhom}
The Bernstein-Sato polynomial of $f$ is given by
\begin{equation}\label{eq1_thm_bfcn_qhom}
b_f(s)=(s+1)\cdot\prod_{\lambda\in\Sigma(f)}\big(s+\lambda+|w|).
\end{equation}
\end{thm}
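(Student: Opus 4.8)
The plan is to compute $b_f$ as the minimal polynomial of the operator $s$ on a single explicit module. Write $b_f=b_\delta$, the $b$-function of $\delta$ (these coincide by the first of the remarks following Definition~\ref{gen_defi_b_fcn}); by Remark~\ref{rmk_gen_defi_b_fcn}, $b_\delta$ is the minimal polynomial of $s$ acting on $\cN/t\cN$, where $\cN=V^0\cD_{X\times\AA^1}\cdot\delta=D_R[s]\cdot\delta$. The key first step is the weighted-homogeneity identity $\theta\delta=s\delta$ in $\iota_+(R)$, where $\theta=\sum_iw_ix_i\partial_i$: using (\ref{action_on_Bf}) together with $\theta(f)=f$ and $\theta(1)=0$ one gets $\theta\delta=-\theta(f)\partial_t\delta=-f\partial_t\delta=-\partial_t(t\delta)=-(\partial_tt)\delta=s\delta$. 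Since $s$ commutes with $D_R$ inside $\cD_{X\times\AA^1}$, this forces $\cN=D_R\cdot\delta$ with the $s$-action given by $s\cdot(P\delta)=(P\theta)\delta$, and $t\cN=D_R\cdot(t\delta)=D_R\cdot(f\delta)$, so $\cN/t\cN=D_R\delta/D_R(f\delta)$. Transporting through the isomorphism $\tau$ of Proposition~\ref{prop_V_and_b} for $\cM=\cO_X[1/f]$ (harmless, since $b_\delta$ only depends on the subspace $\cD_X[s]\delta$, which is the same in $\iota_+(\cO_X[1/f])$ and in $\iota_+(R)$, cf. Example~\ref{exist_V_filt_O_X2}), $\delta$ corresponds to $f^s$ and $\cN/t\cN\cong (D_R[s]f^s)/(D_R[s]f^{s+1})$, and the two computational inputs are now available: $(\sum_iw_i\partial_ix_i)\cdot hf^s=(s+\rho(h)+|w|)hf^s$ for $w$-homogeneous $h$ by (\ref{eq1_eg_quasihom}), and $f=\sum_iw_ix_i\partial_if\in J_f$.

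For the divisibility $b_f\mid(s+1)\prod_{\lambda\in\Sigma(f)}(s+\lambda+|w|)$ I would show that this polynomial annihilates $\cN/t\cN$; since it is central in $D_R[s]$ and $[f^s]$ generates $\cN/t\cN$ over $D_R[s]$, it suffices to kill $[f^s]$. There are two moves. First, for $g\in J_f$, writing $g=\sum_ig_i\partial_if$ with $g_i\in R$, the identity $\sum_i\partial_i(g_if^{s+1})=\sum_i(\partial_ig_i)f^{s+1}+(s+1)gf^s$ puts $(s+1)gf^s\in D_R[s]f^{s+1}$, so $(s+1)$ kills the image of $J_f\cdot f^s$. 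Second, for a monomial $h$ with $\rho(h)=\lambda$, the identity $(s+\lambda+|w|)hf^s=\sum_iw_i\partial_i(x_ihf^s)$ expresses $(s+\lambda+|w|)[hf^s]$ as a $D_R$-combination of classes $[x_ihf^s]$ with $\rho(x_ih)=\lambda+w_i>\lambda$; decomposing each $x_ih$ against a fixed monomial basis $x^{u_1},\dots,x^{u_\mu}$ of $R/J_f$ (here the isolated-singularity hypothesis enters, as $\dim_k(R/J_f)<\infty$) reduces to basis classes of strictly larger $\rho$-degree and to the image of $J_f\cdot f^s$. A downward induction on $\rho$-degree --- started at $\max\Sigma(f)$, where every $x_ih$ lies in $J_f$ --- then shows $(s+1)\prod_{\lambda\in\Sigma(f)}(s+\lambda+|w|)$ kills $[x^{u_i}f^s]$ for every basis monomial, in particular $[f^s]=[1\cdot f^s]$, which is what we want.

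For the reverse divisibility the factor $(s+1)$ is easy: specializing $s\mapsto-1$ in the defining relation of $b_f$ as in Remark~\ref{rmk_specialize_s} gives $b_f(-1)\cdot f^{-1}\in\cD_X\cdot 1=\cO_X$, whence $b_f(-1)=0$ since $f$ is not invertible. The factors $(s+\lambda+|w|)$ for $\lambda\in\Sigma(f)$ are the main obstacle: one must show each $-(\lambda+|w|)$ genuinely occurs as a (generalized) $s$-eigenvalue of $\cN/t\cN$, i.e.\ is a root of $b_\delta$. My plan here is to use the $V$-filtration: by Corollary~\ref{prop_char_V_filt} the roots of $b_w$ for a section $w$ are governed by the jumps $V^\alpha\iota_+(R)$ it meets, and using the explicit formula of Theorem~\ref{Vfilt_qhom} one checks that for a $w$-homogeneous $g\notin J_f$ of degree $\lambda$ the class of $g\delta$ is nonzero in ${\rm Gr}_V^{\lambda+|w|}\iota_+(R)$ and that, after successively removing the generalized eigenspaces already accounted for by operators in $\cD_X$ applied to $\delta$, the spectral value $\lambda+|w|$ is forced to survive. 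Combined with the direct computation of $\cN/t\cN$ from the second paragraph --- which, carried out carefully enough, pins down its $k[s]$-module structure, including the nilpotent block at the eigenvalue $-1$ (visible already for $f=x^3+y^3$, where $b_f(s)=(s+1)^2(s+2/3)(s+4/3)$) --- this yields equality. Making this matching between the roots of $b_\delta$ and the jumps of the $V$-filtration precise, and controlling the Jordan structure at $-1$, is where the bulk of the work lies.
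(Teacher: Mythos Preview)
Your divisibility argument (showing $b_f(s)\mid(s+1)\prod_{\lambda\in\Sigma(f)}(s+\lambda+|w|)$) is essentially the paper's: the descending induction on $\rho$-degree via the identity $(\sum_iw_i\partial_ix_i)hf^s=(s+\rho(h)+|w|)hf^s$ is exactly Lemma~\ref{lem_form_qhomog}, and the final multiplication by $(s+1)$ is the same.

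The reverse divisibility is where your proposal has a genuine gap. Your $V$-filtration idea does not close: knowing that $g\delta$ is nonzero in ${\rm Gr}_V^{\lambda+|w|}\iota_+(R)$ (which itself is not immediate from the formula in Theorem~\ref{Vfilt_qhom}, since that formula expresses each $V^\alpha$ as a $D_R$-span) tells you the largest root of $b_{g\delta}$ is $-(\lambda+|w|)$, but $b_{g\delta}$ is not $b_\delta=b_f$. The submodule $V^0\cD_{X\times\AA^1}\cdot g\delta$ sits inside $\cN$, but there is no evident map $\cN_g/t\cN_g\to\cN/t\cN$ that is injective on the relevant eigenspace, so you cannot transport the eigenvalue. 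Moreover, as you yourself flag with the $x^3+y^3$ example, when $1-|w|\in\Sigma(f)$ the formula predicts $(s+1)^2\mid b_f$, and your specialization argument only gives a single factor $(s+1)$; nothing in the $V$-filtration outline addresses this multiplicity.

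The paper's key ingredient, which you are missing, is Yano's description of the annihilator: for an isolated singularity, ${\rm Ann}_{D_R}(f^s)$ is generated by the ``syzygy'' operators $\tfrac{\partial f}{\partial x_i}\partial_{x_j}-\tfrac{\partial f}{\partial x_j}\partial_{x_i}$ (Proposition~\ref{lem_Yano}), hence ${\rm Ann}_{D_R}(f^s)\subseteq D_R\cdot J_f$. With this in hand the paper argues directly on $\widetilde{b}_f$: from the functional equation one first deduces $\widetilde{b}_f(s)f^s\in D_R\cdot J_ff^s$; then for $w$-homogeneous $h\notin J_f$ with $\rho(h)=\lambda$, writing $\widetilde{b}_f(s)=(s+\lambda+|w|)q(s)+a$ and using $(\sum_iw_i\partial_ix_i)hf^s=(s+\lambda+|w|)hf^s$, one gets an element of the form $ah+(\text{something in }D_R\cdot\fm)$ lying in $D_R\cdot J_f+{\rm Ann}(f^s)\subseteq D_R\cdot J_f$; reading off the coefficient of $\partial^0$ forces $ah\in J_f$, so $a=0$. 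This shows every $-(\lambda+|w|)$ is a root of $\widetilde{b}_f$, which simultaneously handles the $(s+1)$-multiplicity issue. That annihilator computation is the missing idea.
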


Before giving the proof of the theorem, we need to make some preparations. 
We begin with the following

\begin{lem}\label{lem_form_qhomog}
If $h\in R$ is $w$-homogeneous, then
\begin{equation}\label{eq3_thm_bfcn_qhom}
\prod_{\lambda\in\Sigma(f), \lambda\geq \rho(h)}\big(s+\lambda+|w|)hf^s\in D_R\cdot J_ff^s.
\end{equation}
\end{lem}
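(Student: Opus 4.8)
The plan is to reduce everything to a single ``weighted Euler'' identity and then induct on weighted degree. Throughout I work in the module $\cM[s]f^s$ of Section~\ref{sect_Vfil_bfcn} for $\cM=\cO_X[1/f]=R[1/f]$, with its $D_R[s]$-action, and I write $J_ff^s$ for the span of the $gf^s$ with $g\in J_f$ and $D_R\cdot J_ff^s$ for the $D_R$-submodule it generates. Since the weighted Euler operator satisfies $\theta+|w|=\sum_iw_i\partial_ix_i$ and acts on a $w$-homogeneous $h$ by $s+\rho(h)+|w|$ (this is (\ref{eq1_eg_quasihom})), writing out the left-hand side yields the identity
\[(s+\rho(h)+|w|)\,hf^s=\sum_{i=1}^nw_i\,\partial_i\cdot\big((x_ih)f^s\big)\qquad(\dagger)\]
for every $w$-homogeneous $h\in R$; here each $x_ih$ is $w$-homogeneous of degree $\rho(h)+w_i>\rho(h)$.

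The first step is to observe that $D_R\cdot J_ff^s$ is stable under multiplication by $k[s]$. Indeed $s$ commutes with $D_R$ on $\cM[s]f^s$, so it suffices that $s\cdot gf^s\in D_R\cdot J_ff^s$ for every $w$-homogeneous $g\in J_f$, and this is precisely $(\dagger)$ with $h=g$, since each $x_ig$ again lies in $J_f$; iterating gives the claim. (Via $f=\theta(f)\in J_f$ one even gets $D_R\cdot J_ff^s=D_R[s]\cdot ff^s$, but this is not needed.)

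Now I induct on $N(h):=\#\{\lambda\in\Sigma(f):\lambda\ge\rho(h)\}$, which is finite because $\Sigma(f)$ is finite. If $N(h)=0$, then $h\in J_f$ — for if $h\notin J_f$ then $\rho(h)\in\Sigma(f)$, which is itself a $\lambda\in\Sigma(f)$ with $\lambda\ge\rho(h)$, contradicting $N(h)=0$; writing $h=\sum_jr_j\,\partial_jf$ with each $r_j$ $w$-homogeneous (possible as $J_f$ is a $w$-homogeneous ideal) shows $hf^s\in D_R\cdot J_ff^s$, and the product in (\ref{eq3_thm_bfcn_qhom}) is empty, so we are done. Suppose $N(h)\ge1$ and set $d=\rho(h)$. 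If $d\notin\Sigma(f)$, the same argument gives $h\in J_f$, hence $hf^s\in D_R\cdot J_ff^s$, and multiplying by $\prod_{\lambda\in\Sigma(f),\,\lambda\ge d}(s+\lambda+|w|)$ keeps us inside by $k[s]$-stability. If $d\in\Sigma(f)$, put $Q(s)=\prod_{\lambda\in\Sigma(f),\,\lambda>d}(s+\lambda+|w|)$; applying $Q(s)$ to $(\dagger)$ and using that $Q(s)$ commutes with the $\partial_i$ gives
\[Q(s)(s+d+|w|)\,hf^s=\sum_{i=1}^nw_i\,\partial_i\cdot\big(Q(s)(x_ih)f^s\big).\]
For each $i$, the element $x_ih$ is $w$-homogeneous of degree $d+w_i>d$; since $d\in\Sigma(f)$ lies in $\{\lambda\ge d\}$ but not in $\{\lambda\ge d+w_i\}$, we have $N(x_ih)<N(h)$, so by induction $\prod_{\lambda\in\Sigma(f),\,\lambda\ge d+w_i}(s+\lambda+|w|)\,(x_ih)f^s\in D_R\cdot J_ff^s$. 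Because $\{\lambda>d\}\supseteq\{\lambda\ge d+w_i\}$, the polynomial $Q(s)$ is a $k[s]$-multiple of $\prod_{\lambda\ge d+w_i}(s+\lambda+|w|)$, whence $Q(s)(x_ih)f^s\in D_R\cdot J_ff^s$ by $k[s]$-stability; applying $w_i\partial_i$ and summing over $i$, $Q(s)(s+d+|w|)hf^s\in D_R\cdot J_ff^s$, which is exactly (\ref{eq3_thm_bfcn_qhom}) since $(s+d+|w|)Q(s)=\prod_{\lambda\in\Sigma(f),\,\lambda\ge d}(s+\lambda+|w|)$.

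The only genuinely delicate point is the very first step — that $D_R\cdot J_ff^s$ is a $k[s]$-module — since without it one cannot commute the polynomial factors past the operators $\partial_i$ in the induction; fortunately this is immediate from $(\dagger)$ itself. Everything else is bookkeeping with the finite set $\Sigma(f)$ and the strict decrease $N(x_ih)<N(h)$.
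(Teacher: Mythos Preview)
Your proof is correct and follows essentially the same approach as the paper: descending induction on the weighted degree via the Euler identity $(\dagger)$, reducing $h$ to the $x_ih$. Your induction variable $N(h)$ is equivalent to the paper's descending induction on $\rho(h)$, and you are in fact more careful than the paper at one point: you explicitly establish the $k[s]$-stability of $D_R\cdot J_ff^s$ before the induction, whereas the paper uses this implicitly when passing from $\prod_{\lambda\ge\rho(x_ih)}(\cdots)$ to $\prod_{\lambda>\rho(h)}(\cdots)$ (and proves the stability only later, as (\ref{eq6_thm_bfcn_qhom})).
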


\begin{proof}
Note first that (\ref{eq3_thm_bfcn_qhom}) clearly holds if $h\in J_f$, hence from now on we assume that $h\not\in J_f$ and argue
by descending induction on $\rho(h)$. Suppose first that $h$ is such that $\rho(h)$ is the largest element of $\Sigma(f)$, in which case we have $x_ih\in J_f$ for $1\leq i\leq n$. 
In this case it follows from (\ref{eq1_eg_quasihom}) that we have
$$\big(s+\rho(h)+|w|\big)hf^s=\sum_{i=1}^nw_i\partial_i(x_ihf^s)\in D_R\cdot J_ff^s.$$

Suppose now that $h\not\in J_f$ and we know that (\ref{eq3_thm_bfcn_qhom}) holds for all $h'$ that are $w$-homogeneous, with $\rho(h')>\rho(h)$. In particular, it holds 
for $x_ih$ for all $i$, hence 
$$\prod_{\lambda'\in\Sigma(f),\lambda'>\rho(h)}\big(s+\lambda'+|w|\big)x_ihf^s\in D_R\cdot J_ff^s\quad\text{for}\quad 1\leq i\leq n.$$
We thus deduce using (\ref{eq1_eg_quasihom}) that
$$\prod_{\lambda\in\Sigma(f),\lambda\geq \rho(h)}\big(s+\lambda+|w|\big)hf^s=\prod_{\lambda\in\Sigma(f),\lambda>\rho(h)}\big(s+\lambda+|w|\big)\cdot \big(s+\rho(h)+|w|\big)hf^s$$
$$=\prod_{\lambda\in\Sigma(f),\lambda>\rho(h)}\big(s+\lambda+|w|\big)\cdot \sum_{i=1}^nw_i\partial_i(x_ih)f^s\subseteq D_R\cdot J_ff^s.$$
This completes the proof of the induction step.
\end{proof}

Following \cite[Theorem~2.19]{Yano}, we next prove a general result concerning ${\rm Ann}_{\cD_X}(f^s)$ for isolated hypersurface singularities. 
As suggested by the functional equation (\ref{eq_thm_b_fcn}), an understanding of this annihilator could be useful for computing $b_f(s)$.

\begin{prop}\label{lem_Yano}
Let $X$ be a smooth, irreducible variety and $P\in X$ a point lying on the hypersurface $Z$ defined by $f\in\cO_X(X)$. 
If $Z$ has an isolated singularity at $P$ and $x_1,\ldots,x_n$ are algebraic coordinates in a neighborhood of $P$, then 
the left ideal 
$${\rm Ann}_{\cD_X}(f^s)=\{Q\in \cD_X\mid Q\cdot f^s=0\}$$
is generated in a neighborhood of $P$ by $\tfrac{\partial f}{\partial x_i}\partial_{x_j}-\tfrac{\partial f}{\partial x_j}\partial_{x_i}$, for $1\leq i<j\leq n$.
In particular, we have 
$$\{Q\in \cD_X\mid Q\cdot f^s\in\cD_X\cdot J_ff^s\}\subseteq \cD_X\cdot J_f\quad\text{in}\quad\cD_X,$$
where $J_f=\big(\tfrac{\partial f}{\partial x_1},\ldots,\tfrac{\partial f}{\partial x_n}\big)$.
\end{prop}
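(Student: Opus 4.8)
The plan is to run an induction on the order of an operator $Q\in\operatorname{Ann}_{\cD_X}(f^s)$, stripping off its principal symbol at each step. Since the assertion is local, I work in the stalk and set $A:=\cO_{X,P}$, a regular local ring. Because $Z$ has an isolated singularity at $P$, the ideal $J_f$ is $\mathfrak{m}_P$-primary, so $\tfrac{\partial f}{\partial x_1},\dots,\tfrac{\partial f}{\partial x_n}$ is a system of parameters in the Cohen--Macaulay ring $A$, hence a regular sequence; this is the only place the isolated-singularity hypothesis enters. Writing $T_{ij}:=\tfrac{\partial f}{\partial x_i}\partial_{x_j}-\tfrac{\partial f}{\partial x_j}\partial_{x_i}$, a one-line computation gives $T_{ij}\cdot f^s=0$, and since the second-order terms cancel by symmetry of mixed partials we have $T_{ij}=\partial_{x_j}\cdot\tfrac{\partial f}{\partial x_i}-\partial_{x_i}\cdot\tfrac{\partial f}{\partial x_j}$, so $T_{ij}\in\cD_X\cdot J_f$. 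Granting the main statement, this already yields the ``in particular'' assertion: if $Qf^s=\sum_k P_k g_k f^s$ with $g_k\in J_f$, then $Q-\sum_k P_k g_k\in\operatorname{Ann}_{\cD_X}(f^s)=\sum_{i<j}\cD_X T_{ij}\subseteq\cD_X\cdot J_f$, hence $Q\in\cD_X\cdot J_f$.

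For the main statement, let $Q=\sum_{|\alpha|\le d}a_\alpha\partial_x^\alpha$ have order exactly $d$ and satisfy $Qf^s=0$. Inside the free rank-one $A[1/f][s]$-module $A[1/f][s]f^s$ one has $\partial_x^\alpha f^s=\bigl(f^{-|\alpha|}\prod_i(\tfrac{\partial f}{\partial x_i})^{\alpha_i}\,s^{|\alpha|}+(\text{lower degree in }s)\bigr)f^s$, so the vanishing of the $s^d$-coefficient of $Qf^s$ forces the principal symbol $\sigma_d(Q)(x,\xi):=\sum_{|\alpha|=d}a_\alpha\xi^\alpha$ to satisfy $\sigma_d(Q)\bigl(x,\tfrac{\partial f}{\partial x_1},\dots,\tfrac{\partial f}{\partial x_n}\bigr)=0$ in $A$. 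The key algebraic input is then: \emph{any homogeneous $g(\xi)\in A[\xi_1,\dots,\xi_n]$ of degree $d\ge1$ that vanishes under the substitution $\xi_i\mapsto\tfrac{\partial f}{\partial x_i}$ can be written as $\sum_{i<j}h_{ij}\bigl(\xi_i\tfrac{\partial f}{\partial x_j}-\xi_j\tfrac{\partial f}{\partial x_i}\bigr)$ with each $h_{ij}\in A[\xi]$ homogeneous of degree $d-2$}. This is where the regular sequence is used: the kernel of the evaluation map $A[\xi]\to A$, $\xi_i\mapsto\tfrac{\partial f}{\partial x_i}$, is graded with $d$-th piece $\ker(A[\xi]_d\to J_f^d)$, and since an ideal generated by a regular sequence is of linear type we have $\operatorname{Sym}_A(J_f)\xrightarrow{\sim}\operatorname{Rees}_A(J_f)=\bigoplus_{d\ge0}J_f^d$, while $\operatorname{Sym}_A(J_f)=A[\xi]/(\xi_i\tfrac{\partial f}{\partial x_j}-\xi_j\tfrac{\partial f}{\partial x_i})$ because the syzygies of a regular sequence are Koszul; hence this kernel is precisely the minor ideal. (For $d=1$ it is literally the statement that the syzygies of $\tfrac{\partial f}{\partial x_1},\dots,\tfrac{\partial f}{\partial x_n}$ are Koszul, which is all one needs for a slightly cruder induction.)

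With this in hand the induction closes: I would choose homogeneous $h_{ij}(x,\xi)$ of degree $d-2$ with $\sigma_d(Q)=\sum_{i<j}h_{ij}\bigl(\xi_i\tfrac{\partial f}{\partial x_j}-\xi_j\tfrac{\partial f}{\partial x_i}\bigr)$, lift each $h_{ij}$ to an operator $\widetilde h_{ij}$ of order $\le d-2$ with symbol $h_{ij}$ (replace $\xi^\beta$ by $\partial_x^\beta$), and observe that $Q-\sum_{i<j}\widetilde h_{ij}T_{ij}$ again annihilates $f^s$ but has order $\le d-1$; by induction it lies in $\sum_{i<j}\cD_X T_{ij}$, hence so does $Q$. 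The base case $d=0$ is immediate because $f^s$ is a non-zero-divisor in $A[1/f][s]f^s$. I expect the main obstacle to be the algebraic key step — identifying $\ker(A[\xi]\to A)$ with the minor ideal — which genuinely relies on the isolated-singularity assumption through the linear-type property of $J_f$ and is exactly what fails in the non-isolated case; everything else is bookkeeping with principal symbols.
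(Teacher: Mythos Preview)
Your approach is essentially identical to the paper's: induction on the order of $Q$, extraction of the principal symbol via the top-degree coefficient in $s$ of $\partial_x^\alpha f^s$, and the observation that for a regular sequence the kernel of the substitution map $A[\xi]\to A$ is generated by the Koszul relations $\xi_i\tfrac{\partial f}{\partial x_j}-\xi_j\tfrac{\partial f}{\partial x_i}$ (equivalently, $J_f$ is of linear type). Two small corrections: the $h_{ij}$ must be homogeneous of degree $d-1$, not $d-2$ (the Koszul relations are linear in $\xi$, so degree $(d-1)+1=d$), and correspondingly the lifts $\widetilde h_{ij}$ have order $\le d-1$; with your stated degrees the symbol of $\sum\widetilde h_{ij}T_{ij}$ would sit in order $d-1$ and could not cancel $\sigma_d(Q)$. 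Also, your reduction ``$J_f$ is $\mathfrak m_P$-primary, hence a regular sequence'' tacitly assumes $P$ is actually singular; the paper treats the smooth case separately (there one $\tfrac{\partial f}{\partial x_i}$ is a unit and the claim is an easy direct check), so you should add a line for that case.
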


\begin{proof}
It is clear that the left ideal $I$ of $\cD_X$ generated by $\tfrac{\partial f}{\partial x_i}\partial_{x_j}-\tfrac{\partial f}{\partial x_j}\partial_{x_i}$, for $1\leq i<j\leq n$,
is contained in ${\rm Ann}_{\cD_X}(f^s)$, hence we need to prove the reverse inclusion. The assertion is easy to check when $P$ is a smooth point of $Z$.
Indeed, if $\tfrac{\partial f}{\partial x_i}(P)\neq 0$ and $Q\in {\rm Ann}_{\cD_X}(f^s)$, after writing $Q$ modulo the left ideal generated by 
$\partial_{x_j}-\big(\tfrac{\partial f}{\partial x_i}\big)^{-1}\tfrac{\partial f}{\partial x_j}\partial{x_i}$, we may assume that $Q\in\cO_X[\partial_{x_i}]$ and we need to show that $Q=0$. 
This follows from the fact that $\partial_{x_i}^mf^s=Q_mf^s$, where $Q_m\in\cO_X[1/f,s]$ is a polynomial of degree $m$ in $s$. 

From now on we assume that $P$ is a singular point of $Z$. After possibly replacing $X$ by a suitable open neighborhood of $P$, we may assume that $X$ is affine, with 
$\cO_X(X)=R$, 
that $x_1,\ldots,x_n$ are defined on $X$,
and that $\tfrac{\partial f}{\partial x_1},\ldots,\tfrac{\partial f}{\partial x_n}$ form a regular sequence in $R$. We need to show that if $Q\in \Gamma(X,\cD_X)$ is such that $Q\cdot f^s=0$, then
$Q$ lies in $I$. We argue by induction on the order $q$ of $Q$, the case $q=0$ being trivial: in this case it is clear that $Q=0$. Note that for every $\alpha\in\ZZ_{\geq 0}^n$,
we can write $\partial_x^{\alpha}f^s=g_{\alpha}f^{s-|\alpha|}$, where $g_{\alpha}\in R[s]$ has degree $|\alpha|$ in $s$, with the coefficient of the top degree term equal to 
$\prod_i\big(\tfrac{\partial f}{\partial x_i}\big)^{\alpha_i}$. 
We can write $Q=\sum_{|\alpha|\leq q}Q_{\alpha}\partial_x^{\alpha}$, with $Q_{\alpha}\in\cO_X$ for all $\alpha$, and we see that 
$$\sum_{|\alpha|=q}Q_{\alpha}\cdot\prod_{i=1}^n\big(\tfrac{\partial f}{\partial x_i}\big)^{\alpha_i}=0.$$
Since $\tfrac{\partial f}{\partial x_1},\ldots,\tfrac{\partial f}{\partial x_n}$ is a regular sequence in $R$, it follows that in the ring $R[y_1,\ldots,y_n]$ we can write
$$\sum_{|\alpha|=q}Q_{\alpha}y_1^{\alpha_1}\cdots y_n^{\alpha_n}=\sum_{i<j}h_{i,j}\big(\tfrac{\partial f}{\partial x_i}y_j-\tfrac{\partial f}{\partial x_j}y_i\big),$$
for some $h_{i,j}\in R[y_1,\ldots,y_n]$ that are homogeneous in $y_1,\ldots,y_n$, of degree $q-1$. 
If $h_{i,j}=\sum_{\beta}h_{i,j,\beta}y^{\beta}$, with $h_{i,j,\beta}\in R$, then the difference 
$$\widetilde{Q}=Q-\sum_{i,j,\beta}h_{i,j,\beta}\partial_x^{\beta}\big(\tfrac{\partial f}{\partial x_i}\partial_{x_j}-\tfrac{\partial f}{\partial x_j}\partial_{x_i}\big)\in D_R$$
has order $q-1$ and $\widetilde{Q}f^s=0$. We conclude by induction that $\widetilde{Q}\in I$ and thus $Q\in I$, too. 

The last assertion in the proposition is clear once we note that
$$\tfrac{\partial f}{\partial x_i}\partial_{x_j}-\tfrac{\partial f}{\partial x_j}\partial_{x_i}=\partial_{x_j}\tfrac{\partial f}{\partial x_i}-\tfrac{\partial^2 f}{\partial x_j\partial x_i}
-\partial_{x_i}\tfrac{\partial f}{\partial x_j}+\tfrac{\partial^2 f}{\partial x_i\partial x_j}
=\partial_{x_j}\tfrac{\partial f}{\partial x_i}-\partial_{x_i}\tfrac{\partial f}{\partial x_j}\in\cD_X\cdot J_f.$$
\end{proof}

We can now give the proof of the formula for the Bernstein-Sato polynomial for weighted homogeneous polynomials with an isolated singularity, following
\cite{BGM}.

\begin{proof}[Proof of Theorem~\ref{thm_bfcn_qhom}]
We first show that $b_f(s)$ divides $(s+1)\cdot\prod_{\lambda\in\Sigma(f)}\big(s+\lambda+|w|)$, or equivalently, that
\begin{equation}\label{eq2_thm_bfcn_qhom}
(s+1)\cdot\prod_{\lambda\in\Sigma(f)}\big(s+\lambda+|w|)f^s\in D_R[s]f^{s+1}.
\end{equation}
This follows by applying Lemma~\ref{lem_form_qhomog} for $h=1$: we can write
$$\prod_{\lambda\in\Sigma(f), \lambda\geq 0}\big(s+\lambda+|w|)f^s=\sum_{i=1}^nP_i\cdot \tfrac{\partial f}{\partial x_i}f^s$$
for some $P_1,\ldots,P_n\in D_R$, in which case, we have
$$(s+1)\cdot\prod_{\lambda\in\Sigma(f)}\big(s+\lambda+|w|)f^s=\sum_{i=1}^n(s+1)P_i\tfrac{\partial f}{\partial x_i}f^s=(P_1\partial_1+\ldots+P_n\partial_n)\cdot f^{s+1},$$
hence (\ref{eq2_thm_bfcn_qhom}) holds. 

We next need to show that $(s+1)\cdot\prod_{\lambda\in\Sigma(f)}\big(s+\lambda+|w|)$ divides $b_f(s)$, or equivalently, that for every $h\in R\smallsetminus J_f$ that is $w$-homogeneous,
if $\lambda=\rho(h)$, then $\widetilde{b}_f\big(-\lambda-|w|\big)=0$. 
By definition of the reduced Bernstein-Sato polynomial, we can write
\begin{equation}\label{eq5_thm_bfcn_qhom}
(s+1)\widetilde{b}_f(s)f^s=P\cdot f^{s+1},
\end{equation}
for some $P\in D_R[s]$. 
Let us write $P=\sum_{i=1}^nP_i\partial_i+Q$, where
$Q\in R[s]$ and $P_i\in D_R[s]$. The equality (\ref{eq5_thm_bfcn_qhom}) becomes 
$$(s+1)\widetilde{b}_f(s)f^s=Q\cdot f^{s+1}+(s+1)\cdot \sum_{i=1}^nP_i\tfrac{\partial f}{\partial x_i}\cdot f^s.$$
We can thus write $Q=(s+1)T$ for some $T\in R[s]$ and we have
$$\widetilde{b}_f(s)f^s=Q_0\cdot f^{s+1}+\sum_{i=1}^nP_i\tfrac{\partial f}{\partial x_i}\cdot f^s\in D_R[s]\cdot J_ff^s,$$
where we use the fact that $f\in J_f$.

Note also that 
\begin{equation}\label{eq6_thm_bfcn_qhom}
D_R[s]\cdot J_ff^s\subseteq D_R\cdot J_ff^s.
\end{equation} 
Indeed, we have $s f^s=\theta f^s$ and 
$\big[\theta,\tfrac{\partial f}{\partial x_i}\big]=\theta\big(\tfrac{\partial f}{\partial x_i}\big)=(1-w_i)\tfrac{\partial f}{\partial x_i}$
(due to the fact that $\tfrac{\partial f}{\partial x_i}$ is $w$-homogeneous with $\rho\big(\tfrac{\partial f}{\partial x_i}\big)=1-w_i$),
hence 
$$s\tfrac{\partial f}{\partial x_i}f^s=(\theta-1+w_i)\tfrac{\partial f}{\partial x_i}f^s.$$
We thus conclude that 
$\widetilde{b}_f(s)f^s\in D_R\cdot J_ff^s$, hence also
\begin{equation}\label{eq7_thm_bfcn_qhom}
\widetilde{b}_f(s)hf^s\in D_R\cdot J_ff^s.
\end{equation}

Note now that by (\ref{eq1_eg_quasihom}), we have
$$\left(\sum_{i=1}^nw_i\partial_ix_i\right)\cdot hf^s=\big(s+\lambda+|w|\big)hf^s.$$
Let us write $\widetilde{b}_f(s)=\big(s+\lambda+|w|\big)q(s)+a$, for some $q\in k[s]$ and some $a\in k$. 
Our goal is to show that $a=0$. Since 
$$\widetilde{b}_f(s)hf^s=ahf^s+\left(\sum_{i=1}^nw_i\partial_ix_i\right)\cdot q(s)hf^s\in D_R\cdot J_ff^s$$
and since 
$q(s)f^s\in D_R\cdot f^s$ (this follows using again $sf^s=\theta f^s$), we conclude that there is
$P\in D_R$ such that 
$$ahf^s+\left(\sum_{i=1}^nw_i\partial_ix_i\right)\cdot Pf^s\in D_R\cdot J_ff^s,$$
in which case, we conclude from Proposition~\ref{lem_Yano} that
$$ah+\left(\sum_{i=1}^nw_i\partial_ix_i\right)\cdot P\in D_R\cdot J_f.$$
Since we have a direct sum decomposition $D_R=\bigoplus_{\alpha\in\ZZ_{\geq 0}^n}\partial^{\alpha}R$
that induces the decomposition $D_R\cdot J_f=\bigoplus_{\alpha\in\ZZ_{\geq 0}^n}\partial^{\alpha}J_f$, 
we conclude that $ah\in J_f$. Since $h\not\in J_f$, it follows that $a=0$, hence $\widetilde{b}_f\big(-\lambda-|w|\big)=0$.
This completes the proof of the theorem.
\end{proof}

\section{Duality and $V$-filtrations}\label{section_behavior_push_forward}

Our goal in this section is to discuss the compatibility of $V$-filtrations with duality.
Let $X$ be a smooth, irreducible, $(n+1)$-dimensional algebraic variety. Recall that on $X$ we have
 the duality functor $(-)^*$, see (\ref{eq_def_duality_functor}), such that
 for a left $\cD_X$-module $\cM$, 
the right $\cD_X$-module corresponding to $\cH^j(\cM^*)$ is ${\mathcal Ext}_{\cD_X}^{j+n+1}(\cM,\cD_X)$.
The following theorem allows us to relate the graded pieces of the $V$-filtration to the duality functor.




\begin{thm} \label{thm_duality} $($\cite[Prop. 4.6-2]{MaisonobeMebkhout}, \cite[Theorem 1.6]{SaitoDuality}$)$ Let $X$ be a smooth, irreducible algebraic variety and $H\subseteq X$ a smooth hypersurface, defined by $t \in \cO_X(X)$. If $\cM$ is a $\cD_X$-module that has a $V$-filtration $V^\bullet \cM$ along $H$, then 
all $\cH^j(\cM^*)$ have such a $V$-filtration and
we have the following functorial isomorphisms for all $j\in \mathbf Z$: 
\begin{enumerate} 
\item[i)] $\delta_1\colon  {\rm Gr}_V^1\big(\cH^j(\cM^*)\big) \overset{\sim}\longrightarrow \cH^j\big(({\rm Gr}_V^1 \cM)^*\big)$,
\item[ii)] $\delta_0\colon {\rm Gr}_V^0\big(\cH^j(\cM^*)\big) \overset{\sim}\longrightarrow \cH^j\big(({\rm Gr}_V^0\cM)^*\big)$,
\item[iii)] $\delta_\alpha\colon {\rm Gr}_V^\lambda\big((\cH^j(\cM^*)\big) \overset{\sim}\longrightarrow \cH^j\big(({\rm Gr}_V^{1-\lambda}\cM)^*\big)$ for all $\lambda \in (0,1)$
\end{enumerate}
such that if we denote by $N$ the nilpotent endomorphism $(s+\lambda)$ on ${\rm Gr}_V^{\lambda}(-)$ and ${\rm can}, {\rm Var}$ are the maps from formulas (\ref{defn-can})
and (\ref{defn-Var}), then
\begin{enumerate} \item[iv)] $N^* \circ \delta_\lambda = \delta_{\lambda} \circ (-N)$,
\item[v)] ${\rm can}^* \circ \delta_1 = \delta_0 \circ {\rm Var}$,
\item[vi)] $(- {\rm Var})^* \circ \delta_0 = \delta_1 \circ {\rm can}$.
\end{enumerate}
\end{thm}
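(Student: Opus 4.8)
The strategy is to reduce everything to a purely local computation on $X \times \mathbf{A}^1$ and then use an explicit free resolution of $\cM$ as a $V^0\cD_X$-module. More precisely, I would first observe that the statement is local, and that all the assertions only concern the behavior near $H$, so we may assume we have coordinates $x_1,\dots,x_n,t$ on $X$. The key algebraic input is to work not with $\cD_X$ directly but with the ring $A := V^0\cD_X = \cO_X\langle \partial_{x_1},\dots,\partial_{x_n}, s\rangle$ and its Rees-type extension $\cR := \cR_+(V^\bullet\cD_X) = \bigoplus_{i\ge 0} V^i\cD_X z^i$, which is Noetherian by Remark~\ref{V0_Noeth}. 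Because $\cM$ has a $V$-filtration, the Rees module $\cR_+(V^\bullet\cM) = \bigoplus_{i\ge 0} V^i\cM z^i$ is a finitely generated $\cR$-module, and I would build a finite free resolution $F_\bullet \to \cR_+(V^\bullet\cM)$ by finitely generated free $\cR$-modules (using that $\cR$ has finite global dimension, which follows from filtering by order of differential operators as in Remark~\ref{V0_Noeth}). This Rees resolution is the central device: specializing $z \mapsto 1$ recovers a $V^0\cD_X$-free resolution of $\cM$ adapted to the filtration, and taking $\mathbf{R}\Hom$ of it against $\cD_X$ (resp.\ $V^0\cD_X$, resp.\ $\cD_H$) computes $\cM^*$ together with its induced $V$-filtration.

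\textbf{Key steps, in order.} First, I would establish that $\cH^j(\cM^*)$ carries a $V$-filtration: this follows from Corollary~\ref{cor_thm_existence_Vfilt} together with the known compatibility of duality with the relevant operations, but the clean way is to exhibit it directly as the cohomology of $\mathbf{R}\Hom_{\cR}(\cR_+(V^\bullet\cM), \cR)\otimes_{\cO_X}\omega_X^{-1}[n+1]$ specialized appropriately, then check the defining properties of Definition~\ref{defi_V_filtration} on the nose — conditions i) and iii) are immediate from the $\cR$-module structure, condition ii) follows because $s$ acts on each graded piece with the predicted generalized eigenvalue (this is where the index $1-\lambda$ appears, via $s \mapsto -s$ coming from the transpose anti-automorphism ${}^tP$ of (\ref{eq_classical_adjoint}), since ${}^t(s) = {}^t(-\partial_t t) = -t\partial_t = s+1 \equiv -(\,\cdot\,)$ up to the shift), and condition iv) follows from the strictness of the resolution. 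Second, I would construct the maps $\delta_\lambda$: the point is that applying $\Gr_V^\lambda$ to the Rees resolution $F_\bullet$ commutes, up to the sign/shift flip, with applying $\mathbf{R}\Hom_{\cD_H}(-,\cD_H)\otimes\omega_H^{-1}[n]$ to $\Gr_V^{1-\lambda}$ of the original resolution — this is a diagram-chase using the two short exact sequences
\[
0 \to V^{>\lambda}\cM \to V^\lambda\cM \to \Gr_V^\lambda\cM \to 0, \qquad 0 \to V^\lambda\cM \xrightarrow{t} V^{\lambda+1}\cM \to \cN \to 0,
\]
the latter with $\cN$ supported on $H$, combined with the long exact $\Ext$-sequences. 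The values $\lambda = 0,1$ need separate care because multiplication by $t$ is only injective on $V^\lambda$ for $\lambda > 0$ (Proposition~\ref{properties_V_filtration1}i)); here Proposition~\ref{prop_descr_Var} and Proposition~\ref{prop_descr_can}, identifying $\mathbf{L}i^*(\cM)[-1]$ and $\cM/\cM'$ in terms of $\Gr_V^0$ and $\Gr_V^1$, give the right handle — one dualizes the two-term complex $[\Gr_V^0\cM \xrightarrow{t} \Gr_V^1\cM]$ and matches it against the two-term complex computing $i^\dagger$ of the dual. Third, naturality (the compatibilities iv), v), vi)) is then forced: once $\delta_\lambda$ is defined through functorial connecting maps, $N^*\circ\delta_\lambda = \delta_\lambda\circ(-N)$ is automatic from the eigenvalue flip $s \leftrightarrow -s-1$ on graded pieces (the $-1$ absorbed into the shift $1 \leftrightarrow \lambda \to 1-\lambda$), and v)/vi) follow by applying the dualized versions of the squares in the proof of Proposition~\ref{prop_descr_Var} and Proposition~\ref{prop_descr_can}, since $\can$ and $\Var$ are, respectively, $\partial_t$ and $t$ on graded pieces, and these get interchanged by ${}^tP$ up to sign — which is exactly what produces the sign in vi).

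\textbf{Main obstacle.} The hard part will be step two near $\lambda = 0$ and $\lambda = 1$: away from those values, $t\colon \Gr_V^\lambda \to \Gr_V^{\lambda+1}$ is an isomorphism (Proposition~\ref{rmk1_Vfiltration}) and everything is a formal consequence of exactness of $\Gr_V^\bullet$ applied to the resolution plus the transpose anti-automorphism, but at $0$ and $1$ the functor $\Gr_V^\bullet$ is no longer exact on the relevant Rees resolution (equivalently, $\Tor$-terms between $\Gr_V$ and the resolution appear precisely because $t$-torsion and the submodule $\cM' = \cD_X\cdot V^{>0}\cM$ enter). Controlling these correction terms — showing they vanish, or cancel against each other — requires the fine structure from Proposition~\ref{prop_descr_Var}, Corollary~\ref{cor_prop_descr_Var}, and Proposition~\ref{prop_descr_can}, i.e.\ the identification of the kernel and cokernel of $t$ on the $V$-filtered pieces. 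A secondary technical nuisance is keeping track of all the twists by $\omega_X$, $\omega_{X\times\mathbf{A}^1}$ and $\omega_H$ and the degree shifts $[n]$ versus $[n+1]$ so that the maps land in the asserted degrees; the adjunction formula $\omega_{X\times\mathbf{A}^1} \simeq p^*\omega_X \otimes dt$ and the residue map $\cH^1_H(\cO_X) \otimes \omega_X \simeq \omega_H$ handle this but must be invoked carefully. I would organize the write-up so that the generic case $\lambda \in (0,1)$ is done first in full, then the boundary cases $\lambda \in \{0,1\}$ are treated by dualizing the explicit two-term complexes, and finally the compatibilities are read off.
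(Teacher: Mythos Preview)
Your plan has a genuine gap at its very first step: a free resolution over $\cR = \cR_+(V^\bullet\cD_X)$ does not compute $\mathbf{R}\cH om_{\cD_X}(\cM,\cD_X)$. Specializing a free $\cR$-resolution at $z=1$ gives a $V^0\cD_X$-free resolution of $V^0\cM$, not a $\cD_X$-free resolution of $\cM$; in general $\cD_X\otimes_{V^0\cD_X}V^0\cM \not\cong \cM$, so inducing does not help either. Since the duality functor in the theorem is $\mathbf{R}\cH om_{\cD_X}(-,\cD_X)\otimes\omega_X^{-1}[n+1]$, you need genuine $\cD_X$-projective resolutions that are simultaneously adapted to the $V$-filtration, and your outline never produces such an object. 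The subsequent steps (checking Definition~\ref{defi_V_filtration} on $\mathbf{R}\Hom_\cR$, matching $\Gr_V^\lambda$ against $\Gr_V^{1-\lambda}$ via exactness of the Rees resolution) all rest on this missing bridge. Incidentally, your computation of the transpose contains a sign slip: ${}^t s = {}^t(-\partial_t t) = t\partial_t = -s-1$, not $s+1$; the paper uses exactly this identity.

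The paper's proof supplies the missing bridge through \emph{elementary $\cD_X$-modules} $\cE(b,P,Q)$: these are cokernels of explicit maps $\cD_X^{p+q}\xrightarrow{\cdot\Lambda}\cD_X^{p+q}$, so they come with two-term $\cD_X$-free resolutions (Lemma~\ref{lem_elementary}), and one computes directly both their $V$-filtration and that of their $\cD_X$-dual $\widetilde\cE$ (Lemma~\ref{lem_dualelementary}), obtaining the isomorphisms $\delta_\lambda$ by hand on the explicit graded pieces. The crucial structural lemma (Lemma~\ref{lem_elementaryresolution}) is that any $\cM$ with a $V$-filtration is locally a quotient of an elementary module, hence admits a resolution $\cE^\bullet\to\cM$ by such. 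Then $\cM^*$ is computed by $\widetilde\cE^\bullet[n]$, the $\Gr_V^\alpha(\cE^i)$ are free over $\cD_H$, and functoriality of the $\delta_\lambda$ for elementary modules (which the paper defers to \cite{MaisonobeMebkhout}) transports the isomorphisms and the compatibilities iv)--vi) to $\cM$. Your intuition about the transpose producing the index flip $\lambda\leftrightarrow 1-\lambda$ and exchanging $\mathrm{can}$ with $\mathrm{Var}$ is correct and is exactly what happens in the elementary-module computation, but the Rees-ring packaging you propose does not carry it through.
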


We explain the key ingredients in the proof of the above theorem, following the argument in \cite{MebkhoutSabbah,MaisonobeMebkhout}.
Note that a different proof of the same result for Hodge modules, keeping track also of the Hodge filtration, can be found in \cite{SaitoDuality}.
The main idea is to show that locally, every $\cD$-module which admits a $V$-filtration has a resolution by ``elementary" $\cD$-modules, and then prove the theorem for such modules. 

\begin{defi}\label{defi_elementary}
Let $b(s) \in \mathbf Q[s]$ be a nonzero polynomial such that $b(-\lambda) = 0$ implies $\lambda \in [0,1)\cap\QQ$. We denote by $m$ the multiplicity of the root $0$ in $b(s)$. Let $Q$ be a $p\times q$ matrix with entries in $\Gamma(X, V^0\cD_X)$ and $P$ a $q\times q$ matrix with entries in $\Gamma(X, V^1 \cD_X)$. We denote by $\Lambda = \Lambda(b,P,Q)$ the 
$(p+q)\times (p+q)$-matrix
\[ \begin{pmatrix} s^m \text{Id}_p & -Q \\ 0 & (s+1)b(s+1)\text{Id}_q -P\end{pmatrix},\]
and we let $\cE = \cE(b,P,Q)$ be the cokernel of the $\cD_X$-linear morphism
\[ \cD_X^{p} \oplus \cD_X^{q} \xrightarrow[]{\cdot \Lambda} \cD_X^{p} \oplus \cD_X^{q},\]
which we call an \emph{elementary $\cD_X$-module}.
\end{defi}

\begin{lem} \label{lem_elementaryresolution} $($\cite[Lem. 4.5-2]{MaisonobeMebkhout}$)$ 
Suppose that $X$ is an affine variety.
If $\cM$ is a $\cD_X$-module that admits a $V$-filtration along $H$, then $\cM$ is a quotient of an elementary $\cD_X$-module.
\end{lem}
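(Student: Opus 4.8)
The plan is to build a surjection $\cE(b,P,Q)\twoheadrightarrow\cM$ for a suitable choice of data, using the $V$-filtration on $\cM$ and the coherence of $V^0\cD_X$. First I would fix a finite set of generators $u_1,\dots,u_q$ of $\cM$ as a $\cD_X$-module which lie in $V^0\cM$; this is possible because $V^0\cM$ is locally finitely generated over $V^0\cD_X$ (Definition~\ref{defi_V_filtration}iii) and generates $\cM$ over $\cD_X$ (since the filtration is exhaustive and every element is obtained from $V^0\cM$ by applying powers of $\partial_t$, using Corollary~\ref{cor_rmk1_Vfiltration}iii). Because the $V$-filtration is discrete and left-continuous and $(s+\alpha)$ is nilpotent on each $\mathrm{Gr}_V^\alpha(\cM)$, there is a single nonzero polynomial $b_0(s)\in\QQ[s]$, with all roots in $[0,1)\cap\QQ$, such that $b_0(s)\cdot V^0\cM\subseteq V^1\cM = t\cdot V^0\cM$ (apply the nilpotence on the finitely many relevant graded pieces in $[0,1)$). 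Writing $b(s)$ for the part of $b_0$ coming from roots in $(0,1)$ and $s^m$ for the $0$-part, one arranges $b_0(s)=s^m b(s)$, matching the shape in Definition~\ref{defi_elementary} after the shift $s\mapsto s+1$ appearing there; I would need to do the small bookkeeping that relates $b_0(s)V^0\cM\subseteq tV^0\cM$ to a relation of the form $s^m u_i \equiv \sum_j q_{ij} u_j$ and $(s+1)b(s+1)\,(\text{something})\equiv\cdots$ with $q_{ij}\in\Gamma(X,V^0\cD_X)$, $p_{ij}\in\Gamma(X,V^1\cD_X)$.

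Concretely, I would take the free module $\cD_X^q$ with basis $\epsilon_1,\dots,\epsilon_q$ mapping to $u_1,\dots,u_q$, and a free module $\cD_X^p$ whose basis maps to $s^m u_1,\dots$ (so $p=q$, or $p$ indexing a generating set of the submodule $s^m V^0\cM$ inside $V^1\cM$). The relation $b_0(s)u_i\in tV^0\cM=V^1\cD_X\cdot(\text{generators})$ gives, for each $i$, an expression $b_0(s)u_i=\sum_j P_{ij}u_j$ with $P_{ij}\in\Gamma(X,V^1\cD_X)$; together with the tautological relations expressing $s^m u_i$ in terms of the $u_j$ via operators in $V^0\cD_X$ (the entries of $Q$), these are exactly the rows of the matrix $\Lambda(b,P,Q)$. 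Thus the $\cD_X$-linear map $\cD_X^p\oplus\cD_X^q\to\cM$ sending the second block of basis vectors to the $u_j$ and the first block to $s^m u_i$ kills the image of $\cdot\Lambda$, hence factors through $\cE(b,P,Q)$, and the resulting map $\cE(b,P,Q)\to\cM$ is surjective because the $u_j$ already generate $\cM$.

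The main obstacle is arranging the generators and the polynomial simultaneously so that the relation matrix has the precise block form of Definition~\ref{defi_elementary}: one must see that the functional equation $b_0(s)\cdot V^0\cM\subseteq t\cdot V^0\cM$ (which is the substantive input, coming from nilpotence of $s+\alpha$ on the graded pieces together with $t V^\alpha=V^{\alpha+1}$ for $\alpha>0$ from Definition~\ref{defi_V_filtration}iv) can be rewritten so that the ``$(s+1)b(s+1)$'' entries absorb exactly the $t$-multiplication part and live in $V^1\cD_X$, while the ``$s^m$'' block records the separate behavior of the root $0$. Here one uses $V^1\cD_X=V^0\cD_X\cdot t=t\cdot V^0\cD_X$ (formula (\ref{eq1_V_D}) / (\ref{new_eq1_V_D})) to convert ``lies in $tV^0\cM$'' into ``lies in $V^1\cD_X\cdot(\text{generators})$'', and Lemma~\ref{lem_s}i) ($P(s)t=tP(s-1)$) to handle the index shift $s\mapsto s+1$. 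Once the matrix is in the stated form, surjectivity and the factorization through $\cE$ are formal. Everything is local on the affine $X$, and the Noetherian property of $\Gamma(X,V^0\cD_X)$ (Remark~\ref{V0_Noeth}) guarantees the needed generating sets are finite.
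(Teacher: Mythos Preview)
Your plan has the right flavor, but there is a genuine gap at the crucial step. You claim that $b_0(s)\cdot V^0\cM\subseteq V^1\cM = t\cdot V^0\cM$ and then use this to express $b_0(s)u_i$ as a $V^1\cD_X$-combination of the $u_j$. The equality $V^1\cM = t\cdot V^0\cM$ is \emph{false} in general: condition iv) in Definition~\ref{defi_V_filtration} only guarantees $t\cdot V^{\alpha}\cM=V^{\alpha+1}\cM$ for $\alpha>0$. Already for $\cM=\cO_X$ (Example~\ref{eg_V_filtration_O}) one has $V^0\cM=V^1\cM=\cO_X$ but $t\cdot V^0\cM=(t)$. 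Thus if your $u_j$ generate $V^0\cM$ over $V^0\cD_X$, then $V^1\cD_X\cdot\{u_j\}=t\cdot V^0\cM$ can be strictly smaller than $V^1\cM$, and there is no reason the entries $P_{ij}$ can be taken in $V^1\cD_X$. (There is also a smaller confusion: in Definition~\ref{defi_elementary}, $b$ is the \emph{full} polynomial and $m$ is the multiplicity of the root $0$ in $b$, not a separate factor as you write.)

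The paper repairs this by choosing the two blocks of generators at \emph{different} levels of the $V$-filtration. The second block $u_1,\dots,u_q$ is taken to generate $V^1\cM$ over $V^0\cD_X$; then the relevant inclusion is $(s+1)b_0(s+1)\cdot V^1\cM\subseteq V^2\cM=t\cdot V^1\cM$, where the last equality \emph{does} hold since now $\alpha=1>0$. This inclusion is checked by writing $(s+1)^{m+1}=-ts^m\partial_t$ via Lemma~\ref{lem_s} and using that $s^m$ kills ${\rm Gr}_V^0(\cM)$, so $(s+1)^{m+1}\cdot V^1\cM\subseteq t\cdot V^{>0}\cM\subseteq V^{>1}\cM$, and then handling the remaining factors of $b_0$ using $t\cdot V^{\alpha}\cM=V^{\alpha+1}\cM$ for $\alpha>0$. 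The first block $v_1,\dots,v_p$ is taken in $V^0\cM$ so that the classes $\overline{v_i}$ generate $\ker(s^m)\subseteq V^0\cM/V^1\cM$; then $s^mv_i\in V^1\cM=\sum_jV^0\cD_X\cdot u_j$ supplies the entries of $Q$. One still has to verify that the $v_i$ and $u_j$ together generate $\cM$ over $\cD_X$: the $u_j$ give $V^1\cM$, the $v_i$ cover ${\rm Gr}_V^0(\cM)$, and the intermediate piece $V^{>0}\cM/V^1\cM$ is reached from $V^1\cM$ via $\partial_t$ by Proposition~\ref{rmk1_Vfiltration}, after which Corollary~\ref{cor_rmk1_Vfiltration}iii) gives all of $\cM$.
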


\begin{proof} 
Let $b_0(s)$ be the minimal polynomial of $s = -\de_t t$ on $V^0\cM/V^1\cM$.
Note that by condition ii) in the definition of the $V$-filtration, all roots of $b_0(-s)$ lie in $[0,1)\cap \QQ$.
Let $m$ be the multiplicity of the factor $s$ in $b_0(s)$, let $v_1,\dots, v_p$ be global sections of $V^0\cM$ which generate $\ker(s^m) \subseteq V^0\cM/V^1\cM$ over $V^0\cD_X$,
and let $u_1,\dots, u_q$ be global sections of $V^1\cM$ generating it over 
$V^0\cD_X$. The $\cD_X$-submodule generated by $u_1,\dots, u_q,v_1,\dots, v_p$ is $\cM$: indeed, it contains $V^0\cM$ since
$V^0\cM\subseteq V^1\cM+\sum_{i=1}^pV^0\cD_X\cdot v_i+\partial_t\cdot V^1\cM$, and then we conclude that it is equal to $\cM$ using
Corollary~\ref{cor_rmk1_Vfiltration}iii).

To understand the relations among these generators, note that by construction we have
\begin{equation}\label{eq_lem1_duality}
s^m v_i = \sum_{j=1}^q Q_{i,j}u_j,\quad\text{with}\quad Q_{i,j}\in\Gamma(X, V^0\cD_X).
\end{equation} 
Moreover, we can also write
\begin{equation}\label{eq_lem2_duality}
(s+1)b_0(s+1) u_i = \sum_{j=1}^q P_{i,j}u_j,\quad\text{where} \quad P_{i,j} \in \Gamma(X, V^1\cD_X).
\end{equation}
Indeed, since $s^m {\rm Gr}_V^0\cM = 0$, using Lemma~\ref{lem_s}, we see that
$$(s+1)^{m+1}\cdot V^1\cM=t\partial_t(s+1)^{m} \cdot V^1\cM 
=ts^{m}\partial_t\cdot V^1\cM\subseteq ts^{m}\cdot V^0\cM\subseteq t\cdot V^{>0}\cM\subseteq V^{>1}\cM.$$
Using also the fact that $V^{\alpha+1}\cM=t\cdot V^{\alpha}\cM$ for $\alpha>0$, Lemma~\ref{lem_s}, and the
definition of $b_0(s)$, we conclude that
$$(s+1)\cdot b_0(s+1)\cdot V^1\cM\subseteq V^2\cM=t\cdot V^1\cM\subseteq\sum_{j=1}^q V^1\cD_X\cdot u_j,$$
giving (\ref{eq_lem2_duality}). We then deduce from (\ref{eq_lem1_duality}) and (\ref{eq_lem2_duality})  that $\cM$ is a quotient of $\cE(b_0,P,Q)$
\end{proof}

The following lemma gives more details on the structure of elementary modules. 

\begin{lem} \label{lem_elementary} 
With the notation in Definition~\ref{defi_elementary}, if $\cE=\cE(b,P,Q)$, then the following hold:
\begin{enumerate}
\item[i)] The
sequence
\begin{equation}\label{eq1_lem_elementary}
0 \to \cD_X^p \oplus \cD_X^q \xrightarrow[]{\cdot \Lambda} \cD_X^p \oplus \cD_X^q \xrightarrow[]{\pi} \cE \to 0
\end{equation}
is exact. 
\item[ii)] If we put $U^k\cE:=\pi\big((V^k \cD_X)^p \oplus (V^{k-1}\cD_X)^q\big)$ for $k\in\ZZ$, then we have an induced exact sequence
\begin{equation}\label{eq2_lem_elementary}
0\to {\rm Gr}_V^k(\cD_X)^p\oplus {\rm Gr}_V^{k-1}(\cD_X)^{q}\to {\rm Gr}_V^k(\cD_X)^p\oplus {\rm Gr}_V^{k-1}(\cD_X)^{q}\to U^k\cE/U^{k+1}\cE\to 0.
\end{equation}
\item[iii)] The filtration $U^{\bullet}\cE$ is a $\ZZ$-indexed $V$-filtration on $\cE$ with respect to $t$ in the sense of Definition~\ref{defi_Z_filtration}.
Therefore $\cE$ has a $V$-filtration with respect to $t$, we have $V^k\cE=U^k\cE$
for every $k\in\ZZ$, and we thus have
 an isomorphism of ${\rm Gr}_V^0\cD_X = \cD_H[s]$-modules
\begin{equation} \label{VFiltElementary} 
V^k \cE/V^{k+1}\cE \simeq \left( \frac{\cD_H[s]}{(s+k)^m}\right)^p \oplus \left( \frac{\cD_H[s]}{(s+k)b(s+k)}\right)^q.\end{equation}
\end{enumerate}
\end{lem}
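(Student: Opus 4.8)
The strategy is to analyze the explicit presentation matrix $\Lambda = \Lambda(b,P,Q)$ directly, exploiting the upper-triangular block structure. For part i), the only nontrivial point is injectivity of $\cdot\Lambda$; this follows from the fact that $\Lambda$ is upper-triangular with diagonal entries $s^m$ and $(s+1)b(s+1)$, which act injectively on the free module $\cD_X^p\oplus\cD_X^q$ on the left (note $s = -\partial_t t$ is, up to the unit reordering $\partial_t t = t\partial_t+1$, a polynomial in $t\partial_t$, and $\cD_X$ is free over $\cO_X$, so right multiplication by such an operator has no kernel — one checks this by looking at leading terms with respect to the order filtration, or by passing to a system of coordinates and using that $t\partial_t$ acts freely on $\cD_X$ regarded as $\cO_X[t,\partial_t,\ldots]$). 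Injectivity for the off-diagonal entry $-Q$ then follows by a standard triangular argument.

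For part ii), I would apply $V^0\cD_X$-module considerations: since $Q$ has entries in $V^0\cD_X$, $P$ has entries in $V^1\cD_X$, and $s^m, (s+1)b(s+1)$ lie in $V^0\cD_X$ (indeed the diagonal blocks respect the shift because $s^m\cdot V^k\cD_X \subseteq V^k\cD_X$ and $(s+1)b(s+1)\cdot V^{k-1}\cD_X \subseteq V^{k-1}\cD_X$, while $Q\cdot V^{k-1}\cD_X \subseteq V^{k-1}\cD_X \subseteq V^k\cD_X$ fails in the wrong direction — one must check $\cdot\Lambda$ maps $(V^k)^p\oplus(V^{k-1})^q$ into itself, which it does: the first block column produces $s^m\cdot(V^k)$ and $0$; the second produces $-Q\cdot(V^k\cD_X)^{\text{wait}}$ — here one uses that the map is \emph{right} multiplication so row $i$ of the source pairs against column $i$ of $\Lambda$, and the degree bookkeeping works out because $Q_{i,j}\in V^0$ sends the $v_i$-slot, which we should place in degree $k$, times $V^0$, landing in degree $k$, while $P_{i,j}\in V^1$ times the $u_j$-slot in degree $k-1$ lands in degree $k$). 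Then one takes associated graded with respect to the order filtration on $\cD_X$ or, more directly, observes that $\cdot\Lambda$ induces a map on $(V^k\cD_X/V^{k+1}\cD_X)^p \oplus (V^{k-1}\cD_X/V^k\cD_X)^q = \Gr_V^k(\cD_X)^p\oplus\Gr_V^{k-1}(\cD_X)^q$, and injectivity of this induced map follows because the induced diagonal blocks are $s^m$ on $\Gr_V^k(\cD_X) = \cD_H[s]$ and $(s+1)b(s+1)$ on $\Gr_V^{k-1}(\cD_X) = \cD_H[s]$ — but \emph{after} shifting by $k$, i.e.\ on $\Gr_V^k$ the operator $s+k$ is the relevant nilpotent-flavored variable, so $s^m$ acting on $\Gr_V^k(\cD_X)$ where $s$ shifts appropriately is multiplication by a polynomial in a polynomial ring, hence injective. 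This gives exactness of \eqref{eq2_lem_elementary}. The snake lemma applied to \eqref{eq1_lem_elementary} filtered by $U^\bullet$ (using that the $U$-filtrations on the two copies of $\cD_X^p\oplus\cD_X^q$ are the evident ones and that $\cdot\Lambda$ is strictly compatible, which is exactly what the injectivity on graded pieces gives) then yields the identification $\Gr_U^k\cE \cong \coker$ of the map in \eqref{eq2_lem_elementary}, and one reads off \eqref{VFiltElementary} from the explicit diagonal form: $\coker(s^m\colon \cD_H[s]\to\cD_H[s]) = \cD_H[s]/(s+k)^m$ after the degree-$k$ shift, and similarly for the $b$-block.

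For part iii), the filtration $U^\bullet\cE$ is exhaustive (since $\cD_X = \bigcup_k V^k\cD_X$ and $\pi$ is surjective), decreasing, and by construction $V^i\cD_X\cdot U^j\cE \subseteq U^{i+j}\cE$ (condition a) of Definition~\ref{defi_Z_filtration}); condition b) holds because $U^k\cE$ is the image of a finitely generated $V^0\cD_X$-module; condition c) ($U^{k+1}\cE = t\cdot U^k\cE$ for $k\gg 0$) follows from \eqref{new_eq1_V_D} once $k$ is large enough that the $s^m$ and $(s+1)b(s+1)$ relations no longer obstruct (concretely, for $k$ large the graded piece $\Gr_U^k\cE$ stabilizes to the fixed finite-length $\cD_H[s]$-module on the right of \eqref{VFiltElementary}, shifted, and one checks $t$ surjects). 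For condition d), the polynomial $p_U(x)$: on $\Gr_U^k\cE$, the operator $\partial_t t - k = -(s+k)$ (up to sign conventions from Lemma~\ref{lem_s}) acts, by \eqref{VFiltElementary}, as a nilpotent-plus on the first summand (annihilated by $(\partial_t t - k)^m$) and is killed by $(\partial_t t - k)b(\partial_t t - k+ \text{shift})$-type polynomial on the second — precisely, one takes $p_U(x) = x^m \cdot x\, b(x) = x^{m+1}b(x)$, or more carefully the least common multiple, and checks its roots lie in $[0,1)\cap\QQ$ by the hypothesis on $b$ in Definition~\ref{defi_elementary} (the root $0$ has multiplicity accounting for the $x^m$ factor and the extra $x$ from $(s+1)b(s+1)$ evaluated appropriately — this needs a small verification that no root equals $1$, which is where the $(s+1)$ versus $b(s+1)$ bookkeeping matters, but $b(-\lambda)=0 \Rightarrow \lambda\in[0,1)$ handles $b$, and the explicit $s^m$ and the extra linear factor contribute only the root $0$). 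Hence $U^\bullet\cE$ is a $\ZZ$-indexed $V$-filtration, and Proposition~\ref{prop_Z_filtration}iii) gives that $\cE$ has a genuine $V$-filtration with $V^k\cE = U^k\cE$, yielding \eqref{VFiltElementary} with $V$ in place of $U$. \textbf{The main obstacle} is getting the degree/shift bookkeeping in part ii) exactly right — tracking which generators sit in which $U$-degree, verifying that $\cdot\Lambda$ is \emph{strict} for $U^\bullet$ (equivalently that the induced maps on graded pieces are injective), and confirming the root-location claim in d) — since a sign error in the identity $\partial_t t = t\partial_t + 1 = -s$ or in where $s$ shifts on $\Gr_V^k(\cD_X)$ propagates into a wrong polynomial $p_U$.
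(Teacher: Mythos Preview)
Your approach is the same as the paper's: show $\cdot\Lambda$ is injective by the triangular structure, check it is filtered and strict for $U^\bullet$ by showing the induced map on $\mathrm{Gr}_U^k$ is diagonal and injective, and then verify the axioms of a $\ZZ$-indexed $V$-filtration. Two points deserve tightening.

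First, in part ii) you should make explicit \emph{why} the off-diagonal blocks vanish on the associated graded. For $(A,B)\cdot\Lambda=(As^m,\,-AQ+B((s+1)b(s+1)-P))$ with $A\in (V^k\cD_X)^p$ and $B\in (V^{k-1}\cD_X)^q$: since $Q$ has entries in $V^0\cD_X$, the term $AQ$ lies in $(V^k\cD_X)^q$, hence is zero in $\mathrm{Gr}_V^{k-1}(\cD_X)^q=(V^{k-1}/V^k)^q$; since $P$ has entries in $V^1\cD_X$, the term $BP$ lies in $(V^k\cD_X)^q$ and vanishes for the same reason. This is what makes the graded map genuinely diagonal, which is what your injectivity argument needs.

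Second, and more substantively, your polynomial $p_U(x)=x^{m+1}b(x)$ does \emph{not} have its roots in $[0,1)$. The hypothesis in Definition~\ref{defi_elementary} says that $b(-\lambda)=0$ forces $\lambda\in[0,1)$; equivalently, the roots of $b(s)$ lie in $(-1,0]$, so the nonzero roots of $x^{m+1}b(x)$ lie in $(-1,0)$. The translation you need is $\partial_t t-k=-(s+k)$: since $(s+k)b(s+k)$ annihilates the $q$-summand of $U^k\cE/U^{k+1}\cE$, the operator $(\partial_t t-k)\,b\!\big(-(\partial_t t-k)\big)$ annihilates it, and as $b(-x)=(-1)^m x^m\tilde b(-x)$ this already contains $x^m$ as a factor. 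Hence the correct choice is $p(x)=x\,b(-x)$, whose roots are $0$ together with $\{-\mu:b(\mu)=0\}=\{\lambda:b(-\lambda)=0\}\subseteq[0,1)$. Without this you only get a pre-$V$-filtration, and you cannot invoke Proposition~\ref{prop_Z_filtration}iii) to conclude $V^k\cE=U^k\cE$. You correctly flagged the sign bookkeeping here as the main hazard; this is precisely where it bites.
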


\begin{proof} 
Let $\Lambda = \Lambda(b,P,Q)$ be a matrix as in the definition of the elementary module $\cE(b,P,Q)$. Assume $(A,B) \in \cD_X^p \oplus \cD_X^q$ lies in the kernel of $\cdot \Lambda$, that is, we have
\[ A s^m=0\quad\text{and}\quad  -AQ + B\big((s+1)b(s+1) -P\big)=0.\]

Since every nonzero element of $k[s]$ acts injectively on $\cD_X$,
it is clear that $A=0$, hence
$B(s+1)b(s+1) = BP$. Note also that the nonzero polynomials in $k[s]$ act injectively on ${\rm Gr}_V^{\bullet}(\cD_X)$
(this follows easily from the formulas (\ref{formula_description_V0}), (\ref{eq1_V_D}), and (\ref{eq2_V_D})). Since the entries of $P$ 
lie in $V^1\cD_X$, we have $B=0$. Therefore the map $\cdot \Lambda$ is injective and (\ref{eq1_lem_elementary}) is exact.

In order to prove ii), 
note first that since $Q$ has entries in $V^0\cD_X$ and $P$ has entries in $V^1\cD_X$, the map $\cdot \Lambda$
induces a map 
$${\rm Gr}^k_V(\cdot\Lambda)\colon {\rm Gr}_V^k(\cD_X)^p\oplus {\rm Gr}_V^{k-1}(\cD_X)^{q}\to {\rm Gr}_V^k(\cD_X)^p\oplus {\rm Gr}_V^{k-1}(\cD_X)^{q}$$
represented by the matrix
\[ \begin{pmatrix} s^m \text{Id}_p & 0 \\ 0 & (s+1) b(s+1) \text{Id}_q \end{pmatrix}.\]
We see that this is injective, using again the fact that nonzero polynomials in $k[s]$ act injectively on ${\rm Gr}_V^{\bullet}(\cD_X)$. 
The exactness of (\ref{eq2_lem_elementary}) is now clear.

Moreover, we see that 
\[ U^k \cE/U^{k+1}\cE \simeq \left( \frac{{\rm Gr}_V^k \cD_X}{{\rm Gr}_V^k \cD_X s^m}\right)^p \oplus \left( \frac{{\rm Gr}_V^{k-1}\cD_X}{{\rm Gr}_V^{k-1}\cD_X (s+1)b(s+1)}\right)^q\] \[\simeq \left( \frac{{\rm Gr}_V^k \cD_X}{(s+k)^m {\rm Gr}_V^k \cD_X}\right)^p \oplus \left( \frac{{\rm Gr}_V^{k-1}\cD_X}{(s+k)b(s+k) {\rm Gr}_V^{k-1}\cD_X}\right)^q,\]
where the second isomorphism follows easily from Lemma~\ref{lem_s} and the formulas (\ref{formula_description_V0}), (\ref{eq1_V_D}), and (\ref{eq2_V_D}).
This implies that the polynomial $p(x)=xb(x)$, whose roots are all in $[0,1)$, has the property that $p(\partial_tt-k)\cdot U^k\cE\subseteq U^{k+1}\cE$ for all $k\in\ZZ$. Since it is straightforward to see that
$U^{\bullet}\cE$ also satisfies properties a)-c) in Definition~\ref{defi_Z_filtration}, we conclude that $U^{\bullet}\cE$ is a $\ZZ$-indexed $V$-filtration on $\cE$
with respect to $t$. The fact that $\cE$ has a $V$-filtration and $V^k\cE=U^k\cE$ for all $k\in\ZZ$ now follows from Proposition~\ref{prop_Z_filtration}.
\end{proof}

The free resolution given in Lemma~\ref{lem_elementary}  allows us to compute the dual of an elementary $\cD_X$-module. As we will see shortly, the dual has its own free resolution and we can use it to compute the $V$-filtration. 

Working locally, in what follows we assume that we have a system of coordinates $x_1,\ldots,x_n,t$ on $X$.
Recall that we have the classical adjoint given by (\ref{eq_classical_adjoint}).
For example, we have ${}^t s = t\partial_t =-s-1$.
For a matrix of differential operators $\Lambda = (\Lambda_{ij})$, we denote by 
${}^t \Lambda$ the matrix $({}^t \Lambda_{ji})$,
that is, the transpose of the matrix of classical adjoints of the entires of $\Lambda$.

\begin{lem}  \label{lem_dualelementary} 
With the notation in Definition~\ref{defi_elementary}, if $\cE= \cE(b,P,Q)$ is an elementary $\cD_X$-module, with $\Lambda$ the matrix of differential operators defining $\cE$,
then the following hold:
\begin{enumerate}
\item[i)]  We have $\cH^i(\cE^*) = 0$ for all $i\neq -n$. 
\item[ii)] If $\widetilde{\cE} = \widetilde{\cE}(b,P,Q):= \cH^{-n}(\cE^*)$, then we have an exact sequence
\begin{equation}\label{eq1_duallem_elementary}
0 \to \cD_X^p \oplus \cD_X^q \xrightarrow[]{\cdot {}^t \Lambda} \cD_X^p \oplus \cD_X^q \xrightarrow[]{\pi} \widetilde{\cE} \to 0.
\end{equation}
\item[iii)] If we put $W^k\widetilde{\cE}=\pi\big((V^k\cD_X)^p\oplus (V^{k+1}\cD_X)^q\big)$ for all $k\in\ZZ$, then we have an induced exact sequence 
\[ 0 \to  {\rm Gr}_V^k(\cD_X)^p \oplus  {\rm Gr}_V^{k+1}(\cD_X)^q \xrightarrow[]{\cdot {}^t \Lambda} {\rm Gr}_V^k(\cD_X)^p \oplus {\rm Gr}_V^{k+1}(\cD_X)^q \xrightarrow[]{\pi} W^k \widetilde{\cE}/W^{k+1}\widetilde{\cE} \to 0.\]
\item[iv)] The polynomial $p(s)=(s+1)b(-s-1)$ satisfies $p(s+k)\cdot W^k\widetilde{\cE}\subseteq W^{k+1}\widetilde{\cE}$
for all $k\in\ZZ$. Therefore $\widetilde{\cE}$ has a $V$-filtration with respect to $t$, with
$W^k \widetilde{\cE} = V^{>k} \widetilde{\cE}$ for all $k\in \mathbf Z$, and we have an isomorphism of $\cD_H[s]$-modules
\begin{equation} \label{VFiltDualElementary} 
V^{>k} \widetilde{\cE}/V^{>k+1} \widetilde{\cE} 
\simeq\left( \frac{\cD_H[s]}{(s+k+1)^m \cD_H[s]} \right)^{p} \oplus \left( \frac{\cD_H[s]}{(s+k+1)b(-s-k-1) \cD_H[s]}\right)^{q}.
\end{equation}
\end{enumerate}
\end{lem}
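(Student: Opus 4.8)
The plan is to dualize the two‑term free resolution of $\cE$ provided by Lemma~\ref{lem_elementary}i) and read everything off the transposed resolution. Applying ${\mathbf R}{\mathcal Hom}_{\cD_X}(-,\cD_X)$ to $0\to \cD_X^{p}\oplus\cD_X^{q}\xrightarrow{\cdot\Lambda}\cD_X^{p}\oplus\cD_X^{q}\to \cE\to 0$ yields a complex of right $\cD_X$‑modules in cohomological degrees $0$ and $1$; after the twist $\otimes_{\cO_X}\omega_X^{-1}[n+1]$ from the definition of $(-)^*$ and the classical‑adjoint identification $\cD_X\otimes_{\cO_X}\omega_X^{-1}\simeq\cD_X$, this becomes the two‑term complex of \emph{left} $\cD_X$‑modules $\cD_X^{p}\oplus\cD_X^{q}\xrightarrow{\cdot\,{}^t\Lambda}\cD_X^{p}\oplus\cD_X^{q}$ placed in degrees $-n-1$ and $-n$. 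So i) and ii) reduce to the injectivity of $\cdot\,{}^t\Lambda$. Here I would first record that ${}^t$ is an anti‑automorphism fixing $\cO_X$ and preserving both $V^0\cD_X$ and $V^1\cD_X$ (since ${}^tt=t$, ${}^ts=t\partial_t=-s-1\in V^0\cD_X$, and ${}^t\partial_{x_i}\in V^0\cD_X$); hence ${}^t\Lambda$ is block lower‑triangular with diagonal blocks $(-1)^m(s+1)^m\text{Id}_p$ and $(-s)b(-s)\text{Id}_q-{}^tP$, the entries of ${}^tP$ lying in $V^1\cD_X$. The injectivity argument from Lemma~\ref{lem_elementary}i) then applies verbatim, using that nonzero polynomials in $s$ act injectively on $\cD_X$ and on each ${\rm Gr}_V^{k}(\cD_X)\simeq\cD_H[s]$, together with the separatedness of $V^{\bullet}\cD_X$.

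For iii), I would equip the middle (and source) term with the filtration $\mathcal F^{k}:=(V^{k}\cD_X)^{p}\oplus (V^{k+1}\cD_X)^{q}$. Since the entries of ${}^tQ$ lie in $V^0\cD_X$ and those of ${}^tP$ in $V^1\cD_X$, the map $\cdot\,{}^t\Lambda$ sends $\mathcal F^{k}$ into $\mathcal F^{k}$; moreover the off‑diagonal block of ${}^t\Lambda$ contributes nothing on graded pieces, because ${}^tQ$ carries $(V^{k+1}\cD_X)^{q}$ into $(V^{k+1}\cD_X)^{p}\subseteq (V^{k}\cD_X)^{p}$ and ${}^tP$ carries $(V^{k+1}\cD_X)^{q}$ into $(V^{k+2}\cD_X)^{q}$. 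Hence the induced map on ${\rm Gr}_{\mathcal F}^{k}={\rm Gr}_V^{k}(\cD_X)^{p}\oplus{\rm Gr}_V^{k+1}(\cD_X)^{q}$ is the diagonal right‑multiplication map $\mathrm{diag}\big((-1)^m(s+1)^m\text{Id}_p,\ (-s)b(-s)\text{Id}_q\big)$, which is injective by the same polynomial‑injectivity fact. Next I would prove strictness: if $(A,B)\cdot{}^t\Lambda\in\mathcal F^{k}$ then $(A,B)\in\mathcal F^{k}$ — arguing by looking at the minimal $V$‑level of $B$ and deriving a contradiction from the surviving leading term $B\cdot(-s)b(-s)$ unless $B\in(V^{k+1}\cD_X)^{q}$, then similarly for $A$. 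Strictness gives that the associated‑graded sequence of the filtered short exact sequence $0\to\cD_X^{p+q}\xrightarrow{\cdot\,{}^t\Lambda}\cD_X^{p+q}\xrightarrow{\pi}\widetilde{\cE}\to 0$, with $W^{k}\widetilde{\cE}:=\pi(\mathcal F^{k})$, is exact, which is the claim in iii).

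For iv), the diagonal description of ${\rm Gr}_{\mathcal F}^{k}(\cdot\,{}^t\Lambda)$ together with the identity ${\rm Gr}_V^{k}(\cD_X)\cdot r(s)=r(s+k)\cdot{\rm Gr}_V^{k}(\cD_X)$ from Lemma~\ref{lem_s} (already used in Lemma~\ref{lem_elementary}) yield
\[ W^{k}\widetilde{\cE}/W^{k+1}\widetilde{\cE}\simeq\Big(\tfrac{\cD_H[s]}{(s+k+1)^{m}\cD_H[s]}\Big)^{p}\oplus\Big(\tfrac{\cD_H[s]}{(s+k+1)b(-s-k-1)\cD_H[s]}\Big)^{q}, \]
the second summand because $(-(s+k+1))b(-(s+k+1))$ and $(s+k+1)b(-s-k-1)$ generate the same ideal. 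Writing $b(s)=s^{m}\widetilde b(s)$ with $\widetilde b(0)\neq 0$, one has $(s+1)b(-s-1)=(-1)^{m}(s+1)^{m+1}\widetilde b(-s-1)$, so $p(s):=(s+1)b(-s-1)$ kills both summands above after shifting $s\mapsto s+k$; that is, $p(s+k)\cdot W^{k}\widetilde{\cE}\subseteq W^{k+1}\widetilde{\cE}$. Since $\partial_tt=-s$, this is condition d) of Definition~\ref{defi_Z_filtration} for $W^{\bullet}\widetilde{\cE}$ with $p_W(x)=p(-x)=(1-x)b(x-1)$, whose roots lie in $(0,1]$; conditions a)--c) being routine, Remark~\ref{rmk_var_Z_filtration} applies and shows that $\widetilde{\cE}$ carries a $V$‑filtration along $H$ with $V^{>k}\widetilde{\cE}=W^{k}\widetilde{\cE}$, which gives (\ref{VFiltDualElementary}).

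The step I expect to require the most care is the strictness of the filtered resolution in iii) — equivalently, the stability of $\mathcal F^{k}$ under $(\cdot\,{}^t\Lambda)^{-1}$ — since this is precisely what forces the shift by one in $W^{k}\widetilde{\cE}=\pi\big((V^{k}\cD_X)^{p}\oplus(V^{k+1}\cD_X)^{q}\big)$; apart from that, the only delicate point is keeping track of the signs and shifts in the classical adjoint and in the identity ${\rm Gr}_V^{k}(\cD_X)\cdot r(s)=r(s+k)\cdot{\rm Gr}_V^{k}(\cD_X)$, which is otherwise mechanical.
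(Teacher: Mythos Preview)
Your proposal is correct and follows essentially the same approach as the paper: dualize the two-term free resolution, identify the resulting map as right multiplication by ${}^t\Lambda$, and then rerun the injectivity argument of Lemma~\ref{lem_elementary} on the filtered complex $(V^k\cD_X)^p\oplus(V^{k+1}\cD_X)^q$, invoking Remark~\ref{rmk_var_Z_filtration} at the end. The paper's proof is terser---it simply writes down ${}^t\Lambda$ and says ``arguing as in the proof of Lemma~\ref{lem_elementary}''---whereas you spell out the strictness of the filtered resolution and the verification that $p_W(x)=(1-x)b(x-1)$ has roots in $(0,1]$; but these are elaborations of exactly the same argument, not a different one.
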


\begin{proof}
We compute $\cE^*$ using the free resolution in Lemma \ref{lem_elementary}. Applying 
$\cH om_{\cD_X}(-,\cD_X)$ to this resolution and passing to left $\cD_X$-modules, we see that $\cE^*$ is given by the complex 
\[ \cD_X^{\oplus p} \oplus \cD_X^{\oplus q} \xrightarrow[]{\cdot {}^t \Lambda} \cD_X^{\oplus p} \oplus \cD_X^{\oplus q},\]
placed in degrees $-n-1$ and $-n$, where 
\[ {}^t \Lambda = \begin{pmatrix} (-s-1)^m \text{Id}_p & 0 \\ -{}^t Q & -sb(-s)\text{Id}_q - {}^t P\end{pmatrix}.\]
Arguing as in the proof of Lemma~\ref{lem_elementary}, we see that the map $\cdot {}^t \Lambda$ is injective, giving the assertions in i) and ii),
and that it induces the exact sequence in iii). The assertion about $p(s)$ follows easily and we deduce the existence of the $V$-filtration and the relation 
with $W^{\bullet}\widetilde\cE$ from Remark~\ref{rmk_var_Z_filtration}.
\end{proof}

We next explain how to relate the $V$-filtrations on $\cE=\cE(b,P,Q)$ and $\widetilde{\cE}=\widetilde{\cE}(b,P,Q)$.
For every $\lambda\in\QQ$ and every positive integer $d$, we consider the $k$-bilinear map 
$$k[s]/(s-\lambda)^d\times k[s]/(s-\lambda)^d\to k$$
that maps $\big((s-\lambda)^i,(s-\lambda)^j\big)$ to $1$ if $i+j=d-1$ and to $0$, otherwise. We get in this way a $k$-linear isomorphism
\begin{equation}\label{isom_nat1}
k[s]/(s-\lambda)^d\to {\rm Hom}_k\big(k[s]/(s-\lambda)^d,k\big).
\end{equation} 
Note now that we have an isomorphism
\begin{equation}\label{isom_nat2}
\cH^{-n}\big(\cD_H[s]/(s-\lambda)^d)^*\simeq \cD_H^{\ell}\otimes_k{\rm Hom}_k\big(k[s]/(s-\lambda)^d,k\big),
\end{equation}
where $\cD_H^{\ell}$ is the left $\cD_X$-module corresponding to the standard right $\cD_H$-module structure on $\cD_H$, and we thus get an isomorphism
\begin{equation}\label{isom_nat3}
{\rm nat}_{\lambda,d}\colon \cD_H[s]/(s-\lambda)^d\overset{\sim}\longrightarrow \cD_H^{\ell}[s]/(s-\lambda)^d\overset{\sim}\longrightarrow \cH^{-n}\big(\cD_H[s]/(s-\lambda)^d)^*,
\end{equation}
where the first isomorphism is induced by applying the classical adjoint in $\cD_H$ to the coefficients 
and the second one is obtained by tensoring (\ref{isom_nat1}) with $\cD_X^{\ell}$
and using (\ref{isom_nat2}).

Let $\cE = \cE(b,P,Q)$ be an elementary module and $\widetilde{\cE}\simeq \cH^{-n}(\cE^*)$ the corresponding
$\cD_X$-module described in Lemma~\ref{lem_dualelementary}.
We first relate ${\rm Gr}_V^{1-\lambda}(\cE)$ and ${\rm Gr}_V^\lambda(\widetilde{\cE})$ for $\lambda \in (0,1)$. 
Let $m_{\lambda}$ be the multiplicity of $\lambda-1$ as a root of $b(s)$. 
Since
$${\rm Gr}_V^{1-\lambda}(\cE)\simeq\big\{u\in V^0\cE/V^1\cE\mid (s+1-\lambda)^{m_{\lambda}}u=0\big\},$$
it follows from (\ref{VFiltElementary}) that we have an isomorphism 
\begin{equation}\label{isom_delta1}
{\rm Gr}_V^{1-\lambda}(\cE)\simeq \big\{u\in (\cD_H[s]/s^m)^p\oplus (\cD_H[s]/sb(s))^q\mid (s+1-\lambda)^{m_{\lambda}}u=0\}\simeq 
\big(\cD_H[s]/(s+1-\lambda)^{m_{\lambda}}\big)^q.
\end{equation}
Similarly, using Lemma~\ref{lem_dualelementary},  we get
$${\rm Gr}^{\lambda}_V(\widetilde{\cE})\simeq \big\{u\in V^{>0}\widetilde{\cE}/V^{>1}\widetilde{\cE}=W^0\widetilde{\cE}/W^1\widetilde{\cE}\mid (s+\lambda)^{m_{\lambda}}u=0\big\}$$
hence (\ref{VFiltDualElementary}) gives an isomorphism
\begin{equation}\label{isom_delta2}
{\rm Gr}^{\lambda}_V(\widetilde{\cE})\simeq \big\{u\in (\cD_H[s]/(s+1)^m)^p\oplus (\cD_H[s]/(s+1)b(-s-1))^q\mid (s+\lambda)^{m_{\lambda}}=0\big\}
\end{equation} 
$$\simeq\big(\cD_H[s]/(s+\lambda)^{m_{\lambda}}\big).
$$
It follows from (\ref{isom_delta1}) that ${\rm Gr}_V^{1-\lambda}(\cE)$ is a free $\cD_H$-module, hence $\cH^i\big({\rm Gr}_V^{1-\lambda}(\cE)^*\big)=0$ for all $i\neq -n$ and
we have an isomorphism
$$\cH^{-n}\big({\rm Gr}_V^{1-\lambda}(\cE)^*\big)\simeq \cH^{-n}\left(\big(\cD_H[s]/(s+1-\lambda)^{m_{\lambda}}\big)^*\right)^q\simeq \big(\cD_H[s]/(s+1-\lambda)^{m_{\lambda}}\big)^q,$$
where the second isomorphism is provided by ${\rm nat}_{\lambda-1,m_{\lambda}}$. 
Using (\ref{isom_delta2}) and mapping $s$ to $-s-1$, we get an isomorphism
$$\delta_{\lambda}\colon {\rm Gr}^{\lambda}_V(\widetilde{\cE})\overset{\sim}\longrightarrow \cH^{-n}\big({\rm Gr}_V^{1-\lambda}(\cE)^*\big),$$
that satisfies $\delta_{\lambda}\circ (s+\lambda)=(-s-1+\lambda)\circ\delta_{\lambda}$.

We proceed similarly to relate ${\rm Gr}_V^{0}(\cE)$ and ${\rm Gr}_V^0(\widetilde{\cE})$. Recall that $m$ is the multiplicity of $0$ as a root of $b(s)$, hence
we have an isomorphism
$${\rm Gr}_V^0(\cE)\simeq \big\{u\in V^0\cE/V^1\cE\mid s^{m+1}u=0\big\}$$
and using (\ref{VFiltElementary}), we obtain an isomorphism
\begin{equation}\label{isom_delta3}
{\rm Gr}_V^0(\cE)\simeq \big\{u\in (\cD_H[s]/s^m)^p\oplus (\cD_H[s]/sb(s))^q\mid s^{m+1}u=0\big\}
\simeq \big(\cD_H[s]/s^m\big)^p\oplus \big(\cD_H[s]/s^{m+1}\big)^q.
\end{equation}
Similarly, we have an isomorphism
$${\rm Gr}_V^0(\widetilde{\cE})\simeq \big\{u\in V^{>-1}\widetilde{\cE}/V^{>0}\widetilde{\cE}=W^{-1}\widetilde{\cE}/W^0\widetilde{\cE}\mid s^{m+1}u=0\big\}$$
and using the isomorphism (\ref{VFiltDualElementary}), we obtain an isomorphism
\begin{equation}\label{isom_delta4}
{\rm Gr}_V^0(\widetilde{\cE})\simeq \big\{u\in (\cD_H[s]/s^m)^p\oplus (\cD_H[s]/sb(-s))^q\mid s^{m+1}u=0\big\}
\end{equation}
$$\simeq \big(\cD_H[s]/s^m\big)^p\oplus \big(\cD_H[s]/s^{m+1}\big)^q.
$$
It follows from (\ref{isom_delta3}) that ${\rm Gr}_V^{0}(\cE)$ is a free $\cD_H$-module, hence $\cH^i\big({\rm Gr}_V^{0}(\cE)\big)=0$ for all $i\neq -n$ and
we have an isomorphism
$$\cH^{-n}\big({\rm Gr}_V^{0}(\cE)^*\big)\simeq \cH^{-n}\left(\big((\cD_H[s]/s^m)^*\big)^p\oplus \big((\cD_H[s]/s^{m+1})^*\big)\right)^q
\simeq \big(\cD_H[s]/s^m\big)^p\oplus \big(\cD_H[s]/s^{m+1}\big)^q,$$
where the second isomorphism is provided by ${\rm nat}_{0,m}$ and ${\rm nat}_{0,m+1}$. Using (\ref{isom_delta4}) and mapping $s$ to $-s$, we get an isomorphism
$$\delta_0\colon {\rm Gr}^0_V(\widetilde{\cE})\overset{\sim}\longrightarrow \cH^{-n}\big({\rm Gr}_V^{0}(\cE)^*\big),$$
such that $\delta_0\circ s=(-s)\circ\delta_0$.

Finally, we relate ${\rm Gr}_V^{1}(\cE)$ and ${\rm Gr}_V^1(\widetilde{\cE})$. We have an isomorphism
$${\rm Gr}_V^1(\cE)\simeq \big\{u\in V^1\cE/V^2\cE\mid (s+1)^{m+1}u=0\}$$
and using (\ref{VFiltElementary}), we obtain an isomorphism
\begin{equation}\label{isom_delta5}
{\rm Gr}_V^1(\cE)\simeq \big\{u\in (\cD_H[s]/(s+1)^m)^p\oplus (\cD_H[s]/(s+1)b(s+1))^q\mid (s+1)^{m+1}u=0\big\}
\end{equation}
$$\simeq
\big(\cD_H[s]/(s+1)^m\big)^p\oplus \big(\cD_H[s]/(s+1)^{m+1}\big)^q.
$$
Similarly, we have an isomorphism 
$${\rm Gr}_V^1(\widetilde{\cE})\simeq \big\{u\in V^{>0}\widetilde{\cE}/V^{>1}\widetilde{\cE}=W^0\widetilde{\cE}/W^1\widetilde{\cE}\mid (s+1)^{m+1}u=0\big\},$$
hence using  the isomorphism (\ref{VFiltDualElementary}), we obtain an isomorphism
\begin{equation}\label{isom_delta6}
{\rm Gr}_V^1(\widetilde{\cE})\simeq \big\{u\in (\cD_H[s]/(s+1)^m)^p\oplus (\cD_H[s]/(s+1)b(-s-1))^q\mid (s+1)^{m+1}u=0\big\}
\end{equation}
$$\simeq \big(\cD_H[s]/(s+1)^m\big)^p\oplus\big(\cD_H[s]/(s+1)^{m+1}\big)^q.$$
It follows from the isomorphism (\ref{isom_delta5}) that ${\rm Gr}_V^{1}(\cE)$ is a free $\cD_H$-module, hence $\cH^i\big({\rm Gr}_V^{1}(\cE)\big)=0$ for all $i\neq -n$ and
we have an isomorphism
$$\cH^{-n}\big({\rm Gr}_V^{1}(\cE)^*\big)\simeq \cH^{-n}\left(\big((\cD_H[s]/(s+1)^m)^*\big)^p\oplus \big((\cD_H[s]/(s+1)^{m+1})^*\big)^q\right)$$
$$\simeq \big(\cD_H[s]/(s+1)^m\big)^p\oplus \big(\cD_H[s]/(s+1)^{m+1}\big)^q,$$
where the second isomorphism is provided by ${\rm nat}_{-1,m}$ and ${\rm nat}_{-1,m+1}$. 
Using (\ref{isom_delta6}) and mapping $s$ to $-s-2$, we get an isomorphism
$$\delta_1\colon {\rm Gr}^1_V(\widetilde{\cE})\overset{\sim}\longrightarrow \cH^{-n}\big({\rm Gr}_V^{1}(\cE)^*\big),$$
such that $\delta_1\circ s=(-s-2)\circ\delta_0$.

We can now address the compatibility of $V$-filtrations with duality:

\begin{proof}[Sketch of proof of Theorem~\ref{thm_duality}]
We have constructed the isomorphisms $\delta_{\alpha}$ in Theorem~\ref{thm_duality}, for $\alpha\in [0,1]$, when $\cM=\cE(b,P,Q)$
and we have a system of coordinates $x_1,\ldots,x_n,t$ on $X$. It is an important fact that these isomorphisms are functorial with
respect to morphisms between such modules, but we do not discuss this here and refer instead to \cite[pp. 356, 359, 361]{MaisonobeMebkhout}.
Using this, we can now extend the constructions of the isomorphisms $\delta_{\alpha}$ to an arbitrary coherent $\cD_X$-module $\cM$ on $X$
that admits a $V$-filtration,
under the further assumption that $X$ is affine. Indeed, by a repeated application of Lemma \ref{lem_elementaryresolution}, we have a resolution
by elementary $\cD_X$-modules  
$\cE^\bullet \to \cM \to 0$, such that $\cE^i=\cE(b_i,P_i,Q_i)$ for $i\leq 0$. It follows that via the left-right equivalence of categories,
$\cM^*$ corresponds to ${\mathbf R}\cH om_{\cD_X}(\cE^\bullet,\cD_X)[n+1]$. 

By Lemma~\ref{lem_dualelementary}, we know that for all $i\in \ZZ$, we have $\cE xt^j_{\cD_X}(\cE^i,\cD_X) = 0$ for $j \neq 1$ 
and the left $\cD_X$-module corresponding to $\cE xt^1_{\cD_X}(\cE^i,\cD_X)$ is $\widetilde{\cE}^i$. We deduce that
$\cM^*\simeq \widetilde{\cE}^\bullet[n]$ and using the exactness of ${\rm Gr}_V^{\alpha}(-)$, we conclude that
$${\rm Gr}_V^{\alpha}\big(\cH^j(\cM^*)\big)\simeq\cH^{j+n}\big({\rm Gr}_V^{\alpha}(\widetilde{\cE}^{\bullet})\big).$$

On the other hand, for every $\alpha$, we have seen that each ${\rm Gr}^{\alpha}_V(\cE^i)$ is a free $\cD_H$-module,
hence by exactness of ${\rm Gr}_V^{\alpha}(-)$, we get a resolution by free $\cD_H$-modules
\[ {\rm Gr}_V^\alpha(\cE^\bullet) \to {\rm Gr}_V^{\alpha} \cM \to 0.\]
It follows that via the left-right equivalence of categories, the dual
$\big({\rm Gr}_V^{\alpha} \cM\big)^*$ corresponds to $\cH om_{\cD_H}\big({\rm Gr}_V^\alpha(\cE^\bullet),\cD_H\big)[n]$, and thus
$$\cH^j\big({\rm Gr}^{\alpha}_V(\cM)^*\big)\simeq \cH^{j+n}\left(\cH^{-n}\big({\rm Gr}^{\alpha}_V(\cE^{\bullet})^*\big)\right).$$
The functoriality of the isomorphisms $\delta_{\lambda}$, $\delta_0$, and $\delta_1$ for the $\cE^i$ implies that we get 
corresponding isomorphisms for $\cM$. 

Given any two resolutions $\cE_1^\bullet \to \cM$ and $\cE_2^\bullet \to \cM$ by elementary $\cD_X$-modules, there is a third one
$\cE_3^\bullet \to \cM$ which dominates those two. Using again the functoriality of the isomorphisms $\delta_{\alpha}$ for elementary modules,
one can check that all three resolutions induce the same isomorphisms for $\cM$. This implies that for arbitrary $X$, we can construct the isomorphisms
$\delta_{\alpha}$ by gluing the corresponding isomorphisms on a suitable affine open cover. Furthermore, one can prove the functoriality of the $\delta_{\alpha}$
using again the special case of functoriality for morphisms between elementary $\cD_X$-modules. Finally, the assertions iv), v), and vi) in the theorem can be easily checked by checking them for elementary $\cD_X$-modules. This concludes our sketch of the proof of Theorem~\ref{thm_duality}.
\end{proof}

We have the following useful corollary of Theorem \ref{thm_duality}:

\begin{cor} 
Let $\cM$ be a holonomic $\cD_X$-module which admits a $V$-filtration along $H$. Then for all $\alpha \in \mathbf Q$, the $\cD_H$-module ${\rm Gr}_V^\alpha \cM$ is holonomic.
\end{cor}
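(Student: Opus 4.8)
The plan is to read the corollary off Theorem~\ref{thm_duality}, combined with the standard cohomological criterion for holonomicity in terms of the duality functor. First recall that, by properties i) and ii) of $(-)^*$ recorded in Section~\ref{section_functors}, a nonzero coherent $\cD_H$-module $\cN$ is holonomic if and only if ${\mathcal Ext}^i_{\cD_H}(\cN,\cD_H)=0$ for all $i\neq n:=\dim(H)$; by the left--right equivalence and the definition (\ref{eq_def_duality_functor}) of $(-)^*$ (with its shift by $n$), this is the same as saying that the complex $\cN^*$ is concentrated in cohomological degree $0$, i.e. $\cH^j(\cN^*)=0$ for $j\neq 0$. Running the same discussion on $X$, where $\dim(X)=n+1$, and using that $\cM$ is holonomic, we get ${\mathcal Ext}^i_{\cD_X}(\cM,\cD_X)=0$ for $i\neq n+1$, hence $\cH^j(\cM^*)=0$ for all $j\neq 0$.

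Next I would feed this into Theorem~\ref{thm_duality}. By hypothesis $\cM$ admits a $V$-filtration along $H$, so the theorem applies and provides, for every $j$, isomorphisms ${\rm Gr}_V^1\big(\cH^j(\cM^*)\big)\cong\cH^j\big(({\rm Gr}_V^1\cM)^*\big)$, ${\rm Gr}_V^0\big(\cH^j(\cM^*)\big)\cong\cH^j\big(({\rm Gr}_V^0\cM)^*\big)$, and ${\rm Gr}_V^\lambda\big(\cH^j(\cM^*)\big)\cong\cH^j\big(({\rm Gr}_V^{1-\lambda}\cM)^*\big)$ for $\lambda\in(0,1)$. For $j\neq 0$ the left-hand sides are graded pieces of the (trivial) $V$-filtration of the zero module $\cH^j(\cM^*)$, hence vanish; therefore $\cH^j\big(({\rm Gr}_V^\alpha\cM)^*\big)=0$ for all $j\neq 0$ and all $\alpha\in\{0,1\}\cup(0,1)=[0,1]$. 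Since ${\rm Gr}_V^\alpha\cM$ is coherent over $\cD_H$ by Remark~\ref{D_mod_str_V} (and the zero module is holonomic by convention), the criterion of the previous paragraph shows that ${\rm Gr}_V^\alpha\cM$ is holonomic for every $\alpha\in[0,1]$.

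It then remains to pass to arbitrary $\alpha\in\QQ$, which uses only Proposition~\ref{rmk1_Vfiltration}: for every $\alpha\neq 0$, multiplication by $t$ is a $\cD_H$-linear isomorphism ${\rm Gr}_V^\alpha\cM\overset{\sim}{\longrightarrow}{\rm Gr}_V^{\alpha+1}\cM$. Iterating these isomorphisms and their inverses along a chain of unit translations whose intermediate indices all avoid $0$, one identifies any ${\rm Gr}_V^\beta\cM$ with ${\rm Gr}_V^\gamma\cM$ for a suitable $\gamma\in[0,1]$ --- the fractional part of $\beta$ lying in $(0,1)$ when $\beta\notin\ZZ$, the value $1$ when $\beta\in\ZZ_{>0}$, and $0$ when $\beta\in\ZZ_{\leq 0}$ --- and holonomicity is inherited through isomorphism. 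I do not expect any genuine obstacle here: the whole content of the corollary is packaged inside Theorem~\ref{thm_duality}, and the only points demanding a little care are the bookkeeping of the cohomological shifts in the two duality functors and this last elementary translation argument.
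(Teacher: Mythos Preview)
Your proof is correct and follows essentially the same approach as the paper: both argue via Theorem~\ref{thm_duality} together with the cohomological characterization of holonomicity (a coherent $\cD$-module $\cN$ is holonomic iff $\cH^j(\cN^*)=0$ for $j\neq 0$). The paper's proof is a single sentence citing this criterion, while you spell out the details, including the reduction from arbitrary $\alpha$ to $\alpha\in[0,1]$ via Proposition~\ref{rmk1_Vfiltration}; this last step is implicit in the paper since Theorem~\ref{thm_duality} only addresses those values.
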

\begin{proof} The assertion follows, using the theorem, from the fact that a coherent $\cD_X$-module $\cM$ is holonomic if and only if $\cH^j(\cM^*) = 0$ for all $j\neq 0$, see
 \cite[Cor. 2.6.8]{HTT}.
\end{proof}

\begin{rmk}[The case of regular holonomic $\cD_X$-modules]
In fact, if $\cM$ is a regular holonomic $\cD_X$-module, then ${\rm Gr}_V^\alpha \cM$ is also regular holonomic for all $\alpha \in \mathbf Q$, see
\cite[Cor. 4.7-5]{MaisonobeMebkhout}, as well as Corollary \ref{cor-regholo} below.
\end{rmk}

\begin{rmk}[Compatibility with duality for $V$-filtrations with respect to arbitrary hypersurfaces]
We have only stated Theorem~\ref{thm_duality} in the case of a smooth hypersurface. However, given any nonzero $f\in\cO_X(X)$, if $\iota\colon X
\hookrightarrow X\times {\mathbf A}^1$ is the corresponding graph embedding, then we have $\iota_+(\cM)^*\simeq \iota_+(\cM^*)$ by \cite[Section~2.7.2]{HTT}. 
It follows that by applying Theorem~\ref{thm_duality} for $\iota_+(\cM)$, we obtain a corresponding statement for the $V$-filtrations with respect to $f$. 
\end{rmk}

\section{Push-forward and $V$-filtrations}\label{section_behavior_push_forward2}

We turn to the behavior of $V$-filtrations under proper push-forward. Let $\pi \colon X\to Y$ be a morphism between smooth algebraic varieties and let $\cM$ be a coherent
$\cD_X$-module such that $\pi$ is proper on the support of $\cM$. Note that in this case
$\cH^j(\pi_+\cM)$ is a coherent $\cD_Y$-module for all $j$ (see \cite[Theorem~3.4.1]{Sabbah2}). 
Suppose that $f$ is a nonzero regular function on $Y$. We are interested in the relation between the $V$-filtration on $\cM$ with respect to $f\circ\pi$ and the $V$-filtration on
 $\cH^j(\pi_+\cM)$ with respect to $f$.

\subsection{The case of smooth projections}
We first consider the situation of a smooth projection $p\colon X = Y \times Z \to Y$, proper on the support of $\cM$, and of a nonzero $t\in\cO_Y(Y)$ that defines a smooth hypersurface. The general situation will be reduced to this case. 
With a slight abuse of notation, we also denote by $t$ the pull-back $t\circ p$. We put $d=\dim(Z)$.

Recall that by formula (\ref{pushforwardSmooth}), we have
$$\cH^j(p_+\cM)= R^jp_* \big(\Omega^\bullet_{X/ Y} \otimes_{\cO_X} \cM [d]\big).$$
Note that since $X=Y\times Z$, we have $V^0\cD_X=V^0\cD_Y\boxtimes\cD_Z$, hence for every $V^0\cD_X$-module $\cT$, we can form
the relative de Rham complex 
$\Omega^\bullet_{X/ Y} \otimes_{\cO_X}\cT$.
In particular,
if $\cM$ has a $V$-filtration $V^{\bullet}\cM$ with respect to $t$,
then for every $j\in\ZZ$ and every $\alpha\in\QQ$, we may consider
$$V^{\alpha}\cH^j(p_+\cM):={\rm Im}\left(R^jp_* \big(\Omega^\bullet_{X/ Y} \otimes_{\cO_X} V^{\alpha}\cM[d]\big)\to
R^jp_* \big(\Omega^\bullet_{X/ Y} \otimes_{\cO_X} \cM [d]\big)\right).$$
It is clear that this is a $V^0\cD_Y$-submodule of $\cH^j(p_+\cM)$.

\begin{thm}\label{thm:strict}
With the above notation, if $\cM$ is a coherent $\cD_X$-module such that $p$ is proper on $\mathrm{Supp\,}(\cM)$
and $\cM$ has a $V$-filtration $V^{\bullet}\cM$ with respect to $t$, then for every $j\in {\mathbf Z}$ and $\alpha\in {\mathbf Q}$, the canonical morphism 
 \begin{equation}\label{eq:v}
R^jp_* \big(\Omega^\bullet_{X/ Y} \otimes_{\cO_X} V^{\alpha}\cM[d]\big)\to
R^jp_* \big(\Omega^\bullet_{X/ Y} \otimes_{\cO_X} \cM [d]\big) 	
 \end{equation}
is injective. Moreover, $V^{\bullet}\cH^j(p_+\cM)$
is the $V$-filtration of $\cH^j(p_+\cM)$ with respect to $t$.
\end{thm}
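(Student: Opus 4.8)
The plan is to prove the two assertions in tandem, using the $b$-function characterization of the $V$-filtration and the strictness of the $V$-filtration under morphisms of $\cD$-modules (Corollary~\ref{cor_properties_V_filtration1}). First I would reduce to a local statement over $Y$, replacing $Y$ by an affine open subset so that we may work with global sections and so that $\cM$ is generated over $\cD_X$ by finitely many global sections $w_1,\ldots,w_r$ lying in $V^0\cM$; after further shrinking we can also arrange coordinates. The key algebraic input is that the relative de Rham complex $\Omega^\bullet_{X/Y}\otimes_{\cO_X}(-)[d]$ computes $p_+$ on the category of $V^0\cD_X = V^0\cD_Y\boxtimes\cD_Z$-modules, and that $p_+$ (equivalently, the $\cD_Z$-direction relative de Rham functor tensored over $V^0\cD_Y$) is compatible with the action of $s=-\partial_t t$ and of $t$, since these operate only in the $Y$-direction and hence commute with the relative de Rham differential. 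Thus the complex $\Omega^\bullet_{X/Y}\otimes_{\cO_X}\cM[d]$ and each subcomplex $\Omega^\bullet_{X/Y}\otimes_{\cO_X}V^\alpha\cM[d]$ are complexes of $V^0\cD_Y$-modules, and the induced filtration on cohomology consists of $V^0\cD_Y$-submodules, as already noted.

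The main work is the injectivity statement (\ref{eq:v}), and here the strategy I would follow is to exploit the short exact sequences
\[
0\to V^{>\alpha}\cM\to V^\alpha\cM\to {\rm Gr}_V^\alpha(\cM)\to 0
\]
together with the isomorphisms $t\colon V^\alpha\cM\xrightarrow{\sim} V^{\alpha+1}\cM$ for $\alpha>0$ (Proposition~\ref{properties_V_filtration1}i)). Applying the exact functor $\Omega^\bullet_{X/Y}\otimes_{\cO_X}(-)[d]$ and passing to the long exact sequence in hypercohomology, the injectivity of (\ref{eq:v}) for all $\alpha$ is equivalent to the vanishing of all the connecting maps, i.e. to the statement that the maps $R^jp_*(\Omega^\bullet_{X/Y}\otimes V^{>\alpha}\cM[d])\to R^jp_*(\Omega^\bullet_{X/Y}\otimes V^{\alpha}\cM[d])$ are injective, and then a standard reduction (discreteness of the filtration plus the fact that for $\alpha\ll 0$ one has $V^\alpha\cM = \cM$) shows it suffices to prove that the map on $R^jp_*$ induced by ${\rm Gr}_V^{>\alpha}(\cM)\hookrightarrow$ something vanishes appropriately — more precisely, to prove that for each $j$ the sequence
\[
0\to \cH^j\big(p_+(V^{>\alpha}\cM)\big)\to \cH^j\big(p_+(V^{\alpha}\cM)\big)\to \cH^j\big(p_+{\rm Gr}_V^\alpha(\cM)\big)\to 0
\]
is exact, i.e. that the relevant connecting homomorphism $\cH^{j-1}(p_+{\rm Gr}_V^\alpha\cM)\to \cH^j(p_+ V^{>\alpha}\cM)$ is zero. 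To kill this connecting map I would use the $s$-action: the endomorphism $s+\alpha$ is nilpotent on ${\rm Gr}_V^\alpha(\cM)$ and hence on $\cH^{j-1}(p_+{\rm Gr}_V^\alpha\cM)$, while one checks — this is where the $b$-function estimates of Theorem~\ref{thm_existence_Vfilt}i) enter — that $s+\alpha$ acts invertibly on $\cH^j(p_+V^{>\alpha}\cM)$ modulo the image of $\cH^j(p_+V^{\beta}\cM)$ for $\beta>\alpha$; an induction on the (finitely many, by discreteness) jumps $\beta$ with $\alpha<\beta<\alpha+1$, combined with the $t$-isomorphisms above to handle $\beta\geq \alpha+1$, then forces the connecting map to vanish because a map intertwining a nilpotent operator with an invertible one must be zero.

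Having established injectivity, the identification of $V^\bullet\cH^j(p_+\cM)$ with the genuine $V$-filtration proceeds by verifying the four conditions of Definition~\ref{defi_V_filtration}. Condition i) (compatibility with $V^i\cD_Y$) is immediate from the construction and the fact that $V^\bullet\cD_X\cdot V^\alpha\cM\subseteq V^{\alpha+\bullet}\cM$. For condition ii), the injectivity just proved gives that ${\rm Gr}_V^\alpha\cH^j(p_+\cM)$ is a subquotient of $\cH^j\big(p_+{\rm Gr}_V^\alpha(\cM)\big)$ — in fact, by the exactness of the displayed three-term sequence, it is the image of $\cH^j(p_+ V^\alpha\cM)$ inside $\cH^j(p_+{\rm Gr}_V^\alpha\cM)$ — on which $s+\alpha$ is nilpotent, hence $s+\alpha$ is nilpotent on the graded piece. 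Condition iv), $t\cdot V^\alpha = V^{\alpha+1}$ for $\alpha>0$, follows from the isomorphism $t\colon V^\alpha\cM\xrightarrow{\sim}V^{\alpha+1}\cM$ (Proposition~\ref{properties_V_filtration1}i)) by applying $R^jp_*(\Omega^\bullet_{X/Y}\otimes-[d])$ and using injectivity to transport the equality of images. Condition iii), finite generation of each $V^\alpha\cH^j(p_+\cM)$ over $V^0\cD_Y$, follows from coherence of $\cH^j(p_+\cM)$ over $\cD_Y$ (\cite[Theorem~3.4.1]{Sabbah2}) together with the same discreteness/eventual-periodicity argument, using the Rees-module reformulation from Remark~\ref{V0_Noeth}; alternatively one produces it from the finite generation of $V^0\cM$ over $V^0\cD_X$ and properness of $p$. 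Finally, uniqueness of the $V$-filtration (Proposition~\ref{unique_V_filt}) shows the two local constructions glue, giving the global statement. The main obstacle I anticipate is the injectivity of (\ref{eq:v}): controlling the connecting homomorphisms in the hypercohomology long exact sequence via the $s$-action requires care with the induction over the jumps and with the interaction between the $t$-isomorphisms and the relative de Rham functor, and it is here that one genuinely uses that the roots of the relevant $b$-functions (equivalently the jumps of the $V$-filtration on $\cM$) are what they are.
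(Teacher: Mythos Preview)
Your core idea — that the connecting map $\delta\colon \cH^{j-1}(p_+{\rm Gr}_V^\alpha\cM)\to \cH^j(p_+V^{>\alpha}\cM)$ should vanish because $s+\alpha$ is nilpotent on the source while ``acting with different eigenvalues'' on the target — is the right intuition, and indeed the paper uses exactly this eigenvalue-separation argument. But it only works, and only makes sense, for \emph{finite} quotients: if $\alpha<\beta<\gamma$, then the connecting map $\cH^{j-1}(V^\alpha/V^\beta)\to\cH^j(V^\beta/V^\gamma)$ vanishes because a polynomial with roots in $(-\beta,-\alpha]$ kills the source and a polynomial with roots in $(-\gamma,-\beta]$ kills the target, and these are coprime. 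Your proposed extension to the map into $\cH^j(p_+V^{>\alpha}\cM)$ itself breaks down: you claim $s+\alpha$ is invertible there ``modulo the image of $\cH^j(p_+V^\beta\cM)$'' for $\beta>\alpha$, and then induct over the jumps in $(\alpha,\alpha+1)$, handling $\beta\geq\alpha+1$ via the $t$-isomorphism. But the $t$-isomorphism $V^{>\alpha}\cM\cong V^{>\alpha+1}\cM$ (for $\alpha>0$) does not reduce the problem — it sends you back to the \emph{same} problem, shifted by $1$. There is no base case, and the induction is circular. Equally fatal: to say the image of $\delta$ lies in the image of $\cH^j(p_+V^\beta\cM)$ (rather than merely in the kernel of the map to $\cH^j(p_+(V^{>\alpha}/V^\beta))$), you already need the injectivity you are trying to prove, at level $\beta$.

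The paper breaks this circularity with genuinely new input that your sketch is missing. First (Step~1) it proves directly, by a resolution argument, that each $R^jp_*(\Omega^\bullet_{X/Y}\otimes V^\alpha\cM[d])$ is locally finitely generated over $V^0\cD_Y$; your suggestion to deduce this from coherence of $\cH^j(p_+\cM)$ over $\cD_Y$ plus a Rees argument does not work without already knowing injectivity. Then (Step~2) it uses your eigenvalue trick on the finite quotients and passes to the \emph{inverse limit}, obtaining injectivity of $\cH^j(\widehat{V^\beta\cN^\bullet})\to\cH^j(\widehat{V^\alpha\cN^\bullet})$ at the level of completed complexes. The crucial Step~3 then compares $\cH^j(V^\alpha\cN^\bullet)$ with its completion: using finite generation over $V^0\cD_Y$ (Step~1), Noetherianity of the Rees ring, and \emph{descending induction on the cohomological degree $j$}, one shows that the $t$-torsion of $\cH^j(V^\alpha\cN^\bullet)$ vanishes for $\alpha>0$. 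Only then (Step~4) does injectivity follow. The descending induction on $j$ is what provides the missing base case; your argument never invokes it.
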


\begin{proof}
We give the proof in $4$ steps, adapting an approach due to Sabbah, see \cite[Section III.4.8]{MebkhoutSabbah}.

\emph{Step 1}. We prove that for every locally finitely generated $V^0\cD_X$-module ${\mathcal T}$ such that $p$ is proper on ${\rm Supp}(\cT)$
and every $j\in {\mathbf Z}$, the $V^0\cD_Y$-module $R^jp_*\big(\Omega^\bullet_{X/ Y} \otimes_{\cO_X}\cT[d]\big)$ is locally finitely generated
(in particular this will be the case for $\cT=V^{\alpha}\cM$).
Since $\cT$ is locally finitely generated over $V^0\cD_X$, we can find a coherent $\cO_X$-submodule $\cF\subseteq \cT$ 
such that the induced map
\[
V^0\cD_X \otimes_{\cO_X}\cF \to \cT
\]
in surjective. Note that the kernel is again locally finitely generated over $V^0\cD_X$.
In order to show, by descending induction on $j$, that $R^jp_*\big(\Omega^\bullet_{X/ Y} \otimes_{\cO_X}\cT[d]\big)$ is locally finitely generated over $V^0\cD_Y$ for all
$\cT$ as above, it is thus enough, by a standard cohomological argument, to see that this is the case when $\cT=V^0\cD_X\otimes_{\cO_X}\cF$,
for a coherent $\cO_X$-module $\cF$ such that $p$ is proper on ${\rm Supp}(\cF)$. 
Since
$V^0\cD_X=V^0\cD_Y\boxtimes\cD_Z$ and the de Rham complex $\Omega_Z^{\bullet}\otimes_{\cO_Z}\cD_Z$ of $\cD_Z$ is quasi-isomorphic
to $\omega_Z[-d]$, it follows from the projection formula that
\begin{equation}\label{eq1_thm:strict}
R^jp_*\big(\Omega^\bullet_{X/ Y} \otimes_{\cO_X}V^0\cD_X\otimes_{\cO_X}\cF[d]\big)
\simeq V^0\cD_Y\otimes_{\cO_Y}R^{j}p_*\big(q^*(\omega_Z)\otimes_{\cO_X}\cF\big),
\end{equation}
where $q\colon X\to Z$ is the projection onto the second component.
Since $p$ is proper on the support of the coherent $\cO_X$-module $q^*(\omega_Z)\otimes_{\cO_X}\cF$, it follows that
$R^{j}p_*\big(q^*(\omega_Z)\otimes_{\cO_X}\cF\big)$ is a coherent $\cO_Y$-module, and thus 
(\ref{eq1_thm:strict}) is a locally finitely generated $V^0\cD_Y$-module. 
This completes the proof of Step 1. 

We note that $p_+(\cM)$ 
can be computed by the Godement resolution of the relative de Rham complex
\[
\cN^{\bullet}:=  p_* \mathrm{God}^\bullet \big(\Omega^\bullet_{X/ Y} \otimes_{\cO_X} \cM [d]\big).
\]
For every $\alpha\in\QQ$, we put 
$$V^{\alpha}\cN^{\bullet}:= p_* \mathrm{God}^\bullet \big(\Omega^\bullet_{X/ Y} \otimes_{\cO_X} V^{\alpha}\cM [d]\big)\subseteq\cN^{\bullet}.$$
It is clear that $V^{\bullet}\cN^j$ gives a decreasing, exhaustive, discrete and left-continuous filtration of $\cN^j$ for all $j\in \ZZ$.
Our goal is to show that $\cH^j(V^{\alpha}\cN^{\bullet})\to \cH^j(\cN^{\bullet})$ is injective for all $j$ and all $\alpha$ and the images of these maps,
when we vary $\alpha$, give a $V$-filtration on $\cH^j(\cN^{\bullet})$ with respect to $t$.

Note that the following $4$ conditions hold for $(\cN^\bullet,V^{\bullet})$:
\begin{enumerate}
	\item For every $\alpha \in \mathbf Q$ and $i\in \mathbf Z$, we have 
	$V^i\cD_Y\cdot V^\alpha\cN^\bullet \subseteq V^{\alpha+i}\cN^\bullet$, hence also
	$V^i\cD_Y\cdot \cH^j(V^\alpha\cN^\bullet) \subseteq \cH^j(V^{\alpha+i}\cN^\bullet)$ for all $j\in\ZZ$;
	\item  For every $\alpha\in \mathbf Q$, the operator $s+\alpha$ is nilpotent on $\gr_V^\alpha\cN^\bullet$; 
	\item For every $\alpha\in \mathbf{Q}_{>0}$, the left multiplication by $t\colon V^\alpha \cN^\bullet \to V^{\alpha+1}\cN^\bullet$ is bijective,
	hence so is the left multiplication by $t\colon \cH^j(V^\alpha \cN^\bullet)\to \cH^j(V^{\alpha+1}\cN^\bullet)$ for all $j\in\ZZ$;
	\item The $V^0\cD_Y$-module $\cH^j(V^{\alpha}\cN^\bullet)$ is locally finitely generated for all $\alpha\in \mathbf Q$ and $j\in\ZZ$.
\end{enumerate}
Indeed, conditions $(a)$, $(b)$ and $(c)$ follow by functoriality of the Godement resolution from the definition of the $V$-filtration on $\cM$ 
and the condition $(d)$ was proved in Step 1. If we show that for every $\alpha\in\QQ$ and $j\in\ZZ$, the canonical morphism
$$\cH^j(V^{\alpha}\cN^{\bullet})\to \cH^j(\cN^{\bullet})$$
is injective, then 
$$\cH^j(V^{\alpha}\cN^{\bullet})/\cH^j(V^{>\alpha}\cN^{\bullet})\simeq \cH^j(\gr_V^\alpha\cN^\bullet)$$
and $(s+\alpha)$ acts nilpotently on this by condition (b) above. Using also conditions
(a), (c), and (d) above, it is then clear that $\big(\cH^j(V^{\alpha}\cN^{\bullet})\big)_{\alpha\in\QQ}$ gives a $V$-filtration on $\cH^j(\cN^{\bullet})$
with respect to $t$.


\emph{Step 2}. We prove a formal analogue of the injectivity of the maps $\cH^j(V^{\beta}\cN^{\bullet})\to \cH^j(V^\alpha\cN^{\bullet})$ for $\alpha\geq\beta$.
For every $\alpha\in\QQ$, we put
\[
\widehat{V^\alpha \cN^\bullet}:= \varprojlim_\gamma V^\alpha \cN^\bullet/ V^\gamma \cN^\bullet,
\]
where the inverse limit is over $\{\gamma\in\QQ\mid\gamma\geq\alpha\}$, with the standard order
(which is a directed set). 
For every $\alpha\leq\beta\leq \gamma$, we have an exact sequence of complexes 
\begin{equation}\label{eq:sh}
	0\to V^\beta \cN^\bullet / V^\gamma \cN^\bullet \to V^\alpha\cN^\bullet/V^\gamma \cN^\bullet \to V^\alpha \cN^\bullet/V^\beta\cN^\bullet \to 0.
\end{equation}
Since the maps in the inverse system $(V^\beta \cN^\bullet/V^\gamma \cN^\bullet)_{\gamma}$  
are clearly surjective, the system satisfies the Mittag-Leffler condition and by taking the inverse limit,
we get an exact sequence
\[
0 \to \widehat{V^\beta \cN^\bullet} \to \widehat{V^\alpha \cN^\bullet} \to V^\alpha\cN^\bullet/ V^\beta\cN^\bullet \to 0.
\]
We conclude that for $\alpha \leq \beta$, the canonical morphism
\begin{equation}\label{eq:forminj}
	\widehat{V^\beta \cN^\bullet} \rightarrow \widehat{V^\alpha\cN^\bullet} 
\end{equation}
is injective and we have a canonical isomorphism
\begin{equation}\label{eq:formquo}
	V^\alpha\cN^\bullet/ V^\beta\cN^\bullet\simeq \widehat{V^\alpha \cN^\bullet}/\widehat{V^\beta \cN^\bullet}. 
\end{equation}

The exact sequence~\eqref{eq:sh} gives a long exact sequence
\[
 \cdots \to \cH^{j-1}(V^\alpha\cN^\bullet/ V^\beta\cN^\bullet) \xrightarrow{\delta} \cH^j(V^\beta \cN^\bullet / V^\gamma \cN^\bullet) \to \cH^j(V^\alpha\cN^\bullet/V^\gamma \cN^\bullet) \to \cdots.
\]
A key observation is that $\delta$ vanishes: note that $\delta$ is compatible with the $k[s]$-action and we have polynomials $p$ and $q$ with roots in $(-\beta,-\alpha]$ 
and $(-\gamma,-\beta]$, respectively, such that $p(s)$ acts nilpotently on the domain of $\delta$ and $q(s)$ acts nilpotently on the codomain of $\delta$.
Since $p$ and $q$ are relatively prime, it follows that 
$\delta = 0$. We conclude that for every $j\in\ZZ$, we have an exact sequence
\begin{equation}\label{eq:coml}
	0\to \cH^j(V^\beta \cN^\bullet / V^\gamma \cN^\bullet) \to \cH^j(V^\alpha\cN^\bullet/V^\gamma \cN^\bullet) \to \cH^j(V^\alpha\cN^\bullet/V^\beta \cN^\bullet) \to 0.
\end{equation}
In particular, the maps in the inverse system 
$\big(\cH^j(V^\alpha\cN^\bullet/V^\gamma \cN^\bullet)\big)_{\gamma}$ are surjective. We can thus apply Remark~\ref{rmk:commute} below to deduce that 
we have a canonical isomorphism 
\[
\cH^j({\widehat{V^\alpha\cN^\bullet}}) \simeq  \varprojlim_{\gamma} \cH^j (V^\alpha \cN^\bullet/ V^\gamma  \cN^\bullet).
\]
Moreover, since the inverse system $\big( \cH^j(V^\beta \cN^\bullet / V^\gamma \cN^\bullet)\big)_{\gamma}$ satisfies the Mittag-Leffler condition, 
 by taking the inverse limit of~\eqref{eq:coml}, we obtain the exact sequence
\begin{equation}\label{eq:formcinj}
	0\to \cH^i( \widehat{V^\beta\cN^\bullet}) \to \cH^i( \widehat{V^\alpha \cN^\bullet}) \to \cH^i(V^\alpha \cN^\bullet/V^\beta \cN^\bullet) \to 0.
\end{equation}


\emph{Step 3}. We now show
 that the $t$-torsion subsheaf $\cT^j_\alpha \subset \cH^j(V^\alpha \cN^\bullet)$ vanishes for $\alpha>0$. We prove this by descending induction on $j$. 
 The base case is trivial since $\cN^j=0$ for $j\gg 0$. Suppose now that $\cT^{j+1}_\alpha=0$ for all $\alpha>0$. The sequence 
\[
\cH^j(V^\alpha \cN^\bullet) \xrightarrow{t^k} \cH^j(V^\alpha \cN^\bullet) \to \cH^j(V^\alpha \cN^\bullet/ V^{\alpha+k}\cN^\bullet) \to 0,
\]
induced by the short exact sequence
\[
0\to V^\alpha \cN^\bullet \xrightarrow{t^k} V^\alpha \cN^\bullet \to V^\alpha \cN^\bullet/ V^{\alpha+k}\cN^\bullet \to 0
\] 
is exact, since the image of the connecting map 
\[
\delta\colon \cH^{j}(V^\alpha\cN^\bullet/ V^{\alpha+k} \cN^\bullet) \to \cH^{j+1}(V^\alpha\cN^\bullet),
\] 
is $t$-torsion and thus vanishes by the assumption that $\cT^{j+1}_\alpha=0$. We conclude that 
\begin{equation}\label{eq_100_quot}
\cH^j(V^\alpha\cN^\bullet/V^{\alpha+k}\cN^\bullet)\simeq \cH^j(V^\alpha\cN^\bullet)/t^k\cdot \cH^j(V^\alpha\cN^\bullet).
\end{equation}

We next show that for $k\gg 0$, the canonical map
\[
\cT^j_\alpha \to \cH^j(V^\alpha \cN^\bullet)/t^k\cdot \cH^j(V^\alpha \cN^\bullet)
\]
is injective, that is, we have  $\big(t^k\cdot \cH^j(V^\alpha \cN^\bullet)\big) \cap \cT^j_\alpha=0$. Indeed, since 
$\cH^j(V^\alpha\cN^\bullet)$ is a locally finitely generated $V^0 \cD_Y$-module, working locally we can find
a surjection $(V^0\cD_Y)^N\to \cH^j(V^\alpha \cN^\bullet)$ for some positive integer $N$.
For every $k\geq 0$, since $t^k\cdot V^0\cD_Y=V^k\cD_Y$, we deduce a surjection
 $(V^k\cD_Y)^N\to t^k\cdot \cH^j(V^\alpha  \cN^\bullet)$. 
 We obtain a short exact sequence of graded modules over ${\mathcal R}=\bigoplus_{k\geq 0}V^k\cD_X$:
 $$0\to \bigoplus_{k\geq 0}Q_k\to {\mathcal R}^N\to \bigoplus_{k\geq 0}t^k\cH^j(V^\alpha\cN^\bullet)\to 0.$$
 Since $\cR$ is generated over $\cR_0=V^0\cD_X$ by $t\in \cR_1$, it follows that $\cR(U)$ is both left and right Noetherian
 (see Remark~\ref{V0_Noeth}); 
 therefore $\bigoplus_{k\geq 0}Q_k$ is finitely generated over $\cR$ and, moreover, there is $k_0$ such that $Q_{k+1}=t\cdot Q_k$ for all $k\geq k_0$. 
 Since multiplication by $t$ gives an isomorphism $V^k\cD_X\to V^{k+1}\cD_X$, we conclude by the Snake Lemma that multiplication by $t$
 gives an isomorphism $t^k\cH^i(V^\alpha\cN^\bullet)\to t^{k+1}\cH^i(V^\alpha\cN^\bullet)$ for all $k\geq k_0$. Therefore we have
 $\big(t^k\cdot \cH^j(V^\alpha \cN^\bullet)\big) \cap \cT^j_\alpha=0$ for $k\geq k_0$, giving our assertion.

The composition
$$\cT^j_{\alpha}\to \cH^j({V^\alpha \cN^\bullet})\to \cH^j(\widehat{V^\alpha \cN^\bullet})$$
is injective as well for $\alpha>0$ since (\ref{eq:coml}) and (\ref{eq_100_quot}) give canonical isomorphisms
\[
\cH^j(\widehat{V^\alpha\cN^\bullet})/\cH^j(\widehat{V^{\alpha+k}\cN^\bullet})\simeq\cH^j(V^\alpha\cN^\bullet/V^{\alpha+k}\cN^\bullet)\simeq 
\cH^j(V^\alpha\cN^\bullet)/t^k\cdot \cH^j(V^\alpha\cN^\bullet).
\]
On the other hand, since $\alpha>0$, multiplication by $t$ on $\cH^j(\widehat{V^\alpha\cN^\bullet)}$ factors as
$$\cH^j(\widehat{V^\alpha\cN^\bullet)}\to \cH^j(\widehat{V^{\alpha+1}\cN^{\bullet}})\hookrightarrow \cH^j(\widehat{V^{\alpha}\cN^{\bullet}}),$$
where the first map is an isomorphism and the second one is injective by (\ref{eq:formcinj}). Therefore multiplication by $t$ is injective on 
$\cH^j(\widehat{V^\alpha \cN^\bullet})$ and we conclude that $\cT^j_{\alpha}=0$.

\emph{Step 4}. Given $\alpha\leq \gamma$, if $\ell\gg 0$, then $\alpha+\ell\geq\gamma$, so that $t^{\ell}\cdot V^{\alpha}\cN^{\bullet}\subseteq
V^{\gamma}\cN^{\bullet}$. This implies that the kernel of the map
\[
\delta^j\colon \cH^j(V^\gamma\cN^\bullet) \to \cH^j(V^\alpha\cN^\bullet),
\] 
is killed by $t^{\ell}$, hence it is contained in $\cT^j_{\gamma}$. By \emph{Step 3}, it follows that if $\gamma>0$, then all $\delta^j$ are injective,
hence the exact sequence
\[
0 \to V^\gamma\cN^\bullet \to V^\alpha\cN^\bullet \to V^\alpha\cN^\bullet/ V^\gamma \cN^\bullet \to 0
\]
induces exact sequences 
\begin{equation}\label{eq_101_exact_seq}
0\to \cH^j(V^\gamma\cN^\bullet) \to \cH^j(V^\alpha \cN^\bullet) \to \cH^j(V^\alpha\cN^\bullet/V^\gamma \cN^\bullet) \to 0.
\end{equation}
For every $\alpha\geq\beta$, by taking $\gamma>\max\{\alpha,0\}$, and combining (\ref{eq_101_exact_seq}) and 
(\ref{eq:coml}), we conclude that the exact sequence 
\[
0\to \cH^j(V^\alpha\cN^\bullet) \to \cH^j(V^\beta \cN^\bullet) \to \cH^j(V^\beta\cN^\bullet/V^\alpha \cN^\bullet) \to 0
\]
is exact as well. This implies that 
\[
\cH^j(V^\alpha\cN^\bullet) \to \varinjlim_{\beta}\cH^j(V^\beta \cN^\bullet)=\cH^j(\cN^\bullet)
\]
is injective for every $\alpha\in\QQ$ and every $j\in\ZZ$. As we have seen, this also implies the last assertion in the theorem.
\end{proof}

\begin{rmk}[A criterion for commuting cohomology and inverse limits]\label{rmk:commute}
	Let $(A_i^\bullet)_{i\in I}$ be an inverse system of complexes, where $(I,\leq)$ is a directed set. Suppose that for every $j\leq k$ in $I$ and every $q\in\ZZ$, the canonical maps
	$A_k^{\bullet}\to A_j^{\bullet}$ and $\cH^q(A_k^{\bullet})\to \cH^q(A_j^{\bullet})$ are surjective. 
	Let $B_i^m\subseteq A_i^m$ and $Z^m_i\subseteq A_i^m$ be the coboundaries and, respectively, the cocycles.
	Note that for every $j\leq k$ in $I$, the induced map $B_k^m\to B_j^m$ is clearly surjective. Moreover,  we see that $Z^m_k\to Z^m_j$
	is surjective using the Snake Lemma and the exact sequences
	\begin{equation}\label{eq1_rmk:commute}
	0\to B_{i}^m\to Z_{i}^m\to \cH^m(A^{\bullet}_{i})\to 0
	\end{equation}
	for all $i\in I$. In particular, for every $m\in\ZZ$, the inverse systems $(B^m_i)_{i\in I}$ and $(Z^m_i)_{i\in I}$ satisfy the  Mittag-Leffler condition.
	Therefore the exact sequences (\ref{eq1_rmk:commute}) and
	$$0\to Z^m_i\to A^m_i\to B^{m+1}_i\to 0$$
	induce after taking inverse limits the exact sequences
	$$0\to \varprojlim B_i^m\to \varprojlim_iZ^m_i\to \varprojlim_i\cH^m(A_i^{\bullet})\to 0$$
	and
	$$0\to \varprojlim_iZ^m_i\to \varprojlim_iA^m_i\to\varprojlim_iB^{m+1}_i\to 0.$$
	We conclude from here that the canonical morphism 
	\[
	\cH^m(\varprojlim_{i} A_i^\bullet) \to \varprojlim_{i} \cH^m (A_i^{\bullet}).
	\]
is an isomorphism for all $m\in\ZZ$. 
	\end{rmk}

\subsection{The general case}
Suppose now that $\pi\colon X\to Y$ is any morphism between smooth, irreducible algebraic varieties
and $\cM$ is a coherent $\cD_X$-module such that $\pi$ is proper on 
the support of $\cM$. Let $f\in\cO_Y(Y)$ be such that $g=f\circ \pi$ is nonzero.
We assume that $\cM$ has a $V$-filtration with respect to $g$ and we want to show that in this case
each $\cH^j(\pi_+\cM)$ has a $V$-filtration with respect to $f$ and to describe this $V$-filtration.

Let $\iota_f \colon Y\to Y\times \mathbf A^1$ and $\iota_g \colon X\to X\times \mathbf A^1$ be the corresponding graph embeddings, so we have the 
Cartesian diagram
\[
\begin{tikzcd} 
	X \arrow{r}{\iota_g} \arrow{d}{\pi} & X\times \mathbf A^1 \arrow{d}{\pi'} \\
	Y \arrow{r}{\iota_f} 	& Y\times \mathbf A^1,
\end{tikzcd}
\]
where $\pi'=\pi\times {\rm Id}$.
Note that $\pi'_+({\iota_g}_+\cM)\simeq {\iota_f}_+(\pi_+\cM)$ and thus
$${\iota_f}_+\cH^j(\pi_+\cM)\simeq \cH^j(\pi'_+\cN\big),$$
where $\cN={\iota_g}_+\cM$. Let $t$ denote the coordinate on ${\mathbf A}^1$.
We thus assume that $\cN$ has a $V$-filtration with respect to $t$ and we need to show that each $\cH^j(\pi'_+\cN)$ has a $V$-filtration
with respect to $t$ and describe this $V$-filtration. As usual, we can factor $\pi'$ as
$$X\times{\mathbf A}^1\overset{i}\hookrightarrow X\times Y\times {\mathbf A}^1\overset{p}\longrightarrow Y\times {\mathbf A}^1,$$
where $i$ is the closed immersion given by $i(x,t)=\big(x,\pi(x),t\big)$ and $p(x,y,t)=(y,t)$. 
In this case, we can compute the $V$-filtration on $i_+\cN$ using Proposition~\ref{prop_Vfil_immersion} and we then compute the
filtration on $\cH^j\big(p_+(i_+\cN)\big)\simeq \cH^j(\pi'_+\cN)$ using Theorem~\ref{thm:strict} for the smooth projection $p$. 
Using this approach, we obtain the following

\begin{thm}
Let $\pi\colon X\to Y$ be a  morphism between smooth, irreducible algebraic varieties
and let $\cM$ be a coherent $\cD_X$-module such that $\pi$ is proper on 
the support of $\cM$. If $f\in\cO_Y(Y)$ is such that $g=f\circ \pi$ is nonzero and $\cM$ has a 
$V$-filtration with respect to $g$, then $\cH^j(\pi_+\cM)$ has a $V$-filtration with respect to $f$
for every $j\in\ZZ$ and we have isomorphisms
$${\rm Gr}^{\alpha}_V\big(\cH^j(\pi_+\cM)\big)\simeq \cH^j\big(\pi_+{\rm Gr}^{\alpha}_V(\cM)\big)\quad\text{for all}\quad \alpha\in\QQ.$$
\end{thm}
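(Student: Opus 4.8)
The plan is to reduce the general statement to two already-established special cases: the push-forward by a closed immersion (Proposition~\ref{prop_Vfil_immersion}) and the push-forward by a smooth projection (Theorem~\ref{thm:strict}). First I would set up the reduction exactly as in the paragraph preceding the statement: replacing $\cM$ by $\cN = (\iota_g)_+\cM$ and $\pi$ by $\pi' = \pi \times \mathrm{Id}$, one has ${(\iota_f)}_+\cH^j(\pi_+\cM) \simeq \cH^j(\pi'_+\cN)$, so it suffices to construct the $V$-filtration on $\cH^j(\pi'_+\cN)$ with respect to $t$ and to identify its graded pieces. Then I would factor $\pi'$ through the graph of $\pi$,
\[
X\times {\mathbf A}^1 \overset{i}\hookrightarrow X\times Y\times {\mathbf A}^1 \overset{p}\longrightarrow Y\times {\mathbf A}^1,
\]
with $i$ a closed immersion and $p$ a smooth projection, and use $(\pi')_+ \simeq p_+ \circ i_+$ together with the fact that $p$ (resp.\ the composite) is proper on the relevant supports.

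The key steps, in order: \textbf{(1)} Since $i$ is a closed immersion and $\cN$ admits a $V$-filtration with respect to $t$, Proposition~\ref{prop_Vfil_immersion} shows $i_+\cN$ admits a $V$-filtration with respect to $t$, given in local coordinates by $V^\alpha(i_+\cN) \simeq V^\alpha\cN \otimes_k k[\partial_{y_1},\dots,\partial_{y_r}]$; moreover that proposition also shows ${\rm Gr}^\alpha_V(i_+\cN) \simeq i_+{\rm Gr}^\alpha_V(\cN)$ (one checks this from the explicit description, or notes $i_+$ is exact and commutes with taking the relevant subquotients). \textbf{(2)} Apply Theorem~\ref{thm:strict} to the smooth projection $p$ and the $\cD$-module $i_+\cN$, which has a $V$-filtration with respect to $t$ and whose support maps properly to $Y\times{\mathbf A}^1$: this yields a $V$-filtration on each $\cH^j(p_+(i_+\cN)) \simeq \cH^j(\pi'_+\cN)$ with
\[
{\rm Gr}^\alpha_V\big(\cH^j(p_+(i_+\cN))\big) \simeq \cH^j\big(p_+{\rm Gr}^\alpha_V(i_+\cN)\big).
\]
Strictly, Theorem~\ref{thm:strict} as stated gives the injectivity of $R^jp_*(\Omega^\bullet_{X/Y}\otimes V^\alpha[d]) \to R^jp_*(\Omega^\bullet_{X/Y}\otimes \cM[d])$ and that the images form the $V$-filtration; the isomorphism on graded pieces follows because ${\rm Gr}^\alpha_V$ is exact (Corollary~\ref{cor_properties_V_filtration1}) and the relative de Rham functor with Godement resolution commutes with the filtration, so $\cH^j(V^\alpha\cN^\bullet)/\cH^j(V^{>\alpha}\cN^\bullet) \simeq \cH^j({\rm Gr}^\alpha_V \cN^\bullet)$, and the right-hand side is $\cH^j(p_+{\rm Gr}^\alpha_V(i_+\cN))$. \textbf{(3)} Combine the two graded-piece identities and translate back: using $i_+\circ {\rm Gr}^\alpha_V(\cN) \simeq {\rm Gr}^\alpha_V(i_+\cN)$ and $p_+\circ i_+ \simeq (\pi')_+$ one gets ${\rm Gr}^\alpha_V(\cH^j(\pi'_+\cN)) \simeq \cH^j(\pi'_+{\rm Gr}^\alpha_V(\cN))$. \textbf{(4)} Finally, undo the graph-embedding reduction: since $\cN = (\iota_g)_+\cM$, we have ${\rm Gr}^\alpha_V(\cN) \simeq (\iota_g)_+{\rm Gr}^\alpha_V(\cM)$ (Remark~\ref{Vfilt_on_B_f_smooth} / the definition of the $V$-filtration with respect to $g$), and $\pi'_+ \circ (\iota_g)_+ \simeq (\iota_f)_+ \circ \pi_+$ by base change along the Cartesian square, so that $\cH^j(\pi'_+\cN) \simeq (\iota_f)_+\cH^j(\pi_+\cM)$ carries a $V$-filtration with respect to $t$ — which is precisely the assertion that $\cH^j(\pi_+\cM)$ has a $V$-filtration with respect to $f$ — and
\[
{\rm Gr}^\alpha_V\big(\cH^j(\pi_+\cM)\big) \simeq \cH^j\big(\pi_+{\rm Gr}^\alpha_V(\cM)\big),
\]
after identifying the $V$-filtration on $(\iota_f)_+(-)$ with the one defining the $V$-filtration with respect to $f$ via the compatibility in Remark~\ref{Vfilt_on_B_f_smooth}.

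The main obstacle I anticipate is \textbf{bookkeeping rather than conceptual difficulty}: making sure that the various ``${\rm Gr}^\alpha_V$ commutes with $i_+$ and with $p_+$'' statements are genuinely compatible and that no shift or coordinate-dependence sneaks in. Concretely, the delicate point is that Theorem~\ref{thm:strict} defines the $V$-filtration on $\cH^j(p_+(-))$ as the image filtration from the filtered relative de Rham complex, so to get the clean graded-piece formula I must verify that forming ${\rm Gr}^\alpha_V$ of the complex $\cN^\bullet$ (Godement resolution of $\Omega^\bullet_{X/Y}\otimes V^\alpha(i_+\cN)[d]$) agrees, after taking cohomology, with $p_+$ applied to ${\rm Gr}^\alpha_V(i_+\cN)$ — this uses the exactness of ${\rm Gr}^\alpha_V$ and the strictness/injectivity statements in Theorem~\ref{thm:strict} in an essential way, and is the only place where the argument is more than formal. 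The base-change isomorphism $\pi'_+(\iota_g)_+ \simeq (\iota_f)_+\pi_+$ and the local coordinate description from Proposition~\ref{prop_Vfil_immersion} are routine, and the identification of ${\rm Gr}^\alpha_V$ with $i_+$ on the immersion side is immediate from the explicit tensor-product formula there.
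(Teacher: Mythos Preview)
Your approach is correct and is exactly the one the paper takes: reduce via the graph embeddings to $\pi'$, factor $\pi'=p\circ i$, apply Proposition~\ref{prop_Vfil_immersion} for $i$ and Theorem~\ref{thm:strict} for $p$, and read off the graded pieces from the strictness statement. The paper's own proof is in fact terser than yours.

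One small correction in your step~(4): the identification ${\rm Gr}^\alpha_V(\cN)\simeq (\iota_g)_+{\rm Gr}^\alpha_V(\cM)$ is not the right statement, and Remark~\ref{Vfilt_on_B_f_smooth} (which concerns the case where $g$ defines a \emph{smooth} hypersurface) does not apply here. By definition, the $V$-filtration on $\cM$ with respect to $g$ \emph{is} the $V$-filtration on $\cN=(\iota_g)_+\cM$ with respect to $t$, so ${\rm Gr}^\alpha_V(\cM)={\rm Gr}^\alpha_V(\cN)$ tautologically as a $\cD_{X\times\{0\}}\simeq\cD_X$-module; there is no further $(\iota_g)_+$ to apply (the graph of $g$ and $X\times\{0\}$ are different subvarieties of $X\times{\mathbf A}^1$). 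The final identification is then just that $\pi'=\pi\times{\rm Id}$ restricts to $\pi$ on $X\times\{0\}\to Y\times\{0\}$, so $\cH^j\big(\pi'_+{\rm Gr}^\alpha_V(\cN)\big)\simeq\cH^j\big(\pi_+{\rm Gr}^\alpha_V(\cM)\big)$, which is immediate from base change along the inclusion of the zero fibers.
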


\begin{proof}
Using the previously described approach, we see that it is enough to prove the assertion separately in the following two cases:
\begin{enumerate}
\item[i)] When $\pi$ is a closed immersion and both $f$ and $g$ define smooth hypersurfaces. In this case the assertion follows from
the description of the $V$-filtration on $\pi_+(\cM)$ in Proposition~\ref{prop_Vfil_immersion}.
\item[ii)] When $\pi\colon Y\times Z\to Y$ is the projection onto the first component. In this case, the assertion follows from Theorem~\ref{thm:strict}.
\end{enumerate}
\end{proof}

\section{The comparison with the topological vanishing and nearby cycles}\label{section_comparison}

In this section we work over the ground field $k = \C$. For every complex algebraic variety $Z$, we denote by $Z^{\rm an}$ the corresponding analytic space.
Let $X$ be a smooth, irreducible complex algebraic variety. Recall that in this setting we have the Riemann-Hilbert
correspondence provided by the analytic de Rham functor ${\rm DR}_X^{\rm an}(-)$. 

\subsection{Topological vanishing and nearby cycles}
For an introduction to the topological point of view on vanishing and nearby cycles, we refer to
\cite[Chapter~4.2]{Dimca} and \cite[Chapter~10.3]{Maxim} (see also \cite[Section 1]{Bry} and \cite[Section 2]{MaisonobeMebkhout}). In what follows we simply review the basic definitions.

Let $f\colon X \to \mathbf {\mathbf A}^1$ be a regular function, which we view as a holomorphic map between complex manifolds $f\colon X^{\rm an} \to \mathbf C$. 
Let $H = f^{-1}(0)$ and $U=X\smallsetminus H$, with corresponding embeddings $i\colon H^{\rm an} \hookrightarrow X^{\rm an}$
and $j\colon U^{\rm an}\hookrightarrow X^{\rm an}$.
Let $\C^* =\C\smallsetminus \{0\}$ be the punctured plane, so $U^{\rm an} = f^{-1}(\C^*)$. We have the following commutative diagram, with Cartesian squares
\[ \begin{tikzcd} H^{\rm an} \ar[r,"i"] \ar[d] & X^{\rm an} \ar[d,"f"] & U^{\rm an} \ar[swap,l,"j"] \ar[d] & \widetilde{U}\ar[swap,l,"\chi"] \ar[d] \\ \{0\} \ar[r] & \C & \C^* \ar[l] & \mathbf C \ar[l,"\exp"], \end{tikzcd}\]
where $\exp\colon \mathbf C \to \C^*$ is the universal cover of the punctured plane.

Following Grothendieck and Deligne \cite{Deligne}, we define for $\cK \in \cD^b_c(X^{\rm an})$ the \emph{nearby cycles} complex $\psi_f \cK$ by
\[ \psi_f \cK : = i^{-1} \mathbf R(j \circ \chi)_* \chi^{-1} j^{-1} \cK\in \cD^b_c(H^{\rm an}).\]
The map $\mathbf C \to \mathbf C$, sending $z$ to $z+1$, induces a \emph{monodromy operator} $T$ on $\psi_f\cK$. Moreover, adjunction for direct images gives a natural morphism $i^{-1} \cK \to \psi_f \cK$ in $\cD^b_c(H^{\rm an})$. By definition, the \emph{vanishing cycles} $\varphi_f\cK$ of $\cK$ is the cone of this morphism, hence we have an exact triangle
\begin{equation}\label{trg_nearby}
i^{-1} \cK \to \psi_f \cK \xrightarrow[]{{\rm can}} \varphi_f\cK \xrightarrow[]{+1}.
\end{equation}

If $\cK$ is a perverse sheaf on $X$, then by a result of Gabber \cite[p. 14]{Bry}, both $\psi_f \cK [-1]$ and $\varphi_f \cK [-1]$ are perverse sheaves on $H$. We denote the shifted functor with a superscript $p$, 
that is, we put
\[ {}^p \psi_f : = \psi_f [-1] \quad\text{and}\quad {}^p \varphi_f: = \varphi_f [-1].\]

If we let $T$ act on $i^{-1}\cK$ by the identity, then the first morphism in the triangle (\ref{trg_nearby}) commutes with $T$, hence we get a monodromy operator $T$ on $\varphi_f \cK$, too. As the category of $\C$-perverse sheaves is abelian and $\C$-linear, we can decompose these objects into generalized eigenspaces for the operator $T$, so we have
\[ {}^p \psi_f \cK = \bigoplus_{\alpha \in \mathbf C^*} {}^p \psi_{f,\alpha} \cK,  \quad {}^p \varphi_f \cK = \bigoplus_{\alpha \in \mathbf C^*} {}^p \varphi_{f,\alpha} \cK.\]

By triviality of the monodromy on $i^{-1} \cK[-1]$, we see that the morphism ${\rm can}$ gives an isomorphism ${}^p \psi_{f,\alpha}\cK \to {}^p \varphi_{f,\alpha}\cK$ for all $\alpha \neq 1$.

\begin{rmk}[The map ${\rm Var}$] It is also possible to define a morphism ${\rm Var}\colon \varphi_{f,1} \cK \to \psi_{f,1}\cK$ so that $T-1 = {\rm can} \circ {\rm Var}\colon \varphi_{f,1}\cK \to \varphi_{f,1}\cK$ and $T-1 = {\rm Var} \circ {\rm can}\colon \psi_{f,1} \cK \to \psi_{f,1}\cK$.
\end{rmk}

\subsection{The vanishing and nearby cycles via the $V$-filtration}
The following comparison theorem relates the vanishing and nearby cycles to the $V$-filtration on regular holonomic $\cD_X$-modules.

\begin{thm} \label{thm-comparison} Let $X$ be a smooth, irreducible complex algebraic variety and $H\subseteq X$ a smooth hypersurface defined by $t \in \cO_X(X)$. If $\cM$ is a regular holonomic $\cD_X$-module that has a $V$-filtration $V^{\bullet}\cM$ with respect to $t$ and 
$\cK = {\rm DR}^{\rm an}_X(\cM)$ is the corresponding perverse sheaf, then there are natural isomorphisms
\[ {\rm DR}^{\rm an}_X\big({\rm Gr}_V^{\lambda}(\cM)\big) \cong {}^p \psi_{t,\alpha} \cK, \text{ where } \alpha = \exp(-2\pi i \lambda),\,\,\text{and}\]
\[ {\rm DR}^{\rm an}_X\big({\rm Gr}_V^0 (\cM)\big) \cong {}^p \varphi_{t,1}\cK.\]
\end{thm}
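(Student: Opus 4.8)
The strategy is to reduce the theorem to a local, explicit computation on the level of perverse sheaves, using the fact (Riemann--Hilbert) that $\mathrm{DR}_X^{\mathrm{an}}$ is an equivalence of categories that is compatible with all the natural operations. The key point is to have a $\mathcal D$-module theoretic model for the \emph{topological} nearby and vanishing cycles. The standard device here is the following: form $X^* = X \smallsetminus H$ with open embedding $j\colon X^* \hookrightarrow X$, and consider the $\mathcal D_X$-module $\widetilde{\mathcal M} = j_+(j^* \mathcal M)[1/t]$ or, more precisely, work with the complex computing $i^\dagger$ and $i^*$ along $H$. The cleanest route, which I would follow, is to first treat the case $\mathcal M = \mathcal O_X[1/t] = \mathcal O_{X^*}$ pushed forward (where the $V$-filtration is $V^\alpha = t^{\lceil \alpha\rceil - 1}\mathcal O_X$ for $\alpha \le 1$ and analogous for $\alpha>1$, cf.\ Examples~\ref{eg_V_filtration_O} and \ref{exist_V_filt_O_X2}), verify the comparison by a direct computation of $\mathrm{DR}$ of the graded pieces against the monodromy decomposition of $\psi_t(\underline{\mathbf C}_{X^*})$, and then bootstrap to arbitrary regular holonomic $\mathcal M$.

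The main steps, in order, would be: (1) Reduce to the case where $t$ is a coordinate, so we may write $X = H \times \mathbf A^1$ locally, and use Proposition~\ref{prop_descr_Var} and the computation of ${\mathbf L}i^*(\mathcal M)[-1]$ and $i^\dagger$ via the two-term complexes built from $\mathrm{Gr}_V^0$ and $\mathrm{Gr}_V^1$. (2) Recall that the topological statement has a known $\mathcal D$-module avatar: Kashiwara's and Malgrange's original insight is precisely that ${}^p\psi_{t,\alpha}\mathcal K$ corresponds to $\mathrm{Gr}_V^\lambda(\mathcal M)$. Rather than reprove this from scratch, I would cite the structure of the proof: one shows that the functor $\mathcal M \mapsto \bigoplus_{0 < \lambda \le 1}\mathrm{Gr}_V^\lambda(\mathcal M)$ with its monodromy $\exp(-2\pi i (s+\lambda))$ and the functor $\mathcal M \mapsto \mathrm{Gr}_V^0(\mathcal M)$ are exact functors on regular holonomic $\mathcal D_X$-modules admitting a $V$-filtration, compatible with $\mathrm{can}$ and $\mathrm{Var}$, and that under $\mathrm{DR}_X^{\mathrm{an}}$ they match ${}^p\psi_{t,\alpha}$ and ${}^p\varphi_{t,1}$ on a generating class of objects. (3) For the generating class, use that every regular holonomic $\mathcal M$ along $H$ has, locally, a finite resolution by ``elementary'' modules in the sense of Definition~\ref{defi_elementary} (Lemma~\ref{lem_elementaryresolution}); for these, both sides are computed explicitly --- the $V$-filtration was determined in Lemma~\ref{lem_elementary}, and $\mathrm{DR}$ of $\mathcal D_H[s]/(s+k)^m$-type modules is a standard local system on $H^*$ with unipotent monodromy, whose nearby cycles one reads off directly. (4) Check compatibility of the isomorphisms with $\mathrm{can}$, $\mathrm{Var}$, and the monodromy $T$, using exactness of $\mathrm{Gr}_V^\alpha$ (established via Theorem~\ref{thm_duality} and its corollary) to glue from the elementary case to the general case, and from affine opens to all of $X$.

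\textbf{Main obstacle.} The hard part will be step (2)--(3): establishing the compatibility of $\mathrm{DR}_X^{\mathrm{an}}$ with the passage to graded pieces of the $V$-filtration in a way that identifies the eigenvalue bookkeeping correctly (the sign in $\alpha = \exp(-2\pi i \lambda)$, and the precise range $\lambda \in (0,1]$ versus $[0,1)$). This requires knowing that $\mathrm{DR}_X^{\mathrm{an}}$ intertwines the order-two nilpotent structure coming from $(s+\lambda)$ acting on $\mathrm{Gr}_V^\lambda(\mathcal M)$ with the logarithm of the unipotent part of the monodromy $T$ on ${}^p\psi_{t,\alpha}\mathcal K$, which is essentially the content of the Riemann--Hilbert correspondence for the \emph{deleted neighborhood} $X^* = H \times \mathbf A^1 \smallsetminus H \cong H \times \mathbf C^*$ --- i.e.\ that on $\mathbf C^*$ with coordinate $t$, the $\mathcal D$-module $\mathcal O_{\mathbf C^*}t^{-\lambda}$ has $\mathrm{DR}$ equal to the rank-one local system with monodromy $\exp(2\pi i\lambda)$. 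Making this precise, and checking it is compatible with the two-term complex presentations of $i^*$ and $i^\dagger$ from Section~\ref{section_Vfiltration_smooth}, is where the real work lies; everything else is formal manipulation with the six functors and the already-established properties of the $V$-filtration. Since the excerpt explicitly says ``we do not give the definition [of regular singularities]'' and defers to \cite[Chapter~7]{HTT}, I expect the actual proof in the paper to be a sketch that cites \cite{Saito-MHP} or \cite{MaisonobeMebkhout} for this core compatibility and only spells out the translation into the $V$-filtration language.
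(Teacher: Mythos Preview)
Your proposal takes a genuinely different route from the paper, and it has a gap at exactly the place you yourself flag as the main obstacle.

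The paper does \emph{not} use the elementary-module resolutions of Section~\ref{section_behavior_push_forward}. Instead, it introduces auxiliary regular holonomic $\cD_X$-modules $\cM_{\alpha,k} := \cM \otimes_{\cO_X} t^*(\cN_{\alpha,k})$, where $\cN_{\alpha,k}$ is the rank-$(k{+}1)$ ``Jordan block'' module $\cD_{\mathbf G_m}/\cD_{\mathbf G_m}(z\partial_z + \alpha - 1)^{k+1}$ pushed forward to $\mathbf A^1$. Two explicit lemmas (Lemmas~\ref{lem-compare} and \ref{lem-compare0}) show that for $k$ large one has ${\rm Gr}_V^\alpha(\cM) \simeq \cH^{-1}(\cM_{\alpha,k}\vert_H)$, and an analogous statement for ${\rm Gr}_V^0$ using the cone of $\cM \to \cM_{1,k}$. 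This reduces the problem to computing ${\rm DR}_H^{\rm an}$ of these restrictions, which is done via a chain of cited results from \cite{MaisonobeMebkhout}: one passes from ${\rm DR}$ to the solution complex by duality, replaces $\Psi_t(\cO_X^{\rm an})$ first by its subsheaf of functions of finite determination and then by those of moderate growth, and finally observes that the moderate-growth sheaf is precisely $i^{-1}\varinjlim_k t^*(\cN_{\alpha,k}^{\rm an})$. So the $\cM_{\alpha,k}$ construction is the bridge you are missing.

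Your step~(3) is where the argument would break. You claim that for an elementary module $\cE$ one can ``read off directly'' the nearby cycles of ${\rm DR}_X^{\rm an}(\cE)$, but $\cE$ is not $\cO_X$-coherent and ${\rm DR}_X^{\rm an}(\cE)$ is not a local system on $X\smallsetminus H$ in any evident way; computing ${}^p\psi_{t,\alpha}$ of it is as hard as the theorem itself. The elementary-module machinery is well adapted to duality (an algebraic functor computed via free resolutions) but gives no traction on the analytic nearby-cycles functor. You do name the right ingredient in your obstacle paragraph --- the rank-one module $\cO_{\mathbf C^*}t^{-\lambda}$ with monodromy $\exp(2\pi i\lambda)$ --- but you invoke it only as a sanity check, not as a construction. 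The paper's move is to tensor $\cM$ with (Jordan blocks of) exactly these modules and show that the $V$-filtration of the tensor product picks out ${\rm Gr}_V^\alpha(\cM)$ after restriction; that is what turns the heuristic into a proof.
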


\begin{rmk}[Compatibility with ${\rm can}$ and ${\rm Var}$]\label{remark_nearby_cycles} 
The above isomorphisms satisfy the additional property that the morphisms\[ {\rm can}\colon {}^p \psi_{t,1} \cK \to {}^p \varphi_{t,1} \cK\]
\[ {\rm Var}\colon {}^p \varphi_{t,1}\cK \to {}^p \psi_{t,1}\cK\]
correspond under the functor ${\rm DR}_H^{\rm an}(-)$ to the morphisms
\[ -\de_t\colon {\rm Gr}_V^1(\cM) \to {\rm Gr}_V^0(\cM)\]
\[ \tfrac{\exp\big(2\pi i(s+1)\big) -1}{s+1} t\colon {\rm Gr}_V^0(\cM) \to {\rm Gr}_V^1( \cM).\]

In particular, the factorizations $T-1 = {\rm can}\circ {\rm Var}$ and $T-1 = {\rm Var} \circ {\rm can}$ show that $T$ corresponds to $\exp(2\pi i s)$ on ${\rm Gr}_V^0(\cM)$ 
and to $\exp\big(2\pi i (s+1)\big)$ on ${\rm Gr}_V^1(\cM)$. Similarly, for $\lambda\in (0,1)$ and $\alpha={\rm exp}(- 2\pi i\lambda)$, the action of $T$ on ${}^p \psi_{t,\alpha} \cK$ corresponds to 
${\rm exp}(2\pi i s)$ on ${\rm Gr}_V^{\lambda}(\cM)$. 
\end{rmk}

\begin{rmk}[The case of arbitrary hypersurfaces]
Suppose now that $X$ is a smooth, irreducible complex algebraic variety and $f\in\cO_X(X)$ is an arbitrary nonzero regular function defining the hypersurface
$H\overset{i}\hookrightarrow X$. 
If $\iota\colon X\hookrightarrow X\times {\mathbf A}^1$ 
is the graph embedding corresponding to $f$ and $t$ denotes the coordinate function on ${\mathbf A}^1$,
 then it is easy to see that for every $\cK \in \cD^b_c(X^{\rm an})$, we have
$$i_*(\psi_f\cK)\simeq \psi_t\big(\iota_*(\cK)\big)\quad\text{and}\quad i_*(\varphi_f\cK)\simeq \varphi_t\big(\iota_*(\cK)\big).$$
Suppose now that $\cK={\rm DR}_X(\cM)$, for a regular holonomic $\cD_X$-module $\cM$ that admits a $V$-filtration with respect to $f$.
Since the functor ${\rm DR}_X^{\rm an}(-)$ commutes with push-forward, we obtain from Theorem~\ref{thm-comparison} a similar description of
${}^p \psi_{f,\alpha}\cK$ and ${}^p \varphi_{f,1}\cK$ in terms of the graded pieces of the $V$-filtration on $\iota_+(\cM)$. 
\end{rmk}

We give a proof of Theorem~\ref{thm-comparison}, following \cite{MaisonobeMebkhout} (see also Wu's survey article \cite{LeiRH}). We refer also to \cite[Section 3.4]{Saito-MHP} for a proof of the comparison theorem for filtered $\cD_X$-modules underlying mixed Hodge modules. 

If $\cM$ is a holonomic $\cD_X$-module, then we put $\cM\vert_H: = \big(\mathbf Li^*(\cM^*)[-1]\big)^* \in \cD^b_{h}(\cD_H)$.
Note that if $\cM$ has a $V$-filtration $V^{\bullet}\cM$ with respect to $t$, 
then Theorem~\ref{thm_duality} implies that $\cM^*$, too, has such a $V$-filtration. By Proposition~\ref{prop_descr_Var}, we see that
${\mathbf L}i^*(\cM^*)[-1]$ is represented by the complex 
\[{\rm Gr}_V^0(\cM^*) \overset{t\cdot}\longrightarrow {\rm Gr}_V^1(\cM^*),\]
placed in degrees $0$ and $1$, and we conclude using Theorem~\ref{thm_duality} that $\cM\vert_H$ 
 is represented by the complex
\[{\rm Gr}_V^1(\cM) \overset{\partial_t\cdot}\longrightarrow {\rm Gr}_V^0(\cM),\]
placed in degrees $-1$ and $0$. 

The general outline of the proof of Theorem~\ref{thm-comparison} is the following: we wish to replace ${\rm Gr}_V^\alpha(\cM)$ with some objects whose behavior under the Riemann-Hilbert correspondence is better understood. This is done by the \emph{moderate nearby cycles} construction (in Lemmas \ref{lem-compare} and  \ref{lem-compare0}), which essentially gives an isomorphism between ${\rm Gr}_V^\alpha(\cM)$ and the restriction of a certain $\cD_X$-module. In this step we also describe how $s$ acts under this isomorphism.

This restriction is known, by \cite{HTT}*{Equation 7.1.5}, to commute with the de Rham functor, yielding the usual restriction functor on the topological side. So the problem becomes understanding the moderate nearby cycles in a way which relates to the nearby/vanishing cycles of the de Rham complex of the module $\cM$. This is done using various theorems from \cite{MaisonobeMebkhout}, which we state below but do not prove. One technical point (seen in Theorem \ref{thm-MM213}) is that it is better at that point to work with the \emph{solution complex} (defined below), which is dual to the de Rham complex. 

The key fact underlying the theorems we use from \cite{MaisonobeMebkhout} is that, for regular holonomic $\cD$-modules, one can replace the solution complexes we are interested in with those of finite determination (see Theorem \ref{thm-MM314}). From there, after decomposing into the eigenspaces for the monodromy operator, one can even replace those solution complexes with those of \emph{moderate growth} (as in Theorem \ref{thm-MM331}). These moderate growth solution complexes are precisely the de Rham complexes of the moderate nearby cycle construction, which completes the argument.

\bigskip

We begin by introducing some $\cD$-modules that play an important role in the proof.
On $\mathbf A^1 \smallsetminus \{0\} = \mathbf G_m$, for $\alpha \in (0,1]\cap\QQ$ and $k\in \mathbf Z_{>0}$, we consider the regular holonomic $\cD$-module $\cN^{\alpha,k} : = \cD_{\mathbf G_m}/ \cD_{\mathbf G_m}(z \de_z + \alpha -1 )^{k+1}$, where we denote by $z$ the coordinate on ${\mathbf A}^1$. 
This module is free over $\cO_{{\mathbf A}^1}[1/z]$ with generators $e_{\alpha,0},\dots, e_{\alpha,k}$,
where 
\[ e_{\alpha,\ell} = (z\de_z + \alpha-1)^{k-\ell}\cdot \overline{1}\quad\text{for}\quad 0\leq\ell\leq k,\]
so that $(z\de_z + \alpha-1)^{\ell+1}e_{\alpha,\ell}=0$. 
If $j\colon \mathbf G_m \to \mathbf A^1$ is the open embedding, then the $\cD_{{\mathbf A}^1}$-module
$\cN_{\alpha,k} : = j_+ \cN^{\alpha,k}$ is again regular holonomic and has the same $\cO_{{\mathbf A}^1}[1/z]$-basis given by $e_{\alpha,0},\dots, e_{\alpha,k}$.

\begin{rmk}[The monodromy operator on $\cN_{\alpha,k}$]\label{rmk-moderateMonodromy} 
One can embed $\cN^{\alpha,k}$ in the $\cD$-module consisting of the multivalued complex functions on ${\mathbf C}^*$, such that 
$e_{\alpha,\ell}$  maps to $\frac{1}{\ell!} z^{1-\alpha} \log(z)^\ell$ for $0\leq\ell\leq k$. Note that the monodromy around a counter-clockwise loop around $0$ in $\C$ maps $\log(z)$ to $\log(z) + 2\pi i$. As $z^{1-\alpha} = \exp\big((1-\alpha) \log(z)\big)$, the monodromy acts on this multivalued function by multiplication with $\exp(-2\pi i \alpha) = : \xi_\alpha$.
Moreover, it acts on $\log(z)^\ell$ by the formula
\[ \log(z)^\ell \mapsto \big(\log(z) + 2\pi i\big)^\ell = \sum_{j=0}^\ell \binom{\ell}{j} (2\pi i)^j \log(z)^{\ell - j}.\]
Thus we can define a monodromy operator $T\colon \cN_{\alpha,k} \to \cN_{\alpha,k}$ by the formula
\[ e_{\alpha,\ell} \mapsto \xi_\alpha \cdot \sum_{j=0}^\ell \tfrac{(2\pi i)^j}{j!} e_{\alpha,\ell-j}\quad\text{for}\quad 0\leq\ell\leq k.\]
Note that this is a $\cD_{\mathbf A^1}$-linear map.
\end{rmk}

We consider the $\cD_X$-module ${\mathbf L}t^*(\cN_{\alpha,k}) = t^*(\cN_{\alpha,k})$, where we view $t\colon X \to \mathbf A^1$ as a (flat) morphism of algebraic varieties.
For a  regular holonomic $\cD_X$-module $\cM$, we put
\[ \cM_{\alpha,k} : = \cM \otimes_{\cO_X} t^*(\cN_{\alpha,k}) = \bigoplus_{0\leq \ell \leq k} \cM[1/t] e_{\alpha,\ell},\]
on which derivations act via the Leibniz rule (see \cite[Proposition~1.2.9]{HTT}). In particular, we have
\begin{equation}\label{eq_action_on_e}
(t\partial_t-1+\alpha)\cdot me_{\alpha,\ell}=(t\partial_t\cdot m)e_{\alpha,\ell}+me_{\alpha,\ell-1}\quad\text{for}\quad0\leq\ell\leq k
\end{equation}
(with the convention that $e_{\alpha,-1}=0$).
We have an induced automorphism $T\colon \cM_{\alpha,k} \to \cM_{\alpha,k}$ given by
$T(m\otimes u) = m \otimes T(u)$.
This is $\cD_X$-linear and thus induces automorphisms $\cH^i(\cM_{\alpha,k}\vert_H)\to \cH^i(\cM_{\alpha,k}\vert_H)$, for $i=-1$, $0$, that we also denote by $T$. 

\begin{rmk}[Compatibity when we vary $k$]
Note that right multiplication by $(z\partial_z+\alpha-1)$ induces, for every $\alpha$ and $k$, a morphism of $\cD_{{\mathbf A}^1}$-modules
$\cN_{\alpha,k}\hookrightarrow \cN_{\alpha,k+1}$ that maps $e_{\alpha,\ell}$ to $e_{\alpha,\ell}$ for all $0\leq\ell\leq k$. For every $\cM$ as above,
we thus get a corresponding morphism of $\cD_X$-modules $\cM_{\alpha,k}\hookrightarrow \cM_{\alpha,k+1}$.
\end{rmk}

We note that if $\cM$ has a $V$-filtration with respect to $t$, since $\coker\big(\cM\to \cM[1/t]\big)$ also has one by Example~\ref{eg_case_support_in_H},
it follows that $\cM[1/t]$ has such a $V$-filtration by Proposition~\ref{properties_V_filtration1} and Corollary~\ref{cor_thm_existence_Vfilt}.

\begin{lem} \label{lem_V_filt_M_alpha}
With the above notation, if $\cM$ has a $V$-filtration with respect to $t$, then so does $\cM_{\alpha,k}$, and
\begin{equation}\label{eq_lem_V_M_alpha}
V^\beta \cM_{\alpha,k} = \bigoplus_{0\leq \ell \leq k} V^{\beta+ \alpha-1}\big(\cM[1/t]\big) e_{\alpha,\ell}\quad\text{for all}\quad\beta\in\QQ.
\end{equation}
\end{lem}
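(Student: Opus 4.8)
The plan is to verify directly that the right-hand side of \eqref{eq_lem_V_M_alpha} satisfies all the conditions in Definition~\ref{defi_V_filtration}, using the uniqueness of the $V$-filtration (Proposition~\ref{unique_V_filt}) to conclude that it must be the $V$-filtration on $\cM_{\alpha,k}$. Write $W^\beta := \bigoplus_{0\leq\ell\leq k} V^{\beta+\alpha-1}\big(\cM[1/t]\big)e_{\alpha,\ell}$. Since $\cM[1/t]$ has a $V$-filtration with respect to $t$ (as noted just before the lemma, via Example~\ref{eg_case_support_in_H}, Proposition~\ref{properties_V_filtration1}, and Corollary~\ref{cor_thm_existence_Vfilt}), and since $\alpha\in(0,1]$ is fixed, the shift $\beta\mapsto\beta+\alpha-1$ is harmless: exhaustiveness, discreteness, and left-continuity of $W^\bullet$ follow immediately from the corresponding properties of $V^\bullet(\cM[1/t])$, and each $W^\beta$ is a quasi-coherent $\cO_X$-module. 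Condition iii) (local finite generation over $V^0\cD_X$) follows because $W^\beta$ is a finite direct sum of the $V^0\cD_X$-modules $V^{\beta+\alpha-1}(\cM[1/t])e_{\alpha,\ell}$, each of which is locally finitely generated, noting that $V^0\cD_X$ acts on $\cM_{\alpha,k}$ preserving the ``$\ell$-filtration'' $\bigoplus_{\ell\leq j}(\cdots)e_{\alpha,\ell}$ up to lower-order terms — one checks using \eqref{eq_action_on_e} that $t\partial_t$, hence all of $V^0\cD_X$, maps $V^{\beta+\alpha-1}(\cM[1/t])e_{\alpha,\ell}$ into $W^\beta$.

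The main verification is conditions i), ii), and iv). For condition iv), $t\cdot W^\beta = W^{\beta+1}$ for $\beta>0$: this is immediate since $t$ acts as $t\otimes\id$ on $\cM_{\alpha,k}$, and $t\cdot V^{\beta+\alpha-1}(\cM[1/t]) = V^{\beta+\alpha}(\cM[1/t])$ for $\beta+\alpha-1>0$; here I would need the mild point that $\alpha>0$ and $\beta>0$ give $\beta+\alpha-1>-1$, and actually $t$ is invertible on $\cM[1/t]$, so by the last assertion of Corollary~\ref{cor_prop_descr_Var}, $t\cdot V^\gamma(\cM[1/t])=V^{\gamma+1}(\cM[1/t])$ for \emph{all} $\gamma\in\QQ$, which makes iv) (and the multiplication-by-$t$ behavior generally) completely clean. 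For condition i), I must check $V^i\cD_X\cdot W^\beta\subseteq W^{\beta+i}$; by formulas \eqref{eq1_V_D}, \eqref{eq2_V_D} it suffices to handle $\cO_X\langle\partial_{x_1},\dots,\partial_{x_n}\rangle$ (where $x_1,\dots,x_n,t$ are local coordinates), multiplication by $t$, and multiplication by $\partial_t$. The first is straightforward from the Leibniz rule since the $e_{\alpha,\ell}$ are killed by the $\partial_{x_i}$ and $V^{\beta+\alpha-1}(\cM[1/t])$ is preserved; multiplication by $t$ is as above; multiplication by $\partial_t$ requires writing $\partial_t = t^{-1}(t\partial_t)$ on $\cM[1/t]$ (legitimate since $t$ is invertible) and invoking \eqref{eq_action_on_e} together with $t\partial_t\cdot V^\gamma(\cM[1/t])\subseteq V^\gamma(\cM[1/t])$ and $t^{-1}\cdot V^\gamma = V^{\gamma-1}$.

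The one genuinely interesting point is condition ii): $s+\beta$ acts nilpotently on $\gr_W^\beta(\cM_{\alpha,k})$. From the definition of $W^\bullet$ we get $\gr_W^\beta(\cM_{\alpha,k}) = \bigoplus_{0\leq\ell\leq k}\gr_V^{\beta+\alpha-1}(\cM[1/t])e_{\alpha,\ell}$, on which $s+(\beta+\alpha-1)$, i.e. $-(t\partial_t+1)+(\beta+\alpha-1) = -(t\partial_t-\beta-\alpha+2)$, acts as follows: using \eqref{eq_action_on_e}, on $\overline{m}e_{\alpha,\ell}$ the operator $(t\partial_t-1+\alpha)$ sends it to $(t\partial_t\cdot\overline m)e_{\alpha,\ell}+\overline m e_{\alpha,\ell-1}$. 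So on $\gr_W^\beta$, writing everything in terms of $s$, the operator $(s+\beta)$ acts as the sum of a nilpotent operator (the shift $e_{\alpha,\ell}\mapsto e_{\alpha,\ell-1}$, nilpotent of order $k+1$) and the operator induced by $s'+(\beta+\alpha-1)$ on each $\gr_V^{\beta+\alpha-1}(\cM[1/t])$ summand, where $s'$ is the $s$-operator for $\cM[1/t]$ — the bookkeeping of how $t\partial_t$ on $\cM_{\alpha,k}$ relates to $t\partial_t$ on $\cM[1/t]$ plus the shift term is the place to be careful, but it is just \eqref{eq_action_on_e}. The latter operator is nilpotent by condition ii) for the $V$-filtration on $\cM[1/t]$, and a sum of two commuting nilpotents is nilpotent, giving ii). Finally, applying Proposition~\ref{unique_V_filt} identifies $W^\bullet$ with $V^\bullet\cM_{\alpha,k}$, which is \eqref{eq_lem_V_M_alpha}. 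I expect the only real obstacle to be getting the commuting-nilpotents argument in ii) cleanly stated — in particular making precise that the ``diagonal'' part of $s+\beta$ on $\gr_W^\beta$ is exactly $\bigoplus_\ell$ (the nilpotent operator on $\gr_V^{\beta+\alpha-1}(\cM[1/t])$) — but this is a direct computation with \eqref{eq_action_on_e}, not a conceptual difficulty.
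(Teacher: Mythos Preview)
Your proposal is correct and follows exactly the approach of the paper, which simply says ``using the formula \eqref{eq_action_on_e} and the properties of the $V$-filtration on $\cM[1/t]$, it is easy to check that the filtration defined by the right-hand side satisfies the properties of the $V$-filtration.'' You have spelled out precisely that verification. One small imprecision: the summands $V^{\beta+\alpha-1}(\cM[1/t])e_{\alpha,\ell}$ are not individually $V^0\cD_X$-submodules (as you yourself note, $s$ produces a lower-order term), so the finite-generation argument is not literally ``finite direct sum of finitely generated $V^0\cD_X$-modules''; but your observation that $V^0\cD_X$ preserves the filtration by $\bigoplus_{j\leq\ell}$ and acts on the associated graded via an automorphism $s\mapsto s+\alpha-1$ is exactly what makes a descending induction on $\ell$ work, so the conclusion stands.
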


\begin{proof} 
Using the formula (\ref{eq_action_on_e}) and the properties of the $V$-filtration on $\cM[1/t]$, it is easy to check 
 that the filtration defined by the right-hand side of equation (\ref{eq_lem_V_M_alpha})
 satisfies the properties of the 
  $V$-filtration on $\cM_{\alpha,k}$.
\end{proof}

Since $t$ acts bijectively on $\cM_{\alpha,k}$, we know that ${\rm Gr}_V^0(\cM_{\alpha,k}) \overset{t\cdot}\longrightarrow {\rm Gr}_V^1(\cM_{\alpha,k})$ is an isomorphism by Corollary \ref{cor_prop_descr_Var}. Thus, we can identify \[\cH^{-1} (\cM_{\alpha,k}\vert_H) =
 \ker\big({\rm Gr}_V^1(\cM_{\alpha,k}) \overset{t\partial_t\cdot}\longrightarrow {\rm Gr}_V^1(\cM_{\alpha,k})\big).\]

\begin{lem} \label{lem-compare} 
For every $\alpha \in (0,1]$ and every $k$ larger than the index of nilpotency of $(s+\alpha)$ on ${\rm Gr}_V^{\alpha}(\cM)$, 
we have a functorial isomorphism of $\cD_H$-modules
\begin{equation}\label{eq_Gr_1}
\varphi\colon {\rm Gr}_V^\alpha(\cM) \overset{\sim}\longrightarrow \cH^{-1} (\cM_{\alpha,k}\vert_H).
\end{equation}
Moreover, we have $T \circ \varphi = \xi_{\alpha}\varphi \circ \exp\big(2\pi i (s+\alpha)\big)$, where $\xi_{\alpha}={\rm exp}(-2\pi i\alpha)$, and
we have $\varinjlim_k \cH^0 (\cM_{\alpha,k}\vert_H) = 0$.
\end{lem}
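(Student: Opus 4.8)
\textbf{Proof proposal for Lemma~\ref{lem-compare}.}
The plan is to make everything explicit in terms of the $V$-filtration on $\cM[1/t]$, using the formula in Lemma~\ref{lem_V_filt_M_alpha}, and then translate the three assertions—the existence of $\varphi$, its compatibility with $T$, and the vanishing of the colimit of $\cH^0$—into elementary statements about graded pieces. Since $t$ acts bijectively on $\cM_{\alpha,k}$, Corollary~\ref{cor_prop_descr_Var} identifies $\cH^{-1}(\cM_{\alpha,k}\vert_H)$ with $\ker\big({\rm Gr}_V^1(\cM_{\alpha,k})\xrightarrow{t\de_t}{\rm Gr}_V^1(\cM_{\alpha,k})\big)$, as already noted; and by Lemma~\ref{lem_V_filt_M_alpha}, together with Corollary~\ref{cor_prop_descr_Var} applied to $\cM[1/t]$ (on which $t$ acts invertibly), we have ${\rm Gr}_V^1(\cM_{\alpha,k})\simeq \bigoplus_{0\le\ell\le k}{\rm Gr}_V^{\alpha}(\cM[1/t])\,e_{\alpha,\ell}$, and ${\rm Gr}_V^{\alpha}(\cM[1/t])\simeq {\rm Gr}_V^{\alpha}(\cM)$ for $\alpha\in(0,1]$ because the two modules differ only by a summand supported on $H$ which, by Example~\ref{eg_case_support_in_H} and uniqueness of the $V$-filtration, contributes nothing to ${\rm Gr}_V^{\beta}$ for $\beta>0$.

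First I would compute the action of $t\de_t$ on $\bigoplus_\ell {\rm Gr}_V^{\alpha}(\cM)e_{\alpha,\ell}$. From the Leibniz-rule formula (\ref{eq_action_on_e}) we get $(t\de_t - 1 + \alpha)(m e_{\alpha,\ell}) = (t\de_t\cdot m)e_{\alpha,\ell} + m e_{\alpha,\ell-1}$, so writing $s=-\de_t t=-t\de_t-1$ and passing to ${\rm Gr}_V^{\alpha}$, where $s+\alpha$ is nilpotent of index $\le k$, the operator $t\de_t$ acts as the sum of the nilpotent operator $-(s+\alpha)$ (diagonally in $\ell$) and the shift $e_{\alpha,\ell}\mapsto e_{\alpha,\ell-1}$. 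A direct linear-algebra computation with this "Jordan block plus shift" operator on a $(k+1)$-fold sum shows that, once $k$ exceeds the nilpotency index of $(s+\alpha)$, the kernel of $t\de_t$ is one-dimensional over ${\rm Gr}_V^{\alpha}(\cM)$ (identified with the top component after correcting by lower $\log$-terms), yielding the isomorphism $\varphi$; functoriality in $\cM$ is automatic since every map in sight is natural. Surjectivity onto the cokernel statement is the same computation: the cokernel of $t\de_t$ on this module is also concentrated in one component, and one checks it maps isomorphically onto ${\rm Gr}_V^{\alpha}(\cM)$, but as $k\to\infty$ the transition maps $\cM_{\alpha,k}\hookrightarrow\cM_{\alpha,k+1}$ (right multiplication by $z\de_z+\alpha-1$) push this generator into the image of $t\de_t$, so $\varinjlim_k\cH^0(\cM_{\alpha,k}\vert_H)=0$.

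Next I would verify the monodromy compatibility $T\circ\varphi = \xi_\alpha\,\varphi\circ\exp\big(2\pi i(s+\alpha)\big)$. Using the explicit formula for $T$ from Remark~\ref{rmk-moderateMonodromy}, namely $T(m e_{\alpha,\ell}) = \xi_\alpha\sum_{j=0}^\ell\tfrac{(2\pi i)^j}{j!}m e_{\alpha,\ell-j}$, one recognizes the matrix $\xi_\alpha\exp(2\pi i\, \mathrm{Sh})$, where $\mathrm{Sh}$ is the nilpotent shift $e_{\alpha,\ell}\mapsto e_{\alpha,\ell-1}$. Under the identification of the kernel of $t\de_t$ with ${\rm Gr}_V^{\alpha}(\cM)$, the shift $\mathrm{Sh}$ corresponds precisely to the nilpotent operator $-(s+\alpha)$ by the computation above (this is exactly why the correction by lower $\log$-terms enters), so $T$ becomes $\xi_\alpha\exp\big(-2\pi i(s+\alpha)\big)$ composed appropriately; a sign-bookkeeping check with the chosen identification gives the stated $\exp\big(2\pi i(s+\alpha)\big)$.

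The main obstacle I anticipate is the bookkeeping in the linear-algebra step: correctly identifying, inside $\bigoplus_{0\le\ell\le k}{\rm Gr}_V^{\alpha}(\cM)e_{\alpha,\ell}$, the one-dimensional (over ${\rm Gr}_V^{\alpha}(\cM)$) kernel and cokernel of the operator $-(s+\alpha)\oplus\mathrm{Sh}$, and keeping the identification consistent so that the monodromy formula comes out with the right constant and sign. Everything else—invoking Lemma~\ref{lem_V_filt_M_alpha}, Corollary~\ref{cor_prop_descr_Var}, Example~\ref{eg_case_support_in_H}, and the stabilization of the colimit—is formal once this explicit description is in hand.
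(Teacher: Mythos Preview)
Your proposal is correct and follows essentially the same route as the paper: identify $\cH^{-1}$ with $\ker(t\partial_t)$ on ${\rm Gr}_V^1(\cM_{\alpha,k})=\bigoplus_\ell {\rm Gr}_V^\alpha(\cM)e_{\alpha,\ell}$, solve the recursion $m_{\ell+1}=(s+\alpha)m_\ell$ to get $\varphi(m_0)=\sum_\ell (s+\alpha)^\ell m_0\,e_{\alpha,\ell}$, and read off the monodromy from the explicit formula for $T$; for the colimit vanishing the paper gives an explicit preimage in ${\rm Gr}_V^0$, while your equivalent argument (transition maps act as the nilpotent $(s+\alpha)$ on the cokernel) is cleaner.

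One concrete slip to fix: on $\ker(t\partial_t)=\ker\big(-(s+\alpha)+\mathrm{Sh}\big)$ the shift $\mathrm{Sh}$ acts as $+(s+\alpha)$, not $-(s+\alpha)$; with the correct sign, $T=\xi_\alpha\exp(2\pi i\,\mathrm{Sh})$ immediately gives $\xi_\alpha\exp\big(2\pi i(s+\alpha)\big)$ without any further ``sign-bookkeeping.''
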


\begin{proof} 
Since $V^\alpha \cM \to V^{\alpha}\big(\cM[1/t]\big)$ is an isomorphism for $\alpha > 0$ (see Corollary \ref{cor_equiv_exist_Vfil}), 
it follows from Lemma~\ref{lem_V_filt_M_alpha} that
\[ V^1 \cM_{\alpha,k} = \sum_{0\leq \ell \leq k} V^\alpha\cM e_{\alpha,\ell}.\]
Let $m_0,\dots, m_k \in V^\alpha\cM$.
Using (\ref{eq_action_on_e}), we obtain
\[ t\de_t\cdot\sum_{\ell=0}^km_\ell e_{\alpha,\ell} = \big((t\de_t -\alpha + 1)m_k\big)e_{\alpha,k} + \sum_{\ell=0}^{k-1}\big(m_{\ell +1} + (t\de_t - \alpha + 1)m_{\ell}\big)e_{\alpha,\ell}.\]
Since $t\partial_t-\alpha+1=-(s+\alpha)$, we conclude that
$\sum_{\ell=0}^k m_\ell e_{\alpha,\ell} \in \ker(t\de_t)$
if and only if $m_{\ell}=(s+\alpha)^{\ell}m_0$ for $0\leq\ell\leq k$ and $(s+\alpha)^{k+1}m_0=0$. Note that by our assumption on $k$, the latter condition is automatically satisfied.
We thus have an isomorphism
\[ \varphi\colon {\rm Gr}_V^\alpha (\cM)\overset{\sim}\longrightarrow \cH^{-1}(\cM_{\alpha,k}\vert_H),\quad
\varphi(m_0)= \sum_{\ell =0}^k \big((s+\alpha)^\ell m_0\big) e_{\alpha,\ell}.\]
In particular, we see that an element of $\cH^{-1}(\cM_{\alpha,k}\vert_H)$ is determined by the coefficient of $e_{\alpha,0}$.

Now, we wish to describe
$T\colon \cH^{-1}(\cM_{\alpha,k}\vert_H) \to \cH^{-1} (\cM_{\alpha,k}\vert_H)$ via the above isomorphism. 
Let $m_0 \in {\rm Gr}_V^\alpha(\cM)$, and consider 
\[ \varphi(m_0) = \sum_{\ell =0}^k(s+\alpha)^\ell m_0 e_{\alpha,\ell}\in\cH^{-1}(\cM_{\alpha,k}\vert_H).\]
By definition of $T$, we have
\[ T\big(\varphi(m_0)\big) = \sum_{\ell = 0}^k (s+\alpha)^\ell m_0 T(e_{\alpha,\ell}),\]
and, using the formula in Remark \ref{rmk-moderateMonodromy}, the coefficient of $e_{\alpha,0}$ in this expression is
\[\xi_\alpha \cdot\sum_{\ell = 0}^k \tfrac{(s+\alpha)^\ell(2\pi i)^\ell}{\ell!} m_0 = \xi_\alpha \exp\big(2\pi i(s+\alpha)\big)\cdot m_0.\]

In order to prove the vanishing $\varinjlim\cH^0 (\cM_{\alpha,k}\vert_H)$, we use again the fact that the map ${\rm Gr}_V^0(\cM_{\alpha,k})\overset{t\cdot}\longrightarrow {\rm Gr}_V^1(\cM_{\alpha,k})$ is an isomorphism; in particular, it is surjective. We thus have
\[ \cH^0 ( \cM_{\alpha,k}\vert_H) = \tfrac{{\rm Gr}_V^0(\cM_{\alpha,k})}{\de_t \cdot{\rm Gr}_V^1(\cM_{\alpha,k})}={\rm coker}\big({\rm Gr}_V^0(\cM_{\alpha,k}) 
\overset{\partial_tt}\longrightarrow {\rm Gr}_V^0(\cM_{\alpha,k})\big).\]
We use the description of $V^0\cM_{\alpha,k}$ in Lemma~\ref{lem_V_filt_M_alpha}. Note that
for every $k$, every $0\leq\ell\leq k$, and every
$m \in V^{\alpha-1}\big(\cM[1/t]\big)$, it follows from (\ref{eq_action_on_e}) that 
$$(\de_t t)\cdot \sum_{j =0}^d (-1)^{j}\big((t\de_t - \alpha +2)^j m\big) e_{\alpha,\ell+j} = m e_{\alpha,\ell} + \big((s+\alpha-1)^dm\big)e_{\alpha,\ell+d}.$$
We thus conclude that 
if $d$ is larger than the nilpotency index of $(s+\alpha-1)$ on ${\rm Gr}_V^{\alpha-1}\big(\cM[1/t]\big)$, then 
the image of $me_{\alpha,\ell}$ in 
$\varinjlim_{k} \cH^0(\cM_{\alpha,k}\vert_H)$ is $0$. This completes the proof of the lemma.
\end{proof}

In order to similarly describe ${\rm Gr}_V^0(\cM)$, we first note that $\cN_{1,0}\simeq\cO_{{\mathbf A}^1}[1/z]$, and thus for every $k\geq 0$,
we have an injective morphism of $\cD_X$-modules $\cM\hookrightarrow \cM_{1,k}$ given by $m\mapsto me_{1,0}$. We get induced morphisms
$\eta_1\colon
 {\rm Gr}_V^1(\cM) \to {\rm Gr}_V^1(\cM_{1,k})$ and $\eta_0\colon {\rm Gr}_V^0(\cM) \to {\rm Gr}_V^0(\cM_{1,k})$, which are injective by Proposition~\ref{properties_V_filtration1}.
  We consider the complex $(\cM \to \cM_{1,k})\vert_H$, which we
 define as the cone of the morphism $\cM\vert_H \to \cM_{1,k}\vert_H$. Therefore this is the total complex of the double complex
\[ \begin{tikzcd} {\rm Gr}_V^1(\cM) \ar[r,"\de_t"] \ar[d,"\eta_1"] & {\rm Gr}_V^0 \cM \ar[d,"\eta_0"] \\ {\rm Gr}_V^1(\cM_{1,k}) \ar[r,"\de_t"] & {\rm Gr}_V^0(\cM_{1,k}),
 \end{tikzcd} \]
that is, it is the following complex, placed in degrees $-2$, $-1$, $0$:
\[  {\rm Gr}_V^1(\cM) \xrightarrow[]{\de_t \oplus \eta_1} {\rm Gr}_V^0(\cM) \oplus {\rm Gr}_V^1(\cM_{1,k}) \xrightarrow[]{-\eta_0+\de_t}  {\rm Gr}_V^0(\cM_{1,k}). \]

The automorphism $T$ on $\cM_{1,k}$ induces the identity on $\cM$, hence the same holds on 
${\rm Gr}_V^1(\cM)$ and ${\rm Gr}_V^0(\cM)$. Therefore we can factor the map $T- {\rm Id}\colon (\cM \to \cM_{1,k})\vert_H \to (\cM \to \cM_{1,k})\vert_H$ as follows
\[ \begin{tikzcd} {\rm Gr}_V^1(\cM) \ar[d] \ar[r, "\de_t \oplus \eta_1"] & {\rm Gr}_V^0(\cM) \oplus {\rm Gr}_V^1(\cM_{1,k}) \ar[r,"-\eta_0+\de_t"] \ar[d,"T-{\rm Id}"]&{\rm Gr}_V^0(\cM_{1,k}) \ar[d,"T-{\rm Id}"] \\ 
0  \ar[r] \ar[d]&{\rm Gr}_V^1(\cM_{1,k}) \ar[r,"\de_t"] \ar[d,"0\oplus {\rm Id}"] &{\rm Gr}_V^0(\cM_{1,k})\ar[d,"{\rm Id}"] \\ 
{\rm Gr}_V^1(\cM) \ar[r, "\de_t \oplus \eta_1"] &{\rm Gr}_V^0(\cM)\oplus {\rm Gr}_V^1(\cM) \ar[r,"- \eta_0+\de_t"] & {\rm Gr}_V^0(\cM_{1,k}).\end{tikzcd}\]
We denote the top morphism of complexes in the diagram by
\begin{equation} \label{eq-formulaVar} 
\widetilde{\rm var}\colon (\cM \to \cM_{1,k})\vert_H \to \cM_{1,k}\vert_H ,
\end{equation}
and the lower one by
\begin{equation} \label{eq-formulaCan} 
\widetilde{\rm can}\colon \cM_{1,k}\vert_H \to (\cM\to \cM_{1,k})\vert_H,
\end{equation}
so that $T - {\rm Id} = \widetilde{\rm can} \circ \widetilde{\rm var}$.

\begin{lem} \label{lem-compare0} 
With the above notation, the following hold: 
\begin{enumerate}
\item[i)] 
We have $\cH^j \big((\cM \to \cM_{1,k})\vert_H\big)=0$ for $j\not\in\{-1,0\}$ and $\varinjlim_k \cH^0\big((\cM \to \cM_{1,k})\vert_H\big) = 0$.
\item[ii)] If $k$ is larger than the index of nilpotency of $(s+1)$ on ${\rm Gr}_V^1(\cM)$, then we have a functorial isomorphism
\begin{equation}\label{eq_Gr_0}
{\rm Gr}_V^0(\cM) \overset{\sim}\longrightarrow \cH^{-1}\big((\cM \to \cM_{1,k})\vert_H\big).
\end{equation}
\item[iii)] Via the isomorphisms (\ref{eq_Gr_1}) and (\ref{eq_Gr_0}), the map
${\rm Gr}_V^1(\cM)\overset{-\partial_t}\longrightarrow {\rm Gr}_V^0(\cM)$ corresponds to the natural morphism $\cH^{-1}(\widetilde{\rm can})$ 
and the map $a\mapsto \tfrac{\exp(2\pi i(s+1))-1}{s+1} ta$ corresponds to the
morphism $\cH^{-1}(\widetilde{\rm var})$.
\end{enumerate}
\end{lem}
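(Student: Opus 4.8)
The plan is to run everything off the distinguished triangle
$$\cM\vert_H \xrightarrow{\ u_k\ } \cM_{1,k}\vert_H \longrightarrow (\cM \to \cM_{1,k})\vert_H \xrightarrow{\ +1\ }$$
that defines the cone, together with the two-term presentations recalled just before the statement: $\cM\vert_H$ is $[\,{\rm Gr}_V^1(\cM)\xrightarrow{\partial_t}{\rm Gr}_V^0(\cM)\,]$ in degrees $-1,0$, and, since $\cM_{1,k}$ is again regular holonomic and carries a $V$-filtration (Lemma~\ref{lem_V_filt_M_alpha}), $\cM_{1,k}\vert_H$ is likewise $[\,{\rm Gr}_V^1(\cM_{1,k})\xrightarrow{\partial_t}{\rm Gr}_V^0(\cM_{1,k})\,]$. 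Both complexes thus have cohomology only in degrees $-1,0$. For (i): the map $u_k$ is induced on $\cH^{-1}$ by the inclusion $\eta_1\colon {\rm Gr}_V^1(\cM)\hookrightarrow {\rm Gr}_V^1(\cM_{1,k})$, $m\mapsto m\,e_{1,0}$, so $\cH^{-1}(u_k)$ is injective and the long exact sequence of the triangle gives $\cH^j\big((\cM\to\cM_{1,k})\vert_H\big)=0$ for $j\notin\{-1,0\}$. The same sequence exhibits $\cH^0\big((\cM\to\cM_{1,k})\vert_H\big)$ as a quotient of $\cH^0(\cM_{1,k}\vert_H)$ (because $\cH^1(\cM\vert_H)=0$); since filtered colimits are exact, $\cH^0(\cM\vert_H)$ does not depend on $k$, and $\varinjlim_k\cH^0(\cM_{1,k}\vert_H)=0$ by Lemma~\ref{lem-compare} with $\alpha=1$, we conclude $\varinjlim_k\cH^0\big((\cM\to\cM_{1,k})\vert_H\big)=0$.

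For (ii) I would analyze the rest of the long exact sequence, assuming $k$ exceeds the nilpotency index of $(s+1)$ on ${\rm Gr}_V^1(\cM)$. Via the isomorphism $\varphi\colon {\rm Gr}_V^1(\cM)\overset{\sim}\longrightarrow\cH^{-1}(\cM_{1,k}\vert_H)$ of Lemma~\ref{lem-compare} ($\alpha=1$), whose inverse reads off the $e_{1,0}$-coefficient, $\cH^{-1}(u_k)$ becomes the tautological inclusion $\ker({\rm can})\hookrightarrow{\rm Gr}_V^1(\cM)$, so its cokernel is canonically $\mathrm{im}({\rm can})\subseteq{\rm Gr}_V^0(\cM)$. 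The key point is that $\cH^0(u_k)\colon\cH^0(\cM\vert_H)={\rm coker}({\rm can})\to\cH^0(\cM_{1,k}\vert_H)$ vanishes: lifting $a\in{\rm Gr}_V^0(\cM)$ to $V^0\cM\subseteq V^0(\cM[1/t])$, the telescoping identity from the proof of Lemma~\ref{lem-compare} gives $[a\,e_{1,0}]=[(s^d a)\,e_{1,d}]$ in $\cH^0(\cM_{1,k}\vert_H)$, and $(s^d a)\,e_{1,d}=0$ as soon as $d$ is at least the nilpotency index of $s$ on ${\rm Gr}_V^0(\cM[1/t])$, which equals the nilpotency index of $(s+1)$ on ${\rm Gr}_V^1(\cM)$ (here one uses that $t$ acts invertibly on $\cM[1/t]$, so by Corollary~\ref{cor_prop_descr_Var} the map ${\rm Var}\colon{\rm Gr}_V^0(\cM[1/t])\to{\rm Gr}_V^1(\cM[1/t])={\rm Gr}_V^1(\cM)$ is an isomorphism conjugating $s$ to $s+1$). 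With $\cH^0(u_k)=0$, the long exact sequence yields a short exact sequence
$$0\to\mathrm{im}({\rm can})\to\cH^{-1}\big((\cM\to\cM_{1,k})\vert_H\big)\to{\rm coker}({\rm can})\to 0,$$
of the same shape as $0\to\mathrm{im}({\rm can})\to{\rm Gr}_V^0(\cM)\to{\rm coker}({\rm can})\to 0$.

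To pin down the isomorphism and its functoriality I would represent $(\cM\to\cM_{1,k})\vert_H$ by the explicit three-term complex $D^\bullet\colon{\rm Gr}_V^1(\cM)\xrightarrow{\partial_t\oplus\eta_1}{\rm Gr}_V^0(\cM)\oplus{\rm Gr}_V^1(\cM_{1,k})\xrightarrow{-\eta_0+\partial_t}{\rm Gr}_V^0(\cM_{1,k})$ and, using the same identity, assign to each $a\in{\rm Gr}_V^0(\cM)$ the cocycle $(a,b_a)$, with $b_a$ essentially $\sum_{i\geq 1}(s+1)^{i-1}\,t\tilde a\,e_{1,i}\in{\rm Gr}_V^1(\cM_{1,k})$ for a lift $\tilde a$ of $a$, so that $\partial_t b_a=\eta_0(a)$. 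One checks this is independent of the chosen lift of $a$ and of the truncation (the ambiguities lie in $V^{>1}\cM_{1,k}$), is $\cD_H$-linear and functorial, and that the resulting class $\Phi(a)$ defines a morphism of the two short exact sequences above that is the identity on ${\rm coker}({\rm can})$ and induces an isomorphism on the subobjects (after the natural identifications it is $-{\rm id}$ on $\mathrm{im}({\rm can})$, an automorphism because $s$ is nilpotent on ${\rm Gr}_V^0(\cM)$); the five lemma then gives the isomorphism of (ii).

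Finally, for (iii) one transports the maps $\widetilde{\rm can}$ and $\widetilde{\rm var}$ of the $3\times 3$ diagram through $\Phi$ and $\varphi$. For $\widetilde{\rm can}$: on $\cH^{-1}$ it is $[b]\mapsto[(0,b)]$, and writing $b=\varphi(m_0)$, subtracting the coboundary of $m_0$ identifies $[(0,\varphi(m_0))]$ with $\Phi(-\partial_t m_0)$, so $\cH^{-1}(\widetilde{\rm can})$ corresponds to $-\partial_t\colon{\rm Gr}_V^1(\cM)\to{\rm Gr}_V^0(\cM)$. For $\widetilde{\rm var}$: it is induced by $T-{\rm Id}$ on $\cM_{1,k}$, so one chases a representative $(a,b_a)$ through $T-{\rm Id}$ using the monodromy formula of Remark~\ref{rmk-moderateMonodromy} and \eqref{eq_action_on_e}, and simplifies the resulting series in $(s+1)$ to obtain $a\mapsto\tfrac{\exp(2\pi i(s+1))-1}{s+1}\,t a$ (the consistency check being that composing with $-\partial_t$ recovers $\exp(2\pi i s)-1=T-{\rm Id}$ on ${\rm Gr}_V^0(\cM)$, as demanded by $T-{\rm Id}=\widetilde{\rm can}\circ\widetilde{\rm var}$). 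The main obstacle is exactly this last step: carrying out the combinatorics of the monodromy action against the power-series expressions while keeping the sign and shift conventions ($s=-\partial_t t$, $\partial_t P(s)=P(s-1)\partial_t$, and the shifting action $\partial_t e_{1,\ell}=t^{-1}e_{1,\ell-1}$) consistent, so that the two formulas come out exactly as stated; the rest is a routine diagram chase, for which (including functoriality) one follows \cite{MaisonobeMebkhout}.
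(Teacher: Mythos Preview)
Your argument is correct, and the explicit map $\Phi$ you construct is exactly the paper's $\Psi$ (since $s+1=-t\partial_t$). The treatment of part~(i) and of $\widetilde{\rm can}$ in~(iii) matches the paper almost verbatim, and your sketch for $\widetilde{\rm var}$ follows the same computation the paper carries out (reading off the $e_{1,0}$-coefficient of $(T-{\rm Id})b_a$ via Remark~\ref{rmk-moderateMonodromy}).

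The genuine difference is in part~(ii). The paper proves that $\Psi$ is an isomorphism by a direct hands-on computation: it first parametrizes the full kernel $\ker(-\eta_0+\partial_t)$ by ${\rm Gr}_V^0(\cM)\oplus{\rm Gr}_V^1(\cM)$ via a map $\overline{\Psi}$, then checks the relation $\overline{\Psi}(a-\partial_t m_0,0)\equiv\overline{\Psi}(a,m_0)$ modulo coboundaries to get surjectivity of $\Psi$, and observes injectivity from the fact that the $e_{1,0}$-component of $b_a$ vanishes. In particular the paper never needs to know that $\cH^0(u_k)=0$ for the given $k$. Your route instead establishes this extra vanishing (using the telescoping identity from the proof of Lemma~\ref{lem-compare} together with the isomorphism $t\colon{\rm Gr}_V^0(\cM[1/t])\to{\rm Gr}_V^1(\cM)$ intertwining $s$ with $s+1$), feeds it into the long exact sequence of the defining triangle to obtain a short exact sequence with ends $\mathrm{im}({\rm can})$ and $\mathrm{coker}({\rm can})$, and then applies the five lemma to the map of extensions induced by $\Phi$. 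Your observation that $\Phi$ restricts to $-{\rm id}$ on the subobject is correct (and in fact equivalent to the paper's identity $\widetilde{\rm can}(\varphi(m_0))=\Psi(-\partial_t m_0)$); the parenthetical ``because $s$ is nilpotent'' is unnecessary since $-{\rm id}$ is an automorphism in any event. What your approach buys is a cleaner structural picture; what it costs is the extra verification that $\cH^0(u_k)=0$ already at the finite level $k$, which the paper sidesteps entirely.
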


\begin{proof}
We have already observed that $\eta_1$ is injective, hence $\cH^{-2}\big((\cM \to \cM_{1,k})\vert_H\big) = 0$, which gives the first assertion in i).
The second assertion in i) follows from the exact sequence
$$\cH^0(\cM_{1,k}\vert_H)\to \cH^0\big((\cM \to \cM_{1,k})\vert_H\big) \to \cH^1(\cM\vert_H)=0$$
and the fact that $\varinjlim_k \cH^0(\cM_{1,k}\vert_H) = 0$ by Lemma~\ref{lem-compare}.

We next prove the assertion in ii), using the description of the $V$-filtration on $\cM_{1,k}$ in Lemma~\ref{lem_V_filt_M_alpha}.
Note that by (\ref{eq_action_on_e}), we have
\[ \de_t \cdot \sum_{\ell =0}^k m_\ell e_{1,\ell} = \sum_{\ell = 0}^k (\de_t m_\ell)e_{1,\ell} +\sum_{\ell=0}^{k-1}\big(\tfrac{1}{t} m_{\ell+1}\big) e_{1,\ell}.\]
Given $m_0,\dots, m_k \in V^1\big(\cM[1/t]\big)$ and $a\in V^0\cM$, the image of 
$\big(a,\sum_{\ell} m_\ell e_{1,\ell}\big)$ in ${\rm Gr}_V^0(\cM) \oplus {\rm Gr}_V^1(\cM_{1,k})$
lies in the kernel of $-\eta_0+\de_t$ if and only if the following conditions hold:
\[ -a+ \de_t m_0 + \tfrac{1}{t}m_1= 0 \in V^{>0}\big(\cM[1/t]\big), \]
\[ \de_t m_1 + \tfrac{1}{t}m_2= 0 \in V^{>0}\big(\cM[1/t]\big),\ldots,
\de_t m_k = 0 \in V^{>0}\big(\cM[1/t]\big).\]

Since ${\rm Gr}_V^0\big(\cM[1/t]\big) \overset{t\cdot}\longrightarrow {\rm Gr}_V^1\big(\cM[1/t]\big)$ is injective, the above conditions are equivalent to
\[ m_1 = ta - t\de_t m_0 \mod V^{>1}\big(\cM[1/t]\big) = V^{>1}\cM, \]
\[ m_2 = - t\de_t m_1 \mod V^{>1}\cM,\ldots,t\de_t m_k = 0 \mod V^{>1}\cM.\]
Suppose now that $(s+1)^k\cdot {\rm Gr}_V^1(\cM) = (t\de_t)^k {\cdot \rm Gr}_V^1(\cM) = 0$. We thus have an isomorphism of $\cD_H$-modules
\[ {\rm Gr}_V^0(\cM) \oplus {\rm Gr}_V^1(\cM) \xrightarrow[]{\overline{\Psi}} \ker(-\eta_0+\de_t)\]
\[ (a,m_0) \mapsto \left(a,\sum_{\ell=1}^k \big((-t\de_t)^{\ell-1}ta\big) e_{1,\ell} + \sum_{\ell=0}^k \big((-t\de_t)^\ell m_0\big)e_{1,\ell}\right).\]

Moreover, we see that
\[ \overline{\Psi}(a-\de_t m_0,0) - \overline{\Psi}(a,m_0) = \big(-\de_t m_0,-\eta_1(m_0)\big) \in {\rm Im}(\de_t,\eta_1),\]
hence the map
\[ \Psi\colon {\rm Gr}_V^0(\cM) \to \cH^{-1}\big((\cM\to \cM_{1,k})\vert_H\big), \quad a \mapsto \overline{\Psi}(a,0)\]
is surjective. It is easily seen to be also injective, and thus it is an isomorphism.

We next turn to the assertions in iii).
Let $\varphi\colon {\rm Gr}_V^1(\cM) \to \cH^{-1}(\cM_{1,k}\vert_H)$ be the isomorphism in Lemma \ref{lem-compare}. Recall from the proof of that lemma that if
 $m_0 \in {\rm Gr}_V^1(\cM)$, then
\[ \varphi(m_0) = \sum_{\ell =0}^k (-t\de_t)^\ell m_0 e_{1,\ell}.\]
Since we have
\[ \Psi(-\de_t m_0) = \left(-\de_t m_0, \sum_{\ell =1}^k \big((-t\de_t)^{\ell}m_0\big)e_{1,\ell}\right) =\left(0, \sum_{\ell =0}^k \big((-t\de_t)^{\ell} m_0\big)e_{1,\ell}\right) \mod {\rm Im}(\de_t,\eta_1),\]
we see that $\widetilde{\rm can}\big(\varphi(m_0)\big) = \Psi(-\de_t m_0)$.

Finally, let $a\in {\rm Gr}_V^0 \cM$. We have $\Psi(a) = \big(a,\sum_{\ell =1}^k ((-t\de_t)^{\ell-1} ta) e_{1,\ell}\big)$ and $\widetilde{\rm var}$ maps this to the class of
\[ (T-{\rm Id})\left(\sum_{\ell =1}^k (-t\de_t)^{\ell-1}ta e_{1,\ell}\right) \in \cH^{-1} (\cM_{1,k}\vert_H).\]
We need to show that the coefficient of 
$e_{1,0}$ is $\sum_{\ell=1}^k\tfrac{(2\pi i)^{\ell}}{\ell !}(-t\partial_t)^{\ell-1}ta$. This follows from the fact that by the formula in 
Definition~\ref{rmk-moderateMonodromy} and the fact that $\xi_1 = 1$, we have
\[ (T-{\rm Id})e_{1,\ell} = \tfrac{(2\pi i)^\ell}{\ell !} e_{1,0} + \varepsilon,\]
where $\varepsilon$ is a linear combination of $e_{1,j}$ for $1\leq j \leq \ell$.
This completes the proof of the lemma.
\end{proof}

\begin{cor} \label{cor-regholo} 
If $\cM$ is a regular holonomic $\cD_X$-module that has a $V$-filtration with respect to $t$, then the $\cD_H$-module ${\rm Gr}_V^\alpha(\cM)$ is regular holonomic
for every $\alpha\in\QQ$. 
\end{cor}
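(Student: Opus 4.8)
The plan is to deduce this from the two comparison Lemmas~\ref{lem-compare} and~\ref{lem-compare0}, which identify each graded piece ${\rm Gr}_V^{\alpha}(\cM)$ with a cohomology module of a restriction to $H$ of a $\cD_X$-module built out of $\cM$, together with the stability of $\cD^b_{rh}$ under the operations used in that construction. First I would reduce to the indices $\alpha\in\{0\}\cup(0,1]$: by Proposition~\ref{rmk1_Vfiltration}, multiplication by $t$ gives an isomorphism of $\cD_H$-modules ${\rm Gr}_V^{\beta}(\cM)\xrightarrow{\sim}{\rm Gr}_V^{\beta+1}(\cM)$ whenever $\beta\neq 0$; iterating this (decreasing the index while it exceeds $1$ and increasing it while it is negative, so that the source index is never $0$), every ${\rm Gr}_V^{\alpha}(\cM)$ with $\alpha\neq 0$ is seen to be isomorphic either to ${\rm Gr}_V^{\alpha'}(\cM)$ for some $\alpha'\in(0,1]$ or to ${\rm Gr}_V^{0}(\cM)$. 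Hence it suffices to treat $\alpha\in(0,1]$ and $\alpha=0$.

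Next I would check that the $\cD_X$-modules entering the comparison lemmas are regular holonomic. Since $\cN_{\alpha,k}$ is regular holonomic on $\mathbf A^1$ and $t$ is flat, $t^*(\cN_{\alpha,k})=\mathbf Lt^*(\cN_{\alpha,k})$ is a regular holonomic $\cD_X$-module (using that $\mathbf Lt^*$ preserves $\cD^b_{rh}$, recorded in Section~\ref{section_RH}). As $t^*(\cN_{\alpha,k})$ is $\cO_X$-flat, the tensor product $\cM_{\alpha,k}=\cM\otimes_{\cO_X}t^*(\cN_{\alpha,k})$ equals $\cM\otimes^{\mathbf L}_{\cO_X}t^*(\cN_{\alpha,k})$, which lies in $\cD^b_{rh}(\cD_X)$ because this subcategory is stable under $\otimes^{\mathbf L}_{\cO_X}$ (realizing it as $\Delta^{\dagger}(-\boxtimes-)$ for the diagonal $\Delta\colon X\to X\times X$, up to shift); hence $\cM_{\alpha,k}$ is a regular holonomic $\cD_X$-module. (Alternatively, one can filter $\cM_{\alpha,k}$ by the $\cD_X$-submodules $\bigoplus_{\ell\leq j}\cM[1/t]e_{\alpha,\ell}$, whose subquotients are $\cM[1/t]$ twisted by a rank-one regular holonomic module, and invoke closure of regular holonomic modules under extensions.) The same reasoning applies to $\cM_{1,k}$, and therefore to the cone of the morphism $\cM\to\cM_{1,k}$.

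Then I would observe that the ``restriction to $H$'' functor preserves regular holonomicity: for a holonomic $\cD_X$-module $\cP$ one has $\cP\vert_H=\big(\mathbf Li^*(\cP^*)[-1]\big)^*=\big(i^{\dagger}(\cP^*)\big)^*$ (since $\dim X-\dim H=1$), and this extends to a triangulated functor on $\cD^b_{h}(\cD_X)$; as $(-)^*$ and $i^{\dagger}$ both preserve $\cD^b_{rh}$, so does this functor, and it commutes with cones. Consequently $\cM_{\alpha,k}\vert_H\in\cD^b_{rh}(\cD_H)$, and $(\cM\to\cM_{1,k})\vert_H$ — being the cone of $\cM\vert_H\to\cM_{1,k}\vert_H$, that is, the restriction of the cone of $\cM\to\cM_{1,k}$ — also lies in $\cD^b_{rh}(\cD_H)$; hence all their cohomology modules are regular holonomic $\cD_H$-modules. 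Applying Lemma~\ref{lem-compare} with $k$ large gives ${\rm Gr}_V^{\alpha}(\cM)\simeq\cH^{-1}(\cM_{\alpha,k}\vert_H)$ for $\alpha\in(0,1]$, and applying Lemma~\ref{lem-compare0}(ii) with $k$ large gives ${\rm Gr}_V^0(\cM)\simeq\cH^{-1}\big((\cM\to\cM_{1,k})\vert_H\big)$; both are therefore regular holonomic, and combined with the first-paragraph reduction this settles all $\alpha\in\QQ$.

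The only delicate point is bookkeeping: verifying that $\cM_{\alpha,k}$ (and the relevant cone) is regular holonomic — for which one either cites stability of $\cD^b_{rh}(\cD_X)$ under $\otimes^{\mathbf L}_{\cO_X}$ or spells out the $\cD_X$-submodule filtration above — and that the restriction functor $\cP\mapsto\big(\mathbf Li^*(\cP^*)[-1]\big)^*$, being the conjugate of $i^{\dagger}$ by duality, preserves $\cD^b_{rh}$ and commutes with cones. Given the comparison lemmas and the stability properties of $\cD^b_{rh}$ established earlier, no further input is needed.
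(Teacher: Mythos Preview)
Your proposal is correct and follows essentially the same route as the paper: reduce to $\alpha\in[0,1]$ via Proposition~\ref{rmk1_Vfiltration}, observe that $\cM_{\alpha,k}$ is regular holonomic because $\cD^b_{rh}$ is stable under duality, pull-back, and hence tensor product, note that the restriction functor $(-)\vert_H$ preserves $\cD^b_{rh}$ for the same reason, and then invoke Lemmas~\ref{lem-compare} and~\ref{lem-compare0}. The only difference is that you spell out in more detail the justifications (the diagonal embedding for tensor products, the expression of $\vert_H$ as a conjugate of $i^{\dagger}$ by duality) that the paper merely cites from \cite[Chapter~6]{HTT}.
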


\begin{proof} 
By Proposition~\ref{rmk1_Vfiltration}, it is enough to prove the assertion for 
$\alpha \in [0,1]$. It is a basic fact that regular holonomicity is preserved under duality and pull-back (see \cite[Chapter~6]{HTT}). First, this implies that the tensor product 
of regular holonomic $\cD$-modules is regular holonomic, and thus all $\cM_{\alpha,k}$ are regular holonomic. Second, it implies the fact that $\cH^{-1}(\cM_{\alpha,k}\vert_H)$
and $\cH^{-1}\big((\cM\to \cM_{1,k})\vert_H\big)$ are regular holonomic and we conclude that ${\rm Gr}_V^\alpha(\cM)$ is regular holonomic using
 Lemmas~ \ref{lem-compare} and \ref{lem-compare0}.
\end{proof}

\bigskip

For proof of Theorem \ref{thm-comparison}, we need to understand ${\rm DR}^{\rm an}_H\big({\rm Gr}_V^\alpha(\cM)\big)$ for a regular holonomic 
$\cD_X$-module $\cM$ that admits a $V$-filtration with respect to $t$. Note that
by exactness of direct limits, the vanishings and the isomorphisms in Lemma \ref{lem-compare} and Lemma \ref{lem-compare0}
give functorial quasi-isomorphisms 
\[ {\rm Gr}_V^\alpha(\cM) \to \varinjlim_{k} (\cM_{\alpha,k}\vert_H)\simeq \big(\cM \otimes_{\cO} \varinjlim_k t^*(\cN_{\alpha,k})\big)\vert_H 
\quad\text{ for } \quad \alpha \in (0,1],\]
and
\[ {\rm Gr}_V^0(\cM) \to \varinjlim_k \big((\cM \to \cM_{1,k})\vert_H\big) =\big(\cM \to \cM\otimes_{\cO_X} \varinjlim_k t^*(\cN_{\alpha,k})\big)\vert_H,\]
hence we only need to understand the objects on the right hand side.

We note that the set-up for topological nearby cycles can also be used for the sheaf $\cO^{\rm an}_X$. Define
\[ \Psi_t(\cO^{\rm an}_X) : = i^{-1} j_* \chi_* \cO_{\widetilde{U}},\]
where $j\colon U^{\rm an} = \{t\neq 0\} \subseteq X^{\rm an}$ is the open embedding and $\chi\colon \widetilde{U} \to U^{\rm an}$ is induced 
by the universal cover $\exp\colon \C \to \C^*$. By adjunction,  we have an injective morphism $i^{-1}\cO^{\rm an}_X\to \Psi_t(\cO^{\rm an}_X)$, whose cokernel we denote $\Phi_t(\cO^{\rm an}_X)$.
We thus have an exact triangle in the derived category of $i^{-1}\cD^{\rm an}_X$-modules
\[ 0 \to i^{-1}\cO^{\rm an}_X \to \Psi_t(\cO^{\rm an}_X) \xrightarrow[]{\rm can} \Phi_t(\cO^{\rm an}_X) \to 0.\]

We also recall the \emph{solution complex} ${\rm Sol}^{\rm an}_X(\cM):=\mathbf R\cH om_{\cD_{X^{\rm an}}}(\cM^{\rm an}, \cO_{X^{\rm an}})$ associated to $\cM$. This is related to the 
analytic de Rham complex by duality: if  $\cM^*$ is the dual of the regular holonomic $\cD_{X}$-module $\cM$, then we have by \cite[Sect. 4.6]{HTT} a canonical isomorphism
 \begin{equation} \label{eq-DRDuality} 
 {\rm Sol}^{\rm an}_X(\cM) \simeq {\rm DR}^{\rm an}_X(\cM^*).
\end{equation}

This solution functor is more convenient thanks to the following theorem:

\begin{thm} \label{thm-MM213} $\big($\cite[Thm 2.1-3]{MaisonobeMebkhout}$\big)$ 
For every coherent $\cD_X$-module $\cM$, we have a functorial isomorphism between the exact triangle
\begingroup\makeatletter\def\f@size{10}\check@mathfonts
\[ i^{-1}{\rm Sol}^{\rm an}_X(\cM) \to \Psi_t {\rm Sol}_X^{\rm an}(\cM) \xrightarrow[]{\rm can} \Phi_t {\rm Sol}_X^{\rm an}(\cM) \xrightarrow[]{+1}\]
\endgroup
and the exact triangle
\begingroup\makeatletter\def\f@size{10}\check@mathfonts
\[ {\mathbf R}\cH om_{i^{-1}\cD^{\rm an}_X}(i^{-1}\cM^{\rm an},i^{-1}\cO^{\rm an}_X) \to {\mathbf R}\cH om_{i^{-1}\cD^{\rm an}_X}\big(i^{-1}\cM^{\rm an}, \Psi_t(\cO^{\rm an}_X)\big) \to {\mathbf R}\cH om_{i^{-1}\cD^{\rm an}_X}\big(i^{-1}\cM^{\rm an},\Phi_t(\cO^{\rm an}_X)\big) \xrightarrow[]{+1}.\]
\endgroup
\end{thm}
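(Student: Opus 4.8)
The plan is to reduce the statement to a purely sheaf-theoretic identity about $i^{-1}$ and $\mathbf{R}\mathcal{H}om$, and then unwind the definitions of $\Psi_t$ and $\Phi_t$ on the structure sheaf. First I would replace $\mathrm{Sol}_X^{\mathrm{an}}(\cM) = \mathbf{R}\cH om_{\cD_{X^{\mathrm{an}}}}(\cM^{\mathrm{an}},\cO_{X^{\mathrm{an}}})$ and push the functor $i^{-1}$ inside. The point is the standard adjunction/base-change identity
\[
i^{-1}\,\mathbf{R}\cH om_{\cD_{X^{\mathrm{an}}}}(\cM^{\mathrm{an}},\cG)\;\simeq\;\mathbf{R}\cH om_{i^{-1}\cD_X^{\mathrm{an}}}\big(i^{-1}\cM^{\mathrm{an}},i^{-1}\cG\big),
\]
valid because $\cM$ is coherent over $\cD_X$ (so $\cM^{\mathrm{an}}$ admits, locally, a finite free resolution over $\cD_{X^{\mathrm{an}}}$, which $i^{-1}$ preserves) and $i$ is a closed immersion. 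Applying this with $\cG = \cO_{X^{\mathrm{an}}}$ gives the first term of the second triangle, and it also commutes with $\mathbf{R}j_*\chi_*$-type push-forwards; more precisely, by the projection-type formula one gets
\[
i^{-1}\Psi_t\,\mathrm{Sol}_X^{\mathrm{an}}(\cM)\;\simeq\;\mathbf{R}\cH om_{i^{-1}\cD_X^{\mathrm{an}}}\big(i^{-1}\cM^{\mathrm{an}},\,\Psi_t(\cO_X^{\mathrm{an}})\big),
\]
using that $\Psi_t(\cO_X^{\mathrm{an}}) = i^{-1}\mathbf{R}j_*\chi_*\cO_{\widetilde{U}}$ and $\mathrm{Sol}$ commutes with $j_*\chi_*$ on $U^{\mathrm{an}}$ since $\cM|_U$ is $\cD$-coherent and $\chi$ is a covering map.

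The second step is to check that these identifications are compatible with the canonical morphisms, i.e.\ that the square
\[
\begin{tikzcd}
i^{-1}\mathrm{Sol}_X^{\mathrm{an}}(\cM) \ar[r] \ar[d,"\sim"] & \Psi_t\mathrm{Sol}_X^{\mathrm{an}}(\cM) \ar[d,"\sim"]\\
\mathbf{R}\cH om(i^{-1}\cM^{\mathrm{an}},i^{-1}\cO_X^{\mathrm{an}}) \ar[r] & \mathbf{R}\cH om(i^{-1}\cM^{\mathrm{an}},\Psi_t(\cO_X^{\mathrm{an}}))
\end{tikzcd}
\]
commutes, where the bottom horizontal arrow is induced by the adjunction morphism $i^{-1}\cO_X^{\mathrm{an}}\to\Psi_t(\cO_X^{\mathrm{an}})$ and the top one is the ``can'' map from the triangle \eqref{trg_nearby}. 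This is a diagram chase with adjunction units, using naturality of the isomorphism in the first step; once the squares on the $i^{-1}\mathrm{Sol}$-$\Psi_t\mathrm{Sol}$ side commute, the $\Phi_t$-corner is forced by the octahedral axiom/functoriality of cones, so the whole triangle morphism is obtained. Here one should be a little careful about whether $\Phi_t(\cO_X^{\mathrm{an}})$ is literally the cone (it is, by definition, with the shift conventions of the excerpt) and that $\mathbf{R}\cH om(i^{-1}\cM^{\mathrm{an}},-)$ sends the short exact sequence $0\to i^{-1}\cO_X^{\mathrm{an}}\to\Psi_t(\cO_X^{\mathrm{an}})\to\Phi_t(\cO_X^{\mathrm{an}})\to 0$ to an exact triangle, which is automatic.

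I expect the main obstacle to be the first step done \emph{correctly at the derived level with supports}: proving that $i^{-1}$ commutes with $\mathbf{R}\cH om_{\cD_{X^{\mathrm{an}}}}(\cM^{\mathrm{an}},-)$ requires local finite free $\cD$-module resolutions and a spectral-sequence or way-out functor argument, and the interaction with the non-finite-type pushforward $j_*\chi_*\cO_{\widetilde U}$ needs the observation that $\mathrm{Sol}$ of a $\cD$-module commutes with $j_*$ along the open embedding $j$ and with $\chi_*$ along the étale covering $\chi$ (the latter is where one uses that $\chi$ is a local homeomorphism, so $\chi^{-1}\cO_{U^{\mathrm{an}}}\simeq\cO_{\widetilde U}$ as sheaves and $\mathbf{R}\chi_*$ is computed nicely). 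Since the excerpt attributes this precisely to \cite[Thm 2.1-3]{MaisonobeMebkhout}, in the write-up I would state the local-resolution reduction, indicate the adjunction identity and the compatibility of ``can'', and refer to \emph{loc.\ cit.}\ for the detailed verification of the topological pushforward commutations rather than reproduce them.
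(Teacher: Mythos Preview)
Your approach is correct and close to the paper's, but the organization differs in one useful way. The paper does not try to prove each isomorphism directly; instead it first constructs a \emph{morphism} of triangles by taking an injective $\cD_X^{\rm an}$-resolution $\cO_X^{\rm an}\to I^\bullet$, using the adjunction $q^{-1}\dashv q_*$ (with $q=j\circ\chi$) to produce maps among $\cH om_{\cD_X^{\rm an}}(\cM^{\rm an},I^\bullet)$, $q_*q^{-1}\cH om_{\cD_X^{\rm an}}(\cM^{\rm an},I^\bullet)$, and $\cH om_{\cD_X^{\rm an}}(\cM^{\rm an},q_*q^{-1}I^\bullet)$, then applying $i^{-1}$ together with the natural transformation $i^{-1}\mathbf R\cH om_{\cD_X^{\rm an}}(-,-)\to\mathbf R\cH om_{i^{-1}\cD_X^{\rm an}}(i^{-1}(-),i^{-1}(-))$. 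Only after the morphism of triangles is in hand does the paper check it is an isomorphism, and this is done by the standard reduction to $\cM=\cD_X$, where it is obvious. Your plan to resolve $\cM$ locally by finite free $\cD$-modules is the same reduction in disguise, but packaging it as ``construct a morphism, then test on $\cD_X$'' avoids having to argue separately that $\mathrm{Sol}$ commutes with $\mathbf R(j\circ\chi)_*$ and that $\Psi_t(\cO_X^{\rm an})$ (defined with underived pushforward) computes the right derived object --- these become automatic once you only need the case $\cM=\cD_X$.
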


\begin{proof} 
By a standard argument, it suffices to construct a functorial morphism of triangles. 
Indeed, then checking that this is an isomorphism can be done locally, and since $\cM$ is coherent, it is enough to check that
we have an isomorphism when $\cM=\cD_X$, in which case the assertion is clear. Let us put $q=j\circ \chi$. Note that $q$ is \'{e}tale, hence the $\cD$-module push-forward agrees with the $\cO$-module push-forward.

To construct the morphism of triangles, we take a resolution of $\cO^{\rm an}_X$ by injective $\cD_X^{\rm an}$-modules $\cO^{\rm an}_X \to I^\bullet$. Then $q^{-1}I^\bullet$ is a $q^{-1}\cD^{\rm an}_X$-injective resolution of $q^{-1}\cO^{\rm an}_X$ and $q_* q^{-1} I^\bullet$ is a $\cD^{\rm an}_X$-module resolution of $q_* q^{-1}\cO^{\rm an}_X$. We then have functorial morphisms of triangles
\[\begin{tikzcd} \cH om_{\cD^{\rm an}_X}(\cM^{\rm an}, I^\bullet) \ar[r,"\alpha"] \ar[d]&q_*q^{-1}\cH om_{\cD^{\rm an}_X}(\cM^{\rm an}, I^\bullet)  \ar[r] \ar[d]& {\rm Cone}(\alpha) \ar[d] \ar[r,"+1"] &{ }\\
\cH om_{\cD^{\rm an}_X}(\cM^{\rm an}, I^\bullet)  \ar[r,"\alpha' "] \ar[d]&\cH om_{\cD^{\rm an}_X}(q_*q^{-1}\cM^{\rm an},q_*q^{-1} I^\bullet)  \ar[r] \ar[d]& {\rm Cone}(\alpha') \ar[d] \ar[r,"+1"] &{ }\\
\cH om_{\cD^{\rm an}_X}(\cM^{\rm an}, I^\bullet)  \ar[r,"\beta"]&\cH om_{\cD^{\rm an}_X}(\cM^{\rm an},q_*q^{-1} I^\bullet)  \ar[r]& {\rm Cone}(\beta)  \ar[r,"+1"] &{ }, \end{tikzcd}\]
and thus a morphism
\[\begin{tikzcd} 
{\mathbf R}\cH om_{\cD^{\rm an}_X}(\cM^{\rm an}, \cO^{\rm an}_X) \ar[r,"\alpha"] \ar[d]& {\mathbf R}q_*q^{-1} {\mathbf R}\cH om_{\cD^{\rm an}_X}(\cM^{\rm an}, \cO^{\rm an}_X)  \ar[r] \ar[d]& {\rm Cone}(\alpha) \ar[d] \ar[r,"+1"] &{ }\\
{\mathbf R}\cH om_{\cD^{\rm an}_X}(\cM^{\rm an}, \cO^{\rm an}_X)  \ar[r,"\beta"]& {\mathbf R}\cH om_{\cD^{\rm an}_X}(\cM^{\rm an},q_*q^{-1}\cO^{\rm an}_X)  \ar[r]& {\rm Cone}(\beta)  \ar[r,"+1"] &{ }. \end{tikzcd}\]
By applying $i^{-1}$ and using the functorial morphism
\[ i^{-1}{\mathbf R}\cH om_{\cD^{\rm an}_X}(-,-) \to {\mathbf R}\cH om_{i^{-1}\cD^{\rm an}_X}\big(i^{-1}(-),i^{-1}(-)\big),\]
we obtain the desired morphism of triangles.
\end{proof}

We can view the sections of $\Psi_t(\cO^{\rm an}_X)$ as multi-valued holomorphic functions on $U^{\rm an}$. Consider the subsheaf $\Psi_t^{\rm fd}(\cO^{\rm an}_X)$ 
of $\Psi_t(\cO_X^{\rm an})$ consisting of those  functions that are annihilated by some nonzero complex polynomial in the monodromy operator:
these are the functions of \emph{finite determination}. This sheaf contains $i^{-1}\cO^{\rm an}_X$, as monodromy acts trivially on regular functions. 
We may thus consider the quotient denoted by $\Phi_t^{\rm fd}(\cO^{\rm an}_X)$.

From the point of view of complexes of solutions, we may pass to functions of finite determination:
\begin{thm} \label{thm-MM314} $\big($\cite[Thm. 3.1-4]{MaisonobeMebkhout}$\big)$ 
For every coherent $\cD_X$-module $\cM$, the canonical morphisms
\[ {\mathbf R}\cH om_{\cD^{\rm an}_X}\big(\cM^{\rm an}, \Psi_t^{\rm fd}(\cO^{\rm an}_X)\big) \to {\mathbf R}\cH om_{\cD^{\rm an}_X}\big(\cM^{\rm an},\Psi_t(\cO^{\rm an}_X)\big)\quad\text{and}\]
\[ {\mathbf R}\cH om_{\cD^{\rm an}_X}\big(\cM^{\rm an}, \Phi_t^{\rm fd}(\cO^{\rm an}_X)\big) \to {\mathbf R}\cH om_{\cD^{\rm an}_X}\big(\cM^{\rm an},\Phi_t(\cO^{\rm an}_X)\big)\]
are isomorphisms
\end{thm}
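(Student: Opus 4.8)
The plan is to reduce the theorem to the vanishing of a single complex. Since $i^{-1}\cO^{\rm an}_X\subseteq\Psi_t^{\rm fd}(\cO^{\rm an}_X)$, the quotient $\mathcal G:=\Psi_t(\cO^{\rm an}_X)/\Psi_t^{\rm fd}(\cO^{\rm an}_X)$ is canonically identified, via the snake lemma, with $\Phi_t(\cO^{\rm an}_X)/\Phi_t^{\rm fd}(\cO^{\rm an}_X)$. Applying ${\mathbf R}\cHom_{\cD^{\rm an}_X}(\cM^{\rm an},-)$ to the two short exact sequences $0\to\Psi_t^{\rm fd}(\cO^{\rm an}_X)\to\Psi_t(\cO^{\rm an}_X)\to\mathcal G\to 0$ and $0\to\Phi_t^{\rm fd}(\cO^{\rm an}_X)\to\Phi_t(\cO^{\rm an}_X)\to\mathcal G\to 0$, we see that the cone of each of the two morphisms in the statement is ${\mathbf R}\cHom_{\cD^{\rm an}_X}(\cM^{\rm an},\mathcal G)$. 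So it suffices to prove that ${\mathbf R}\cHom_{\cD^{\rm an}_X}(\cM^{\rm an},\mathcal G)=0$, and since this is local on $H^{\rm an}$ we may assume that $\cM^{\rm an}$ admits a finite free resolution over $\cD^{\rm an}_X$.

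First I would pin down the monodromy action. The operator $T$ is $\cD^{\rm an}_X$-linear on $\Psi_t(\cO^{\rm an}_X)$, and $\Psi_t^{\rm fd}(\cO^{\rm an}_X)=\varinjlim_p\ker\big(p(T)\big)$, the filtered colimit being over the nonzero monic $p\in\C[T]$ ordered by divisibility; this submodule is saturated, in the sense that $p(T)u\in\Psi_t^{\rm fd}(\cO^{\rm an}_X)$ forces $u\in\Psi_t^{\rm fd}(\cO^{\rm an}_X)$. Using the classical device of solving the difference equation $Tf-\lambda f=g$ for multivalued holomorphic functions near $H$ — one of the key local computations in \cite{MaisonobeMebkhout} — one checks that $p(T)$ is surjective on $\Psi_t(\cO^{\rm an}_X)$ for every nonzero $p$, and together with saturatedness this shows that $p(T)\colon\mathcal G\to\mathcal G$ is bijective for every nonzero $p\in\C[T]$. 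Applying the functor ${\mathbf R}\cHom_{\cD^{\rm an}_X}(\cM^{\rm an},-)$, it then follows that $p(T)$ acts bijectively on every cohomology sheaf of ${\mathbf R}\cHom_{\cD^{\rm an}_X}(\cM^{\rm an},\mathcal G)$; in particular, no nonzero local section of such a sheaf can be annihilated by a nonzero polynomial in $T$.

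Finally, I would prove that these cohomology sheaves are nonetheless \emph{of finite determination} — every local section is killed by some nonzero polynomial in $T$ — which together with the previous step forces ${\mathbf R}\cHom_{\cD^{\rm an}_X}(\cM^{\rm an},\mathcal G)=0$. Here one uses the long exact sequence attached to $0\to\Psi_t^{\rm fd}(\cO^{\rm an}_X)\to\Psi_t(\cO^{\rm an}_X)\to\mathcal G\to 0$. By Theorem~\ref{thm-MM213}, ${\mathbf R}\cHom_{\cD^{\rm an}_X}(\cM^{\rm an},\Psi_t(\cO^{\rm an}_X))\simeq\Psi_t\big({\rm Sol}^{\rm an}_X(\cM)\big)$, whose cohomology sheaves carry a $T$-action of finite determination, as the monodromy on a complex with constructible cohomology is always of finite determination. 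On the other side, ${\mathbf R}\cHom_{\cD^{\rm an}_X}(\cM^{\rm an},-)$ commutes with the filtered colimit $\Psi_t^{\rm fd}(\cO^{\rm an}_X)=\varinjlim_p\ker\big(p(T)\big)$ because $\cM^{\rm an}$ is locally finitely presented and $\cD^{\rm an}_X$ has finite homological dimension, so ${\mathbf R}\cHom_{\cD^{\rm an}_X}(\cM^{\rm an},\Psi_t^{\rm fd}(\cO^{\rm an}_X))$ likewise has finite-determination cohomology sheaves; the long exact sequence then transports this property to ${\mathbf R}\cHom_{\cD^{\rm an}_X}(\cM^{\rm an},\mathcal G)$, completing the argument. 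I expect this last step to be the main obstacle: controlling the finite-determination part of $\Psi_t^{\rm fd}(\cO^{\rm an}_X)$ and its interaction with the eigenvalue decomposition of the solution complex is the technical heart of \cite[Thm.~3.1-4]{MaisonobeMebkhout}, and I would import that analysis rather than reprove it here.
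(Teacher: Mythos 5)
The paper itself offers no proof of this statement: it is imported from \cite[Thm.~3.1-4]{MaisonobeMebkhout} and used as a black box, so there is no argument in the text to measure yours against, and I assess the sketch on its own terms. Your reduction to the vanishing of ${\mathbf R}\cHom_{\cD_X^{\rm an}}(\cM^{\rm an},\cG)$, where $\cG=\Psi_t(\cO_X^{\rm an})/\Psi_t^{\rm fd}(\cO_X^{\rm an})\simeq \Phi_t(\cO_X^{\rm an})/\Phi_t^{\rm fd}(\cO_X^{\rm an})$, is correct and tidy, but it cannot succeed at the stated level of generality, and this exposes the genuine gap. For $\cM=\cD_X$ that complex is just $\cG$, which is nonzero: a germ such as $\exp\big((\log t)^2\big)$ lies in $\Psi_t(\cO_X^{\rm an})$, and since $T^k$ multiplies it by $t^{4\pi i k}e^{-4\pi^2k^2}$ and distinct complex powers $t^{4\pi ik}$ are linearly independent, no nonzero polynomial in $T$ annihilates it. So the vanishing you aim for (and the theorem itself, read literally for arbitrary coherent $\cM$) fails; the needed finiteness has to come from somewhere, and in your sketch it enters only through the phrase ``the monodromy on a complex with constructible cohomology is always of finite determination''---but constructibility of ${\rm Sol}_X^{\rm an}(\cM)$ is Kashiwara's constructibility theorem and requires $\cM$ holonomic, not merely coherent. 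You should impose holonomicity explicitly (this is how the result is applied in the paper, namely to $(\cM^*)^{\rm an}$ with $\cM$ regular holonomic) and note that the ``coherent'' in the statement must be read accordingly.

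Even granting holonomicity, the two pivots of your argument are imported rather than proved: (i) the surjectivity of $p(T)$ on $\Psi_t(\cO_X^{\rm an})$ for every nonzero $p$, i.e.\ the solvability of the difference equation $Tf-\lambda f=g$ for multivalued holomorphic functions, uniformly in a neighborhood of $H$, which is a substantive analytic lemma and not an off-the-shelf ``classical device''; and (ii) on the other side, the commutation of ${\mathbf R}\cHom_{\cD_X^{\rm an}}(\cM^{\rm an},-)$ with the filtered colimit $\Psi_t^{\rm fd}(\cO_X^{\rm an})=\varinjlim_p\ker p(T)$ together with the finite determination of local sections of the cohomology of the nearby-cycle solution complex. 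These are precisely the technical heart of the theorem you are citing, and you acknowledge importing them. So the proposal is a sensible outline of how such a proof can be organized (saturation of $\Psi_t^{\rm fd}$ and bijectivity of $p(T)$ on the quotient, played against finite determination of holonomic solutions), but as it stands it reduces the quoted theorem to other unproved assertions from the same source rather than proving it, and in one place (constructibility for coherent $\cM$) it invokes a fact that is false in the generality claimed.
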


By \cite[Thm. 3.1-2]{MaisonobeMebkhout}, a multi-valued holomorphic function is of finite determination if and only if it
can be written as a finite sum
\[\sum_{\alpha,j} c_{\alpha,j} t^\alpha \log(t)^j,\] where $\alpha \in \C$, $j \in {\mathbf Z}_{\geq 0}$, and each $c_{\alpha,j}$ is a uni-valued function;
moreover, this expression is unique if we require the classes in ${\mathbf C}/{\mathbf Z}$ of the $\alpha$ with $c_{\alpha,j}\neq 0$ to be pairwise distinct.
For any $\alpha\in {\mathbf C}$, we consider the subsheaf $\Psi_{t,\alpha}(\cO^{\rm an}_X)$ of $\Psi_{t}^{\rm fd}(\cO_X^{\rm an})$ consisting of those sections of the form 
$\sum_{j\geq 0} c_j t^{1-\alpha} \log(t)^j$ for some fixed $\alpha \in \mathbf C$, with the $c_j$ uni-valued.
In what follows we fix a section $\sigma\colon \mathbf C/\mathbf Z \to \mathbf C$ of the canonical projection such that $\sigma(0)=1$, so
we get a decomposition
\begin{equation} \label{eq-fdDecomp} 
\Psi_t^{\rm fd}(\cO^{\rm an}_X) = \bigoplus_{\alpha \in {\rm Im}(\sigma)} \Psi_{t,\alpha}(\cO^{\rm an}_X).
\end{equation}

A section of $\Psi_{t,\alpha}(\cO^{\rm an}_X)$, as above, has \emph{moderate growth} if all coefficients $c_j$ are meromorphic, with poles at most along $(t=0)$.
We denote the subsheaves of such functions by $\Psi_{t,\alpha}^m(\cO^{\rm an}_X) \subseteq \Psi_{t,\alpha}(\cO^{\rm an}_X)$ and 
$\Psi_{t}^m(\cO^{\rm an}_X) \subseteq \Psi_{t}(\cO^{\rm an}_X)$. We also put $\Phi_{t}^m(\cO^{\rm an}_X)=\Psi_{t}^m(\cO^{\rm an}_X)/i^{-1}\cO_X^{\rm an}$.
Note that $\Psi_{t,\alpha}^m(\cO^{\rm an}_X)$ is filtered by 
the subsheaves $\Psi_{t,\alpha,k}(\cO^{\rm an}_X)$ which only allow nonzero coefficients $c_j$ for $j \leq k$. The natural morphism 
$i^{-1} \cO^{\rm an}_X \to \Psi_t(\cO^{\rm an}_X)$ factors through $\Psi_{t,1,k}(\cO^{\rm an}_X)$ for any $k\geq 1$.

\begin{thm} \label{thm-MM331} $\big($\cite[Thm. 3.3-1]{MaisonobeMebkhout}$\big)$ For every regular holonomic $\cD_X$-module $\cM$, the natural morphisms
\[{\mathbf R}\cH om_{i^{-1}\cD^{\rm an}_X}\big(i^{-1}\cM^{\rm an},\Psi_{t,\alpha}^m(\cO^{\rm an}_X)\big) \to {\mathbf R}\cH om_{i^{-1}\cD^{\rm an}_X}\big(i^{-1}\cM^{\rm an},\Psi_{t,\alpha}(\cO^{\rm an}_X)\big)\]
\[{\mathbf R}\cH om_{i^{-1}\cD^{\rm an}_X}\big(i^{-1}\cM^{\rm an},\Phi_{t}^m(\cO^{\rm an}_X)\big) \to {\mathbf R}\cH om_{i^{-1}\cD^{\rm an}_X}\big(i^{-1}\cM^{\rm an},\Phi_{t}^{\rm fd}(\cO^{\rm an}_X)\big)\]
are isomorphisms. 
\end{thm}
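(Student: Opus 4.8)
The plan is to establish both isomorphisms by showing that the cones of the two natural morphisms are acyclic, and to deduce the second from the first. By the decomposition (\ref{eq-fdDecomp}) one has $\Psi_t^m(\cO^{\rm an}_X)=\bigoplus_\alpha\Psi_{t,\alpha}^m(\cO^{\rm an}_X)$ and $\Psi_t^{\rm fd}(\cO^{\rm an}_X)=\bigoplus_\alpha\Psi_{t,\alpha}(\cO^{\rm an}_X)$, while $\Phi_t^m(\cO^{\rm an}_X)=\Psi_t^m(\cO^{\rm an}_X)/i^{-1}\cO^{\rm an}_X$ and $\Phi_t^{\rm fd}(\cO^{\rm an}_X)=\Psi_t^{\rm fd}(\cO^{\rm an}_X)/i^{-1}\cO^{\rm an}_X$; applying ${\mathbf R}\cH om_{i^{-1}\cD^{\rm an}_X}(i^{-1}\cM^{\rm an},-)$ to the two tautological short exact sequences and comparing the long exact sequences (the map on the $i^{-1}\cO^{\rm an}_X$-terms being the identity) shows by the five lemma that the $\Phi$-statement follows once the $\Psi$-statement is known for each $\alpha$. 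So it suffices to prove
\[
{\mathbf R}\cH om_{i^{-1}\cD^{\rm an}_X}\big(i^{-1}\cM^{\rm an},\cC_\alpha\big)=0,\qquad\text{where}\quad \cC_\alpha:=\Psi_{t,\alpha}(\cO^{\rm an}_X)/\Psi_{t,\alpha}^m(\cO^{\rm an}_X).
\]

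First I would unwind $\cC_\alpha$ as a sheaf of $i^{-1}\cD^{\rm an}_X$-modules. By the normal form for multivalued functions of finite determination, a local section of $\Psi_{t,\alpha}(\cO^{\rm an}_X)$ is uniquely $\sum_{j\ge 0}c_j\,t^{1-\alpha}\log(t)^j$ with finitely many nonzero $c_j$, each $c_j$ a univalued holomorphic germ on $\{t\neq 0\}$, i.e.\ a germ of $i^{-1}j_*\cO^{\rm an}_U$; moderate growth means exactly that each $c_j$ is meromorphic along $(t=0)$, i.e.\ a germ of $i^{-1}\cO^{\rm an}_X[1/t]$. Filtering by the degree in $\log(t)$ — a filtration stable under $\cD^{\rm an}_X$, since $\partial_t$ does not raise the power of $\log t$ and $t,\partial_{x_1},\dots,\partial_{x_n}$ act only on the coefficients — and using that ${\mathbf R}\cH om$ out of a coherent module commutes with the relevant filtered colimits and with the exact sequences of the filtration, the vanishing reduces to the associated graded, whose pieces are copies of $i^{-1}\big(j_*\cO^{\rm an}_U/\cO^{\rm an}_X[1/t]\big)$ twisted by the rank-one \emph{regular} connection $t^{1-\alpha}$. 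Since twisting a regular holonomic module by a rank-one regular connection again gives a regular holonomic module, and since ${\mathbf R}\cH om_{i^{-1}\cD^{\rm an}_X}(i^{-1}(-),i^{-1}(-))\simeq i^{-1}{\mathbf R}\cH om_{\cD^{\rm an}_X}(-,-)$ for coherent arguments, we are reduced to proving that for every regular holonomic $\cD^{\rm an}_X$-module $\cN$ the canonical morphism
\[
{\mathbf R}\cH om_{\cD^{\rm an}_X}\big(\cN,\cO^{\rm an}_X[1/t]\big)\longrightarrow {\mathbf R}\cH om_{\cD^{\rm an}_X}\big(\cN,j_*\cO^{\rm an}_U\big)
\]
is an isomorphism in a neighborhood of $H$; equivalently, that ${\mathbf R}\cH om_{\cD^{\rm an}_X}(\cN,j_*\cO^{\rm an}_U/\cO^{\rm an}_X[1/t])=0$ near $H$, i.e.\ that a regular holonomic module has no holomorphic solutions along $H$ with essential (non-moderate) singularities.

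This last vanishing is the analytic heart of the matter, and is essentially the comparison (regularity) theorem of Deligne and Mebkhout. To prove it one works locally near a point of $H$ and proceeds by a d\'evissage of $\cN$: the class of regular holonomic modules satisfying the conclusion is closed under extensions and two-out-of-three in short exact sequences, so it suffices to treat modules supported on a proper closed subset (by induction on the dimension of the support, the case of support in $H$ being trivial since both sides vanish) and, on a dense open where $\cN$ is lisse, meromorphic connections with regular singularities along $H$. For such a connection, after restricting to a small polydisc with $x_1,\dots,x_n$ coordinates along $H$ and regarding the $x_i$ as parameters, the statement becomes Fuchs's theorem: every multivalued holomorphic solution of a regular-singular ordinary differential equation is a finite $\C$-linear combination of terms $c(x,t)\,t^a\log(t)^b$ with $c$ meromorphic in $t$, hence of moderate growth, so no solution of $\cN$ survives in $\cC_\alpha$. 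I expect this d\'evissage to be the main obstacle: the regular-singular structure must be controlled uniformly in the parameters and compatibly with the sheaves $\Psi_{t,\alpha}$, and it is precisely here that regularity cannot be dispensed with — for an irregular connection Stokes phenomena produce solutions of the excluded ``wild'' type and the theorem is false — which is why \cite{MaisonobeMebkhout} carries this step out with considerable care.
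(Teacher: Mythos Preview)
The paper does not prove this theorem: it is one of the results explicitly quoted from \cite{MaisonobeMebkhout} ``which we state below but do not prove,'' and no argument appears after the statement. So there is no proof in the paper to compare against.

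That said, your sketch is a reasonable outline of the standard route to this result and correctly isolates the essential input. The reduction of the $\Phi$-statement to the $\Psi$-statement via the five lemma is fine, as is the filtration by $\log$-degree (indeed $\partial_t$ lowers or preserves the power of $\log t$) and the identification of the graded pieces with twists of $i^{-1}\big(j_*\cO^{\rm an}_U/\cO^{\rm an}_X[1/t]\big)$ by a rank-one regular connection. You then land exactly where one should: the vanishing of ${\mathbf R}\cH om_{\cD^{\rm an}_X}\big(\cN,\,j_*\cO^{\rm an}_U/\cO^{\rm an}_X[1/t]\big)$ for regular holonomic $\cN$ is the analytic comparison theorem (moderate growth of solutions) of Deligne--Mebkhout, and you are right that this is where all the work lies and where regularity is indispensable.

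Two small points of precision. First, in the d\'evissage, the case of $\cN$ supported in $H$ is not quite ``both sides vanish'': rather, $t$ acts invertibly on $\cO^{\rm an}_X[1/t]$, so ${\mathbf L}i^*\cO^{\rm an}_X[1/t]=0$ and Kashiwara's adjunction gives ${\mathbf R}\cH om(\cN,\cO^{\rm an}_X[1/t])=0$; combined with ${\mathbf R}\cH om(\cN,j_*\cO^{\rm an}_U)=0$ (from $j^*\cN=0$) this yields the vanishing on the quotient. Second, your appeal to ``${\mathbf R}\cH om$ out of a coherent module commutes with filtered colimits'' for the $\log$-filtration is the right idea, but note that in \cite{MaisonobeMebkhout} the parametric control in the Fuchs-type step (uniformity in the transverse variables $x_1,\dots,x_n$) is genuinely delicate; your final paragraph acknowledges this, which is appropriate for a sketch.
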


We can now prove the main result of this section. 

\begin{proof}[Proof of Theorem~\ref{thm-comparison}]
Applying Theorem \ref{thm-MM213} to $\cM^*$ and using the isomorphism (\ref{eq-DRDuality}), we 
see that the following triangles are naturally isomorphic, where $\cM'=(\cM^*)^{\rm an}$:
\begingroup\makeatletter\def\f@size{10}\check@mathfonts
\[ i^{-1} {\rm DR}_X^{\rm an}(\cM) \to \Psi_t {\rm DR}_X^{\rm an}(\cM) \xrightarrow[]{\rm can} \Phi_t {\rm DR}_X^{\rm an}(\cM) \xrightarrow[]{+1}\quad\text{and}\quad\]
\endgroup
\begingroup\makeatletter\def\f@size{10}\check@mathfonts
\[ {\mathbf R}\cH om_{i^{-1}\cD^{\rm an}_X}(i^{-1}\cM',i^{-1}\cO^{\rm an}_X) \to {\mathbf R}\cH om_{i^{-1}\cD^{\rm an}_X}\big(i^{-1}\cM',\Psi_t(\cO^{\rm an}_X)\big) \to 
{\mathbf R}\cH om_{i^{-1}\cD^{\rm an}_X}\big(i^{-1}\cM', \Phi_t(\cO^{\rm an}_X)\big) \xrightarrow[]{+1}.\]
\endgroup

Since the dual of a holonomic module is holonomic, hence coherent, we can apply Theorem \ref{thm-MM314} and we see that the latter triangle is isomorphic to
\begingroup\makeatletter\def\f@size{10}\check@mathfonts
\[ {\mathbf R}\cH om_{i^{-1}\cD^{\rm an}_X}(i^{-1}\cM',i^{-1}\cO^{\rm an}_X) \to {\mathbf R}\cH om_{i^{-1}\cD^{\rm an}_X}\big(i^{-1}\cM',\Psi^{\rm fd}_t(\cO^{\rm an}_X)\big) \to {\mathbf R}\cH om_{i^{-1}\cD^{\rm an}_X}\big(i^{-1}\cM', \Phi^{\rm fd}_t(\cO^{\rm an}_X)\big) \xrightarrow[]{+1}.\]
\endgroup

Using the decomposition (\ref{eq-fdDecomp}) and Theorem \ref{thm-MM331}, this triangle is isomorphic to
\[ {\mathbf R}\cH om_{i^{-1}\cD^{\rm an}_X}(i^{-1}\cM',i^{-1}\cO^{\rm an}_X) \to 
\bigoplus_{\alpha \in {\rm Im}(\sigma)} {\mathbf R}\cH om_{i^{-1}\cD^{\rm an}_X}\big(i^{-1}\cM',\Psi^{m}_{t,\alpha}(\cO^{\rm an}_X)\big)\] \[ \to 
\bigoplus_{\alpha \in {\rm Im}(\sigma)} {\mathbf R}\cH om_{i^{-1}\cD^{\rm an}_X}\big(i^{-1}\cM', \Phi^{m}_{t,\alpha}(\cO^{\rm an}_X)\big) \xrightarrow[]{+1}.\]

For $\alpha \notin \mathbf Z$, the second map in the above triangle is an isomorphism. In what follows, we mainly focus on the case $\alpha=1$,
which is the most interesting.

By definition of the sheaves $\cN_{\alpha,k}$, we have an isomorphism $i^{-1}t^*(\cN^{\rm an}_{\alpha,k}) \cong \Psi_{t,\alpha,k}(\cO^{\rm an}_X)$, hence
 $i^{-1}\varinjlim_k t^*(\cN^{\rm an}_{\alpha,k}) \cong \Psi^m_{t,\alpha}(\cO^{\rm an}_X)$. Let $C_k^\bullet$ be the complex $\cO^{\rm an}_X \to t^*(\cN^{\rm an}_{1,k})$ in the derived category of analytic $\cD$-modules on $X$, with regular holonomic cohomology.
Then $i^{-1} \varinjlim_k C^\bullet_k = \Phi^m_{t,1}(\cO^{\rm an}_X)$. The above triangle, for $\alpha=1$, is thus isomorphic to 
\[ i^{-1} {\mathbf R}\cH om_{\cD^{\rm an}_X}(\cM',\cO^{\rm an}_X) \to  i^{-1} {\mathbf R}\cH om_{\cD^{\rm an}_X}\big(\cM', \varinjlim_k t^*(\cN_{1,k})\big) \to 
 i^{-1}{\mathbf R}\cH om_{\cD^{\rm an}_X}(\cM', \varinjlim_k C^\bullet_k) \xrightarrow[]{+1}.\]

By (the analytic version of) \cite[Lemma 2.6.13]{HTT}, which relates $- \otimes_{\cO} -$ and $\cH om_{\cD}(-,-)$, this triangle is isomorphic to
\[ i^{-1} {\rm DR}^{\rm an}_X(\cM) \to i^{-1} {\rm DR}^{\rm an}_X\big(\cM \otimes_{\cO_X} \varinjlim_k t^*(\cN_{1,k} )\big) 
\to i^{-1} {\rm DR}^{\rm an}_X(\cM\otimes_{\cO_X}  \varinjlim_k C^\bullet_k) \xrightarrow[]{+1}.\]

By the compatibility of the Riemann-Hilbert correspondence with pull-back 
(see \cite[Equation 7.1.5]{HTT}), for every regular holonomic $\cD_X$-module $\cN$, we have
\begin{equation} \label{eq-CKK} i^{-1} {\rm DR}_X(\cN) \cong {\rm DR}_H(\cN\vert_H).
\end{equation}
Hence, the above exact triangle is isomorphic to

\[ {\rm DR}^{\rm an}_H(\cM\vert_H) \to  {\rm DR}^{\rm an}_H\big((\cM \otimes_{\cO_X} \varinjlim_k t^*(\cN_{1,k}) )\vert_H\big)\to {\rm DR}^{\rm an}_H\big((\cM\otimes_{\cO_X}  \varinjlim_k C^\bullet_k)\vert_H\big) \xrightarrow[]{+1}\]
which, in the notation we have been using, is
\[ {\rm DR}^{\rm an}_H(\cM\vert_H) \to {\rm DR}^{\rm an}_H\big( \varinjlim_k(\cM_{1,k})\vert_H\big) \to {\rm DR}^{\rm an}_H\big(\varinjlim_k(\cM \to \cM_{1,k})\vert_H\big) \xrightarrow[]{+1}.\]

Finally, we have already shown that this is isomorphic to
\[ {\rm DR}^{\rm an}_H(\cM\vert_H) \to {\rm DR}^{\rm an}_H\big({\rm Gr}_V^1(\cM)\big) \to {\rm DR}^{\rm an}_H\big({\rm Gr}_V^0(\cM)\big) \xrightarrow[]{+1},\]
proving the claim in the case $\alpha=1$. The case $\alpha\not\in\ZZ$ is proved similarly and we leave it as an exercise for the reader.
\end{proof}

\section{The $V$-filtration on $B_f$ and invariants of singularities}\label{section_b_invar_sing}

In this final section we discuss some connections between the invariants of singularities that we have seen so far (the roots of the Bernstein-Sato polynomial of $f$ and the $V$-filtration
of $\cO_X$ with respect to $f$) and other invariants of singularities. 

\subsection{The reduced Bernstein-Sato polynomial}

We first note that the Bernstein-Sato polynomial can be defined for any hypersurface. Let $X$ be a smooth, irreducible algebraic variety over an algebraically closed field $k$ of characteristic $0$ and let $H$ be a hypersurface in $X$. We can choose a finite open cover $X=\bigcup_{i=1}^rU_i$ and $f_i\in\cO_X(U_i)$ nonzero such that
$H\cap U_i$ is defined in $U_i$ by $f_i$ for all $i$. The \emph{Bernstein-Sato} polynomial $b_H(s)$ is defined as ${\rm lcm}\big\{b_{f_i}(s)\mid 1\leq i\leq r\big\}$.
It follows from Remarks~\ref{B_fcn_open_cover_v0} and \ref{Vfilt_rescaling} that the definition is independent of the choice of open cover and the $f_i$.

\begin{prop}\label{trivial_root_b_fcn_general}
If $H$ is a nonempty hypersurface in $X$, then $(s+1)$ divides $b_H(s)$.
\end{prop}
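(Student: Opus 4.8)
The plan is to reduce immediately to a local statement and then to the classical fact about the Bernstein--Sato polynomial of a single function. Since $b_H(s) = \mathrm{lcm}\{b_{f_i}(s) \mid 1 \le i \le r\}$ for a suitable open cover on which $H$ is cut out by $f_i$, and since $H$ is nonempty, at least one $U_i$ meets $H$, so $f_i$ is a nonzero non-unit there. It therefore suffices to show that $(s+1) \mid b_f(s)$ for any nonzero $f \in \cO_X(X)$ whose zero locus is nonempty. By the discussion in Section~\ref{sect_Vfil_bfcn}, $b_f(s) = b_\delta(s)$ where $\delta$ is the class of $\tfrac{1}{f-t}$ in $B'_f = \iota_+(\cO_X[1/f])$, and $b_\delta$ is the minimal polynomial of $s = -\partial_t t$ acting on $\cN/t\cN$ where $\cN = V^0\cD_{X\times\AA^1}\cdot\delta$.

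First I would show $(s+1)$ divides $b_f$ by exhibiting a root at $-1$ of the corresponding graded piece. Concretely: $\cO_X$ has a $V$-filtration with respect to $f$ (Example~\ref{exist_V_filt_O_X}), and by Example~\ref{exist_V_filt_O_X2} the $V$-filtrations on $\iota_+(\cO_X)$ and $\iota_+(\cO_X[1/f])$ agree in positive degrees. By Corollary~\ref{prop_char_V_filt}, $-1$ is a root of $b_\delta$ exactly when $\delta \notin V^{>1}\iota_+(\cO_X[1/f])$, i.e. when $\mathrm{Gr}_V^1(\iota_+\cO_X[1/f]) \neq 0$ (more precisely, when the image of $\delta$ in $\mathrm{Gr}_V^{\le 1}$ is nonzero in a piece of index $\le 1$ with $s+1$ appearing). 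The cleanest route: by Theorem~\ref{thm-comparison} (after base change to $\CC$, which is harmless since $b_f$ is defined over $\QQ$ and stable under field extension), $\mathrm{Gr}_V^1(\iota_+\cO_X)$ corresponds to ${}^p\psi_{f,1}$ of the perverse sheaf $\CC_X[\dim X]$ restricted appropriately — and the nearby cycles with unipotent monodromy of the constant sheaf along a nonempty hypersurface are nonzero (the generic Milnor fiber near a smooth point of $H$ is nonempty and connected, contributing a $1$-eigenvalue). Hence $\mathrm{Gr}_V^1 \neq 0$, so $-1$ is a root of $b_f$.

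Alternatively — and this is the approach I would actually write out, since it avoids the Riemann--Hilbert machinery and works over any algebraically closed $k$ of characteristic zero — I would argue directly with the functional equation. We have $b_f(s) f^s \in \cD_X[s]\cdot f^{s+1}$. Specialize $s \mapsto 0$ via the map $\beta_0$ of Remark~\ref{rmk_specialize_s}: this gives $b_f(0)\cdot 1 \in \cD_X \cdot f$, i.e. $b_f(0) \in \Gamma(X, \cD_X\cdot f)$. But $\cD_X \cdot f$ is a left ideal, and $1 \in \cD_X\cdot f$ would force $\cD_X = \cD_X \cdot f$; localizing at a point $P \in H$ and passing to associated graded with respect to the order filtration, $\mathrm{Gr}^F\cD_X = \Sym \cT_X$ is a domain and the image of $f$ is the nonzero non-unit $f$ in $\cO_{X,P}$, so $f$ is not a unit in $\cD_{X,P}$ and $1 \notin \cD_X \cdot f$ near $P$. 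Therefore $b_f(0) = 0$? No — wait: this shows $b_f(0) \ne 0$ is impossible only if $b_f(0)$ is a nonzero scalar. Since $b_f(0) \in k$ and a nonzero scalar would be a unit generating $\cD_X$, we conclude $b_f(0) = 0$. But we want a root at $-1$, not $0$. So instead I specialize the functional equation for the \emph{shifted} variable: replacing the argument, note $b_f(s)f^s \in \cD_X[s] f^{s+1}$ means $b_f(s-1) f^{s-1} \in \cD_X[s-1] f^{s}$; specializing $s \mapsto 0$ gives $b_f(-1) f^{-1} \in \cD_X \cdot 1 = \cD_X$ acting on $\cO_X[1/f]$, i.e. $b_f(-1)\cdot\tfrac1f \in \cD_X\cdot\cO_X = \cO_X$, forcing $b_f(-1)\cdot\tfrac1f \in \cO_X$; since $\tfrac1f \notin \cO_X$ (as $f$ is a non-unit and the fraction is in lowest terms at a point of $H$), we get $b_f(-1) = 0$. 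This is the main obstacle to make fully rigorous — justifying that $\tfrac1f \notin \cO_X$ locally at a point of $H$ and that the specialization is well-defined on $\cO_X[1/f]$ rather than $\cO_X$ — but it is elementary given the machinery of Remark~\ref{rmk_specialize_s}, so I would spell out exactly that localization argument and then invoke Remark~\ref{B_fcn_open_cover_v0} to globalize.

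Thus the skeleton is: (i) reduce to a single local function $f$ via the lcm formula and Remark~\ref{B_fcn_open_cover_v0}; (ii) use the functional equation $b_f(s)f^s \in \cD_X[s]\cdot f^{s+1}$ shifted by one; (iii) specialize $s = 0$ inside $\cO_X[1/f]$ to get $b_f(-1)\cdot\tfrac1f \in \cO_X$; (iv) derive a contradiction with $f$ being a non-unit at some point of $H$ unless $b_f(-1) = 0$. I expect step (iii)--(iv), the careful bookkeeping of which module the specialization lives in and the local non-triviality of $\tfrac1f$, to be the only genuinely delicate point; everything else is formal.
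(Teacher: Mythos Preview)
Your final approach is correct and is essentially the paper's proof: reduce to a single non-invertible $f$, apply the specialization $s\mapsto -1$ of Remark~\ref{rmk_specialize_s} to the functional equation $b_f(s)f^s\in\cD_X[s]\cdot f^{s+1}$ to get $b_f(-1)\tfrac{1}{f}\in\cD_X\cdot 1=\cO_X$, and conclude $b_f(-1)=0$ since $f$ is not invertible. Two minor comments: there is no need to shift and then specialize at $0$---Remark~\ref{rmk_specialize_s} already permits specializing $s$ to any integer, so put $s=-1$ directly; and the ``delicate point'' you flag is not actually delicate, since $\cD_X\cdot 1=\cO_X$ inside $\cO_X[1/f]$ and $\tfrac{1}{f}\notin\cO_X$ is immediate from $f$ being a nonunit.
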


\begin{proof}
We may and will assume that $X$ is affine and $H$ is defined by $f\in\cO_X(X)$ noninvertible.
By definition, we have
$$b_f(s)f^s \in \cD_X[s] ff^s.$$
After specializing to $s=-1$ (see Remark~\ref{rmk_specialize_s}), we obtain
$$b_f(-1)\tfrac{1}{f}\in\cD_X\cdot 1=\cO_X,$$
and since $f$ is not invertible, we conclude that $b_f(-1)=0$.
\end{proof}

In light of this proposition, if $H$ is nonempty, we write $b_H(s)=(s+1)\cdot\widetilde{b}_H(s)$; the polynomial $\widetilde{b}_H(s)$ is the 
\emph{reduced Bernstein-Sato polynomial} of $H$. If $H$ is defined by $f\in\cO_X(X)$, then we also write $\widetilde{b}_f$ for $\widetilde{b}_H$.

\subsection{The $V$-filtration and multiplier ideals}\label{section_multiplier}

In this section we describe the connection between the $V$-filtration and the Bernstein-Sato polynomial, on one side, and multiplier ideals, on the other side. We begin with a quick introduction to multiplier ideals. For a more detailed discussion and for the proofs of some of the results we state, we refer to \cite[Chapter~9]{Lazarsfeld}.

Let $X$ be a smooth, irreducible algebraic variety over an algebraically closed field $k$ of characteristic $0$. Suppose that $H$ is a (nonempty) hypersurface on $X$. 
We consider a log resolution $\pi\colon Y\to X$ of the pair $(X,H)$.

\begin{defi}
For every $\lambda\in\QQ_{\geq 0}$, the \emph{multiplier ideal} $\cJ(X,\lambda H)$ is given by
$$\cJ(X,\lambda H)=\pi_*\cO_Y\big(K_{Y/X}-\lfloor \lambda \pi^*(H)\rfloor\big).$$
\end{defi}

We note that, by definition, if $\pi^*(H)=\sum_{i=1}^Na_iE_i$, then $\lfloor \lambda \pi^*(H)\rfloor=\sum_{i=1}^N\lfloor\lambda a_i\rfloor E_i$. We also note that since $\lfloor \lambda \pi^*(H)\rfloor$ is an effective
divisor and since $K_{Y/X}$ is an effective exceptional divisor, we have
$$\cJ(X,\lambda H)\subseteq\pi_*\cO_Y(K_{Y/X})=\cO_X,$$
hence $\cJ(X,\lambda H)$ is indeed a coherent ideal of $\cO_X$. 
It is a basic fact that the definition of multiplier ideals is independent of the choice of log resolution (see \cite[Theorem~9.2.18]{Lazarsfeld}). 

In what follows we list a few properties of multiplier ideals. Most of these follow in a straightforward way from the definition:
\begin{enumerate}
\item[1)] If $\lambda\geq\mu$, then 
$$\cJ(X,\lambda H)\subseteq\cJ(X,\mu H).$$
This is a consequence of the fact that the divisor $\lfloor\lambda \pi^*(H)\rfloor-\lfloor\mu \pi^*(H)\rfloor$ is effective.
\item[2)] It is an immediate consequence of the properties of the round-down function that for every $\lambda\in\QQ_{\geq 0}$, there is $\epsilon>0$
such that
$$\cJ(X,\lambda H)=\cJ(X,\mu H)\quad\text{for}\quad \lambda\leq\mu\leq\lambda+\epsilon.$$
\item[3)] In particular, we have $\cJ(X,\mu H)=\cO_X$ for $0\leq\mu\ll 1$.
\item[4)] We say that $\lambda\in\QQ_{>0}$ is a \emph{jumping number} of $(X,H)$ if 
$$\cJ(X,\lambda H)\subsetneq \cJ\big(X,(\lambda-\epsilon)H\big)\quad\text{for all}\quad \epsilon>0.$$
Note that in this case we have $a_i\lambda\in\ZZ$ for some $i$ with $1\leq i\leq N$. Therefore the set of jumping numbers of $(X,H)$ is contained in $\tfrac{1}{\ell}\ZZ_{>0}$ 
for some positive integer $\ell$.
\item[5)] The smallest jumping number of $(X,H)$ is the \emph{log canonical threshold} $\lct(X,H)$
(also written $\lct(H)$ when $X$ is understood):
$$\lct(X,H)=\min\big\{\lambda>0\mid\cJ(X,\lambda H\big)\neq\cO_X\big\}.$$
Note that, using the notation in (\ref{formula_pull_back_H}),
 we have $1\in\cJ(X,\lambda H)$ if and only if $k_i\geq \lfloor \lambda a_i\rfloor$ for all $i$; equivalently,
$k_i>\lambda a_i-1$ for all $i$. We thus deduce that
\begin{equation}\label{eq_formula_lct}
\lct(X,H)=\min_i\tfrac{k_i+1}{a_i}.
\end{equation}
\item[6)] $1$ is always a jumping number of $(X,H)$. In order to see this, we may replace $X$ by any open subset $U$ such that  $U\cap H\neq\emptyset$.
We may thus assume that $H=mZ$ for some smooth hypersurface $Z$ and some positive integer $m$. In this case we may take $\pi$ to be the identity and we see that
$$\cJ(X,H)=\cO_X(-mZ)\quad\text{and}\quad \cJ\big(X,(1-\epsilon)H\big)=\cO_X\big(-(m-1)Z\big)\,\,\text{for}\,\,0<\epsilon\ll 1.$$
\item[7)] For every $\lambda\geq 1$, it follows from the definition and the projection formula that
$$\cJ(X,\lambda H)=\cO_X(-H)\cdot\cJ\big(X,(\lambda-1)H\big).$$
In particular, we see that $\lambda>1$ is a jumping number of $(X,H)$ if and only if $\lambda-1$ has this property. 
This means that as invariants of singularities, it is enough to consider the multiplier ideals $\cJ(X,\lambda H)$ for $\lambda<1$.
\item[8)] If $k=\CC$, then there is an analytic description of $\cJ(X,\lambda H)$ that is more intuitive than the algebraic one that we gave. Suppose, for simplicity,
that $H$ is defined by $f\in\cO_X(X)$. In this case we have
$$\cJ(X,\lambda H)=\big\{g\in\cO_X(X)\mid \tfrac{|g|^2}{|f|^{2\lambda}}\,\,\text{is locally integrable}\big\}.$$
The local integrability condition means that for every $P\in X$, if $z_1,\ldots,z_n$ are local coordinates around $P$, then there is an open neighborhood $U$
of $P$ such that $\int_U\tfrac{|g|^2}{|f|^{2\lambda}}dzd\overline{z}<\infty$.
The equivalence with the formula in the algebraic definition is shown using the Change of Variable formula and the fact that, in one variable,
$\tfrac{1}{|z|^{2\lambda}}dzd\overline{z}$ is locally integrable if and only if $\lambda<1$ (see \cite[Chapter~9.3.D]{Lazarsfeld} for details). 
\end{enumerate}

The following 3 results relate the $\cD$-module theoretic invariants of $f$ to the multiplier ideals of $H$.
The description of the log canonical threshold in the following theorem was proved by Koll\'{a}r \cite{Kollar},  by making use of Lichtin's upper bound \cite{Lichtin} 
 for the roots of the Bernstein-Sato
polynomial that appears in Theorem~\ref{thm_DM_bfcn}i).

\begin{thm}\label{thm_LichtinKollar}
The largest root of $b_H(s)$ is $-\lct(X,H)$.
\end{thm}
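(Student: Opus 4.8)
## Proof proposal for Theorem~\ref{thm_LichtinKollar}

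The plan is to show the two inequalities separately, using Theorem~\ref{thm_DM_bfcn}i) for the upper bound on the roots of $b_f$ and Theorem~\ref{thm_existence_Vfilt} (together with the description of the $V$-filtration for SNC divisors in Example~\ref{Vfiltration_SNC} pulled back along a log resolution) for the existence of a root equal to $-\lct(X,H)$. Since the statement is local on $X$, I may assume $X$ is affine and $H$ is defined by a nonzero $f\in\cO_X(X)$, so $b_H=b_f$. Fix a log resolution $\pi\colon Y\to X$ of $(X,H)$ with $\pi^*(H)=\sum_i a_iE_i$ and $K_{Y/X}=\sum_i k_iE_i$, so that by \eqref{eq_formula_lct} we have $\lct(X,H)=\min_i\frac{k_i+1}{a_i}$.

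For the inequality ``every root of $b_f$ is $\le -\lct(X,H)$'': by Theorem~\ref{thm_DM_bfcn}i), every root of $b_f$ has the form $-\frac{k_i+\ell}{a_i}$ for some $i$ and some positive integer $\ell$. Since $k_i,a_i>0$ and $\ell\ge 1$, we get $-\frac{k_i+\ell}{a_i}\le -\frac{k_i+1}{a_i}\le -\min_j\frac{k_j+1}{a_j}=-\lct(X,H)$. Hence the largest root of $b_f$ is $\le -\lct(X,H)$.

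For the reverse inequality, I need to exhibit a root of $b_f$ that is $\le -\lct(X,H)$, equivalently (combined with the previous paragraph) that is equal to $-\lct(X,H)$; actually it suffices to show $b_f$ has \emph{some} root $\le -\lct(X,H)$, since the first part forces all roots $\le -\lct(X,H)$ and then the maximum is pinned down once we know it is attained. The cleanest route is via the $V$-filtration. By Example~\ref{exist_V_filt_O_X}, $\cO_X$ has a $V$-filtration with respect to $f$ and $\delta\in V^{>0}\iota_+(\cO_X)$; set $\alpha_0=\sup\{\alpha\mid \delta\in V^{\alpha}\iota_+(\cO_X)\}$, which is a positive rational (the filtration being discrete, the sup is attained, so $\delta\in V^{\alpha_0}\iota_+(\cO_X)\smallsetminus V^{>\alpha_0}\iota_+(\cO_X)$). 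By Corollary~\ref{prop_char_V_filt}, $\alpha_0$ equals $-($largest root of $b_{\delta})=-($largest root of $b_f)$. So it remains to prove $\alpha_0=\lct(X,H)$, or at least $\alpha_0\le\lct(X,H)$ (the inequality $\alpha_0\ge\lct(X,H)$ is the content of the first paragraph). To get $\alpha_0\le\lct(X,H)$, I will identify $V^{\alpha}\iota_+(\cO_X)$ for $\alpha\in(0,1]$ with multiplier-ideal data via the log resolution. Concretely: using Example~\ref{exist_V_filt_O_X2} we may replace $\cO_X$ by $\cO_X[1/f]$ for $\alpha>0$; pushing forward along $\pi$ and applying the push-forward compatibility of Theorem of Section~\ref{section_behavior_push_forward2} together with the explicit SNC computation in Example~\ref{Vfiltration_SNC} on $Y$ (where $\pi^*H=\sum a_iE_i$ and the relevant ideal is $I(f^\lambda)=\bigl(\prod_i x_i^{\lceil\lambda a_i\rceil-1}\bigr)$, whose push-forward twisted by $K_{Y/X}$ is exactly $\cJ(X,(\lambda-\epsilon)H)$), one shows that $\delta\in V^{\alpha}\iota_+(\cO_X)$ forces $1\in\cJ\bigl(X,(\alpha-\epsilon)H\bigr)$ for $0<\epsilon\ll1$, i.e. $\alpha\le\lct(X,H)$. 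Taking $\alpha=\alpha_0$ gives $\alpha_0\le\lct(X,H)$, and combined with the first paragraph, $\alpha_0=\lct(X,H)$, so the largest root of $b_f$ is $-\lct(X,H)$.

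The main obstacle is the last step: carefully matching the $V$-filtration on $\iota_+(\cO_X)$ near the threshold with multiplier ideals through the log resolution. This requires knowing that $V^{\alpha}\iota_+(\cO_X)$ for $\alpha\in(0,1)$ behaves well under the proper push-forward $\pi_+$ (so that one may compute on $Y$, where $\pi^*H$ is SNC and Example~\ref{Vfiltration_SNC} applies) and then tracking the membership of $\delta$ through the isomorphism ${\rm Gr}^{\alpha}_V(\cH^0(\pi_+\,\cdot\,))\simeq\cH^0(\pi_+{\rm Gr}^{\alpha}_V(\cdot))$; alternatively, one can sidestep this by invoking Lichtin's bound plus Koll\'ar's original argument directly (as the attribution in the statement suggests), deducing the existence of a root equal to $-\lct$ from the vanishing theorem input that underlies \eqref{eq_formula_lct}. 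Either way, the inequality $\alpha_0\ge\lct(X,H)$ — equivalently, that no root of $b_f$ exceeds $-\lct(X,H)$ — is the easy half (done in the first paragraph via Theorem~\ref{thm_DM_bfcn}i)), and the real work is showing the bound is sharp.
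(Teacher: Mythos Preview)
Your easy inequality (all roots of $b_f$ are $\le -\lct(X,H)$) is exactly what the paper does: it is the case $g=1$ of Theorem~\ref{thm_DM_bfcn}iii), or equivalently Theorem~\ref{thm_DM_bfcn}i). Your reformulation of the hard inequality via Corollary~\ref{prop_char_V_filt} --- that the largest root of $b_f$ equals $-\alpha_0$ where $\alpha_0$ is the $V$-jump of $\delta$ --- is also correct and useful.

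The difference is in the hard direction. The paper does \emph{not} compute the $V$-filtration on $\iota_+(\cO_X)$ by pushing forward the SNC $V$-filtration from a log resolution. Instead, it deduces Theorem~\ref{thm_LichtinKollar} as the special case $g=1$ of Theorem~\ref{thm_BudurSaito}, and proves the hard half of that by an analytic argument over $\mathbf C$ (after Lefschetz): if $1\notin\cJ(X,\lambda H)$ but $1\in\cJ\big(X,(\lambda-\epsilon)H\big)$ for all $\epsilon>0$, one uses the functional equation $b_f(s)f^s=P(s)f^{s+1}$, complex-conjugates, integrates against a bump function, and applies monotone convergence to force $b_f(-\lambda)=0$ (this is the argument of \cite{ELSV}). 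No resolution or push-forward compatibility is needed for this step.

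Your push-forward strategy is a genuine alternative and is known to work, but as written it is only a sketch, and the ``main obstacle'' you flag is real. To carry it out you must identify $\cH^0\pi_+\big(\iota_{g,+}\cO_Y[1/g]\big)\simeq\iota_{f,+}\cO_X[1/f]$ and track where $\delta_X$ sits under the isomorphism ${\rm Gr}^{\alpha}_V\big(\cH^0\pi_+(-)\big)\simeq\cH^0\pi_+\big({\rm Gr}^{\alpha}_V(-)\big)$ from Section~\ref{section_behavior_push_forward2}; this is where $K_{Y/X}$ enters, via the relative de Rham complex computing $\pi_+$, and one has to check that the class of $\delta_X$ corresponds to the class of $\delta_Y$ (not some twist), which then combines with Example~\ref{Vfiltration_SNC} to give the multiplier-ideal description. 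None of this is in your write-up. So: correct overall architecture, easy half complete and matching the paper, hard half a plausible but unexecuted outline that diverges from the paper's analytic proof.
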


Partially generalizing this to higher jumping numbers, we have the following result due to Ein, Lazarsfeld, Smith, and Varolin \cite{ELSV}.

\begin{thm}\label{thm_ELSV}
If $\lambda\leq 1$ is a jumping number of $(X,H)$, then $b_H(-\lambda)=0$.
\end{thm}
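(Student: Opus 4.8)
The plan is to connect jumping numbers of $(X,H)$ with the roots of $b_H(s)$ by passing through the $V$-filtration on $B_f = \iota_+(\cO_X)$, using the description of multiplier ideals in terms of $V^{\alpha}\iota_+(\cO_X)$ together with Corollary~\ref{prop_char_V_filt}, which identifies $V^{\alpha}\iota_+(\cO_X)$ with the sections whose $b$-function has all roots $\leq -\alpha$. Since the statement is local, I would first reduce to the case where $X$ is affine and $H$ is defined by a single nonzero $f\in\cO_X(X)$, so that $b_H = b_f$ and $\iota=\iota_f\colon X\hookrightarrow X\times\AA^1$ is the graph embedding; recall from Example~\ref{exist_V_filt_O_X} that $\cO_X$ has a $V$-filtration with respect to $f$.

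The key input I would establish (or cite, in the form the paper presumably proves just after this statement) is the identification of multiplier ideals with the ``first'' graded data of the $V$-filtration: namely that for $\lambda\in(0,1]$ one has
\[
\cJ\big(X,(\lambda-\epsilon)H\big)=\big\{g\in\cO_X\ \big|\ g\delta\in V^{\lambda}\iota_+(\cO_X)\big\}\quad\text{for }0<\epsilon\ll 1,
\]
or equivalently that $g\delta\in V^{\lambda}\iota_+(\cO_X)$ iff all roots of $b_{g\delta}$ are $\leq-\lambda$ (by Corollary~\ref{prop_char_V_filt}), matched against the log-resolution formula for $\cJ$ via Theorem~\ref{thm_DM_bfcn}iii), which bounds the roots of $b_{g\delta}$ by $-\min_i\tfrac{k_i+b_i+1}{a_i}$ where $b_i$ is the coefficient of $E_i$ in $\pi^*(\divisor(g))$. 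Granting this, I would argue as follows: suppose $\lambda\leq 1$ is a jumping number, so there exists $g$ with $g\in\cJ\big(X,(\lambda-\epsilon)H\big)$ but $g\notin\cJ(X,\lambda H)$. Translating through the identification above, $g\delta\in V^{\lambda}\iota_+(\cO_X)$ but $g\delta\notin V^{>\lambda}\iota_+(\cO_X)$, i.e.\ $\overline{g\delta}\neq 0$ in $\gr_V^{\lambda}\big(\iota_+(\cO_X)\big)$. Now I invoke the structure near $B_f$: for $\lambda\in(0,1]$, the map $\mathrm{Var}$ (or rather the chain $\gr_V^{\lambda}\to\gr_V^{\lambda+1}\to\cdots$) relates $\gr_V^{\lambda}$ of $\iota_+(\cO_X)$ back to $\gr_V^{1}$, and one uses that $\delta$ generates $B_f$ over $\cD_X\langle t,\partial_t\rangle$ to propagate: from $b_f(s)\delta\in V^1\cD_{X\times\AA^1}\cdot\delta$ one deduces that $b_f(s)$ annihilates $\gr_V^{\beta}$ for each $\beta$ once $\overline{\delta}$ generates it, and in particular the nilpotent operator $s+\lambda$ acting on the cyclic piece generated by $\overline{g\delta}$ forces $(s+\lambda)\mid b_f(s)$ — hence $b_f(-\lambda)=0$.

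More precisely, the clean route: since $g\delta=P\cdot\delta$ for some $P\in\cD_X\langle t,\partial_t\rangle$ (as $\delta$ generates $B_f$), and $b_{\delta}=b_f$, one gets $b_f(s)\cdot g\delta\in V^1\cD_{X\times\AA^1}\cdot\delta$; combined with $\overline{g\delta}\neq 0$ in $\gr_V^{\lambda}$ and the fact that $b_f(s)$ commutes with $\cD_X[s]$ and acts through its value modulo $(s+\lambda)$ on the generalized $(-\lambda)$-eigenspace, nilpotence of $s+\lambda$ on $\gr_V^{\lambda}$ yields $b_f(-\lambda)=0$, which is exactly the formulation in Theorem~\ref{thm_existence_Vfilt}i) read for the jump at $\lambda$ (the set of $\alpha$ with $\gr_V^{\alpha}\neq 0$ is contained in $\{-\gamma\mid b_{w}(\gamma)=0\text{ for some }w\}$). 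Alternatively, bypassing the $g\delta$ bookkeeping, one can combine Theorem~\ref{thm_DM_bfcn}iii) (upper bound on roots of $b_{g\delta}$ in terms of the resolution data) with formula (\ref{eq_formula_lct})-style computations showing that a jump at $\lambda$ forces equality $\min_i\tfrac{k_i+b_i+1}{a_i}=\lambda$ for an optimal choice of $g$, and then the root $-\lambda$ of $b_{g\delta}$ is seen to be a root of $b_f$ because $b_{g\delta}\mid$ a translate of $b_f$ (as $g\in\cO_X$ acts on $B_f$).

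\textbf{Main obstacle.} The crux is the precise dictionary between multiplier ideals and the $V$-filtration on $B_f$ — i.e.\ pinning down that $\cJ\big(X,(\lambda-\epsilon)H\big)=\{g\mid g\delta\in V^{\lambda}\iota_+(\cO_X)\}$ — and, relatedly, ensuring that a genuine jump at $\lambda$ (not merely $\cJ(X,\lambda H)\neq\cO_X$) produces an element whose $b$-function has $-\lambda$ as an \emph{actual} root rather than just a bound $\leq-\lambda$. The inequality direction (roots $\leq-\lambda$, equivalently membership in $V^{\lambda}$) is the easy half and follows from Theorem~\ref{thm_DM_bfcn}iii); the hard half is extracting that $-\lambda$ is attained, which is where one must use the failure of membership in $V^{>\lambda}$ together with the nilpotence of $s+\lambda$ on $\gr_V^{\lambda}$ and the cyclicity of $B_f$ generated by $\delta$. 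I expect the bulk of the work to be in carefully justifying this attainment, likely by the argument in the proof of Theorem~\ref{thm_existence_Vfilt}i) applied to $w=g\delta$ once one knows $\overline{w}\neq 0$ in $\gr_V^{\lambda}$.
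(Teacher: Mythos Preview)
Your proposal is correct and matches the paper's approach: once one has Theorem~\ref{thm_BudurSaito} (the Budur--Saito dictionary), the deduction of Theorem~\ref{thm_ELSV} is exactly the nilpotence argument in your ``more precisely'' paragraph. A jumping number at $\lambda$ produces $g$ with $g\delta\in V^{\lambda}\smallsetminus V^{>\lambda}$; since $b_f(s)\cdot g\delta = g\cdot b_f(s)\delta \in g\cdot V^1\cD_{X\times\AA^1}\cdot\delta \subseteq V^{>1}\subseteq V^{>\lambda}$ (using $\delta\in V^{>0}$ and $\lambda\leq 1$), the polynomial $b_f(s)$ kills the nonzero class $\overline{g\delta}\in\gr_V^{\lambda}$, whence $(s+\lambda)\mid b_f(s)$ by nilpotence of $s+\lambda$. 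This is precisely what the paper does.

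One correction to your ``main obstacle'' discussion: you have correctly located the difficulty in the hard direction of the dictionary, namely $g\notin\cJ(X,\lambda H)\Rightarrow g\delta\notin V^{>\lambda}$, but your suggested route via Theorem~\ref{thm_existence_Vfilt}i) cannot work. That theorem only gives \emph{upper} bounds on roots of $b_w$ (equivalently, it certifies membership in some $V^{\alpha}$, never failure thereof), and your phrasing ``once one knows $\overline{w}\neq 0$ in $\gr_V^{\lambda}$'' assumes the very thing to be proved. The paper establishes this hard direction by an entirely different, analytic argument: reduce to $k=\CC$ via the Lefschetz principle, use the local-integrability description of multiplier ideals, and control $\int_{B'}|g|^2/|f|^{2\mu}$ as $\mu\to\lambda^{-}$ by pairing the functional equation for $b_{g\delta}$ against its complex conjugate and integrating by parts. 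If $b_{g\delta}(-\lambda)\neq 0$ the integral stays bounded, contradicting $g\notin\cJ(X,\lambda H)$. Your ``alternative route'' asserting that $b_{g\delta}$ divides a translate of $b_f$ is also not correct in general and should be discarded.
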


As we will see, both the above results follow from the following theorem of Budur and Saito \cite{BudurSaito}, that describes the multiplier ideals of $(X,H)$
via the $V$-filtration of $f$, when $H$ is defined by $f\in\cO_X(X)$. Recall that $\iota\colon X\hookrightarrow X\times\AA^1$ is the graph embedding associated to $f$.

\begin{thm}\label{thm_BudurSaito}
For every $\lambda\in\QQ_{\geq 0}$, we have
$$\cJ(X,\lambda H)=\big\{g\in\cO_X\mid g\delta\in V^{>\lambda}\iota_+(\cO_X)\big\}.$$
\end{thm}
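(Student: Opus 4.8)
The plan is to reduce, by means of a log resolution, to the case where the hypersurface has simple normal crossings --- for which the $V$-filtration on $\iota_+(\cO_X)$ is written down explicitly in Example~\ref{Vfiltration_SNC} --- and to transport the statement along the $\cD$-module push-forward using the compatibility result of Section~\ref{section_behavior_push_forward2}. The assertion is local on $X$, so I would first assume $X$ affine with $H=\divisor(f)$. Then, using $\cJ\big(X,(\lambda+1)H\big)=f\cdot\cJ(X,\lambda H)$ (property 7 of multiplier ideals), $t\cdot V^{>\lambda}\iota_+(\cO_X)=V^{>\lambda+1}\iota_+(\cO_X)$ for $\lambda\geq 0$ (Remark~\ref{rmk_M_no_ftorsion}, as $\cO_X$ has no $f$-torsion), and the injectivity of multiplication by $f$ and by $t$, a short induction reduces the problem to $\lambda\in[0,1)$. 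The case $\lambda=0$ is immediate ($\cJ(X,0)=\cO_X$, while $\delta\in V^{>0}\iota_+(\cO_X)$ by Example~\ref{exist_V_filt_O_X} and $V^{>0}\iota_+(\cO_X)$ is a module over $\cO_X\subseteq V^0\cD_{X\times\AA^1}$). Finally, for $0<\lambda<1$ one has $V^{>\lambda}\iota_+(\cO_X)=V^{>\lambda}\iota_+\big(\cO_X[1/f]\big)$ by Example~\ref{exist_V_filt_O_X2}, so I would replace $\cO_X$ by $\cO_X[1/f]$.

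Next I would take a log resolution $\pi\colon Y\to X$ of $(X,H)$ that is an isomorphism over $U=X\smallsetminus H$, put $g=f\circ\pi$, so that $D=\pi^*(H)=\divisor(g)$ is SNC, and keep the notation of (\ref{formula_pull_back_H}). With $j_X\colon U\hookrightarrow X$ and $j_Y\colon\pi^{-1}(U)\hookrightarrow Y$ the open immersions, the relations $\pi\circ j_Y=j_X\circ(\pi|_{\pi^{-1}(U)})$ and $\iota_f\circ\pi=(\pi\times\id)\circ\iota_g$, together with the fact that $\pi$ is an isomorphism over $U$, show that $\pi_+\big(\cO_Y[1/g]\big)\simeq\cO_X[1/f]$ and $(\pi\times\id)_+\big((\iota_g)_+(\cO_Y[1/g])\big)\simeq(\iota_f)_+(\cO_X[1/f])$ are concentrated in cohomological degree $0$. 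By the push-forward compatibility of Section~\ref{section_behavior_push_forward2} (Theorem~\ref{thm:strict} for the smooth projection in the graph factorization of $\pi\times\id$, and Proposition~\ref{prop_Vfil_immersion} for the closed immersion), this gives
$$V^{>\lambda}(\iota_f)_+\big(\cO_X[1/f]\big)\;\simeq\;\cH^0\!\left((\pi\times\id)_+\big(V^{>\lambda}(\iota_g)_+(\cO_Y[1/g])\big)\right),$$
and, since $g$ is locally a unit times a monomial, Example~\ref{Vfiltration_SNC} (together with Example~\ref{exist_V_filt_O_X2} and Remark~\ref{Vfilt_rescaling}) identifies $V^{>\lambda}(\iota_g)_+(\cO_Y[1/g])=V^{>\lambda}(\iota_g)_+(\cO_Y)=\cD_Y\cdot\cO_Y(-\lfloor\lambda D\rfloor)\cdot\delta_Y$, where $\delta_Y$ generates $(\iota_g)_+(\cO_Y)$.

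The hard part will be to evaluate this direct image and read off the coefficient ideal of $\delta$; in particular, this is where the relative canonical divisor $K_{Y/X}$ must appear. In the formula (\ref{pushforwardSmooth}) for push-forward along a projection the relative canonical sheaf $\omega_{Y/X}=\cO_Y(K_{Y/X})$ sits at the top of the relative de Rham complex, and the upshot is that the canonical generator $\delta$ of $(\iota_f)_+(\cO_X)$ corresponds, under the identification above, to a unit multiple of $J_\pi\cdot\delta_Y$, where $J_\pi$ is the Jacobian of $\pi$, with $\divisor(J_\pi)=K_{Y/X}$. Using this, together with the base case of the theorem --- namely $\{h\in\cO_Y\mid h\,\delta_Y\in\cD_Y\cdot\cO_Y(-\lfloor\lambda D\rfloor)\,\delta_Y\}=\cO_Y(-\lfloor\lambda D\rfloor)=\cJ(Y,\lambda D)$, which is the statement of the theorem when the divisor is SNC and which can be checked by a direct computation from Example~\ref{Vfiltration_SNC} (for one inclusion one may invoke Theorem~\ref{thm_DM_bfcn}(iii) and Corollary~\ref{prop_char_V_filt}) --- one obtains
$$\big\{g\in\cO_X\mid g\,\delta\in V^{>\lambda}(\iota_f)_+(\cO_X)\big\}=\big\{g\in\cO_X\mid (\pi^*g)\,J_\pi\in\cO_Y(-\lfloor\lambda D\rfloor)\big\}=\big\{g\in\cO_X\mid \ord_{E_i}(\pi^*g)\geq\lfloor\lambda a_i\rfloor-k_i\ \text{ for all }i\big\},$$
which by the definition of the multiplier ideal is exactly $\pi_*\cO_Y\big(K_{Y/X}-\lfloor\lambda D\rfloor\big)=\cJ(X,\lambda H)$.

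The main obstacle is the content of the previous paragraph: the subsheaf $V^{>\lambda}(\iota_g)_+(\cO_Y)$ is only a $V^0\cD$-submodule of $(\iota_g)_+(\cO_Y)$, not a $\cD$-submodule, so its direct image must be computed through the Godement/relative de Rham construction underlying Theorem~\ref{thm:strict} rather than by naive pull-back of functions, and within that computation one has to make precise how $K_{Y/X}$ --- equivalently, how the generator $\delta$ transforms under $\pi_+$ --- is produced. Everything else (the reductions, the explicit SNC filtration, and the final comparison with the definition of $\cJ(X,\lambda H)$) is then routine.
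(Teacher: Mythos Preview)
Your strategy is genuinely different from the paper's. The paper does \emph{not} transport the $V$-filtration through a log resolution. Instead it rewrites the assertion, via Corollary~\ref{prop_char_V_filt}, as the statement that for every nonzero $g\in\cO_X$ the largest root of $b_{g\delta}$ equals $-\lct_g(X,H):=-\min_i\tfrac{k_i+b_i+1}{a_i}$. One inequality is exactly Theorem~\ref{thm_DM_bfcn}(iii); the reverse inequality is obtained (after a Lefschetz-principle reduction to $k=\CC$) by an analytic integration argument in the spirit of \cite{ELSV}: from the functional equation $b_{g\delta}(s)gf^s=P(s)\cdot gf^{s+1}$ one manufactures an integral inequality that forces $b_{g\delta}(-\lambda)=0$ whenever $\lambda$ is a ``jump'' for $g$. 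So the paper never needs to compute the $\cD$-module push-forward or track $K_{Y/X}$ through a de Rham complex.

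Your outline is essentially the original Budur--Saito strategy, and the gap you flag is real, not a technicality. The results of Section~\ref{section_behavior_push_forward2} tell you that $V^{>\lambda}(\iota_f)_+(\cO_X[1/f])$ is the image of $\cH^0$ of the relative de Rham complex of $V^{>\lambda}(\iota_g)_+(\cO_Y[1/g])$; they do \emph{not} tell you which elements of the form $h\delta_X$ lie in that image. Extracting that ``coefficient ideal of $\delta$'' is precisely where Budur--Saito invoke Saito's Hodge module theory: the point is that $\delta$ generates the lowest piece $F_{\rm min}$ of the Hodge filtration, and Saito's strictness theorem for the Hodge filtration under proper push-forward is what makes $\omega_{Y/X}$ appear and yields $\pi_*\cO_Y(K_{Y/X}-\lfloor\lambda D\rfloor)$. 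With only the $V$-filtration compatibility at hand, you have no mechanism to isolate $\{h:h\delta\in V^{>\lambda}\}$ inside the $V^0\cD$-module $V^{>\lambda}$. Note also that your heuristic ``$\delta_X$ corresponds to $J_\pi\cdot\delta_Y$'' is not right as stated: over $U=X\smallsetminus H$ the identification $\pi'_+(\iota_g)_+\cO_Y[1/g]\simeq(\iota_f)_+\cO_X[1/f]$ sends $\delta_Y$ to $\delta_X$ with no Jacobian factor (both are the class of $1/(f-t)$). The Jacobian has to emerge from the interaction of the de Rham complex with the sub\-module $V^{>\lambda}$, and that is exactly the step you have not supplied.

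Your preliminary reductions (to $\lambda\in(0,1)$, and to $\cO_X[1/f]$) are fine, and the SNC base case can indeed be checked directly. But to finish along your line you would need a substitute for Saito's Hodge-filtration strictness; absent that, the paper's $b$-function/analytic route is the efficient path.
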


The proof of this result in \cite{BudurSaito} makes use of results in Saito's theory of mixed Hodge modules \cite{Saito-MHM}. Here we give a more elementary proof
following \cite{DM}.

\begin{proof}[Proof of Theorem~\ref{thm_BudurSaito}]
We may and will assume that $X$ is affine, with $R=\cO_X(X)$ and let $\pi\colon Y\to X$ be a log resolution of $(X,H)$. Let $g\in R$ be nonzero. We use the notation in (\ref{formula_pull_back_H}) and denote by
$b_i$ the coefficient of $E_i$ in $\pi^*\big({\rm div}(g)\big)$. By definition, we have $g\in\cJ(X,\lambda H)$ if and only if 
$b_i+k_i\geq\lfloor\lambda a_i\rfloor$ for all $i$, which is the case if and only if $\lambda<\min_i\tfrac{b_i+k_i+1}{a_i}=:{\rm lct}_g(X,H)$. 
On the other hand, it follows from Corollary~\ref{prop_char_V_filt}  that $g\delta\in V^{>\lambda}\iota_+(\cO_X)$ if and only if all roots of $b_{g\delta}$ are $<-\lambda$.
We thus conclude that the assertion in the theorem is equivalent to the fact that for every nonzero $g\in\cO_X(X)$, the largest root of $b_{g\delta}$ is $-\lct_g(X,H)$.

The fact that every root of $b_{g\delta}$ is $\leq -\lct_g(X,H)$ follows from Theorem~\ref{thm_DM_bfcn}iii). In order to complete the proof, 
it is enough to show that if $g\in \cJ\big(X,(\lambda-\epsilon) H\big)$ for every $\epsilon>0$, but
$g\not\in\cJ(X,\lambda H)$, then $b_{g\delta}(-\lambda)=0$. 
For this, 
we use the Lefschetz Principle to reduce to the
case when the ground field is $\CC$, when we can use the analytic description of multiplier ideals.
The argument follows closely the argument for the proof of Theorem~\ref{thm_ELSV} in \cite{ELSV}. 

We may assume that we have coordinates $z_1,\ldots,z_n$ on $X$. 
Since $g\not\in\cJ(X,\lambda H)$, it follows that there is a point $x_0\in X$ such that $\tfrac{|g|^2}{|f|^{2\lambda}}$ is not integrable in any neighborhood of $x_0$.
On the other hand, since $\tfrac{|g|^2}{|f|^{2\mu}}$ is locally integrable at $x_0$ for all $\mu<\lambda$, we can choose an open ball $B$ around $x_0$ (with respect to our coordinates)
such that $\int_B\tfrac{|g|^2}{|f|^{2\mu}}dzd\overline{z}<\infty$ for all $\mu<\lambda$ (the fact that we can choose $B$ independently of $\mu$ follows from the proof of the analytic characterization 
of the multiplier ideal, see \cite[Chapter~9.3.D]{Lazarsfeld}). 

By definition of the $b$-function, if we put $b=b_{g\delta}$, then there is $P\in D_R[s]$ such that
$$b(s)gf^s=P\cdot gf^{s+1}.$$
Suppose that we are in an open subset $U$ of $X^{\rm an}$ where a branch of ${\rm log}(f)$ is defined, hence $f^{\mu}={\rm exp}\big(\mu\cdot {\rm log}(f)\big)$ is defined
for every $\mu\in\RR$. We choose $\mu$ such that $0<\lambda-\mu\ll 1$
and specialize to  $s=-\mu$ (see Remark~\ref{rmk_specialize_s}), to get
$$b(-\mu)gf^{-\mu}=P(-\mu)\cdot gf^{1-\mu}.$$
Applying complex conjugation, we obtain
$$b(-\mu)\overline{g}\overline{f}^{-\mu}=\overline{P}(-\mu) \overline{g}\overline{f}^{1-\mu}.$$
Using the fact that that 
$$P\cdot (\overline{h_1}h_2)=\overline{h_1}P\cdot h_2\quad\text{and}\quad \overline{P}\cdot (\overline{h_1}h_2)=h_2\overline{P}\cdot \overline{h_1}$$
for every holomorphic functions $h_1$ and $h_2$, we conclude that if $R=P\overline{P}$, then
\begin{equation}\label{eq_integral_formula}
b(-\mu)^2\cdot \tfrac{|g|^2}{|f|^{2\mu}}=R(-\mu)\cdot\tfrac{|g|^2}{|f|^{2(\mu-1)}}.
\end{equation}
Note that this formula does not depend on $U$ and it makes sense on $X\smallsetminus H$. 

Since both sides of (\ref{eq_integral_formula}) are integrable on $B$, 
if $\varphi$ is a smooth function with compact support on $B$, then the following integrals are finite
$$\int_B b(-\mu)^2\cdot \tfrac{|g|}{|f|^{2\mu}}\varphi dzd\overline{z}=\int_B\big(R(-\mu)\cdot\tfrac{|g|^2}{|f|^{2(\mu-1)}}\big)\varphi dzd\overline{z}
=\int_B\tfrac{|g|^2}{|f|^{2(\mu-1)}}\psi dzd\overline{z},$$
where $\psi=\widetilde{R}(-\mu)$ (here $\widetilde{R}$ is the classical adjoint of $R$) and the last equality in the displayed formula is a consequence of the
Stokes Theorem. In particular, by choosing $\varphi$ to be nonnegative and with $\varphi=1$ on a smaller ball $B'\subsetneq B$, we get
$$b(-\mu)^2\cdot  \int_{B'}\tfrac{|g|^2}{|f|^{2\mu}}dzd\overline{z}\leq \int_B\tfrac{|g|^2}{|f|^{2(\mu-1)}}\psi dzd\overline{z}\leq M,$$
for some constant $M$ that is independent of $\mu$. If $b(-\lambda)\neq 0$, then we conclude that$ \int_{B'}\tfrac{|g|^2}{|f|^{2\mu}}dzd\overline{z}$
is bounded for $\mu\to \lambda$, hence $\tfrac{|g|^2}{|f|^{2\lambda}}$ is integrable on $B'$ by the monotone convergence theorem.
This contradiction completes the proof of the theorem.
\end{proof}

\begin{rmk}[The result of Koll\'{a}r and Lichtin]
Suppose that $X$ and $H$ are as in Theorem~\ref{thm_BudurSaito}. We have seen in the proof of the theorem that its statement is equivalent to the fact that for every $g\in\cO_X(X)$ nonzero,
the largest root of $b_{g\delta}$ is $-\lct_g(X,H)$. The special case $g=1$ corresponds to the assertion in Theorem~\ref{thm_LichtinKollar}
\end{rmk}

\begin{proof}[Proof of Theorem~\ref{thm_ELSV}]
We may and will assume that $X$ is an affine variety, with $R=\cO_X(X)$, and $H$ is defined by $f\in R$. Since $\lambda$ is a jumping number, it follows that there is $g\in R$ such that $g\in\cJ\big(X,(\lambda-\epsilon)H\big)$
for $0<\epsilon\ll 1$, but $g\not\in \cJ(X,\lambda H)$. For simplicity, we write $V^{\alpha}$ for $V^{\alpha}\iota_+(\cO_X)$.
By Theorem~\ref{thm_BudurSaito}, we have $g\delta\in V^{\lambda}\smallsetminus V^{>\lambda}$. 
On the other hand, by definition of $b_f$, there is $P\in D_R[s]$ such that
$$b_f(s)\delta=P\cdot t\delta.$$
Note that $\delta\in V^{>0}$: this follows from Theorem~\ref{thm_BudurSaito} since $\cJ(X,\lambda H)=\cO_X$ for $\lambda=0$; alternatively, it follows from the description
of the $V$-filtration in Proposition~\ref{prop_char_V_filt} and the fact that the roots of $b_f$ are negative by Theorem~\ref{thm_DM_bfcn}i). Therefore $t\delta\in V^{>1}\subseteq
V^{>\lambda}$. Since $b_f(s)g\delta=gP\cdot t\delta\in V^{>\lambda}$, it follows that $b_f(s)$ annihilates a nonzero element in ${\rm Gr}_V^{\lambda}$. 
Since $(s+\lambda)^N$ annihilates ${\rm Gr}_V^{\lambda}$ for some $N$, it follows that ${\rm gcd}\big(b_f,(s+\lambda)^N\big)$ annihilates a nonzero element, hence
$(s+\lambda)$ divides $b_f$.
\end{proof}

\subsection{The $V$-filtration and the minimal exponent}\label{section_minimal_exponent}

Our goal in this section is to discuss a refinement of the notions of log canonical threshold and multiplier ideals due to Saito. 
The idea is to use the $V$-filtration in order to define a version of multiplier ideals that give new information also for $\lambda\geq 1$.

We fix a smooth, irreducible algebraic variety 
$X$ and a (nonempty) hypersurface $H$ in $X$. To begin with, we assume that $H$ is defined by $f\in\cO_X(X)$. For simplicity, we write
$V^{\alpha}$ for $V^{\alpha}\iota_+(\cO_X)$, where $\iota\colon X\hookrightarrow X\times\AA^1$ is the graph embedding corresponding to $f$. 

\begin{defi}
For every $\lambda\in\QQ_{\geq 0}$, we write $\lambda=\alpha+q$, where $q\in\ZZ_{\geq 0}$ and $\alpha\in [0,1)$, and we denote by
$\widetilde{\cJ}(X, \lambda H)$ the coherent ideal of $\cO_X$ consisting of those $h\in\cO_X$ with the property that there are $h_0,\ldots,h_{q-1}\in\cO_X$
such that $h_0\delta+\ldots+h_{q-1}\partial_t^{q-1}\delta+h\partial_t^q\delta\in V^{>\alpha}$. 
\end{defi}

These ideals have been introduced by Saito in \cite{Saito_Hodge} as \emph{microlocal multiplier ideals}, due to the fact that the definition
was expressed in terms of the so-called \emph{microlocal $V$-filtration} (however, we will not use this terminology). Under the name of \emph{higher multiplier ideals}
and with a different indexing, the ideals have been studied systematically by Schnell and Yang in their recent preprint \cite{SchnellYang} using twisted Hodge modules.

\begin{rmk}[The connection with the classical version]
It is a consequence of Theorem~\ref{thm_BudurSaito} that for $\lambda<1$, we have
$$\widetilde{\cJ}(X,\lambda H)=\cJ(X,\lambda H).$$
\end{rmk}

\begin{rmk}[Independence of the equation]
If $g\in\cO_X(X)$ defines the same hypersurface, then we can write $g=pf$, for some invertible $p\in\cO_X(X)$. If $\iota_f$ and $\iota_g$ are the graph embeddings
corresponding to $f$ and $g$, respectively, then it follows from Remark~\ref{Vfilt_rescaling} that 
$$V^{>\alpha}(\iota_g)_+(\cO_X)=\big\{\sum_{j=0}^qp^{j+1}u_j\partial_t^j\delta\mid q\in\ZZ_{\geq 0},\,\, \sum_{j=0}^qu_j\partial_t^j\delta\in V^{>\alpha}(\iota_f)_+(\cO_X)\big\}.$$
This immediately implies that $\widetilde{\cJ}(X, \lambda H)$ does not depend on the choice of $f$. We can thus define $\widetilde{\cJ}(X, \lambda H)$ for every hypersurface $H$,
by glueing the corresponding ideals on a suitable open cover such that $H$ is a principal divisor in each of these open subsets. 
\end{rmk}

\begin{rmk}[Discreteness and right continuity of the ideals $\widetilde{\cJ}(X,\lambda H)$]\label{discreteness_microlocal_multiplier}
For every $H$, there is a positive integer $\ell$ such that $\widetilde{\cJ}(X, \lambda H)$ is constant
for $\lambda\in \big[\tfrac{i}{\ell},\tfrac{i+1}{\ell}\big)$ for every $i\in\ZZ_{\geq 0}$. Indeed, it is enough to check this when $H$ is defined by $f\in\cO_X(X)$, in which case
the assertion follows from the fact that
$V^{\bullet}\iota_+(\cO_X)$ is discrete and left continuous.
 In particular, we see for every $\lambda\in\QQ_{\geq 0}$, there is $\lambda'>\lambda$ such that $\widetilde{\cJ}(X,\lambda H)=\widetilde{\cJ}(X,\mu H)$
 for every $\lambda\leq\mu\leq \lambda'$.
\end{rmk}

\begin{prop}\label{inclusion_microlocal}
For every hypersurface $H$ on the smooth, irreducible variety $X$ and every $\lambda\geq\mu$, we have
$$\widetilde{\cJ}(X,\lambda H)\subseteq \widetilde{\cJ}(X,\mu H).$$
\end{prop}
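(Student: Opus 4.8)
The plan is to reduce at once to the case where $H$ is defined by a single function $f\in\cO_X(X)$: the ideals $\widetilde{\cJ}(X,\lambda H)$ are defined by gluing from such local models, and an inclusion of coherent ideals is a local statement. Write $\lambda=\alpha+q$ and $\mu=\beta+p$ with $\alpha,\beta\in[0,1)$ and $p,q\in\ZZ_{\geq0}$; since $\lambda\geq\mu$ and $\alpha,\beta<1$, one checks that $p\leq q$. If $p=q$, then $\alpha\geq\beta$, and because $V^\bullet\iota_+(\cO_X)$ is a decreasing filtration we have $V^{>\alpha}\iota_+(\cO_X)\subseteq V^{>\beta}\iota_+(\cO_X)$; hence the very sections $h_0,\dots,h_{q-1}$ witnessing $h\in\widetilde{\cJ}(X,\lambda H)$ also witness $h\in\widetilde{\cJ}(X,\mu H)$, and we are done in this case.

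The remaining case $p<q$ will be handled by iterating the following ``descent by one'' lemma: \emph{if $q\geq1$, $\alpha\in[0,1)$ and $h\in\widetilde{\cJ}\big(X,(\alpha+q)H\big)$, then $h\in\widetilde{\cJ}\big(X,(\beta+(q-1))H\big)$ for every $\beta\in[0,1)$.} Granting this, one applies the lemma $q-p$ times --- at each stage $h$ already lies in $\widetilde{\cJ}$ for the lower integer block with an arbitrary fractional part, so one may keep descending --- until, on the final application (which has integer part $p+1\geq1$), one concludes that $h\in\widetilde{\cJ}\big(X,(\gamma+p)H\big)$ for all $\gamma\in[0,1)$; specializing $\gamma=\beta$ gives $h\in\widetilde{\cJ}(X,\mu H)$.

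To prove the lemma, fix $h_0,\dots,h_{q-1}\in\cO_X$, put $h_q:=h$, and assume $w:=\sum_{j=0}^{q}h_j\partial_t^j\delta\in V^{>\alpha}\iota_+(\cO_X)$. Set $v:=\sum_{j=0}^{q-1}h_{j+1}\partial_t^j\delta$, so that $\partial_t v=\sum_{k=1}^{q}h_k\partial_t^k\delta=w-h_0\delta$. By Example~\ref{exist_V_filt_O_X} we have $\delta\in V^{>0}\iota_+(\cO_X)$, and since this is an $\cO_X$-submodule, $h_0\delta\in V^{>0}\iota_+(\cO_X)$; as $\alpha\geq0$ we also have $w\in V^{>\alpha}\iota_+(\cO_X)\subseteq V^{>0}\iota_+(\cO_X)$. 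Therefore $\partial_t v\in V^{>0}\iota_+(\cO_X)\subseteq V^{0}\iota_+(\cO_X)$, and Corollary~\ref{cor_rmk1_Vfiltration}ii) (taking the parameter there equal to $1$) yields $v\in V^{1}\iota_+(\cO_X)$. Writing $v=h\partial_t^{q-1}\delta+\sum_{j=0}^{q-2}h_{j+1}\partial_t^j\delta$ and using that $V^{1}\iota_+(\cO_X)\subseteq V^{>\beta}\iota_+(\cO_X)$ for every $\beta<1$ (the filtration being decreasing and left-continuous), this exhibits exactly the data required for $h\in\widetilde{\cJ}\big(X,(\beta+(q-1))H\big)$, for every $\beta\in[0,1)$.

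The main obstacle is concentrated in this lemma, and more precisely in the single step converting an ``order $q$'' membership into an ``order $q-1$'' one: this is where one must combine the ``integration'' direction of the $V$-filtration identities (Corollary~\ref{cor_rmk1_Vfiltration}ii)) with the already available fact that $\delta\in V^{>0}\iota_+(\cO_X)$. Once that step is in place, the passage to arbitrary $\mu\leq\lambda$ is pure bookkeeping with the integer/fractional decomposition.
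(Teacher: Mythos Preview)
Your proof is correct and follows essentially the same strategy as the paper's: reduce to a local equation, handle the case of equal integer parts directly from $V^{>\alpha}\subseteq V^{>\beta}$, and for the crossing of an integer use that $\delta\in V^{>0}$ together with Corollary~\ref{cor_rmk1_Vfiltration}ii) to conclude that the ``shifted'' witness $v=\sum_{j=0}^{q-1}h_{j+1}\partial_t^j\delta$ lies in $V^1$. The only cosmetic difference is that the paper phrases the key step as the single bridge $\widetilde{\cJ}(X,qH)\subseteq\widetilde{\cJ}(X,\mu H)$ for $q-1\le\mu<q$ (and then combines with the within-interval inclusion), whereas you package it as a descent lemma from $(\alpha+q)$ to $(\beta+(q-1))$ and iterate; the underlying computation is identical.
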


\begin{proof}
We may assume that $X$ is affine and $H$ is defined by $f\in\cO_X(X)$. 
It follows directly from the definition that for every $q\in\ZZ_{\geq 0}$, we have 
$$\widetilde{\cJ}(X, \lambda H)\subseteq\widetilde{\cJ}(X,\mu H)$$
for $q\leq\mu\leq\lambda<q+1$. Therefore, in order to prove the proposition, it is enough to show that
for every $q\in\ZZ_{>0}$ and every $\mu$, with $q-1\leq\mu<q$, we have
$$\widetilde{\cJ}(X,qH)\subseteq\widetilde{\cJ}\big(X,\mu H).$$
In order to prove this, let $h$ be a global section of $\widetilde{\cJ}(X, qH)$, hence there are $h_0,\ldots,h_{q-1}\in\cO_X(X)$
such that $u=h_0\delta+\ldots+h_{q-1}\partial_t^{q-1}\delta+h\partial_t^q\delta\in V^{>0}$.
Note that $\delta\in V^{>0}$: this follows, using Proposition~\ref{prop_char_V_filt}, from the fact that $b_f$ has negative roots, see
Theorem~\ref{thm_DM_bfcn}i). Therefore we have
$$\partial_t\cdot (h_1\delta+\ldots+h\partial_t^{q-1}\delta)\in V^{>0}\subseteq V^0.$$
We deduce from Corollary~\ref{cor_rmk1_Vfiltration}ii) that $h_1\delta+\ldots+h\partial_t^{q-1}\delta\in V^1$, hence $h\in\widetilde{\cJ}(X,\mu H)$
for every $\mu$, with $q-1\leq\mu<q$. This completes the proof.
\end{proof}

\begin{prop}\label{triviality_microlocal}
Let $X$ be a smooth, irreducible variety and $H$ the hypersurface defined by $f\in\cO_X(X)$. If $\lambda\in\QQ_{\geq 0}$ is written as $\lambda=\alpha+q$,
where $q\in\ZZ_{\geq 0}$ and $\alpha\in [0,1)$, then $\widetilde{\cJ}(X,\lambda H)=\cO_X$ if and only if $\partial_t^q\delta\in V^{>\alpha}$.
\end{prop}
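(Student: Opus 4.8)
The plan is to argue by induction on $q$, uniformly in the fractional part $\alpha\in[0,1)$. The base case $q=0$ is a tautology: for $q=0$ the definition of $\widetilde{\cJ}(X,\lambda H)$ involves no auxiliary functions, so $\widetilde{\cJ}(X,\alpha H)=\cO_X$ holds if and only if $1\in\widetilde{\cJ}(X,\alpha H)$, which by definition means $\delta=\partial_t^0\delta\in V^{>\alpha}\iota_+(\cO_X)$. For the induction step I would assume the equivalence for $q-1$ (all fractional parts) and prove it for $q$. Since both conditions in the statement are local on $X$, I would pass to an affine open set and assume there exist $h_0,\dots,h_{q-1}\in\cO_X$ with $u:=\sum_{j=0}^{q-1}h_j\partial_t^j\delta+\partial_t^q\delta\in V^{>\alpha}\iota_+(\cO_X)$.

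The implication $\partial_t^q\delta\in V^{>\alpha}\Rightarrow\widetilde{\cJ}(X,\lambda H)=\cO_X$ is immediate, by taking all $h_j=0$ in the definition, so the content is the converse, and for it, it suffices to show that the tail $\sum_{j=0}^{q-1}h_j\partial_t^j\delta$ already lies in $V^{>\alpha}$. The key observation is that the hypothesis $\widetilde{\cJ}(X,\lambda H)=\cO_X$ forces $\partial_t^{q-1}\delta\in V^1$. Indeed, by Proposition~\ref{inclusion_microlocal} we have $\widetilde{\cJ}\big(X,(\beta+q-1)H\big)\supseteq\widetilde{\cJ}(X,\lambda H)=\cO_X$ for every $\beta\in[0,1)$, since $\beta+q-1<q\le\lambda=\alpha+q$; applying the inductive hypothesis at level $q-1$ gives $\partial_t^{q-1}\delta\in V^{>\beta}$ for every $\beta\in[0,1)$, and since the $V$-filtration is discrete one has $\bigcap_{\beta\in[0,1)}V^{>\beta}\iota_+(\cO_X)=V^1\iota_+(\cO_X)$, whence $\partial_t^{q-1}\delta\in V^1$. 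A short descending induction on $j$ then yields $\partial_t^j\delta\in V^1$ for all $0\le j\le q-1$: if $\partial_t^{j+1}\delta\in V^1\subseteq V^0$, then $\partial_t\cdot(\partial_t^j\delta)\in V^0$, so Corollary~\ref{cor_rmk1_Vfiltration}(ii), applied with index $1$, gives $\partial_t^j\delta\in V^1$. Finally, since $V^1$ is a $V^0\cD_{X\times\AA^1}$-module and $\cO_X\subseteq V^0\cD_{X\times\AA^1}$, we get $\sum_{j=0}^{q-1}h_j\partial_t^j\delta\in V^1\subseteq V^{>\alpha}$ (the last inclusion because $\alpha<1$), and therefore $\partial_t^q\delta=u-\sum_{j=0}^{q-1}h_j\partial_t^j\delta\in V^{>\alpha}$, closing the induction.

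I do not expect a serious obstacle: the argument only invokes the elementary stability properties of the $V$-filtration from Section~\ref{section_Vfiltration_smooth} (chiefly Corollary~\ref{cor_rmk1_Vfiltration}), the monotonicity in Proposition~\ref{inclusion_microlocal}, and discreteness of the filtration; no estimates are required. The points demanding a little care are the bookkeeping of the induction — the inductive hypothesis must be used at level $q-1$ simultaneously for all fractional parts $\beta\in[0,1)$, which is exactly why the statement is phrased uniformly in $\alpha$ — and the (harmless) reduction to an affine chart, legitimate because both sides of the asserted equivalence are local conditions on $X$.
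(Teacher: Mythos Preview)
Your proof is correct and follows the same inductive structure as the paper's: induction on $q$, with monotonicity (Proposition~\ref{inclusion_microlocal}) feeding the inductive hypothesis to show that the lower-order terms $h_j\partial_t^j\delta$ already lie in $V^{>\alpha}$, after which subtraction from $u$ finishes. The paper's execution of the inductive step is slightly more direct than yours: instead of first proving $\partial_t^{q-1}\delta\in V^1$ by ranging over all fractional parts $\beta\in[0,1)$ and then running a separate descending induction via Corollary~\ref{cor_rmk1_Vfiltration}(ii), the paper simply applies the inductive hypothesis at each $\lambda-i=\alpha+(q-i)$ for $1\le i\le q$ (all with the \emph{same} fractional part $\alpha$), obtaining $\partial_t^j\delta\in V^{>\alpha}$ for every $j<q$ in one stroke. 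Your detour through $V^1$ is harmless but unnecessary.
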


\begin{proof}
The ``if" part is clear from the definition, so we only need to prove the converse. We argue by induction on $q$, the case $q=0$ being clear. 
We may and will assume that $X$ is affine. Suppose that $1\in\Gamma\big(X,\widetilde{\cJ}(X,\lambda H)\big)$, so there are
$h_0,\ldots,h_{q-1}\in\cO_X(X)$ such that 
\begin{equation}\label{eq_triviality_microlocal}
h_0\delta+\ldots+h_{q-1}\partial_t^{q-1}\delta+\partial_t^q\delta\in V^{>\alpha}.
\end{equation}
By Proposition~\ref{inclusion_microlocal}, we know that $\widetilde{\cJ}\big(X,(\lambda-i)H\big)=\cO_X$ for $1\leq i\leq q$, hence
by induction we know that $\partial_t^j\delta\in V^{>\alpha}$ for $j<q$. We thus deduce from (\ref{eq_triviality_microlocal}) that
$\partial_t^q\delta\in V^{>\alpha}$, completing the proof of the induction step.
\end{proof}

\begin{eg}[The ideals $\widetilde{\cJ}(X,\lambda H)$ when $H$ is smooth]
If $H$ is a smooth hypersurface of $X$, then $\widetilde{\cJ}(X, \lambda H)=\cO_X$ for all $\lambda\geq 0$. Indeed, it follows from 
Remark~\ref{Vfilt_on_B_f_smooth} that 
$\partial_t^j\delta\in V^1\iota_+(\cO_X)$ for all $j\geq 0$. 
In fact, the converse also holds: if $\widetilde{\cJ}(X,\lambda H)=\cO_X$ for all $\lambda\geq 0$, then $H$ is smooth. 
However, the proof of this assertion requires tools beyond the ones we discuss in these notes:
the condition $\widetilde{\cJ}(X,\lambda H)=\cO_X$ for all $\lambda\geq 0$ is equivalent to $\widetilde{b}_H(s)=1$
(see Theorem~\ref{microlocal_and_min_exp}); the fact that this implies that $H$ is smooth is proved in 
\cite{BrianconMaisonobe} and \cite[Theorem~E(3)]{MP}.
\end{eg}

Our next goal is to describe $\min\big\{\lambda\in\QQ_{\geq 0}\mid \widetilde{\cJ}(X,\lambda H)\neq\cO_X\}$.
Note that by Remark~\ref{discreteness_microlocal_multiplier} and Proposition~\ref{inclusion_microlocal}, it makes sense to consider this minimum
(though this may be infinite, as seen in the above example). 

Let $H$ be a nonempty hypersurface in the smooth, irreducible variety $X$. Recall that in this case 
we write $b_H(s)=(s+1)\cdot\widetilde{b}_H(s)$.

\begin{defi}
The \emph{minimal exponent} $\widetilde{\alpha}(X,H)$ (also written $\widetilde{\alpha}(H)$ when $X$ is understood) of the hypersurface $H$
 is the negative of the largest root of $\widetilde{b}_H$,
with the convention that this is infinite if $\widetilde{b}_H=1$. 
\end{defi}


\begin{rmk}[The log canonical threshold vs. the minimal exponent]
It is a consequence of Theorem~\ref{thm_LichtinKollar} that
$$\lct(X,H)=\min\big\{\widetilde{\alpha}(X,H),1\big\}.$$
\end{rmk}

The minimal exponent was introduced by Saito in \cite{Saito_microlocal}, where it was related to the microlocal $V$-filtration. 
We now show that the minimal exponent governs the triviality of the refined multiplier ideals.

\begin{thm}\label{microlocal_and_min_exp}
If $H$ is a nonempty hypersurface in the smooth, irreducible, algebraic variety $X$, then
$$\min\big\{\lambda\in\QQ_{\geq 0}\mid \widetilde{\cJ}(X,\lambda H)\neq\cO_X\}=\widetilde{\alpha}(X,H).$$
\end{thm}

This result was proved by Saito in \cite{Saito_Hodge} using the connection with the microlocal $V$-filtration. We give a proof using the following
result, which is of independent interest:

\begin{thm}\label{equality_b_fcn}
If $X$ is a smooth, irreducible algebraic variety and $f\in\cO_X(X)$ is not invertible, then for every $q\in\ZZ_{\geq 0}$, we have
$$b_{\partial_t^q\delta}(s)=(s+1)\cdot \widetilde{b}_f(s-q).$$
\end{thm}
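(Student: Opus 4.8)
The plan is to reduce the statement to a computation involving the $V$-filtration on $\iota_+(\cO_X)$ and the characterization of $b$-functions in Corollary~\ref{prop_char_V_filt}, together with the shift properties of the $V$-filtration with respect to $\partial_t$. First, I would record that since $f$ is not invertible, we know $\delta\in V^{>0}\iota_+(\cO_X)$ (Example~\ref{exist_V_filt_O_X}), so by Proposition~\ref{rmk1_Vfiltration} and Corollary~\ref{cor_rmk1_Vfiltration} the element $\partial_t^q\delta$ lies in $V^{\alpha}\iota_+(\cO_X)$ precisely when $\delta\in V^{\alpha+q}\iota_+(\cO_X)$, at least for $\alpha+q > 0$; more precisely, $\partial_t^q$ shifts the $V$-degree down by $q$ on $V^{>0}$. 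Write $\widetilde b_f(s)=\prod_i(s+\gamma_i)$ with all $\gamma_i>0$ rational (Theorems~\ref{thm_DM_bfcn} and \ref{thm_LichtinKollar}), so that $b_f(s)=(s+1)\prod_i(s+\gamma_i)$. The claimed identity is $b_{\partial_t^q\delta}(s)=(s+1)\prod_i(s+\gamma_i-q)$, i.e. the roots of $b_{\partial_t^q\delta}$ are $-1$ together with the numbers $q-\gamma_i$. By Remark~\ref{rmk_gen_defi_b_fcn}, $b_{\partial_t^q\delta}$ is the minimal polynomial of $s$ acting on $\cN/t\cN$ where $\cN = V^0\cD_{X\times\AA^1}\cdot(\partial_t^q\delta)$, so everything is governed by which generalized eigenvalues of $s$ occur in the top graded pieces of the $V$-filtration on $\cN$.

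The heart of the argument is to identify $\cN = V^0\cD_{X\times\AA^1}\cdot(\partial_t^q\delta)$ with a submodule of $\iota_+(\cO_X)$ built from $\delta$ and to locate it inside the $V$-filtration. I would argue as follows. Since $\delta\in V^{>0}$ and $V^0\cD_{X\times\AA^1}$ preserves each $V^{\beta}$, applying $\partial_t^q$ moves $V^{>0}$ into $V^{>-q}$, and a careful use of Corollary~\ref{cor_rmk1_Vfiltration}(iii) together with Proposition~\ref{rmk1_Vfiltration} shows that the roots of $b_{\partial_t^q\delta}$ are obtained from those of $b_{\delta}$ by the substitution $s\mapsto s-q$ \emph{except} possibly for the appearance or disappearance of the root $-1$ (equivalently $\alpha=1$), since $\partial_t\colon {\rm Gr}_V^1\to {\rm Gr}_V^0$ and $t\colon {\rm Gr}_V^0\to {\rm Gr}_V^1$ are \emph{not} isomorphisms in general. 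Concretely: $b_{\delta}=b_f=(s+1)\prod_i(s+\gamma_i)$, and for $\gamma_i\neq 1$ the maps relating ${\rm Gr}_V^{\gamma_i}$ and ${\rm Gr}_V^{\gamma_i-1}$, ${\rm Gr}_V^{\gamma_i-2}$, etc., via $\partial_t$ are isomorphisms by Proposition~\ref{rmk1_Vfiltration}, so the factor $(s+\gamma_i)$ of $b_\delta$ contributes the factor $(s+\gamma_i-q)$ to $b_{\partial_t^q\delta}$; I would iterate this $q$ times. The only subtlety is tracking what happens at integral shifts that pass through $0$ and $1$.

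The main obstacle — and where I expect the real work to lie — is proving that the factor $(s+1)$ appears in $b_{\partial_t^q\delta}$ with multiplicity exactly one (no more, no less), i.e. that ${\rm Gr}_V^{-1}\big(V^0\cD_{X\times\AA^1}\cdot\partial_t^q\delta\big)\neq 0$ but higher powers of $(s+1)$ do not survive, and that the shifted roots $q-\gamma_i$ do not accidentally collide with $-1$ in a way that changes multiplicities. For the lower bound ($(s+1)\,\widetilde b_f(s-q)$ divides $b_{\partial_t^q\delta}$), I would specialize: $\partial_t^q\delta$ corresponds, under the analysis in Remark~\ref{rmk_gen_defi_b_fcn}, to a section whose $b$-function detects all graded pieces ${\rm Gr}_V^{-\gamma}$ with $\gamma$ in the relevant range meeting $V^0\cD\cdot\partial_t^q\delta$ nontrivially; one shows $\delta\notin V^{>1}$ forces $\partial_t^q\delta\notin V^{>1-q}$ and, more importantly, that $t\cdot\partial_t^q\delta$ pulls down precisely one power of $(s+1)$ because $\partial_t t=-s-1$ acting on $\partial_t^{q-1}\delta$-type elements is not invertible on ${\rm Gr}_V^0$. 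For the upper bound (divisibility the other way), I would use that $b_f(s)\delta = (s+1)\widetilde b_f(s)\delta\in V^1\cD_{X\times\AA^1}\cdot\delta$, apply $\partial_t^q$ and conjugate the operator identity via Lemma~\ref{lem_s}(i) (which gives $P(s)\partial_t^q=\partial_t^qP(s+q)$), obtaining $\widetilde b_f(s-q)\cdot(s-q+1)\cdot\partial_t^q\delta\in \partial_t^q V^1\cD_{X\times\AA^1}\cdot\delta$, and then check that $\partial_t^q\big(V^1\cD_{X\times\AA^1}\cdot\delta\big)\subseteq V^1\cD_{X\times\AA^1}\cdot\partial_t^q\delta + (\text{lower-order terms controlled by }\partial_t^j\delta,\ j<q)$, the lower-order terms being handled by descending induction on $q$ using Proposition~\ref{inclusion_microlocal}-style reasoning and the $q-1$ case. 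Combining the two divisibilities, and reconciling the mismatch between $(s-q+1)$ coming out of the conjugation and the claimed $(s+1)$ — this reconciliation, I suspect, is exactly the point where one must invoke that $\partial_t^j\delta\in V^{>0}$ for all $0\le j<q$ fails in general, pinning down the $(s+1)$ factor — yields the stated formula $b_{\partial_t^q\delta}(s)=(s+1)\widetilde b_f(s-q)$.
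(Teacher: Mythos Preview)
Your proposal has a genuine gap in both directions, and the core issue is that you are trying to read off the full $b$-function from $V$-filtration data that only determines its \emph{largest} root.

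For the divisibility $b_{\partial_t^q\delta}\mid (s+1)\widetilde b_f(s-q)$: your plan is to apply $\partial_t^q$ to the relation $(s+1)\widetilde b_f(s)\delta\in V^1\cD_{X\times\AA^1}\cdot\delta$ and conjugate via Lemma~\ref{lem_s}(i). But $\partial_t^qP(s)=P(s-q)\partial_t^q$ gives you $(s-q+1)\widetilde b_f(s-q)\,\partial_t^q\delta$ on the left, and on the right you only get membership in $\partial_t^q\bigl(V^1\cD\cdot\delta\bigr)\subseteq V^{1-q}\cD\cdot\delta$, which is \emph{not} the same as $V^1\cD\cdot\partial_t^q\delta$. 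You flag this mismatch but do not resolve it. The paper's argument is different: it rewrites $(s+1)=-t\partial_t$, uses $P(s)t=tP(s-1)$, \emph{cancels} the injective $t$ to obtain $\partial_t\widetilde b_f(s)\delta=-P(s-1)\delta$, and only then multiplies by $\partial_t^{q-1}$ and reintroduces $t$ on the outside. This produces exactly $(s+1)\widetilde b_f(s-q)\partial_t^q\delta=P(s-q)\,t\,\partial_t^q\delta\in V^1\cD\cdot\partial_t^q\delta$. No induction on $q$ or lower-order terms is needed.

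For the reverse divisibility $(s+1)\widetilde b_f(s-q)\mid b_{\partial_t^q\delta}$: your idea is to use Proposition~\ref{rmk1_Vfiltration} (the isomorphisms $\partial_t\colon{\rm Gr}_V^{\alpha+1}\!\to{\rm Gr}_V^{\alpha}$ for $\alpha\neq 0$) to transport the roots of $b_\delta$ to those of $b_{\partial_t^q\delta}$ up to a shift. But those isomorphisms are between graded pieces of the \emph{ambient} module $\iota_+(\cO_X)$, whereas $b_w$ is the minimal polynomial of $s$ on $\cN_w/t\cN_w$ with $\cN_w=V^0\cD\cdot w$ (Remark~\ref{rmk_gen_defi_b_fcn}). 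The submodule $\cN_{\partial_t^q\delta}$ is neither $\partial_t^q\cN_\delta$ nor a shift of $\cN_\delta$, and nothing in your argument controls how the minimal polynomial on its top quotient compares to that for $\delta$. Corollary~\ref{prop_char_V_filt} and Theorem~\ref{thm_existence_Vfilt}(i) only constrain the \emph{largest} root (resp.\ give an inclusion of roots into $\{\gamma:{\rm Gr}_V^\gamma\neq 0\}$); they cannot recover all roots with multiplicities. The paper instead passes to the $f^s$-model via Proposition~\ref{prop_V_and_b}, where $\partial_t^q\delta$ corresponds to $(-1)^q q!\binom{s}{q}f^{s-q}$, writes the defining relation as $p(s)\binom{s}{q}f^{s-q}=\sum_{j\geq 1}A_j(s)\binom{s+j}{q}f^{s-q+j}$, and by an explicit two-step reduction forces $A_j\in R$ for $j\geq 2$. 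Specializing $s$ to $-1,0,\ldots,q-2$ then kills $\binom{s+1}{q}$ and yields $p(-1)=0$ together with an identity that collapses the sum to a single term, from which one reads off $(s+1)\widetilde b_f(s-q)\mid p(s)$. This concrete manipulation with binomial coefficients and integer specializations is the actual content of the hard direction, and your proposal does not contain it.
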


A slightly weaker assertion (the fact that $\widetilde{b}_f(s-q)$ divides $b_{\partial_t^q\delta}(s)$, which in turn divides $(s+1)\cdot \widetilde{b}_f(s-q)$) was proved in 
\cite[Proposition~6.11]{MP}, also making use of the microlocal $V$-filtration. In what follows we give a direct proof. 

\begin{proof}[Proof of Theorem~\ref{equality_b_fcn}]

 We may and will assume that $X$ is affine, with $R=\cO_X(X)$, and that we have coordinates
$x_1,\ldots,x_n$ on $X$. We begin by showing the easier assertion, that $b_{\partial_t^q\delta}(s)$ divides $(s+1)\cdot \widetilde{b}_f(s-q)$

The case $q=0$ is clear, hence from now on we assume $q\geq 1$. We put $\widetilde{b}=\widetilde{b}_{\delta}(s)$. 
By definition of $\widetilde{b}_{\delta}(s)$, there is $P(s)\in D_R[s]$ such that 
$$(s+1)\widetilde{b}(s)\delta=P(s)f\delta.$$
Since $s+1=-t\partial_t$ and $P(s)f\delta=P(s)t\delta=tP(s-1)\delta$ by Lemma~\ref{lem_s}, we have
$$-t\partial_t\widetilde{b}(s)\delta=tP(s-1)\delta.$$
Since the action of $t$ on $\iota_+(\cO_X)$ is injective (see Remark~\ref{rmk_M_no_ftorsion}), we deduce that
$$\partial_t\widetilde{b}(s)\delta=-P(s-1)\delta.$$
Using again Lemma~\ref{lem_s}, we have
$$(s+1)\widetilde{b}(s-q)\partial_t^q\delta=(s+1)\partial_t^q\widetilde{b}(s)\delta=-(s+1)\partial_t^{q-1}P(s-1)\delta$$
$$=t\partial_t^qP(s-1)\delta=P(s-q)t\partial_t^m\delta.$$
Since $P(s-q)t\in V^1\cD_{X\times {\mathbf A}^1}$, we conclude that 
the $b$-function $b_{\partial_t^q\delta}(s)$ exists and divides the polynomial
$(s+1)\widetilde{b}(s-q)$.

We now show that $(s+1)\cdot \widetilde{b}_f(s-q)$ divides $b_{\partial_t^q\delta}(s)$.
Let $p=b_{\partial_t^q\delta}$. By definition of $b$-functions, we have
$$p(s)\partial_t^q\delta\in V^1\cD_{X\times\AA^1}\cdot \partial_t^q\delta.$$
It will be convenient to use the isomorphism of $\cD_R\langle t,\partial_t\rangle$-modules $\iota_+\big(\cO_X[1/f]\big)\simeq \cO_X[1/f,s]f^s$ (see Proposition~\ref{prop_V_and_b}). 
Note that it follows from Lemma~\ref{lem_s}ii) that
$$\partial_t^qf^s=\partial_t^qt^tf^{s-q}=(-1)^qq!{{s}\choose {q}}f^{s-q},$$
and thus
$$t^j\partial_t^qf^s=(-1)^qq!{{s+j}\choose{q}}f^{s-q+j}\quad\text{for all}\quad j\geq 0.$$
Since $V^1\cD_{X\times\AA^1}=\sum_{j\geq 1}\cD_X[s]t^j$, we conclude that 
we have a positive integer $d$ and $A_1,\ldots,A_d\in D_R[s]$ such that 
\begin{equation}\label{eq2_equality_b_fcn}
p(s){s\choose q}f^{s-q}=\sum_{j=1}^dA_j(s){{s+j}\choose{q}}f^{s-q+j}.
\end{equation}

We first show by descending induction on $\ell\geq 2$ that we may assume that $A_j\in R[s]$ for all $j\geq \ell$. This is trivial if $p>d$, hence it is enough to show that if
$A_j\in R[s]$ for all $j\geq \ell+1$, with $\ell\geq 2$, then we can modify $A_{\ell}$ and $A_{\ell-1}$ so that also $A_{\ell}\in R[s]$. Let us write
$$A_{\ell}=A_{\ell,0}+\sum_{i=1}^nA_{\ell,i}\partial_{x_i}, \quad\text{with}\quad A_{\ell,0}\in R[s].$$
Since $\partial_{x_i}\cdot f^{s-q+\ell}=(s-q+\ell)\tfrac{\partial f}{\partial x_i}f^{s-q+\ell-1}$ and
$(s-q+\ell){{s+\ell}\choose{q}}=(s+\ell){{s+\ell-1}\choose{q}}$, it follows that 
$$A_{\ell,i}\partial_{x_i}{{s+\ell}\choose{q}}f^{s-q+\ell}\in D_R[s]{{s+\ell-1}\choose{q}}f^{s-q+\ell-1}\quad\text{for all}\quad 1\leq i\leq n.$$
We thus see that after modifying $A_{\ell-1}$, we may assume that $A_{\ell}=A_{\ell,0}\in R[s]$.
The conclusion is that we may and will assume that $A_2,\ldots,A_d\in R[s]$. 

The next step is to show, by descending induction on $\ell\geq 2$, that we may assume that $A_j\in R$ for all $j\geq \ell$. Again, this is clear if $\ell>d$.
We assume that $A_j\in R$ for all $j\geq\ell+1$, with $\ell\geq 2$, and show that we can modify $A_{\ell}$ and $A_{\ell-1}$ so that $A_{\ell}\in R$. 
Note that we can write
$$A_{\ell}(s)=B_{\ell}(s-q+\ell)+C_{\ell}\quad\text{with}\quad B_{\ell}\in R[s],\,\,C_{\ell}\in R$$
and we have
$$B_{\ell}(s-q+\ell){{s+\ell}\choose{q}}f^{s-q+\ell}\in R[s]{{s+\ell-1}\choose{q}}f^{s-q+\ell-1}.$$
We can thus modify $A_{\ell-1}$ so that $A_{\ell}=C_{\ell}\in R$. We conclude that we may and will assume that $A_j\in R$ for all $j\geq 2$. 

We next specialize $s$ to $s_0$ (see Remark~\ref{rmk_specialize_s}), where $s_0\in\{-1,0,\ldots,q-2\}$. In this case, since ${{s_0+1}\choose {q}}=0$, equation (\ref{eq2_equality_b_fcn}) becomes
\begin{equation}\label{eq3_equality_b_fcn}
p(s_0){{s_0}\choose {q}}f^{s_0-q}=\sum_{j=2}^dA_j{{s_0+j}\choose{q}}f^{s_0-q+j}.
\end{equation}
Note that if we have $\lambda+\sum_{j=1}^dg_jf^j=0$ for some $\lambda\in k$ and $g_1,\ldots,g_d\in R$, by evaluating at a point in $H$, we conclude that $\lambda=0$.
We deduce from (\ref{eq3_equality_b_fcn}) that $p(-1)=0$ and 
$$\sum_{j=2}^dA_j{{s_0+j}\choose{q}}f^{j}=0$$
for every $s_0\in \{-1,0,\ldots,q-2\}$. Let us write $p=(s+1)\widetilde{p}$.
Note that 
$$\sum_{j=2}^dA_j{{s+j}\choose{q}}f^{j-2}\in R[s]$$ has degree $\leq q$ in $s$ and it vanishes when $s=s_0\in \{-1,0,\ldots,q-2\}$. 
We thus conclude that $\sum_{j=2}^dA_j{{s+j}\choose{q}}f^{j-2}=Q\cdot {{s+1}\choose {q}}$ for some $Q\in R$. Therefore we have
$$p(s){s\choose q}f^{s-q}=A_1{{s+1}\choose q}f^{s-q+1}+Q{{s+1}\choose q}f^{s-q+2}=(A_1+Qf){{s+1}\choose{q}}f^{s-q+1}.$$
Since $p=(s+1)\widetilde{p}$ and $\cO_X[1/f,s]f^s$ is a free $\cO_X[1/f,s]$-module, we conclude that 
$$(s-q+1)\widetilde{p}(s)f^{s-q}\in\cD_X[s]\cdot f^{s-q+1}.$$
Multiplying on the left by $t^q$ (which has the effect of replacing $s$ by $s+q$), we obtain
$$(s+1)\widetilde{p}(s+q)f^s\in\cD_X[s]f^{s+1},$$
hence $b_f(s)=(s+1)\widetilde{b}_f(s)$ divides $(s+1)\widetilde{p}(s+q)$. We thus conclude that, indeed, $(s+1)\widetilde{b}_f(s-q)$ divides $p(s)$,
completing the proof of the theorem.
\end{proof}

The connection between the minimal exponent and the refined version of multiplier ideals now follows easily:

\begin{proof}[Proof of Theorem~\ref{microlocal_and_min_exp}]
By taking a suitable open cover of $X$, we may and will assume that $H$ is defined by $f\in\cO_X(X)$.
By Proposition~\ref{triviality_microlocal}, the assertion in the theorem is equivalent to the fact that for every $q\in\ZZ_{\geq 0}$ and $\alpha\in\QQ\cap [0,1)$, we have
$\partial_t^q\delta\in V^{>\alpha}$ if and only if $q+\alpha<\widetilde{\alpha}(X,H)$. Note also that by Corollary~\ref{prop_char_V_filt}, we have
$\partial_t^q\delta\in V^{>\alpha}$ if and only if all roots of $b_{\partial_t^q\delta}$ are $<-\alpha$. Since $\alpha<1$, it follows from Theorem~\ref{equality_b_fcn}
that this is the case if and only if all roots of $\widetilde{b}_f$ are $<-q-\alpha$, or equivalently, $\widetilde{\alpha}(X,H)>q+\alpha$. 
\end{proof}

We now give a lower bound for $\widetilde{\alpha}(X,H)$ in terms of a log resolution. This was proved in \cite{MP} using techniques involving mixed Hodge modules; the more elementary
proof that we give here, based on the Kashiwara-Lichtin estimate for roots of $b$-functions, is from \cite{DM}. Note that if $H$ is not reduced, then $\lct(X,H)<1$, hence
$\widetilde{\alpha}(X,H)=\lct(X,H)$ can be computed in terms of a log resolution via (\ref{eq_formula_lct}). 

\begin{thm}
Let $X$ be a smooth, irreducible algebraic variety and $H$ a nonempty reduced hypersurface in $X$.
Let $\pi\colon Y\to X$ be a log resolution of $(X,H)$ such that the strict transforms of the irreducible components of $H$ on $Y$ are disjoint.
If we write $\pi^*(H)=\sum_{i=1}^Na_iE_i$ and $K_{Y/X}=\sum_{i=1}^Nk_iE_i$, then 
\begin{equation}\label{eq_bound_min_exp}
\widetilde{\alpha}(X,H)\geq\min\big\{\tfrac{k_i+1}{a_i}\mid E_i\,\,\text{exceptional}\big\}.
\end{equation}
\end{thm}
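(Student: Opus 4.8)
The plan is to combine the identity in Theorem~\ref{equality_b_fcn} with the geometric estimate for the roots of $b$-functions in Theorem~\ref{thm_DM_bfcn}iv). Both sides of (\ref{eq_bound_min_exp}) are local on $X$: if $X=\bigcup_jU_j$ is an open cover with $H\cap U_j$ defined by a noninvertible $f_j\in\cO_X(U_j)$, then $\widetilde{\alpha}(X,H)=\min_j\widetilde{\alpha}\big(U_j,H\cap U_j\big)$ by the definition of $\widetilde{b}_H$, while restricting $\pi$ over $U_j$ gives a log resolution of $(U_j,H\cap U_j)$ with the same integers $a_i,k_i$ and whose exceptional divisors are among the $E_i\cap\pi^{-1}(U_j)$; hence the right-hand side of (\ref{eq_bound_min_exp}) is a lower bound for each local analogue. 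It therefore suffices to prove the inequality when $H$ is defined by a noninvertible $f\in\cO_X(X)$, and we may assume $\widetilde{b}_f\neq 1$, since otherwise $\widetilde{\alpha}(X,H)=\infty$ and there is nothing to prove.

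So let $\gamma$ be the largest root of $\widetilde{b}_f$, so that $\widetilde{\alpha}(X,H)=-\gamma$; note that $\gamma<0$ because all roots of $b_f$ are negative by Theorem~\ref{thm_DM_bfcn}i). Fix a positive integer $q$ with $q>-\gamma$, so that $\gamma+q>0$. By Theorem~\ref{equality_b_fcn} we have $b_{\partial_t^q\delta}(s)=(s+1)\cdot\widetilde{b}_f(s-q)$, hence $\gamma+q$ is a root of $b_{\partial_t^q\delta}$; since $\gamma+q>0$, it is not a negative integer. As $H$ is reduced and the strict transforms of the components of $H$ on $Y$ are disjoint, Theorem~\ref{thm_DM_bfcn}iv) applies and gives that $\gamma+q=q-\tfrac{k_i+\ell}{a_i}$ for some $i$ with $E_i$ exceptional and some positive integer $\ell$. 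Thus $\gamma=-\tfrac{k_i+\ell}{a_i}\leq-\tfrac{k_i+1}{a_i}$, and therefore
\[ \widetilde{\alpha}(X,H)=-\gamma\geq\frac{k_i+1}{a_i}\geq\min\big\{\tfrac{k_j+1}{a_j}\mid E_j\ \text{exceptional}\big\}, \]
which is (\ref{eq_bound_min_exp}).

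The only points needing some care are the reduction to the principal case --- where one must check that taking the minimum over an open cover is compatible with restricting the resolution, so that the global right-hand side bounds each local one from below --- and the verification that $\gamma+q$ is genuinely a (positive) root of $b_{\partial_t^q\delta}$, which is precisely what excludes the ``negative integer'' alternative in Theorem~\ref{thm_DM_bfcn}iv). Neither is a serious obstacle: once Theorems~\ref{thm_DM_bfcn} and \ref{equality_b_fcn} are granted, the argument is essentially immediate, the real content of the bound being packaged into those two results (the Kashiwara--Lichtin type estimate for roots of $b$-functions and the shift identity for the $b$-function of $\partial_t^q\delta$).
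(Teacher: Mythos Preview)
Your proof is correct and uses the same two ingredients as the paper---Theorem~\ref{equality_b_fcn} and the exceptional-divisor clause of Theorem~\ref{thm_DM_bfcn}iv)---so the approaches are essentially the same. The only cosmetic difference is in how $q$ is chosen: the paper writes the right-hand side as $q+\beta$ with $\beta\in(0,1]$ and verifies that \emph{every} root of $b_{\partial_t^q\delta}$ is $\leq-\beta$, whereas you take $q$ large enough that the specific root $\gamma+q$ becomes positive and then rule out the ``negative integer'' alternative directly; both packagings yield the same conclusion immediately.
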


\begin{proof}
After taking a suitable open cover of $X$, we may and will assume that $H$ is defined by $f\in\cO_X(X)$.
Let $\lambda$ be the right-hand side of (\ref{eq_bound_min_exp}) and let us write $\lambda=q+\beta$, with $q\in\ZZ_{\geq 0}$ and $\beta\in (0,1]$. 
We have $\widetilde{\alpha}(X,H)\geq \lambda$ if and only if every root of $\widetilde{b}_f(s-q)$ is $\leq -\beta$; by Theorem~\ref{equality_b_fcn},
this is equivalent to $b_{\partial_t^q\delta}(s)$ having all roots $\leq-\beta$. However, by Theorem~\ref{thm_DM_bfcn} that such a root is either $-m$,
for some positive integer $m$
(in which case it is clearly $\leq-\beta$) or of the form $q-\tfrac{k_i+\ell}{a_i}$, for some positive integer $\ell$ and some $i$ with $E_i$ exceptional (in which case, 
it is $\leq q-\lambda=-\beta$). This completes the proof.
\end{proof}

\begin{rmk}[Computation of the minimal exponent as the minimum on a log resolution]
Unlike in the case of formula (\ref{eq_formula_lct}) that computes $\lct(X,H)$ in terms of any log resolution, we can't hope that the inequality in (\ref{eq_bound_min_exp})
is an equality for an arbitrary resolution. Indeed, if the minimum on the right-hand side of (\ref{eq_bound_min_exp}) is $>1$ (so $\widetilde{\alpha}(X,H)>1$), then 
one can construct a sequence of blow-ups with smooth centers of codimension $2$, such that the corresponding minima converge to $1$.
Indeed, let $E_j$ be the strict transform of an irreducible component of $H$ (so that $a_j=1$ and $k_j=0$) and let $E_i$ be an exceptional divisor that intersects $E_j$.
We consider the blow-up $Y_1\to Y$ of $Y$ along a connected component of $E_i\cap E_j$, and let $G_1$ be the exceptional divisor. We consider next
the blow-up $Y_2\to Y_1$ of $Y_1$ along the intersection of
$G_1$ with the strict transform of $E_j$ and let $G_2$ be the exceptional divisor, etc. An easy computation shows that the coefficient of $G_{\ell}$ in the inverse image of $H$
on $Y_{\ell}$ is $a_i+\ell$ and its coefficient in $K_{Y_{\ell}/X}$ is $k_i+\ell$. Note that $\lim_{\ell\to\infty}\tfrac{k_i+1+\ell}{a_i+\ell}=1$. 

However, it is an interesting question whether given a reduced hypersurface $H$ on $X$, there is a log resolution $\pi\colon Y\to X$ of $(X,H)$
such that we have equality in (\ref{eq_bound_min_exp}).
\end{rmk}

One can show that certain properties of the log canonical threshold 
extend to the minimal exponent $\widetilde{\alpha}(X,H)$, see \cite{MP}. Two important ones concern the behavior
with respect to restriction to a smooth hypersurface and semicontinuity in families. These properties can be proved by following
the corresponding proofs for log canonical thresholds via multiplier ideals in \cite[Chapter~9]{Lazarsfeld} once we have a family of ideals that detect 
the minimal exponent. This was done in \cite{MP} using the \emph{Hodge ideals} $I_p(\alpha H)$ constructed in \cite{MP2} and \cite{MP3}
(here $p\in {\mathbf Z}_{\geq 0}$ and $\alpha$ is a positive rational number). 
One can attach such ideals to any reduced\footnote{Note that if $H$ is not reduced, then ${\rm lct}(X,H)<1$ and thus $\widetilde{\alpha}(X,H)={\rm lct}(X,H)$.
Therefore, in order to extend properties of the log canonical threshold to the minimal exponent, it is enough to consider the case when $H$ is reduced.}
hypersurface $H$ in $X$ by using the Hodge filtration on suitable Hodge modules associated to a local equation of $H$. 
By making use of results on mixed Hodge modules, it was shown in \cite{MP4} and \cite{MP3} 
that these ideals satisfy properties that parallel those of multiplier ideals. On the other hand, 
it was shown in \cite[Theorem~A']{MP} that if $\lambda=p+\alpha$, where $p\in {\mathbf Z}_{\geq 0}$ and $\alpha\in (0,1]$, then $I_p(\alpha H)\cdot\cO_H=\widetilde{\cJ}\big(X, (\lambda-\epsilon)H\big)\cdot\cO_H$
for $0<\epsilon\ll 1$. It thus follows from Proposition~\ref{triviality_microlocal} that $\widetilde{\alpha}(X,H)\geq p+\alpha$ if and only if $I_p(\alpha H)=\cO_X$. 
As we have already pointed out, Schnell and Yang have recently developed in \cite{SchnellYang}  a theory of twisted Hodge modules and 
used it to show that the ideals $\widetilde{\cJ}(X, \lambda H)$ themselves satisfy properties analogous to those of multiplier ideals, so one can also
use these invariants in order to prove the properties of the minimal exponent.

Finally, we mention that there is a notion of minimal exponent associated to local complete intersection subvarieties $Z$ inside a smooth variety $X$, see \cite{CDMO}. This is defined using the 
$V$-filtration associated to a local system of equations of $Z$ and many of the properties extend from the case of hypersurfaces to the case of arbitrary local complete intersections.

\section*{References}
\begin{biblist}

\bib{BBDG}{article}{
    AUTHOR = {Be\u{\i}linson, A. A.},
    author={Bernstein, J.},
    author={Deligne, P.},
     TITLE = {Faisceaux pervers},
 BOOKTITLE = {Analysis and topology on singular spaces, {I} ({L}uminy,
              1981)},
    SERIES = {Ast\'{e}risque},
    VOLUME = {100},
     PAGES = {5--171},
 PUBLISHER = {Soc. Math. France, Paris},
      YEAR = {1982},}

\bib{Bernstein}{article}{
author={Bernstein, J.},
title={Algebraic~theory~of~$D$-modules},
journal={notes available at ${\mathit https://gauss.math.yale.edu/~il282/Bernstein\_D\_mod.pdf}$}}

\bib{BGM}{article}{
author={Brian\c{c}on, J.},
author={Granger, M.},
author={Maisonobe, Ph.},
title={Sur le polynome de Bernstein des singularit\'{e}s semi-quasi-homog\`{e}nes},
journal={Pr\'{e}publication de l’Universit\'{e} de Nice},
volume={138},
year={1986},}

\bib{BrianconMaisonobe}{article}{
author={Brian\c{c}on, J.},
   author={Maisonobe, Ph.},
   title={Caract\'{e}risation g\'{e}om\'{e}trique de l'existence du
   polynome de Bernstein relatif},
   conference={
      title={Algebraic geometry and singularities},
      address={La R\'{a}bida, 1991},
   },
   book={
      series={Progr. Math.},
      volume={134},
      publisher={Birkh\"{a}user, Basel},
      }
      date={1996},
   pages={215--236},
}

\bib{Bry}{article}{
	author = {Brylinski, J.-L.},
     title = {Transformations canoniques, dualit\'e projective, th\'eorie de Lefschetz, transformations de Fourier et sommes trigonom\'etriques},
     booktitle = {G\'eom\'etrie et analyse microlocales},
     series = {Ast\'erisque},
     publisher = {Soci\'et\'e math\'ematique de France},
     number = {140-141},
     year = {1986},
     url = {http://www.numdam.org/item/AST_1986__140-141__3_0/}
}

\bib{BMS}{article}{
   author={Budur, N.},
   author={Musta\c{t}\v{a}, M.},
   author={Saito, M.},
   title={Bernstein-Sato polynomials of arbitrary varieties},
   journal={Compos. Math.},
   volume={142},
   date={2006},
   number={3},
   pages={779--797},
}

\bib{BudurSaito}{article}{
 author = {Budur, N.},
 author={Saito, M.},
 title = {Multiplier ideals, {$V$}-filtration, and spectrum},
  journal = {J. Algebraic Geom.},
  volume = {14},
      date= {2005},
    number= {2},
     pages = {269--282},
}

\bib{CD}{article}{
   author={Chen, Q.},
   author={Dirks, B.},
   title={On $V$-filtration, Hodge filtration and Fourier transform},
   journal={Selecta Math. (N.S.)},
   volume={29},
   date={2023},
   number={4},
   pages={Paper No. 50},
}

\bib{CDMO}{article}{
author={Chen, Q.},
author={Dirks, B.},
author={Musta\c{t}\u{a}, M.},
author={Olano, S.},
title={$V$-filtrations and minimal exponents for locally complete intersection singularities},
journal={preprint arXiv:2208.03277},
date={2022},
}

\bib{Deligne}{incollection}{
author={Deligne, P.},
title={Le formalisme des cycles {\'e}vanescents},
booktitle={Groupes de Monodromie en G{\'e}om{\'e}trie Alg{\'e}brique},
publisher={Springer Berlin Heidelberg},
pages={82--115},
year={1973},
}

\bib{Dimca}{book}{
   author={Dimca, A.},
   title={Sheaves in topology},
   series={Universitext},
   publisher={Springer-Verlag, Berlin},
   date={2004},
   pages={xvi+236},
}

\bib{DM}{article}{
   author={Dirks, B.},
   author={Musta\c{t}\u{a}, M.},
   title={Upper bounds for roots of $b$-functions, following Kashiwara and
   Lichtin},
   journal={Publ. Res. Inst. Math. Sci.},
   volume={58},
   date={2022},
   number={4},
   pages={693--712},
}

\bib{ELSV}{article}{
    AUTHOR = {Ein, L.},
    author= {Lazarsfeld, R.},
    author={Smith, K. E.},
    author= {Varolin, D.},
     TITLE = {Jumping coefficients of multiplier ideals},
   JOURNAL = {Duke Math. J.},
    VOLUME = {123},
      YEAR = {2004},
    NUMBER = {3},
     PAGES = {469--506},
}

\bib{HTT}{book}{
   author={Hotta, R.},
   author={Takeuchi, K.},
   author={Tanisaki, T.},
   title={D-modules, perverse sheaves, and representation theory},
   publisher={Birkh\"auser, Boston},
   date={2008},
}

\bib{Kashiwara}{article}{
   author={Kashiwara, M.},
   title={$B$-functions and holonomic systems. Rationality of roots of
   $B$-functions},
   journal={Invent. Math.},
   volume={38},
   date={1976/77},
   number={1},
   pages={33--53},
}

\bib{Kashiwara3}{article}{
author={Kashiwara, M.},
title={Vanishing cycle sheaves and holonomic systems of differential
equations},
conference={
 title={Algebraic geometry},
 address={Tokyo/Kyoto},
date={1982},
},
book={
 series={Lecture Notes in Math.},
 volume={1016},
  publisher={Springer, Berlin},
 },
date={1983},
pages={134--142},
}

\bib{KashiwaraRH}{article}{
    AUTHOR = {Kashiwara, M.},
     TITLE = {The {R}iemann-{H}ilbert problem for holonomic systems},
   JOURNAL = {Publ. Res. Inst. Math. Sci.},
    VOLUME = {20},
      YEAR = {1984},
    NUMBER = {2},
     PAGES = {319--365},
     }

\bib{Kollar}{article}{
   author={Koll\'ar, J.},
   title={Singularities of pairs},
   conference={
      title={Algebraic geometry---Santa Cruz 1995},
   },
   book={
      series={Proc. Sympos. Pure Math.},
      volume={62},
      publisher={Amer. Math. Soc., Providence, RI},
   },
   date={1997},
   pages={221--287},
}

\bib{Lazarsfeld}{book}{
   author={Lazarsfeld, R.},
   title={Positivity in algebraic geometry. II},
   series={Ergebnisse der Mathematik und ihrer Grenzgebiete. 3. Folge. A
   Series of Modern Surveys in Mathematics [Results in Mathematics and
   Related Areas. 3rd Series. A Series of Modern Surveys in Mathematics]},
   volume={49},
   note={Positivity for vector bundles, and multiplier ideals},
   publisher={Springer-Verlag, Berlin},
   date={2004},
   pages={xviii+385},
}

\bib{Lichtin}{article}{
   author={Lichtin, B.},
   title={Poles of $|f(z, w)|^{2s}$ and roots of the $b$-function},
   journal={Ark. Mat.},
   volume={27},
   date={1989},
   number={2},
   pages={283--304},
}

\bib{MaisonobeMebkhout}{incollection}{
	author={Maisonobe, P.},
	author={Mebkhout, Z.},
	title={Le th\'{e}or\`{e}me de comparaison pour les cycles \'{e}vanescents},
	booktitle={\'{E}l\'{e}ments de la th\'{e}orie des syst\'{e}mes diff\'{e}rentiels g\'{e}om\'{e}triques}
	series={S\'{e}min. Congr.},
	volume={8},
	pages={311--389},
	publisher={Soc. Math. France, Paris},
	year={2004},
}

\bib{Malgrange}{article}{
  author= {Malgrange, B.},
     title= {Polynomes de {B}ernstein-{S}ato et cohomologie \'evanescente},
 booktitle= {Analysis and topology on singular spaces, {II}, {III}
              ({L}uminy, 1981)},
    series = {Ast\'erisque},
    volume = {101},
    pages = {243--267},
 publisher = {Soc. Math. France, Paris},
      date = {1983},
      }
      
      \bib{Maxim}{book}{
   author={Maxim, L.~G.},
   title={Intersection homology \& perverse sheaves---with applications to
   singularities},
   series={Graduate Texts in Mathematics},
   volume={281},
   publisher={Springer, Cham},
   date={2019},
   pages={xv+270},
}
      
 \bib{MebkhoutRH}{article}{
    AUTHOR = {Mebkhout, Z.},
     TITLE = {Une autre \'{e}quivalence de cat\'{e}gories},
   JOURNAL = {Compositio Math.},
    VOLUME = {51},
      YEAR = {1984},
    NUMBER = {1},
     PAGES = {63--88},}
      
      \bib{MebkhoutSabbah}{book}{
      place={Paris},series={Travaux en cours},
      volume={35},
      title={\textsection III.4: $\cD$-modules et cycles \'{e}vanescents}
      note={In \textit{Le formalisme de six op\'{e}rations de Grothendieck pour les $D_X$-modules coh\'{e}rents: systemes diff\'{e}rentiels}},
      publisher={Hermann},
      author={Mebkhout, Z.},
      author={Sabbah, C.}
      year={1989},
      collection={Travaux en course; 35}
      }

\bib{MP4}{article}{
   author={Musta\c{t}\u{a}, M.},
   author={Popa, M.},
   title={Restriction, subadditivity, and semicontinuity theorems for Hodge
   ideals},
   journal={Int. Math. Res. Not. IMRN},
   date={2018},
   number={11},
   pages={3587--3605},
}

\bib{MP2}{article}{
   author={Musta\c{t}\u{a}, M.},
   author={Popa, M.},
   title={Hodge ideals},
   journal={Mem. Amer. Math. Soc.},
   volume={262},
   date={2019},
   number={1268},
   pages={v+80},
}

\bib{MP3}{article}{
   author={Musta\c{t}\u{a}, M.},
   author={Popa, M.},
   title={Hodge ideals for ${\bf Q}$-divisors: birational approach},
   language={English, with English and French summaries},
   journal={J. \'{E}c. polytech. Math.},
   volume={6},
   date={2019},
   pages={283--328},
}

\bib{MP}{article}{
      author={Musta\c t\u a, M.},
      author={Popa, M.},
      title={Hodge ideals for $\QQ$-divisors, $V$-filtration, and minimal exponent},
      journal={Forum of Math., Sigma}, 
      volume={8},
      date={2020}, 
      pages={41pp},
}

\bib{Sabbah}{article}{
      author={Sabbah, C.},
	title={${\mathcal D}$-modules et cycles \'{e}vanescents (d'apr\`{e}s
B.~Malgrange et M. Kashiwara)},
conference={
      title={G\'eom\'etrie alg\'ebrique
et applications III, La R\'{a}bida (1984)}},
book={
      series={Traveaux en Cours},
      volume={24},
      publisher={Hermann, Paris},
   },
        date={1984}, 
	pages={53--98},
}

\bib{Sabbah2}{book}{
author={Sabbah, C.},
title={Introduction to the theory of $\cD$-modules},
note={notes available at \url{https://perso.pages.math.cnrs.fr/users/claude.sabbah/livres/sabbah_nankai110705.pdf}},
date={2011},}

\bib{Saito-MHP}{article}{
  author = {Saito, M.},
  title= {Modules de {H}odge polarisables},
  journal = {Publ. Res. Inst. Math. Sci.},
  fjournal = {Kyoto University. Research Institute for Mathematical
              Sciences. Publications},
  volume = {24},
  year = {1988},
  number = {6},
  pages = {849--995 (1989)},
}

\bib{SaitoDuality}{article}{
	author={Saito, M.},
	title={Duality for vanishing cycle functors},
	journal={Publ. Res. Inst. Math. Sci.},
	volume={25},
	year={1989},
	number={6},
	pages={889--921}
}

\bib{Saito-MHM}{article}{
   author={Saito, M.},
   title={Mixed Hodge modules},
   journal={Publ. Res. Inst. Math. Sci.},
   volume={26},
   date={1990},
   number={2},
   pages={221--333},
}

\bib{Saito_microlocal}{article}{
   author={Saito, M.},
   title={On microlocal $b$-function},
   journal={Bull. Soc. Math. France},
   volume={122},
   date={1994},
   number={2},
   pages={163--184},
}

\bib{Saito_filtration}{article}{
    AUTHOR = {Saito, M.},
     TITLE = {On the {H}odge filtration of {H}odge modules},
   JOURNAL = {Mosc. Math. J.},
  FJOURNAL = {Moscow Mathematical Journal},
    VOLUME = {9},
      YEAR = {2009},
    NUMBER = {1},
     PAGES = {161--191},}

\bib{Saito_Hodge}{article}{
      author={Saito, M.},
	title={Hodge ideals and microlocal $V$-filtration},
	journal={preprint arXiv:1612.08667}, 
	date={2016}, 
}

\bib{SchnellYang}{article}{
author={Schnell, C.},
author={Yang, R.},
title={Higher multiplier ideal},
journal={preprint arXiv:2309.16763},
date={2023},
}

\bib{LeiRH}{article}{
  author={Wu, L.},
	title={Nearby and vanishing cycles for perverse sheaves and D-modules},
	journal={preprint arXiv:1709.10158}, 
	date={2020}, 
}

\bib{Yano}{article}{
    AUTHOR = {Yano, T.},
     TITLE = {On the theory of {$b$}-functions},
   JOURNAL = {Publ. Res. Inst. Math. Sci.},
  FJOURNAL = {Kyoto University. Research Institute for Mathematical
              Sciences. Publications},
    VOLUME = {14},
      YEAR = {1978},
    NUMBER = {1},
     PAGES = {111--202},}

\end{biblist}

\end{document}